\newtheorem{lemma}{Lemma}
\newtheorem{theorem}[lemma]{Theorem}
\newtheorem*{theorem*}{Theorem}
\newtheorem{proposition}[lemma]{Proposition}
\newtheorem{corollary}[lemma]{Corollary}
\theoremstyle{definition}
\newtheorem{definition}[lemma]{Definition}
\newtheorem{note}[lemma]{Note}
\newtheorem{example}[lemma]{Example}
\newtheorem{question}[lemma]{Question}
\newtheorem*{acknowledgments}{Acknowledgments}
\numberwithin{lemma}{section}
\newcommand{\R}{\mathbb{R}}
\newcommand{\C}{\mathbb{C}}
\newcommand{\Z}{\mathbb{Z}}
\newcommand{\X}{\mathfrak{X}}
\newcommand{\Y}{\mathfrak{Y}}
\newcommand{\ddt}{\partial/\partial t}
\newcommand{\bigddt}{\frac{\partial}{\partial t}}
\newcommand{\Id}{\mathrm{Id}}
\DeclareMathOperator{\cl}{\mathrm{cl}}
\DeclareMathOperator{\supp}{\mathrm{supp}}
\DeclareMathOperator{\interior}{\mathrm{int}}
\DeclareMathOperator{\image}{\mathrm{im}}
\newcommand{\aut}{\mathrm{Aut}}
\newcommand{\tdf}{\frac{3\delta}{4}}
\newcommand{\pt}{\mathrm{pt}}
\newcommand{\ind}{\mathrm{ind}}
\newcommand{\coind}{\mathrm{coind}}
\newcommand{\orb}{\mathrm{orb}}
\newcommand{\AbramovichVistoliGraber}{MR1950940}
\newcommand{\ALR}{AdemLeidaRuan}
\newcommand{\BanyagaHurtubise}{MR2145196}
\newcommand{\BehrendXu}{BehrendXu}
\newcommand{\RuanCrepant}{MR2234886}
\newcommand{\ChenRuan}{MR2104605}
\newcommand{\ChenRuanGromovWitten}{MR1950941}
\newcommand{\Crepant}{Crepant}
\newcommand{\Heinloth}{Heinloth}
\newcommand{\Hirsch}{MR0448362}
\newcommand{\LermanTolman}{MR1401525}
\newcommand{\Lang}{MR0431240}
\newcommand{\Milnor}{MR0163331}
\newcommand{\JoyceDesingularizations}{JoyceDesingularizations}
\newcommand{\JoyceExceptional}{MR1787733}
\newcommand{\Moerdijk}{MR1950948}
\newcommand{\Noohi}{Noohi}
\newcommand{\OrbifoldMSW}{OrbifoldMSW}
\newcommand{\Invitation}{MR2298610}
\newcommand{\LUXLoop}{MR2340450}
\newcommand{\GinotEtAl}{InertiaString}
\newcommand{\Warner}{MR0295244}
\newcommand{\Wasserman}{MR0250324}
\title{Morse Inequalities for Orbifold Cohomology.}
\author{Richard Hepworth\footnote{The author is supported by E.P.S.R.C.~Postdoctoral Research Fellowship EP/D066980.}\\  Department of Pure Mathematics\\ University of Sheffield}
\begin{document}
\maketitle

\begin{abstract}
This paper begins the study of Morse theory for orbifolds, or more precisely for differentiable Deligne-Mumford stacks.  The main result is an analogue of the Morse inequalities that relates the orbifold Betti numbers of an almost-complex orbifold to the critical points of a Morse function on the orbifold.  We also show that a generic function on an orbifold is Morse.  In obtaining these results we develop for differentiable Deligne-Mumford stacks those tools of differential geometry and topology --- flows of vector fields, the strong topology --- that are essential to the development of Morse theory on manifolds.
\end{abstract}

\setcounter{tocdepth}{1}
\tableofcontents

\section{Introduction}

In this paper we begin the study of Morse theory for orbifolds, or more precisely for differentiable Deligne-Mumford stacks.  Morse theory studies the homology or cohomology of a manifold by looking at the critical points of an appropriate function on the manifold.  The simplest instance of this is the \emph{Morse inequalities}, which relate the Betti numbers of a compact manifold $M$ to the number of critical points of a Morse function on $M$ \cite{\Milnor}.  In extending Morse theory to orbifolds one must choose how to extend the notion of homology or cohomology from manifolds to orbifolds.  There are several options here but the most interesting is the \emph{orbifold} or \emph{Chen-Ruan} cohomology $H^\ast_\orb(\X)$ defined for almost-complex orbifolds $\X$ \cite{\ChenRuan}.  Ruan's \emph{crepant resolution conjecture} relates $H^\ast_\orb(\X)$ to the cohomology $H^\ast(Y)$ of a crepant resolution $Y\to\X$, so that $H^\ast_\orb(\X)$ serves as a tool for understanding the cohomology of crepant resolutions.  The main result of this paper is a generalization of the Morse inequalities  to orbifolds.  It relates the critical points of a Morse function on an almost-complex orbifold $\X$ to the ranks of the orbifold cohomology groups $H^i_\orb(\X)$.  In obtaining this result we develop for differentiable Deligne-Mumford stacks those tools of differential geometry and topology --- partitions of unity, Riemannian metrics, flows of vector fields, the strong topology --- that are so essential for the development of Morse theory on manifolds.

The Morse inequalities are often sufficient to compute the Betti numbers of a manifold, but a much more powerful result is provided by the \emph{Morse-Smale-Witten complex} \cite{MR1045282}.  The Morse-Smale-Witten complex of a Morse-Smale function $f$ on a compact Riemannian manifold $M$ is a chain complex defined in terms of the critical points and gradient flow lines of $f$, and its cohomology is naturally isomorphic to the cohomology of $M$.  In a forthcoming paper \cite{\OrbifoldMSW} we will extend the current work to develop a Morse-Smale-Witten approach to orbifold cohomology, and in \cite{\Crepant} we will demonstrate how the methods of Morse-Smale-Witten theory can be used to compute the integer homology of crepant resolutions of orbifolds.  

For a compact manifold $M$, the \emph{Morse inequalities} relate the Betti numbers of $M$ to the critical points of a Morse function $f\colon M\to\R$.  Let us write
\[P_t(M)=\sum\dim H_i(M;\C) t^i,\qquad M_t(f)=\sum_c t^{\ind_c}\]
for the \emph{Poincar\'e polynomial} of $M$ and the \emph{Morse polynomial} of $f$ respectively, where $c$ runs over critical points of $f$ and $\ind_c$ denotes the index.  Then the Morse inequalities state that 
\begin{equation}\label{MorseInequalitiesEquation}M_t(f)=P_t(M)+(1+t)R(t)\end{equation}
for some polynomial $R(t)$ with non-negative integer coefficients.  This often allows the Betti numbers of $M$ to be computed directly from $f$.  For example, if the critical points of $f$ all have even index then $R(t)=0$ and $P_t(M)=M_t(f)$.  One can always find a Morse function on $M$, and indeed Morse functions form a dense open subset of $C^\infty(M)$ when it is equipped with the strong topology.

Our main result generalizes the Morse inequalities \eqref{MorseInequalitiesEquation} to compact differentiable Deligne-Mumford stacks $\X$.  We will define what it means for $f\colon\X\to\R$ to be a  \emph{Morse function}; in this case $f$ has a discrete set of critical points $c$, each with an automorphism group $\aut_c$ and an index $\ind_c$ which is a linear representation of $\aut_c$.  When $\X$ is compact and almost-complex we define
\[P_t^\orb(\X)=\sum\dim H^i_\orb(\X)t^i,\qquad M_t^\orb(f)=\sum_{c,(g)}t^{\dim({\ind_c}^g)+2\iota(g)}\]
to be the \emph{orbifold Poincar\'e polynomial} of $\X$ and the \emph{orbifold Morse polynomial} of $f$ respectively.  The second sum is taken over pairs $c,(g)$ for which $C_{\aut_c}(g)$ preserves orientations of ${\ind_c}^g$, and $\iota(g)$ denotes the degree-shifting number.

\begin{theorem*}[Orbifold Morse Inequalities]
There is a polynomial $R(t)$ with non-negative integer coefficients such that
\begin{equation}M_t^\orb(f)=P_t^\orb(\X)+(1+t)R^\orb(t).\label{OrbifoldMorseInequalitiesEquation}\end{equation}
\end{theorem*}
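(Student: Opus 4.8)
The plan is to reduce the statement to the ordinary Morse inequalities, applied one sector at a time on the inertia stack. Recall that Chen--Ruan cohomology is by definition the degree-shifted ordinary cohomology of the inertia stack $I\X$: writing $\X_{(g)}$ for the component of $I\X$ indexed by a conjugacy class $(g)$ of local automorphisms and $\iota(g)$ for its degree-shifting number, one has $H^d_\orb(\X)=\bigoplus_{(g)}H^{d-2\iota(g)}(\X_{(g)};\C)$. Since $\iota(g)$ is constant on each component, this gives at once
\[P_t^\orb(\X)=\sum_{(g)}t^{2\iota(g)}\,P_t(\X_{(g)}),\]
where $P_t(\X_{(g)})$ is the ordinary Poincar\'e polynomial of the compact stack $\X_{(g)}$. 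So the strategy is to prove an ordinary Morse-inequality identity on each $\X_{(g)}$ and reassemble after multiplying by $t^{2\iota(g)}$.

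First I would show that a Morse function $f$ on $\X$ pulls back along the projection $I\X\to\X$ to a function $\tilde f$ whose restriction to each component $\X_{(g)}$ is again Morse, and identify its critical data. A point of $\X_{(g)}$ consists of a point $x$ of $\X$ together with $g\in\aut_x$, and such a point lies over the $g$-fixed locus; working in a linear chart $[\R^n/\aut_c]$ about a critical point $c$ with $g\in\aut_c$, the component $\X_{(g)}$ is modelled near $(c,g)$ by $[({\R^n})^g/C_{\aut_c}(g)]$, on which $\tilde f$ is the restriction of the local model of $f$. The key point is that, because $f$ is $\aut_c$-invariant, its differential $df$ at any $g$-fixed point already lies in the $g$-invariant part of the cotangent space; hence $d\tilde f$ vanishes at $(x,g)$ if and only if $df$ vanishes at $x$, so the critical points of $\tilde f|_{\X_{(g)}}$ are exactly the pairs $(c,g)$ with $c$ critical for $f$, and no spurious critical points arise. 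Choosing an invariant Morse chart, the Hessian of $f$ at $c$ is an $\aut_c$-invariant nondegenerate symmetric form; since $({\R^n})^g$ is orthogonal, with respect to this form, to its $g$-invariant complement, the Hessian restricts nondegenerately to $({\R^n})^g$ with negative eigenspace exactly ${\ind_c}^g$. Therefore $(c,g)$ is a nondegenerate critical point of $\tilde f|_{\X_{(g)}}$ of index $\dim({\ind_c}^g)$ and isotropy $C_{\aut_c}(g)$, and it contributes to the ordinary Morse polynomial of $\X_{(g)}$ (with $\C$ coefficients) precisely when $C_{\aut_c}(g)$ preserves the orientation of ${\ind_c}^g$. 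Summing over components, $\sum_{(g)}t^{2\iota(g)}M_t(\tilde f|_{\X_{(g)}})=M_t^\orb(f)$.

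It then remains to invoke the ordinary Morse inequalities for a compact differentiable Deligne--Mumford stack, applied to $\tilde f|_{\X_{(g)}}$ on each component, to obtain for every $(g)$ a polynomial $R_{(g)}(t)$ with non-negative integer coefficients such that $M_t(\tilde f|_{\X_{(g)}})=P_t(\X_{(g)})+(1+t)R_{(g)}(t)$. Multiplying by $t^{2\iota(g)}$ and summing over $(g)$ turns the left-hand side into $M_t^\orb(f)$ and the first right-hand term into $P_t^\orb(\X)$, so setting $R^\orb(t)=\sum_{(g)}t^{2\iota(g)}R_{(g)}(t)$, which again has non-negative integer coefficients, yields \eqref{OrbifoldMorseInequalitiesEquation}. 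The main obstacle is the sector analysis of the second paragraph: one must check that $\tilde f$ is genuinely Morse on each component of $I\X$, pin down its index and orientation data in terms of the representation ${\ind_c}$ of $\aut_c$, and confirm that the degree shifts $2\iota(g)$---which are only rational in general---interact correctly with the factor $(1+t)$. This last point is harmless, since each shift is a single constant on its component, so the strong inequalities hold within each coset $2\iota(g)+\Z$ of the grading and simply add over sectors.
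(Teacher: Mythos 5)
Your proposal is correct and follows essentially the same route as the paper: the paper proves the ordinary Morse inequalities for $\bar\X$ via handle attachments and subadditivity, and then obtains the orbifold version by applying them to the inertia stack together with Theorem~\ref{InertiaTheorem} (your second paragraph is precisely the content of that theorem and its supporting lemma on restricting invariant differentials and Hessians to $g$-fixed subspaces) and the observation that the degree shift $2\iota(g)$ is constant on each sector. Your handling of the per-sector reassembly, the orientability condition via $C_{\aut_c}(g)$ acting on ${\ind_c}^g$, and the rational grading matches the paper's intent.
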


In fact we will give two other generalizations of the Morse inequalities,  each corresponding to a different notion of homology or cohomology of an orbifold.  Furthermore, we show that Morse functions always exist by defining a topology on the set $C^\infty(\X)$ of morphisms $\X\to\R$ and proving the following theorem.

\begin{theorem*}
Morse functions form a dense open subset of $C^\infty(\X)$.
\end{theorem*}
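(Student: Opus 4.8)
The plan is to follow the classical strategy for manifolds (compare \cite{\Milnor}, \cite{\Hirsch}) but to carry it out equivariantly inside orbifold charts and to globalize using the strong topology on $C^\infty(\X)$ developed above. The organizing idea is to recast the Morse condition as a transversality statement. The differential of a morphism $f\colon\X\to\R$ is a section $df$ of the cotangent bundle $T^\ast\X$, and a critical point of $f$ is exactly a point where this section meets the zero section. I would verify that, at such a point, nondegeneracy of the (equivariant) Hessian is equivalent to transversality of $df$ to the zero section; in a chart with isotropy group $G$ the relevant vertical derivative is the $G$-invariant Hessian, and surjectivity of this map is precisely nondegeneracy of the invariant form. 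Thus the Morse functions are exactly the $f$ with $df\pitchfork 0$, and both halves of the theorem become statements about this single transversality condition.

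For openness I would argue locally and then appeal to the strong topology. In a local uniformizing chart $(U,G)$ the morphism $f$ lifts to a $G$-invariant function, and nondegeneracy of the Hessian at a critical point is a condition on the $2$-jet of this lift that is manifestly open in the $C^2$ sense. Since transversality of a section to the zero section is a stable condition, a perturbation of $f$ that is small in the strong topology (which controls $2$-jets uniformly) can neither destroy nondegeneracy at an existing critical point nor create a degenerate one. The strong topology on $C^\infty(\X)$ is engineered so that this pointwise-open condition becomes genuinely open on the whole mapping space, with no compactness hypothesis on $\X$.

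Density is the substantive part. I would first establish local density: given $f$ and a chart $(U,G)$ with $\cl(U)$ compact, I want an arbitrarily small $G$-invariant perturbation making every critical point in $U$ nondegenerate. Reformulated via Sard's theorem, the critical points of $f+g$ are the zeros of $d(f+g)$, and nondegeneracy amounts to $0$ being a regular value; applying a parametric (Thom) transversality argument to a suitable family of invariant perturbations $g$ reduces the claim to showing that this family is rich enough to force transversality. At a point $p$ with trivial isotropy this is the usual manifold argument using perturbations $f+\langle a,\cdot\rangle$. At a point $p$ whose isotropy group is nontrivial, however, the perturbations are \emph{forced} to be invariant, and here lies the main obstacle: one must produce invariant functions whose $2$-jets at $p$ realize a nondegenerate invariant Hessian. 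I would resolve this by noting that invariant quadratic polynomials surject onto the space of $G_p$-invariant symmetric bilinear forms, and that the nondegenerate forms are open and dense therein; feeding these invariant quadratic perturbations into an equivariant version of the parametric transversality theorem then yields local density. Proving this equivariant transversality statement cleanly --- keeping every perturbation a genuine morphism $\X\to\R$ while retaining enough freedom at the most symmetric points --- is the crux of the whole argument.

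Finally I would globalize. Choosing a locally finite orbifold atlas, I would combine the local density result with the openness already established, perturbing successively over the charts so that each step preserves the nondegeneracy achieved on the previous ones. As in the manifold case this patching is exactly where the strong topology earns its keep: it guarantees that the resulting sequence of invariant perturbations converges and that the limit is still Morse, giving density of the Morse functions in $C^\infty(\X)$. I expect the openness and globalization steps to be essentially formal once the strong topology is in hand, with almost all of the real difficulty concentrated in the equivariant density at points of nontrivial isotropy.
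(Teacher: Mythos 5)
Your overall architecture matches the paper's: cover $\X$ by a countable locally finite family of linear orbifold-charts $[S_l/G_l]\to\X$ with compact cores, observe that nondegeneracy over each compact core is a $C^2$-open condition in the strong topology (the paper quotes \cite[5.32]{\BanyagaHurtubise} for this), establish density of the "Morse over the $l$-th chart" condition by an equivariant perturbation in that chart, and globalize. Where you diverge is in how the two hard steps are discharged. For the local equivariant density --- which you correctly identify as the crux --- the paper does not run a transversality argument at all: it cites Wasserman's lemma \cite[Lemma 4.8]{\Wasserman} that $G$-invariant Morse functions are dense among $G$-invariant functions in every $C^r$ topology, transports this to $C^\infty([S_l/G_l])$ via Proposition~\ref{StrongTopologyAtlasProposition}, and then converts the resulting Morse function on the chart into a global morphism $\X\to\R$ by the explicit interpolation $f(1-\tilde\phi_l)+\widetilde{\phi_l g}$, whose continuity in $g$ is Lemma~\ref{ExtensionByZeroLemma}. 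For the globalization, the paper does not perturb successively and pass to a limit; it proves once and for all that $C^\infty(\X)$ is a Baire space (Theorem~\ref{BaireTheorem}) and then simply intersects the countably many dense open sets $\mathcal{M}_l$. Your successive-perturbation scheme is essentially the Baire argument unrolled by hand, so it buys nothing extra and costs you a convergence verification that the paper has already packaged.

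The one place where your sketch is thinner than it reads is the appeal to "an equivariant version of the parametric transversality theorem." Generic equivariant transversality is \emph{false} in general: invariant perturbations of an invariant section need not make it transverse to an invariant submanifold, which is exactly why the na\"ive statement $df\pitchfork 0$ cannot just be fed into Thom's theorem with the perturbation space restricted to invariant functions. What saves the Morse problem is the specific structure you point to --- invariant quadratics realize every $G_p$-invariant symmetric bilinear form, and nondegenerate invariant forms are open and dense among invariant forms --- together with a stratification by orbit type; but assembling this into a proof is precisely the content of Wasserman's lemma, not a corollary of a general transversality principle. So your route is viable, but the step you defer to a named theorem is the step that has to be proved from scratch (or cited, as the paper does).
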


The paper is organized as follows.  In Section~\ref{UnderlyingSpacesSection} we introduce the underlying space $\bar\X$ of a differentiable stack $\X$.  This is a topological space which reflects certain `topological' or `pointwise' properties of $\X$.  It will turn out that if $\X$ is differentiable Deligne-Mumford then $\bar\X$ is an orbifold in the traditional sense, though $\bar\X$ is not a priori part of the data of the stack $\X$.  We show that subsets of $\bar\X$ correspond to a particular kind of substack of $\X$; this will be essential in \cite{\OrbifoldMSW} for defining stable and unstable manifolds.  The section ends with a discussion of the differences between the notions of underlying space and coarse moduli space.  In Section~\ref{DDMStacksSection} we recall the definition of differentiable Deligne-Mumford stack and discuss its relation with proper \'etale groupoids and orbifolds in the traditional sense.  We then derive some basic but essential properties of differentiable Deligne-Mumford stacks including paracompactness, partitions of unity and the existence of orbifold-charts.  Section~\ref{MorseFunctionSection} introduces Morse functions on differentiable Deligne-Mumford stacks and defines their critical points and the index and co-index of a critical point.  We prove a Morse Lemma which describes the local form of a Morse function.  Finally we show how a Morse function on $\X$ immediately gives a Morse function on the inertia stack $\Lambda\X$ by composing with the evaluation-map $\Lambda\X\to\X$; this is essential to our results on orbifold cohomology.

Sections \ref{GeometrySection} and \ref{StrongTopologySection} are the technical heart of the paper.  In Section~\ref{GeometrySection} we define Riemannian metrics, vector fields, the gradient vector field, and integrals and flows of vector fields on differentiable Deligne-Mumford stacks.  We then prove results on the existence of Riemannian metrics, uniqueness of integrals, and uniqueness and existence of flows, and we demonstrate by example that these results can fail for stacks that are not Deligne-Mumford.  Section~\ref{StrongTopologySection} defines the strong topology on the set $C^\infty(\X)$ of morphisms $\X\to\R$.  This generalizes the strong topology on $C^\infty(M)$ for a manifold $M$.  We discuss the relation with the strong topology on an atlas $X\to\X$ and we prove that $C^\infty(\X)$ is a Baire space.  Finally we show that Morse functions form a dense open subset of $C^\infty(\X)$, so that in particular every differentiable Deligne-Mumford stack admits a Morse function.

In Section~\ref{ApplicationsSection} we use the material developed so far to derive the main results of the paper.  We examine the homotopy type of the underlying space $\bar\X$ in terms of a Morse function $\X\to\R$, and we then use this result to prove Morse inequalities for each of three notions of homology or cohomology of an orbifold.  We also give a characterization of representable stacks using Morse functions.  Finally, in Section~\ref{ExamplesSection} we give some examples of Morse functions and the conclusions one can draw from the Morse inequalities.  We also give an example that shows how Morse theoretic techniques can be used to compute the integral homology of crepant resolutions of orbifolds.

\begin{acknowledgments}
Thanks to David Gepner for many interesting and useful discussions about stacks.  The author is supported by an E.P.S.R.C.~Postdoctoral Research Fellowship, grant number EP/D066980.
\end{acknowledgments}

\section{Underlying Spaces and Coarse Moduli Spaces.}\label{UnderlyingSpacesSection}

This section deals with differentiable and topological stacks. For generalities on differentiable stacks see \cite{\BehrendXu} or \cite{\Heinloth} and for topological stacks see \cite{\Noohi}.  Throughout this section and the ones following we will often treat \emph{representable} differentiable stacks as manifolds without specifying a particular equivalence.  Thus, for example, given a diagram 
\[\xymatrix{
U\times_\X V\ar[r]\ar[d] & U\ar[d]_{}="1"\\
V\ar[r]^{}="2" & \X \ar@{=>}"1";"2"
}\]
with $\X$ a differentiable stack and $U,V\to\X$ submersions, we will treat $U\times_\X V$ as a manifold without further comment, even though it is only equivalent to a manifold.

In this section we will define the \emph{underlying space} $\bar\X$ of a differentiable stack $\X$.  The underlying space $\bar\X$ is a topological space that reflects many properties of the stack $\X$.  It is also the natural home for many concepts that one can generalize from the theory of manifolds.  For example, the support of a function on a manifold $X$ is a closed subset of $X$, and the support of a function on a stack $\X$ is a closed subset of $\bar\X$.

In the case of differentiable Deligne-Mumford stacks the underlying space is of especial importance.  It provides the link between stacks and orbifolds in the traditional sense.  It also has many of the good topological properties of manifolds, and this allows us to generalize many aspects of the theory of manifolds to differentiable Deligne-Mumford stacks.  This will be explained further in Section~\ref{DDMStacksSection}.

Although we will define the underlying space of a differentiable stack, our construction works just as well for topological stacks.  We then recover an existing construction of Noohi, who in \cite[4.3]{\Noohi} defined for each topologlical stack $\X$ a space $\X_\mathrm{mod}$ together with a morphism $\mathrm{mod}\colon\X\to\X_\mathrm{mod}$ that makes $\X_\mathrm{mod}$ a coarse moduli space of $\X$.  If one applies our definition of underlying space to a topological stack $\X$, then $\bar\X$ is naturally isomorphic to $\X_\mathrm{mod}$.  Indeed, in \cite[4.3]{\Noohi} the term `underlying space' was used to informally refer to $\X_\mathrm{mod}$.

In contrast with the topological case, when dealing with differentiable stacks we cannot call $\bar\X$ the coarse moduli space of $\X$.  Indeed, we will see that not all differentiable stacks admit a coarse moduli space, while some differentiable stacks have a coarse moduli space that is not isomorphic to the underlying space.

The section is organized as follows.  In \S\ref{UnderlyingSpaceSubsection} we define the underlying space of a differentiable stack and establish some of its functorial properties.  In \S\ref{SubstackSubsection} we establish a correspondence between subsets of the underlying space and certain substacks of the original stack.  Then in \S\ref{CoarseModuliSubsection} we consider the material of \S\ref{UnderlyingSpaceSubsection} and \S\ref{SubstackSubsection} in the topological setting and compare it with results from \cite{\Noohi}.  We then show how, in the differentiable setting, the notion of coarse moduli space differs wildly from that of the underlying space.

\subsection{The underlying space of a differentiable stack.}\label{UnderlyingSpaceSubsection}
Let $\X$ be a differentiable stack and consider the groupoid $\X(\pt)$ of geometric points $\pt\to\X$ and $2$-morphisms between them.  If $X\to\X$ is an atlas, then $\X(\pt)$ is equivalent to the groupoid of sets $X\leftleftarrows X\times_\X X$, and we can therefore consider the \emph{set} $\pi_0\X(\pt)$ of $2$-isomorphism classes in $\X(\pt)$.

Each geometric point $\mathrm{pt}\to\X$ determines an element of $\pi_0\X(\pt)$ that we will denote by $[\pt\to\X]$.  To a morphism $U\to\X$ and a subset $A\subset \pi_0\X(\pt)$ we associate the subset
\[A_U=\{u\in U\mid [u\to\X]\in A\}\]
of $U$.  

\begin{definition}\label{UnderlyingSpaceDefinition}
Let $\X$ be a differentiable stack.  The \emph{underlying space} $\bar\X$ of $\X$ is the set $\pi_0\X(\pt)$ with the topology in which  $A\subset\bar\X$ is open if and only if $A_U\subset U$ is open for each morphism $U\to\X$.
\end{definition}

\begin{definition}\label{UnderlyingMapDefinition}
Let $f\colon\X\to\Y$ be a morphism of differentiable stacks.  Then the \emph{underlying map} $\bar f\colon\bar\X\to\bar\Y$ of $f$ is the function given by $[\mathrm{pt}\to\X]\mapsto[\mathrm{pt}\to\X\xrightarrow{f}\Y]$.
\end{definition}

The assignment $\X\mapsto\bar\X$, $f\mapsto\bar f$ is functorial in the sense that $\overline{\mathrm{Id}}=\mathrm{Id}$, that $\overline{gf}=\bar g\bar f$, and that $\bar f_1=\bar f_2$ if there is a $2$-morphism $f_1\Rightarrow f_2$.  Further properties of underlying maps are given in the following proposition.

\begin{proposition}\label{UnderlyingMapProposition}
Let $f\colon\X\to\Y$ be a morphism of differentiable stacks.  Then the underlying map $\bar f\colon\bar\X\to\bar\Y$ is continuous.  If $f$ is representable and has any of the following properties, then $\bar f$ also has that property:
\begin{quote}
injective, surjective, open, closed, topological embedding
\end{quote}
\end{proposition}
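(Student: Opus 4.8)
The plan is to dispatch continuity directly from the definitions and then to treat the five listed properties uniformly by base-changing $f$ along an atlas of $\Y$. For continuity, I would take an open set $B\subseteq\bar\Y$ and an arbitrary morphism $U\to\X$; writing $g$ for the composite $U\to\X\xrightarrow{f}\Y$, the definition of $\bar f$ identifies $(\bar f^{-1}(B))_U$ with the subset $B_U$ formed from $g\colon U\to\Y$, which is open precisely because $B$ is. As $U\to\X$ was arbitrary, $\bar f^{-1}(B)$ is open. For the remaining properties I would fix an atlas $Y\to\Y$ and set $X=\X\times_\Y Y$; representability makes $X$ a manifold, $a\colon X\to\X$ an atlas, and the projection $q\colon X\to Y$ the base change of $f$, so that $q$ carries whichever of the listed properties $f$ is assumed to have. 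Applying $\bar{(\cdot)}$ to the $2$-commutative square yields a commutative square of spaces with top edge $q=\bar q$, bottom edge $\bar f$, and vertical atlas maps $\bar a,\bar b$.

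Two facts about this square carry the whole argument. The first is that the underlying map of an atlas is a surjective open map, hence an open quotient map: surjectivity holds because geometric points lift through an atlas, and openness I would check by the direct computation $\bar b(O)_U=\pi_U(\pi_Y^{-1}(O))$ for open $O\subseteq Y$ and $U\to\Y$, using the two projections of $U\times_\Y Y$ and the fact that $\pi_U$ is a surjective submersion. The second, and the crux, is the set identity $\bar b^{-1}(\bar f(A))=q(\bar a^{-1}(A))$ for every $A\subseteq\bar\X$. The inclusion $\subseteq$ is formal from commutativity and surjectivity of $\bar a$; the reverse inclusion amounts to the statement that any pair $\xi\in\bar\X$, $y\in Y$ with $\bar f(\xi)=\bar b(y)$ lifts to a genuine geometric point of the $2$-fibre product $X=\X\times_\Y Y$, the required $2$-isomorphism over $\Y$ being exactly the datum encoded in the equality $\bar f(\xi)=\bar b(y)$.

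With these in hand the five properties follow formally. Surjectivity of $\bar f$ comes from that of $\bar b$ and $q$. For openness (resp.\ closedness), an open (resp.\ closed) set $A$ has $\bar a^{-1}(A)$ open (resp.\ closed) by the continuity already proved, so $\bar b^{-1}(\bar f(A))=q(\bar a^{-1}(A))$ is open (resp.\ closed) since $q$ is, and descending through the quotient map $\bar b$ shows $\bar f(A)$ is open (resp.\ closed). For injectivity, given $\bar f(\xi_1)=\bar f(\xi_2)$ I would lift the common value along $\bar b$ to some $y$ and invoke the reverse inclusion to produce $w_i\in X$ with $\bar a(w_i)=\xi_i$ and $q(w_1)=q(w_2)=y$; injectivity of $q$ forces $w_1=w_2$, whence $\xi_1=\xi_2$. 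For the embedding case, taking $A=\bar\X$ shows $q(X)=\bar b^{-1}(\bar f(\bar\X))$; restricting the open quotient map $\bar b$ to this saturated set is again a quotient map, and feeding the embedding property of $q$ through the identity shows $\bar f(A)$ is open in $\bar f(\bar\X)$ for every open $A$, so that $\bar f$ is a continuous injective relatively open map, i.e.\ a topological embedding.

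I expect the main obstacle to be the reverse inclusion in $\bar b^{-1}(\bar f(A))=q(\bar a^{-1}(A))$, equivalently the surjectivity of the comparison map $\overline{\X\times_\Y Y}\to\bar\X\times_{\bar\Y}\bar Y$ onto the set-theoretic fibre product; once this is secured everything else is bookkeeping with quotient maps. A secondary point of care is the embedding argument, where restricting a quotient map to a saturated subset that is neither open nor closed need not yield a quotient map — which is exactly why the openness of $\bar b$, and not merely its being a quotient map, must be invoked there.
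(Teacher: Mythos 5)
Your argument is correct and follows essentially the same route as the paper: base-change $f$ along an atlas $Y\to\Y$, establish the identity $q(\bar a^{-1}(A))=\bar b^{-1}(\bar f(A))$ (the paper's $\tilde f(A_{\X\times_\Y V})=(\bar f A)_V$), and descend through the quotient map underlying the atlas. Two minor differences: your continuity proof is more direct than the paper's (which routes through a fibre product of atlases and Lemma~\ref{AtlasTopologyLemma}), and your embedding step replaces the paper's explicit saturation $W''=t(s^{-1}W')$ with the observation that the open surjection $\bar b$ restricted to the saturated set $q(X)$ remains a quotient map --- the same idea in different packaging.
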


It is clear that if $\X=\mathrm{Mor}(-,X)$, then $\bar\X=X$.  The next proposition shows that in general $\bar\X$ is a topological quotient of an atlas $X$ for $\X$, and in particular that for global quotients ${[X/G]}$ the underlying space is just the topological quotient $X/G$.

\begin{proposition}\label{XBarTopologyProposition}
Let $X\to\X$ be an atlas for $\X$.  Then the underlying map $X\to\bar\X$ identifies $\bar\X$ as the quotient of $X$ by the relation $(s\times t)(X\times_\X X)\subset X\times X$.
\end{proposition}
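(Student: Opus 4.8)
The plan is to prove the two separate assertions packaged in the statement: first, that at the level of sets the map $p\colon X\to\bar\X$, $x\mapsto[x\to\X]$, induces a bijection between $\bar\X$ and the quotient of $X$ by the relation $R=(s\times t)(X\times_\X X)$; and second, that $p$ is a topological quotient map, so that this bijection is a homeomorphism.

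For the set-level statement I would unwind the meaning of a point of the manifold $X\times_\X X$ (which we may treat as a manifold since $X\to\X$ is representable). Such a point amounts to a pair of points $x,y\in X$ together with a $2$-isomorphism between the geometric points $x\to\X$ and $y\to\X$, with $s$ and $t$ recording $x$ and $y$. Hence $(x,y)\in R$ precisely when such a $2$-isomorphism exists, i.e. precisely when $[x\to\X]=[y\to\X]$, i.e. precisely when $p(x)=p(y)$; thus the fibres of $p$ are exactly the classes of $R$. Since $X\to\X$ is an atlas it is in particular surjective, so every geometric point $\pt\to\X$ factors up to $2$-isomorphism through $X$ and $p$ is onto. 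Together these give the bijection $X/R\cong\bar\X$.

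For the topological statement I must show that $A\subset\bar\X$ is open if and only if $p^{-1}(A)=A_X$ is open in $X$. One implication is immediate from Definition~\ref{UnderlyingSpaceDefinition}: if $A$ is open then $A_U$ is open for every morphism $U\to\X$, and taking $U=X$ gives that $A_X=p^{-1}(A)$ is open. The substance is the converse. Assuming $A_X$ is open, I must produce openness of $A_U$ for an arbitrary morphism $U\to\X$. I would form the fibre product $U\times_\X X$ with its projections $\pi\colon U\times_\X X\to U$ and $q\colon U\times_\X X\to X$. Because the atlas $X\to\X$ is a representable surjective submersion, and these are stable under base change, $\pi$ is again a surjective submersion, hence an open map and a topological quotient map. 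The key observation is that for a point $w$ of $U\times_\X X$ lying over $u=\pi(w)$ and $x=q(w)$, the defining $2$-isomorphism gives $[u\to\X]=[x\to\X]=p(x)$; consequently $\pi^{-1}(A_U)=q^{-1}(A_X)$. Since $A_X$ is open and $q$ is continuous, $q^{-1}(A_X)$ is open, so $\pi^{-1}(A_U)$ is open, and since $\pi$ is a surjective open map we conclude $A_U=\pi(\pi^{-1}(A_U))$ is open. As $U\to\X$ was arbitrary, $A$ is open in $\bar\X$.

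The main obstacle is this converse direction, and within it the identity $\pi^{-1}(A_U)=q^{-1}(A_X)$: it is what transports the openness hypothesis from the single atlas $X$ to every test morphism $U$, and it is exactly here that the groupoid structure (a point of $U\times_\X X$ carries a $2$-isomorphism matching the images of $u$ and $x$) and the fact that base change of an atlas is a surjective submersion, hence a quotient map, are used. The set-level identification is then essentially bookkeeping by comparison of fibres.
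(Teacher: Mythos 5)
Your proof is correct and follows essentially the same route as the paper: the paper delegates the work to Lemma~\ref{AtlasTopologyLemma}, whose proof contains exactly your key step, namely that for the base-changed atlas projection $\pi\colon U\times_\X X\to U$ one has $A_U=\pi(q^{-1}(A_X))$, which is open because $\pi$ is an open surjection. The set-level identification of the fibres of $X\to\bar\X$ with the classes of $(s\times t)(X\times_\X X)$ is the same bookkeeping the paper leaves implicit.
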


The following corollary of Proposition~\ref{UnderlyingMapProposition} is useful in relating the topology of $\bar\X$ to that of manifolds equipped with a morphism into $\X$.

\begin{corollary}\label{InteriorClosureCorollary}
Let $\X$ be a differentiable stack and let $A\subset\bar\X$ be a subset of the underlying space.  Then for any open morphism $U\to\X$ we have
\[(\interior A)_U=\interior(A_U),\qquad (\cl A)_U=\cl(A_U).\]
\end{corollary}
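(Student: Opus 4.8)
The plan is to recognize that the operators $A\mapsto A_U$ are nothing but preimage under the underlying map of $U\to\X$, and then to reduce both identities to a standard fact about continuous open maps. First I would unwind the definitions. Since $U$ is a manifold we have $\bar U=U$, and the underlying map $g\colon U\to\bar\X$ of the morphism $U\to\X$ is, by Definition~\ref{UnderlyingMapDefinition}, the function $u\mapsto[u\to\X]$. Comparing with the definition of $A_U$, this shows that $A_U=g^{-1}(A)$ for every subset $A\subset\bar\X$. Thus the two claimed identities are exactly $g^{-1}(\interior A)=\interior(g^{-1}(A))$ and $g^{-1}(\cl A)=\cl(g^{-1}(A))$.

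Next I would record the two topological properties of $g$ that drive the proof. The map $g$ is continuous by Proposition~\ref{UnderlyingMapProposition}; indeed this is essentially how the topology on $\bar\X$ is arranged in Definition~\ref{UnderlyingSpaceDefinition}. Moreover, the morphism $U\to\X$ is representable and open by hypothesis, so Proposition~\ref{UnderlyingMapProposition} also gives that $g$ is an open map. These are the only inputs needed.

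The remaining step is the purely point-set statement: if $g\colon U\to Y$ is a continuous open map and $A\subset Y$ is any subset, then $g^{-1}(\interior A)=\interior(g^{-1}(A))$ and $g^{-1}(\cl A)=\cl(g^{-1}(A))$. For the interior, continuity gives $g^{-1}(\interior A)\subset\interior(g^{-1}(A))$, since the left-hand side is open and contained in $g^{-1}(A)$; conversely, if $u$ lies in $\interior(g^{-1}(A))$ then some open $V\ni u$ satisfies $g(V)\subset A$, and because $g(V)$ is open this forces $g(V)\subset\interior A$, whence $u\in g^{-1}(\interior A)$. The closure identity then follows either by the dual argument — testing open neighborhoods $V$ of a point of $g^{-1}(\cl A)$ against $A$ and using that $g(V)$ is open — or formally by complementation from the interior case, since $g^{-1}$ commutes with taking complements.

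I do not expect a genuine obstacle here. Once $A_U$ is identified with $g^{-1}(A)$ and $g$ is known to be both continuous and open, the result is a standard property of open maps. The only point deserving care is checking that the hypotheses of Proposition~\ref{UnderlyingMapProposition} are in force — that an open representable morphism $U\to\X$ from a manifold yields an open underlying map $g$ — after which the interior and closure computations are entirely routine.
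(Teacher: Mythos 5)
Your proof is correct and follows the same route as the paper: identify $A_U$ with the preimage of $A$ under the underlying map $U\to\bar\X$, invoke Proposition~\ref{UnderlyingMapProposition} for continuity and openness, and conclude by the standard fact that preimages under continuous open maps commute with interior and closure. The paper merely compresses the final point-set step, which you spell out correctly.
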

\begin{proof}
The two claims are equivalent and we will only prove the first.  Under the underlying map $U\to\bar\X$, $A_U$ is the preimage of $A$ and $(\interior A)_U$ is the preimage of $\interior A$.  The result follows since, by Proposition~\ref{UnderlyingMapProposition}, $U\to\bar\X$ is open.
\end{proof}

The following basic lemma is central to the proofs of Propositions \ref{UnderlyingMapProposition} and \ref{XBarTopologyProposition}.

\begin{lemma}\label{AtlasTopologyLemma}
Let $X\to\X$ be an atlas.  Then $A\mapsto A_X$ defines a one-to-one correspondence between open subsets of $\bar\X$ and {saturated} open subsets of $X$.  Here, $S\subset X$ is \emph{saturated} if it contains all points of $X$ that are isomorphic in $\X(\pt)$ to a point in $S$; equivalently, $S$ is saturated if $ts^{-1}S=S$ in the diagram
\[\xymatrix{
X\times_\X X\ar[r]^-t\ar[d]_s & X\ar[d]_{}="1"\\
X\ar[r]^{}="2" & \X.\ar@{=>}"1";"2"
}\]
\end{lemma}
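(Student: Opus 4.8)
The plan is to verify the easy direction of the correspondence together with injectivity, then to construct the inverse explicitly and isolate the single delicate point, namely that the candidate inverse actually lands in \emph{open} subsets of $\bar\X$. First I would observe that $A_X$ is always saturated: if $u\in A_X$ and $v\in X$ is isomorphic to $u$ in $\X(\pt)$, then $[v\to\X]=[u\to\X]\in A$, so $v\in A_X$; in terms of the diagram this is exactly the identity $ts^{-1}A_X=A_X$. If in addition $A$ is open in $\bar\X$, then $A_X$ is open by the very definition of the topology on $\bar\X$, applied to the morphism $X\to\X$ itself. Thus $A\mapsto A_X$ does send open subsets to saturated open subsets.

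For injectivity I would use that an atlas is surjective on geometric points, so that the underlying map $X=\bar X\to\bar\X$ is surjective and $A_X$ is precisely its preimage of $A$. A subset of the target of a surjection is determined by its preimage, so $A$ is recovered as $\{[x\to\X]\mid x\in A_X\}$, and the assignment is injective. For surjectivity onto saturated open sets, given a saturated open $S\subset X$ I would set $A=\{[x\to\X]\mid x\in S\}$, the image of $S$ under $X\to\bar\X$. Saturation gives $A_X=S$ at once: a point $u\in X$ lies in $A_X$ precisely when it is isomorphic in $\X(\pt)$ to some point of $S$, which by saturation means $u\in S$.

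The main obstacle is to show that this $A$ is open, that is, that $A_U$ is open in $U$ for \emph{every} morphism $U\to\X$, knowing only that $A_X=S$ is open. Here I would exploit the fiber product $U\times_\X X$ with its two projections $p\colon U\times_\X X\to U$ and $q\colon U\times_\X X\to X$. Since the atlas condition is stable under base change, $p$ is again a surjective submersion, hence an open surjective map (a quotient map). For a point $w\in U\times_\X X$ the $2$-morphism built into the fiber product identifies $[p(w)\to\X]$ with $[q(w)\to\X]$, so that
\[ p^{-1}(A_U)=\{w\mid [p(w)\to\X]\in A\}=\{w\mid [q(w)\to\X]\in A\}=\{w\mid q(w)\in S\}=q^{-1}(S). \]
The right-hand side is open because $q$ is continuous and $S$ is open, so $p^{-1}(A_U)$ is open; since $p$ is an open surjection we have $A_U=p\bigl(p^{-1}(A_U)\bigr)$, which is therefore open in $U$. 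As $U\to\X$ was arbitrary, $A$ is open, completing the correspondence. The only step that genuinely requires care is the stability of the atlas property under the pullback $p$ and the consequent fact that $p$ is an open quotient map; the remainder is bookkeeping with the definition of $A_U$ and the equivalence relation defining $\bar\X$.
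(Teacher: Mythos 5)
Your proposal is correct and follows essentially the same route as the paper: both verify that $A_X$ is saturated open, recover $S$ as $A_X$ for the image $A$ of a saturated open $S$, and prove openness of $A$ via the fiber product $U\times_\X X$, using continuity of the projection to $X$ and openness of the (base-changed, hence still submersive) projection to $U$ to conclude that $A_U$ is open. Your extra step identifying $p^{-1}(A_U)=q^{-1}(S)$ merely makes explicit what the paper asserts as "clear," namely that $A_U=\pi_2\pi_1^{-1}S$.
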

\begin{proof}
Let $A\subset\bar\X$ be open.  Then $A_X$ is certainly a saturated open subset of $X$.  Conversely, suppose $S\subset X$ is a saturated open subset and define $A\subset\bar\X$ to be $\{[s\to\X]\mid s\in S\}$.  Then since $S$ is saturated, $A_X=S$.  It remains to prove that $A$ is open.  Let $U\to\X$ be any morphism and consider the diagram
\[\xymatrix{
X\times_\X U\ar[r]^-{\pi_2}\ar[d]_{\pi_1}& U\ar[d]_{}="1"\\
X\ar[r]^{}="2"&\X.\ar@{=>}"1";"2"
}\]
It is clear that $A_U=\pi_2\pi_1^{-1}S$, which is open since $\pi_1$ is continuous and $\pi_2$ is open.  This concludes the proof.
\end{proof}

\begin{proof}[Proof of Proposition~\ref{UnderlyingMapProposition}.]
Let $U\to\X$ and $V\to\Y$ be morphisms from manifolds, with $V\to\Y$ a submersion.  Then it is simple to verify that in the commutative diagram
\[\xymatrix{
U\times_\Y V\ar[d]\ar[rr]^{\tilde f} && V\ar[d]_{}="1"\\
U\ar[r]&\X\ar[r]_f^{}="2" & \Y\ar@{=>}"1";"2"
}\]
we have $(\bar f^{-1}B)_{U\times_\Y V}=\tilde f^{-1}(B_V)$ for $B\subset\bar\Y$.  Taking $U$ and $V$ to be atlases, it follows that $U\times_\Y V\to\X$ is an atlas, and for $B\subset\bar\Y$ open, $(\bar f^{-1}B)_{U\times_\Y V}=\tilde f^{-1}(B_V)$ is also open, so that by Lemma~\ref{AtlasTopologyLemma} $(\bar f^{-1}B)$ is open.  This shows that $\bar f$ is continuous.

Now suppose that $f$ is representable and let $V\to\Y$ be an atlas, so that we have a 2-commutative diagram
\begin{equation}\label{PullbackDiagram}\xymatrix{
\X\times_\Y V\ar[d]\ar[r]^-{\tilde f} & V\ar[d]_{}="1"\\
\X\ar[r]_f^{}="2"&\Y.\ar@{=>}"1";"2"
}\end{equation}

Suppose that $f$ is injective.  If $\bar f$ is not injective, then there are points $x,y\in U$ such that  $x\to\X$, $y\to\X$ are not 2-isomorphic while $x\to\X\to\Y$, $y\to\X\to\Y$ are $2$-isomorphic; let $v\in V$ be such that there are $2$-morphisms $x\Rightarrow v$, $y\Rightarrow v$.  These morphisms give us two distinct points in $\X\times_\Y V$ whose image in $V$ is $v$, and this contradicts the injectivity of $f$.

If $f$ is surjective, then so is $\tilde f$, and since every point of $\bar\Y$ has the form $[v\to\Y]$ for some $v\in V$, it follows that $\bar f$ is surjective.

It is simple to verify that in diagram \eqref{PullbackDiagram} we have $\tilde f(A_{\X\times_\Y V})=(\bar f A)_V$ for any $A\subset\bar\X$.  It follows immediately that $\bar f$ is open or closed if $f$ is open or closed respectively.

Finally, suppose that $f$ is a topological embedding.  Then $\bar f$ is certainly injective.  Let $W\subset\bar\X$ be open.  We must show that there is an open $B\subset\bar\Y$ for which $B\cap\bar f\bar\X=\bar f W$.  The fact that $\tilde f$ is an embedding means that there is $W'\subset V$ open with $W'\cap\tilde f(\X\times_\Y V)=\tilde f(W_{\X\times_\Y V})$.  We may replace $W'$ with the saturated open set $W''=t(s^{-1}W')$, which --- since $\tilde f(\X\times_\Y V)$ and $\tilde f(W_{\X\times_\Y V})$ are themselves saturated --- satisfies $W''\cap\tilde f(\X\times_\Y V)=\tilde f(W_{\X\times_\Y V})$.  It follows that $B\cap\bar f\bar\X=\bar f W$, where $B\subset\bar\Y$ is the open subset corresponding to $W''$.  Consequently $\bar f$ is a topological embedding.
\end{proof}

\begin{proof}[Proof of Proposition~\ref{XBarTopologyProposition}.]
Taking preimages under the underlying map $X\to\bar\X$ precisely realises the correspondence of Lemma~\ref{AtlasTopologyLemma}.
\end{proof}

\subsection{Substacks and the underlying space.}\label{SubstackSubsection}
We now recall the notion of substack of a stack and show how subsets of $\bar\X$ correspond to a certain kind of substack of $\X$ that we call \emph{subsets} of $\X$.  This correspondence will be used in an essential way in \cite{\OrbifoldMSW} to define the stable and unstable manifolds of a critical point of a Morse function on a Riemannian Deligne-Mumford stack.

\begin{definition}
Let $\mathcal{S}$ be a site and let $\X$ be a stack over $\mathcal{S}$, regarded as a weak functor $\mathcal{S}^\mathrm{op}\to\mathrm{Groupoids}$.  Then recall from \cite[3.9]{\Noohi} that a \emph{substack} $\Y$ of $\X$ is a full saturated subfunctor of $\X$ that is also a stack.
\end{definition}

The definition means that $\Y(W)$ is a full saturated subgroupoid of $\X(W)$ for each $W\in\mathrm{Ob}(\mathcal{C})$, that $\Y$ inherits its structure as a weak functor from that of $\X$, and that $\Y$ is itself a stack.  (A subcategory $\mathcal{C}\subset\mathcal{D}$ is \emph{full} if every morphism in $\mathcal{D}$ between objects in $\mathcal{C}$ is itself in $\mathcal{C}$, and it is \emph{saturated} if every object in $\mathcal{D}$ that is isomorphic to one in $\mathcal{C}$ is itself in $\mathcal{C}$.)

For the purposes of the next definition and the discussion that follows we will distinguish between a manifold $U$ and the stack $\underline U$ that it represents.

\begin{definition}\label{SubsetDefinition}
Let $U$ be a manifold and let $V\subset U$ be a subset.  We can then form the substack ${\underline V}_U\subset \underline U$ for which ${\underline V}_U(W)\subset \underline U(W)$ is the set of maps $W\to U$ with image in $V$.   We will call the substack ${\underline V}_U\subset\underline U$ a \emph{subset of $U$}, and ${\underline V}_U\to\underline U$ \emph{the inclusion of a subset of $U$}.  Just as we usually write $\underline U$ as $U$, so we will usually write $\underline V_U$ as $V$, even though $\underline V_U$ depends on $U$.
\end{definition}

\begin{definition}
A substack $\Y$ of $\X$ is called a \emph{subset} if, for all $U\to\X$, the morphism $U\times_\X\Y\to U$ is equivalent to the inclusion of a subset.
\end{definition}

Each substack $\Y$ of $\X$ determines a subset $Y=\{[\pt\to\X]\mid \pt\to\X\in\Y(\pt)\}$ of $\bar\X$.  In the case that $\Y$ is a differentiable stack, $Y$ is the image of the underlying map $\bar\Y\to\bar\X$.

\begin{proposition}\label{SubsetProposition}
The assignment $\Y\mapsto Y$ determines a correspondence between subsets of $\X$ and subsets of $\bar\X$.  This restricts to a correspondence between open subsets of $\X$ and open subsets of $\bar\X$.
\end{proposition}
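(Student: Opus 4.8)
The plan is to construct an explicit inverse to $\Y\mapsto Y$ and then verify that the two assignments are mutually inverse. Given a subset $A\subset\bar\X$, define a subfunctor $\Y_A\subset\X$ by letting $\Y_A(W)$ be the full subgroupoid of $\X(W)$ consisting of those morphisms $x\colon W\to\X$ whose image lands entirely in $A$, i.e.\ for which $A_W=W$, where $A_W=\{w\in W\mid [w\to\X]\in A\}$ is formed relative to $x$. Since $2$-isomorphic geometric points determine the same element $[\pt\to\X]$ of $\bar\X$, the subgroupoid $\Y_A(W)$ is automatically full and saturated. The condition $A_W=W$ is pointwise and hence local on $W$, so given a cover $\{W_i\to W\}$ and objects $x_i\in\Y_A(W_i)$ with descent data, the glued object $x\in\X(W)$ again satisfies $A_W=W$; thus $\Y_A$ inherits the stack structure from $\X$ and is a substack.

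Next I would check that $\Y_A$ is a \emph{subset} of $\X$ in the sense of the definition preceding the statement, and that $Y_{\Y_A}=A$. For a morphism $U\to\X$, fullness of $\Y_A$ identifies the $W$-points of $U\times_\X\Y_A$ with those maps $W\to U$ whose composite $W\to U\to\X$ lies in $\Y_A(W)$, which by definition are exactly the maps $W\to U$ with image in $A_U$. Hence $U\times_\X\Y_A\to U$ is equivalent to the inclusion of the subset $A_U\subset U$ of Definition~\ref{SubsetDefinition}, so $\Y_A$ is a subset of $\X$. Evaluating on $\pt$ gives $Y_{\Y_A}=\{[\pt\to\X]\mid[\pt\to\X]\in A\}=A$.

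It then remains to show $\Y_Y=\Y$ for every subset $\Y$ of $\X$. Because $\Y$ is a subset, each $U\times_\X\Y\to U$ is equivalent to the inclusion of some subset $V_U\subset U$, and by fullness a morphism $x\colon W\to\X$ lies in $\Y(W)$ precisely when the pullback along $x$ has $V_W=W$. The key identification is $V_U=Y_U$: for $w\in U$ with image point $\pt\to U\to\X$, pulling back along $\pt$ shows $w\in V_U$ iff $(\pt\to\X)\in\Y(\pt)$, while saturation of $\Y$ gives $(\pt\to\X)\in\Y(\pt)$ iff $[\pt\to\X]\in Y$, i.e.\ iff $w\in Y_U$. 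Combining these, $x\in\Y(W)\iff V_W=W\iff Y_W=W\iff x\in\Y_Y(W)$, so $\Y=\Y_Y$. This establishes the bijection.

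For the open refinement, recall that $\Y$ is an open subset of $\X$ exactly when each $U\times_\X\Y\to U$ is the inclusion of an \emph{open} subset, i.e.\ when $Y_U\subset U$ is open for every $U\to\X$. By Definition~\ref{UnderlyingSpaceDefinition} this is precisely the condition that $Y\subset\bar\X$ be open, so the correspondence restricts as claimed; one may equivalently test openness on a single atlas via Lemma~\ref{AtlasTopologyLemma}. The main obstacle is not any single deep step but the careful $2$-categorical bookkeeping in the middle paragraphs: correctly computing the fiber products $U\times_\X\Y$ using fullness and matching the resulting subsets $V_U$ with $Y_U$ via saturation. Once that pointwise identification is pinned down, the descent verification and the open case are formal.
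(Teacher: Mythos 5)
Your construction of the inverse $A\mapsto\Y_A$ is exactly the paper's, and both verifications (that $\Y_A$ is a substack and a subset with $Y_{\Y_A}=A$, and that $\Y_Y=\Y$) are correct, so the proposal is sound. The one place where you genuinely diverge is the proof that $\Y_Y=\Y$: the paper proves that $\Y(W)$ equals the pointwise-defined subcollection by pulling back to an atlas $X\to\X$, factoring $U\times_\X X\to X$ through $Y_X$, and then descending along the surjective submersion $U\times_\X X\to U$ using that $\Y$ is a stack; you instead apply the subset property of $\Y$ directly to the test object $W$ itself, identify $V_W$ with $Y_W$ by restricting to geometric points $\pt\to W$, and conclude from $V_W=W$ that $\mathrm{id}_W$ lifts. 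Your route is slightly more economical --- it never invokes the descent axiom for $\Y$, only fullness, saturation, and the defining property of a subset --- while the paper's route has the advantage of only needing the subset property over a single atlas rather than over every $W$. One small attribution slip: where you write that ``by fullness'' $x\in\Y(W)$ iff $V_W=W$, the nontrivial (backward) implication actually uses \emph{saturation} of $\Y(W)$ (the lift of $\mathrm{id}_W$ produces an object of $\Y(W)$ merely $2$-isomorphic to $x$); you do invoke saturation correctly in the next sentence, so this is cosmetic. The treatment of the open case is a harmless shortcut: you characterize openness of $\Y\to\X$ by openness of every $Y_U$ and appeal to Definition~\ref{UnderlyingSpaceDefinition}, whereas the paper checks one direction via the construction and the other via Proposition~\ref{UnderlyingMapProposition}; both are fine.
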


\begin{proof}
Let $\Y$ be a subset of $\X$ and let $Y$ be the corresponding subset of $\bar\X$.  We claim that for each $W$, $\Y(W)\subset\X(W)$ is the collection of those $W\to\X$ with the property that $[w\to\X]\in Y$ for each $w\in W$.  Thus, $\Y$ is determined by $Y$.

To prove the claim, first note that each $\Y(W)$ is contained in the stated subcollection of $\X(W)$.  Now let $X\to\X$ be an atlas for $\X$, so that $\Y\times_\X X\to X$ is equivalent to the inclusion of a subset; it is clear by considering points that the subset in question is $Y_X$.  We have a cartesian diagram
\[\xymatrix{
Y_X\ar[r]\ar[d] & X\ar[d]^{}="1"\\
\Y\ar[r]_{}="2"& \X.\ar@{=>}"1";"2"
}\]
Now let $U\to\X$ have the property that each $[u\to\X]$ is in $Y$.  Then $U\times_\X X\to X$ factors through $Y_X$, so that (since $\Y(U\times_\X X)\subset \X(U\times_\X X)$ is saturated) $U\times_\X X\to U\to\X$ factors through $\Y$.  Since $\Y$ is itself a stack, and $\Y(U)\subset\X(U)$ is full and saturated, and $U\times_\X X\to U$ is a surjective submersion, it follows that $U\to\X$ factors through $\Y$ as required.

We must now show that given $Y\subset\bar\X$, the specification
\[\Y(W)=\{W\to\X\mid[w\to\X]\in Y\mathrm{\ for\ all\ }w\in W\}\]
determines a substack $\Y$ of $\X$ that is also a subset, and that $\Y$ determines the original subset $Y\subset\bar\X$.

First, it is clear that the specification does determine a full, saturated subfunctor $\Y$ of $\X$.  Second, to check that $\Y$ is a stack we must verify that, given $W=\bigcup W_i$ and $W\to\X$ with each $W_i\to\X$ factoring through $\Y$, then $W\to\X$ itself factors through $\Y$; but this is again clear.  Finally, it is simple to verify that for any $U\to\X$, the induced $U\times_\X\Y\to U$ is equivalent to the inclusion of $\{u\in U\mid[u\to\X]\in Y\}$.

By the previous paragraph, if $Y\subset\bar\X$ is open and $\Y\to\X$ is the corresponding substack, then for $U\to\X$, $\Y\times_\X U\to U$ is equivalent to the inclusion of the open subset $Y_U$, so that $\Y$ is indeed an open substack of $\X$.  Conversely, if $\Y\to\X$ is an open inclusion, then so is $\bar\Y\to\bar\X$ by Proposition~\ref{UnderlyingMapProposition}, so that the resulting $Y=\mathrm{Im}(\bar\Y\to\bar\X)$ is indeed open.
\end{proof}

\subsection{Coarse moduli spaces.}\label{CoarseModuliSubsection}

We now compare the material presented in the last two subsections, which were concerned with differentiable stacks, with some existing results of Noohi on topological stacks \cite[4.3]{\Noohi}.  We will recall, and concentrate on, the notion of coarse moduli space and its relation to the underlying space.

We will see that our definition of underlying space can be applied to topological stacks $\X$ just as well as differentiable stacks.  The resulting space $\bar\X$ coincides with a space $\X_\mathrm{mod}$ defined by Noohi.  However, the significance of the underlying space is quite different in the two contexts: Noohi showed that $\X_\mathrm{mod}$ is always a coarse moduli space for $\X$, but we will see that not all differentiable stacks admit a coarse moduli space at all, and that even when a coarse moduli spaces does exist it need not coincide with the underlying space.

Let $\mathrm{Top}$ denote the site of compactly generated topological spaces and continuous maps, and recall from \cite[\S 13]{\Noohi} that a topological stack is a stack $\X$ over $\mathrm{Top}$ which admits a representable {\bf LF} surjection $X\to\X$ from a space $X$.  Here {\bf LF} is a class of local fibrations, and we will assume that all {\bf LF} maps are open.  Given a topological stack $\X$, Noohi defined the space $\X_\mathrm{mod}$ to be the set $\pi_0\X(\mathrm{pt})$ equipped with the topology whose open sets are $\mathfrak{U}_\mathrm{mod}\subset\X_\mathrm{mod}$ for open substacks $\mathfrak{U}\subset\X$.  Now note that, by replacing manifolds with spaces and submersions with {\bf LF} maps, the material of subsections \S\ref{UnderlyingSpaceSubsection} and \S\ref{SubstackSubsection} holds for topological stacks in place of differentiable stacks, without any further change.  We can therefore define the \emph{underlying space} $\bar\X$ of a topological stack $\X$.

\begin{proposition}
The underlying space $\bar\X$ of a topological stack $\X$ is canonically isomorphic to $\X_\mathrm{mod}$.
\end{proposition}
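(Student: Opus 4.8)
The plan is to exploit the fact that $\bar\X$ and $\X_\mathrm{mod}$ are, by their very definitions, two topologies on one and the same set, namely $\pi_0\X(\pt)$. The asserted canonical isomorphism is therefore just the identity map of underlying sets, and proving the proposition amounts to checking that the two topologies coincide: that a subset $A\subset\pi_0\X(\pt)$ is open in $\bar\X$ if and only if it is open in $\X_\mathrm{mod}$. Throughout I would work with the topological analogues of Lemma~\ref{AtlasTopologyLemma}, Proposition~\ref{UnderlyingMapProposition} and Proposition~\ref{SubsetProposition}, which hold verbatim once manifolds are replaced by spaces and submersions by {\bf LF} maps.

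For one inclusion I would show that every open set of $\X_\mathrm{mod}$ is open in $\bar\X$. Let $\mathfrak{U}\subset\X$ be an open substack, so that the open sets of $\X_\mathrm{mod}$ are precisely the sets $\mathfrak{U}_\mathrm{mod}$. For any morphism $U\to\X$ from a space, a point $u\in U$ lies in $(\mathfrak{U}_\mathrm{mod})_U$ exactly when $u\to\X$ factors through $\mathfrak{U}$, i.e.\ exactly when $u$ lies in the image of $U\times_\X\mathfrak{U}\to U$; hence $(\mathfrak{U}_\mathrm{mod})_U$ is this image. Since $\mathfrak{U}\to\X$ is an open inclusion, $U\times_\X\mathfrak{U}\to U$ is the inclusion of an open subset, so $(\mathfrak{U}_\mathrm{mod})_U$ is open in $U$. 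As $U\to\X$ was arbitrary, $\mathfrak{U}_\mathrm{mod}$ is open in $\bar\X$ by Definition~\ref{UnderlyingSpaceDefinition}.

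For the reverse inclusion I would invoke Proposition~\ref{SubsetProposition}. Given $A\subset\bar\X$ open, that proposition produces an open subset $\Y\subset\X$ with associated subset $Y=A$; an open subset is in particular an open substack $\mathfrak{U}=\Y$, and by construction $\mathfrak{U}_\mathrm{mod}=Y=A$, so $A$ is open in $\X_\mathrm{mod}$. Combining the two inclusions shows the topologies agree and completes the proof. The only real obstacle here is bookkeeping: I must confirm that Noohi's notion of open substack coincides with the open subsets of \S\ref{SubstackSubsection} (both amount to $U\times_\X\mathfrak{U}\to U$ being the inclusion of an open subset for every $U\to\X$), and that Noohi's set $\mathfrak{U}_\mathrm{mod}$ is literally the subset of $\bar\X$ that Proposition~\ref{SubsetProposition} associates to $\mathfrak{U}$. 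Once these identifications are in place the statement follows formally from the earlier results.
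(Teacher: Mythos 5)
Your proof is correct and is essentially the paper's own argument: the paper likewise observes that both spaces share the underlying set $\pi_0\X(\pt)$ and deduces that the topologies coincide directly from Proposition~\ref{SubsetProposition}. You have simply spelled out the details that the paper declares ``immediate.''
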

\begin{proof}
Both $\bar\X$ and $\X_\mathrm{mod}$ are given as sets by $\pi_0\X(\mathrm{pt})$.  We need only check that the topologies coincide, but this is immediate from Proposition~\ref{SubsetProposition}.
\end{proof}

\begin{definition}
If $\X$ is a stack over a site $\mathcal{S}$, then a \emph{coarse moduli space} for $\X$ is an object $\X_\mathrm{mod}$ of $\mathcal{S}$, together with a \emph{coarse moduli morphism} $\mathrm{mod}\colon\X\to\X_\mathrm{mod}$ that has the following property:
\begin{quote}
Every morphism $\X\to U$, for $U$ a representable stack, factors uniquely through $\mathrm{mod}$:
\[\xymatrix{
\X\ar[rr]\ar[rd]_{\mathrm{mod}} && U\\
&\X_{\mathrm{mod}}\ar[ur]&
}\]
\end{quote}
A coarse moduli space need not exist, though if it does then it is unique up to isomorphism.
\end{definition}

\begin{proposition}[{\cite[4.15]{\Noohi}}]\label{CoarseModuliProposition}
Let $\X$ be a topological stack.  There is a \emph{canonical morphism}
\[\mathrm{mod}\colon\X\to\bar\X\]
which is a coarse moduli morphism, so that $\X_\mathrm{mod}$ is a coarse moduli space for $\X$.
\end{proposition}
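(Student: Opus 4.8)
The plan is to build the morphism $\mathrm{mod}$ by descent along an atlas and then verify the universal property by reducing every assertion to the atlas, where it becomes a statement about the topological quotient map $X\to\bar\X$ supplied by Proposition~\ref{XBarTopologyProposition}.

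First I would fix a representable {\bf LF} surjection (atlas) $a\colon X\to\X$ and form the underlying map $\bar a\colon X\to\bar\X$, which is continuous by Proposition~\ref{UnderlyingMapProposition} (using $\bar{\underline X}=X$). Regarding $\bar\X$ as a representable stack, I claim $\bar a$ descends to a morphism $\mathrm{mod}\colon\X\to\bar\X$. Writing $s,t\colon X\times_\X X\to X$ for the two projections, a point of $X\times_\X X$ consists of two points of $X$ together with a $2$-isomorphism of their images in $\X$, so $[s(\cdot)\to\X]=[t(\cdot)\to\X]$ in $\pi_0\X(\pt)=\bar\X$; that is, $\bar a\circ s=\bar a\circ t$ as continuous maps $X\times_\X X\to\bar\X$. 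Because $\bar\X$ is a space, hence a $0$-truncated stack, a morphism from $\X$ into it is precisely a map out of the atlas equalizing $s$ and $t$, so $\bar a$ determines the desired $\mathrm{mod}$ together with a $2$-isomorphism $\mathrm{mod}\circ a\cong\bar a$.

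Next I would establish the factorization property. Given any morphism $g\colon\X\to U$ with $U$ representable, I would take $\bar g\colon\bar\X\to\bar U=U$, continuous by Proposition~\ref{UnderlyingMapProposition}, as the candidate factorization. To check $\bar g\circ\mathrm{mod}\cong g$ I precompose with the atlas: by functoriality of the underlying-map construction, $\bar g\circ\mathrm{mod}\circ a\cong\bar g\circ\bar a=\overline{g\circ a}$, and since $g\circ a\colon\underline X\to U$ is a morphism of representable stacks its underlying map $\overline{g\circ a}$ coincides (under $\bar{\underline X}=X$, $\bar U=U$) with $g\circ a$ itself. Thus $\bar g\circ\mathrm{mod}$ and $g$ become equal after restriction along the atlas, and since $U$ is a sheaf this forces $\bar g\circ\mathrm{mod}\cong g$. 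For uniqueness I would invoke Proposition~\ref{XBarTopologyProposition}, which presents $\bar a\colon X\to\bar\X$ as a surjective topological quotient map: if $h\colon\bar\X\to U$ also satisfies $h\circ\mathrm{mod}\cong g$, then $h\circ\bar a\cong g\circ a\cong\bar g\circ\bar a$ as maps $X\to U$, and surjectivity of $\bar a$ gives $h=\bar g$ on points, with continuity automatic. In particular $\mathrm{mod}$ is independent of the chosen atlas, so the construction is canonical.

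I expect the main obstacle to be the descent step of the first paragraph: one must argue carefully that morphisms from a topological stack into a space are the same data as equalizing maps from an atlas — namely that a space is $0$-truncated, so the relevant $2$-isomorphisms are unique and the cocycle condition is vacuous — rather than taking this for granted. Once this is in place, the remaining arguments are routine manipulations with the functoriality and continuity of underlying maps and with the quotient topology on $\bar\X$.
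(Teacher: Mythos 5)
The paper does not actually prove this proposition: it is imported wholesale from Noohi (cited as \cite[4.15]{\Noohi}), so there is no in-paper argument to compare against. Your proof is essentially the standard one and I find it correct. The two load-bearing points are both handled properly: (i) since a space is a $0$-truncated stack, $\mathrm{Hom}(\X,Y)$ for $Y$ a space is the equalizer of $\mathrm{Hom}(X,Y)\rightrightarrows\mathrm{Hom}(X\times_\X X,Y)$, and the identity $\bar a\circ s=\bar a\circ t$ can legitimately be checked on set-theoretic points of the space $X\times_\X X$, which is exactly how you check it; (ii) the verification of the universal property reduces, via the same equalizer description and the surjectivity of $X\to\bar\X$ from Proposition~\ref{XBarTopologyProposition}, to the functoriality of underlying maps and the identification $\overline{\underline X}=X$. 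The one loose step is the final sentence: uniqueness of the factorizing map $h$ for a \emph{given} $\mathrm{mod}$ does not by itself show that $\mathrm{mod}$ is independent of the chosen atlas. The repair is one line --- pass to the common refinement $X''=X\times_\X X'$, which is again an atlas, note that both candidate morphisms restrict to $\bar a''$ on $X''$, and use injectivity of $\mathrm{Hom}(\X,\bar\X)\to\mathrm{Hom}(X'',\bar\X)$ --- or, more cleanly, define $\mathrm{mod}$ atlas-freely by sending each object $w\colon W\to\X$ of $\X(W)$ to its underlying map $\bar w\colon W\to\bar\X$, which is continuous by Proposition~\ref{UnderlyingMapProposition} and manifestly $2$-isomorphism invariant.
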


We have established that the notion of underlying space makes sense in the topological setting and coincides with the notion of coarse moduli space.  Now we will show using two examples that in the differentiable setting this is far from true.

\begin{example}
Consider the differentiable stack $\mathfrak{G}=[\R^n/\mathrm{GL}(n,\R)]$, where $\mathrm{GL}(n,\R)$ acts on $\R^n$ in the tautological manner.
\begin{itemize}
\item The underlying space $\bar{\mathfrak{G}}$ is the topological quotient $\R^n/\mathrm{GL}(n,\R)$.  This is the two-point space $\{\mathcal{O}_0,\mathcal{O}_{\neq 0}\}$, where $\mathcal{O}_0$, $\mathcal{O}_{\neq 0}$ are the orbits in $\R^n$ of the zero vector and of the nonzero vectors respectively.  This is a non-Hausdorff space whose only proper open subset is $\{\mathcal{O}_{\neq 0}\}$.
\item $\mathfrak{G}$ has coarse moduli space $\mathfrak{G}_\mathrm{mod}=\pt$, and the constant morphism $\mathfrak{G}\to\pt$ is a coarse moduli morphism.  This is because manifolds are Hausdorff, and so any $\mathrm{GL}(n,\R)$-invariant morphism from $\R^n$ to a manifold must be constant.
\end{itemize}
In this instance the coarse moduli space exists but is different from the underlying space.
\end{example}

\begin{example}
Consider the differentiable stack $\mathfrak{C}=[\C/ \Z_2]$, where $\Z_2$ acts on $\C$ by $z\mapsto -z$.  There is a unique morphism
\[m\colon\mathfrak{C}\to\C\]
for which the composite $\C\to\mathfrak{C}\xrightarrow{m}\C$ is $z\mapsto z^2$.  Taking underlying spaces, $m$ becomes the homeomorphism $\bar m\colon \C/\Z_2\to\C$, $\pm z\mapsto z^2$.  

We could just as well regard $m\colon\mathfrak{C}\to\C$ as a morphism of topological, complex differentiable, or algebraic stacks.  In each of these cases $m$ makes $\C$ a coarse moduli space of $\mathfrak{C}$.  However, in the differentiable case $m$ is \emph{not} a coarse moduli morphism, and $\mathfrak{C}$ does not in fact have a coarse moduli space, as we will now show.

To see that $m\colon\mathfrak{C}\to\C$ is not coarse moduli consider the morphism $\mathfrak{C}\to\R$ that becomes  $z\mapsto|z|^2$ when composed with $\C\to\mathfrak{C}$.  If $m$ were a coarse moduli morphism, then $\mathfrak{C}\to\R$ would have to factor through $m$ using the map $\C\to\R$, $z\mapsto|z|$, which is not smooth.

We will now show that $\mathfrak{C}$ does not have a coarse moduli space.  Suppose that there were a coarse moduli morphism $\mathrm{mod}\colon\mathfrak{C}\to C$.  Then there would be a factorization:
\[\xymatrix{\mathfrak{C}\ar[rd]_{\mathrm{mod}}\ar[rr]^m && \C\\ & C\ar[ur]_n &}\]
The composition $\C\to\mathfrak{C}\to C$ is $\Z_2$-invariant and surjective, and when composed with $n$ becomes $z\mapsto z^2$.  Thus $\overline{\mathrm{mod}}$ is surjective, and so $n$ is a homeomorphism.  Since $\C\to\mathfrak{C}\to C$ is $\Z_2$-invariant, its derivative at $0$ vanishes; we claim that the derivative of $n$ at $0$ also vanishes.  By considering Taylor approximations at $0$, we will then obtain a contradiction since the composite $\C\to\mathfrak{C}\xrightarrow{\mathrm{mod}} C\xrightarrow{n}\C$ is $z\mapsto z^2$, which cannot be the composite of two maps whose derivatives at $0$ vanish.

We now show that the derivative of $n$ at $0$ must vanish.  Since $m$ is not coarse moduli we know that $n$ cannot be a diffeomorphism.  However, it is clear that $n|\colon C-\{0\}\to\C-\{0\}$ is a diffeomorphism, and so the derivative of $n$ at $0$ must be singular.  Consider the action of $T^1$ on $\C$ given by $z\mapsto e^{i\theta}z$.  This commutes with the $\Z_2$-action, and so determines a weak action on $\mathfrak{C}=[\C/\Z_2]$, and consequently an action on $C$.  Moreover, $m$ is equivariant if we allow $T^1$ to act on the range by $z\mapsto e^{2i\theta}z$.  It follows that $n$ is also equivariant.  Now, since the derivative of $n$ at $0$ is singular with $T^1$-invariant image it must vanish completely as claimed.
\end{example}

\section{Differentiable Deligne-Mumford Stacks.}\label{DDMStacksSection}

In this section we will recall the definition of differentiable Deligne-Mumford stacks and establish some basic facts.  In \S\ref{DDMDefinitionSection} we define such stacks and discuss the connection with proper \'etale groupoids and orbifolds.  In \S\ref{OrbifoldChartsSection} we show that differentiable Deligne-Mumford stacks admit orbifold-charts and we establish a direct link with orbifolds via the underlying space.  Then in \S\ref{ParacompactnessSection} we will give results on paracompactness and partitions of unity.  Finally in \S\ref{TechnicalSection} we gather some miscellaneous results which will be used in later sections.

\subsection{The definition of differentiable Deligne-Mumford stacks.}\label{DDMDefinitionSection}

\begin{definition}
Recall that a \emph{differentiable Deligne-Mumford stack} is a differentiable stack $\X$ which admits an \'etale surjection $X\to\X$ and whose diagonal $\Delta\colon\X\to\X\times\X$ is proper.  
\end{definition}

The existence of an \'etale atlas on $\X$ ensures that the automorphism groups of the points of $\X$ are discrete, while the properness condition ensures that they are finite.  Properness also serves as an important Hausdorff-type condition that is much stronger than simply requiring that automorphism groups be finite.  Indeed, there are stacks which admit an \'etale atlas, whose points all have trivial automorphism groups, but which are not Deligne-Mumford.  Moreover, we will see in \S\ref{CounterexamplesSection} that several important results that hold for Deligne-Mumford stacks will fail for stacks that merely admit an \'etale atlas and have finite automorphism groups.

\begin{example}
Let $G$ be a Lie group acting properly and almost-freely on a manifold $M$.  Then the stack $\X=[M/G]$ is differentiable Deligne-Mumford.

Properness of the action means that $G\times M\to M\times M$ is proper, which exactly means that $\Delta\colon\X\to\X\times\X$ is proper.  Since the action is proper we may take a slice $U_m\hookrightarrow M$ to the $G$-action at any point $m\in M$.  Almost-freeness of the $G$-action means that the isotopy group $G_m$ is finite and so $G\times U_m\to (G\times U_m)/G_m\hookrightarrow M$ is \'etale.  This shows that $U_m\to\X$ is \'etale, and so $\X$ admits an  \'etale atlas.
\end{example}

If $\X$ is differentiable Deligne-Mumford and $X\to\X$ is an \'etale atlas then $X\times_\X X\rightrightarrows X$ is an proper \'etale Lie groupoid.  Conversely, every proper \'etale Lie groupoid represents a differentiable Deligne-Mumford stack.  Thus the sub-2-category of stacks over $\mathrm{Diff}$ whose objects are the differentiable Deligne-Mumford stacks is equivalent to the $2$-category of proper \'etale Lie groupoids with Morita equivalences among them weakly inverted.  The relationship between orbifolds and this second $2$-category is well-known \cite[\S 3]{\Moerdijk}, \cite[\S1.4]{\ALR}, at least if we restrict to effective orbifolds and effective groupoids.  It is therefore reasonable to work with differentiable Deligne-Mumford stacks in place of orbifolds or proper \'etale Lie groupoids.

\subsection{Orbifold-charts.}\label{OrbifoldChartsSection}

\begin{definition}
Let $\X$ be a differentiable Deligne-Mumford stack.
\begin{enumerate}
\item An \emph{orbifold-chart} on $\X$ is an open embedding $[U/G]\to\X$, where $G$ is a finite group acting on a manifold $M$.
\item Let $x$ be a $2$-isomorphism class of points in $\X$.  An \emph{orbifold-chart at $x$} is an orbifold-chart $[U/G]\to\X$ together with a distinguished point $x_U\in U$ such that $[x_U\to\X]=x$ and $Gx_U=x_U$.
\item An orbifold chart $[U/G]\to\X$ at $x$ is \emph{linear} if $U$ is an open subset of some linear $G$-representation and $x_U={0}$.
\end{enumerate}
\end{definition}

The following is an immediate consequence of \cite[3.4]{\Moerdijk}.
\begin{proposition}\label{OrbifoldChartsProposition}
Let $x$ be a $2$-isomorphism class of points in a differentiable Deligne-Mumford stack $\X$.  Then there is an orbifold-chart $[U/G]\to\X$ at $x$, which we may assume to be linear.
\end{proposition}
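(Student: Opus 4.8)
The plan is to reduce the statement to a purely groupoid-theoretic assertion about proper \'etale Lie groupoids and then invoke the cited result \cite[3.4]{\Moerdijk}. Since $\X$ is differentiable Deligne-Mumford it admits an \'etale atlas $X\to\X$, and the associated groupoid $\Gamma=X\times_\X X\rightrightarrows X$ is proper and \'etale. The point $x$ corresponds to a $2$-isomorphism class in $\X(\pt)$, and under the atlas this is represented by some honest point $x_0\in X$ with $[x_0\to\X]=x$. The statement of \cite[3.4]{\Moerdijk} is precisely that near any object of a proper \'etale groupoid there is a local model of the form $G\ltimes U$, where $G$ is the (finite) isotropy group $\Gamma_{x_0}$ of $x_0$ and $U$ is an invariant neighbourhood on which $G$ acts; moreover one may take $U$ to be a $G$-invariant open subset of the tangent representation $T_{x_0}X$ with $x_0$ corresponding to the origin. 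So the first step is simply to translate between the stack language and the groupoid language.

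Next I would produce the orbifold-chart from this local model. The $G$-action on $U$ gives a quotient stack $[U/G]$, and the inclusion $U\hookrightarrow X$ together with the groupoid structure furnishes a morphism $[U/G]\to\X$. The content of Moerdijk's local structure theorem is that this morphism is an open embedding: the restriction of $\Gamma$ to $U$ is Morita-equivalent to the action groupoid $G\ltimes U$, which realises $[U/G]$, and the image is the open substack corresponding under Proposition~\ref{SubsetProposition} to the saturation of $U$ inside $\bar\X$. One then sets $x_U$ to be the origin $0\in U\subset T_{x_0}X$, which satisfies $[x_U\to\X]=x$ and is fixed by $G$ since it is the fixed point of the linear action, so the chart is an orbifold-chart \emph{at} $x$.

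Finally I would observe that the chart so produced is automatically linear: because $U$ is taken to be an open $G$-invariant subset of the linear representation $T_{x_0}X$ and $x_U=0$, the two defining conditions of linearity hold by construction. If the version of \cite[3.4]{\Moerdijk} one cites only yields a $G$-invariant manifold neighbourhood rather than a linear one, the extra step is to apply a $G$-equivariant chart (obtained by averaging an arbitrary chart over the finite group $G$, or equivalently by Bochner linearisation of a finite group action fixing a point) to identify an invariant neighbourhood of $x_0$ $G$-equivariantly with an invariant open subset of $T_{x_0}X$.

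The main obstacle is not any hard estimate but the careful bookkeeping of the translation between stacks and groupoids: one must check that the local model $G\ltimes U$ really does embed as an \emph{open} substack, i.e.\ that the induced $[U/G]\to\X$ is a monomorphism with open image, rather than merely a representable \'etale morphism. This amounts to verifying that distinct $G$-orbits in $U$ remain non-isomorphic in $\X(\pt)$ and that no extra isomorphisms appear, which is exactly the injectivity-on-$2$-isomorphism-classes packaged into the properness of the diagonal; the openness of the image then follows from \'etaleness together with Proposition~\ref{UnderlyingMapProposition}. Since all of this is the content of the cited result once unwound, the proof should be short, with the linearisation step the only place requiring a genuine (if standard) construction.
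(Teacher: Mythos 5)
Your proposal is correct and follows exactly the route the paper takes: the paper offers no argument beyond the remark that the proposition ``is an immediate consequence of \cite[3.4]{\Moerdijk},'' and your write-up simply unpacks that citation (local model $G\ltimes U$ for a proper \'etale groupoid, translation back to the stack, and Bochner linearisation of the finite isotropy action to get a linear chart). The extra care you take over openness of $[U/G]\to\X$ and the role of properness of the diagonal is a sensible elaboration of what the paper leaves implicit, not a different approach.
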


\begin{corollary}
The underlying space $\bar\X$ of a differentiable Deligne-Mumford stack has a natural orbifold-structure in the sense of \cite[Appendix]{\ChenRuanGromovWitten}.
\end{corollary}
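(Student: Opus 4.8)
The plan is to equip $\bar\X$ with an orbifold atlas in the Satake-style sense recalled in \cite[Appendix]{\ChenRuanGromovWitten}: a collection of uniformizing charts covering $\bar\X$, together with compatible embeddings of charts over overlaps. The charts will come directly from the linear orbifold-charts furnished by Proposition~\ref{OrbifoldChartsProposition}, and the only substantial work is in producing the chart embeddings and verifying their compatibility. The point-set hypotheses demanded of the ambient space $\bar\X$ (Hausdorffness, paracompactness) follow from properness of the diagonal and are established in \S\ref{ParacompactnessSection}, so I concentrate on the chart data.

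First I would construct the charts. Fix a $2$-isomorphism class $x\in\bar\X$. By Proposition~\ref{OrbifoldChartsProposition} there is a linear orbifold-chart $[U/G]\to\X$ at $x$, that is, an open embedding with $U$ an open $G$-invariant neighbourhood of $0$ in a linear $G$-representation and $x_U=0$. Applying Proposition~\ref{UnderlyingMapProposition} to this open embedding and Proposition~\ref{XBarTopologyProposition} to identify $\overline{[U/G]}=U/G$, I obtain an open topological embedding $U/G\hookrightarrow\bar\X$ whose image $V_x$ is an open neighbourhood of $x$. The triple $(U,G,\pi_x)$, where $\pi_x\colon U\to U/G\xrightarrow{\ \cong\ }V_x$ is the induced projection, is then a uniformizing system for the open set $V_x\subset\bar\X$. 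Letting $x$ range over $\bar\X$ produces a cover by such charts.

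Next I would establish compatibility. Given two charts with $V_{x_1}\cap V_{x_2}\neq\varnothing$ and a point $y$ in the overlap, I apply Proposition~\ref{OrbifoldChartsProposition} once more to obtain a linear chart $[U_3/G_3]\to\X$ at $y$, shrinking $U_3$ so that $[U_3/G_3]$ factors through the open substacks $[U_1/G_1]$ and $[U_2/G_2]$ (possible since these are open neighbourhoods of $y$ and the chart at $y$ may be taken arbitrarily small). The resulting open embeddings $[U_3/G_3]\hookrightarrow[U_i/G_i]$ of quotient stacks are what must be promoted to embeddings of uniformizing systems. Taking $U_3$ connected and identifying $G_3$ with the stabilizer of $y$, such an embedding is induced by a $G_3$-equivariant open embedding $\lambda_i\colon U_3\to U_i$ covering a monomorphism $G_3\hookrightarrow G_i$ and satisfying $\pi_{x_i}\circ\lambda_i=\pi_y$; any two choices of $\lambda_i$ differ by the action of an element of $G_i$. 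These are exactly the injections of charts required, and the indeterminacy by elements of $G_i$ is precisely the data the definition permits, yielding the cocycle-type compatibility among the injections.

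The main obstacle is this last step: extracting honest $G_3$-equivariant maps $\lambda_i$ of chart domains, together with the group monomorphisms, from the abstract open embeddings of quotient stacks, and checking that the resulting injections satisfy the atlas compatibility conditions. This amounts to translating $2$-morphisms in $\X$ into arrows of the proper \'etale groupoid underlying a chart and using properness and \'etaleness to guarantee that the lifts exist and are unique up to the finite group actions. If one prefers to avoid this translation, the same conclusion follows more cheaply by invoking the equivalence recorded in \S\ref{DDMDefinitionSection} between differentiable Deligne-Mumford stacks and proper \'etale Lie groupoids, together with the standard passage from such groupoids to orbifolds \cite[\S3]{\Moerdijk}, \cite[\S1.4]{\ALR}; under this passage the orbit space---which is $\bar\X$ by Proposition~\ref{XBarTopologyProposition}---acquires its orbifold structure, and the linear charts constructed above realize it explicitly.
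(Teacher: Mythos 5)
Your proposal follows essentially the same route as the paper: the paper's proof likewise cites the Hausdorff/second-countability results of \S\ref{ParacompactnessSection} and observes that each orbifold-chart $\iota_x\colon[U_x/G_x]\to\X$ induces a uniformizing system $(U_x,G_x,\bar{\iota_x})$ on its image, with compatibility of any two such systems simply asserted. Your write-up merely supplies more detail on that compatibility step (the third chart at an overlap point and the equivariant embeddings), which the paper leaves implicit.
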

\begin{proof}
First, $\bar\X$ is a second-countable Hausdorff space as we will see in the next section.  Second, an orbifold-chart $\iota_x\colon[U_x/G_x]\to\X$ at $x\in\bar\X$ induces a uniformizing system $(U_x,G_x,\bar{\iota_x})$ on the image of $\bar{\iota_x}$ in $\bar\X$.  Any two uniformizing systems obtained in this way are compatible.  Thus $\bar\X$ has the structure of an orbifold.  
\end{proof}

\subsection{Paracompactness and partitions of unity.}\label{ParacompactnessSection}

Any locally compact, second countable Hausdorff space (for example, any manifold) is paracompact, and for any open cover of such a space there is a countable partition of unity subordinate to that cover.  Manifolds are locally compact, second countable Hausdorff spaces, and in this case the partition of unity can be taken to consist of smooth functions (see \cite[1.9, 1.11]{\Warner}).  In this section we will prove analogous results for differentiable Deligne-Mumford stacks.

\begin{proposition}\label{DMUnderlyingSpaceProposition}
Let $\X$ be a differentiable Deligne-Mumford stack.  Then $\bar\X$ is a locally-compact second countable Hausdorff space.  In particular, $\bar\X$ is paracompact \cite[1.9]{\Warner} and, by Urysohn's Metrization Theorem, metrizable.
\end{proposition}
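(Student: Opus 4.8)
The plan is to fix an étale atlas $X\to\X$ and transfer each of the three properties from the manifold $X$ to $\bar\X$ along the quotient map $q\colon X\to\bar\X$. By Proposition~\ref{XBarTopologyProposition} the space $\bar\X$ is the quotient of $X$ by the relation $R=(s\times t)(X\times_\X X)\subset X\times X$, and by Proposition~\ref{UnderlyingMapProposition} the map $q$ is open, since the atlas $X\to\X$ is representable and open (it is a submersion). It is also continuous and surjective. Since $X$ is a manifold it is locally compact and second countable, and I would argue that both of these properties descend along the open continuous surjection $q$.

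For local compactness I would take a point of $\bar\X$, lift it to some $x\in X$, and choose a compact neighbourhood $K$ of $x$, so that $x\in\interior K$. Then $q(\interior K)$ is open because $q$ is open, $q(K)$ is compact because $q$ is continuous, and $q(\interior K)\subset q(K)$, so $q(K)$ is a compact neighbourhood of the image point. For second countability I would note that if $\{B_n\}$ is a countable base for $X$, then $\{q(B_n)\}$ is a countable base for $\bar\X$: each $q(B_n)$ is open since $q$ is open, and given an open $V\subset\bar\X$ and a point $y\in V$, any lift $x\in q^{-1}(y)$ lies in some $B_n\subset q^{-1}(V)$, whence $y\in q(B_n)\subset V$.

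The substantive point is Hausdorffness, and this is the only place where the Deligne-Mumford hypothesis genuinely enters. I would use the standard criterion that the quotient of a space by an \emph{open} equivalence relation is Hausdorff precisely when the relation is closed; since $q$ is open it therefore suffices to show that $R$ is a closed subset of $X\times X$. I would realise $R$ as the image of the map $(s,t)\colon X\times_\X X\to X\times X$ and observe that this map is the base change of the diagonal $\Delta\colon\X\to\X\times\X$ along $X\times X\to\X\times\X$, using the identification $X\times_\X X=(X\times X)\times_{\X\times\X}\X$. Because $\Delta$ is proper and properness is stable under base change, $(s,t)$ is proper; and a proper map into the locally compact Hausdorff space $X\times X$ is closed. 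Hence $R=\image(s,t)$ is closed, and the criterion gives that $\bar\X$ is Hausdorff.

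Having established that $\bar\X$ is a locally compact, second countable Hausdorff space, paracompactness follows from \cite[1.9]{\Warner} and metrizability from Urysohn's Metrization Theorem. I expect the Hausdorff step to be the main obstacle: the work lies in correctly exhibiting $(s,t)$ as a base change of $\Delta$ so that properness applies, and then in invoking the open-quotient-by-closed-relation criterion, since this is the sole point at which the properness of the diagonal is used.
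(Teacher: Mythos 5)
Your proof is correct and follows essentially the same route as the paper: both present $\bar\X$ as the quotient of an atlas $X$ by the relation $(s\times t)(X\times_\X X)$ via Proposition~\ref{XBarTopologyProposition}, both derive Hausdorffness from the fact that $s\times t$ is proper (as a base change of the proper diagonal) and hence has closed image in the locally compact space $X\times X$, together with openness of the quotient map, and both obtain second countability by pushing a countable base of $X$ forward along the open quotient. The only divergence is the local-compactness step, where the paper invokes orbifold-charts $[X/G]\to\X$ while you push a compact neighbourhood directly forward along the open continuous surjection $q$ --- a slightly more elementary argument that avoids Proposition~\ref{OrbifoldChartsProposition}.
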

\begin{proof}
Let $\pi\colon X\to\X$ be an \'etale atlas for $\X$.  Proposition~\ref{XBarTopologyProposition} tells us that $\bar\X$ is homeomorphic to the quotient of $X$ by the equivalence relation $(s\times t)(X\times_\X X)\subset X\times X$.  

Since $s\times t\colon X\times_\X X\to X\times X$ is proper and $X\times X$ is locally compact, $s\times t$ has closed image.  It follows that $\bar\X$ is Hausdorff.  Let $\{U_i\}$ be a countable basis for the topology of $X$.  Then $\{ts^{-1}U_i\}$ is a countable family of \emph{invariant} open subsets of $X$, and any other open invariant subset of $X$ is a union of elements of $\{ts^{-1}U_i\}$.  Thus, by Proposition~\ref{XBarTopologyProposition}, $\{\bar\pi(ts^{-1}U_i)\}$ is a countable basis for the topology on $\bar\X$.  Finally, any point $x\in\bar\X$ lies in the image of the open embedding $\overline{[X/G]}\to\bar\X$ underlying some orbifold-chart $[X/G]\to\X$.  Since $\overline{[X/G]}$ is locally compact, the same holds for $\bar\X$.
\end{proof}

\begin{definition}
Given a morphism $f\colon\X\to\R$, the \emph{support of $f$}, denoted $\supp f$, is the subset $\cl\{x\in\bar\X\mid\bar f(x)\neq 0\}$ of $\bar\X$.  In other words $\supp f$ is just the support of $\bar f$.  By Corollary~\ref{InteriorClosureCorollary} we have $(\supp f)_U=\supp(f\circ\pi_U)$ for any submersion $\pi\colon U\to\X$.
\end{definition}

\begin{theorem}[Existence of partitions of unity.]\label{PartitionsOfUnityTheorem}
Let $\X$ be differentiable Deligne-Mumford and let $\{U_\alpha\}$ be an open cover of $\bar\X$.  Then there are morphisms $\phi_i\colon\X\to\R$ for $i=1,2,\ldots$ such that the $\bar\phi_i$ are a partition of unity on $\bar\X$, subordinate to $\{U_\alpha\}$, with each $\supp\phi_i$ compact.
\end{theorem}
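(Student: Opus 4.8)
The plan is to reduce the construction to the underlying space $\bar\X$, which by Proposition~\ref{DMUnderlyingSpaceProposition} is locally compact, second countable, and Hausdorff --- hence paracompact and metrizable --- and then to promote the resulting topological data to genuine morphisms $\X\to\R$ using orbifold-charts. First I would choose, for each $x\in\bar\X$, a linear orbifold-chart $[U_x/G_x]\to\X$ at $x$ (Proposition~\ref{OrbifoldChartsProposition}) whose underlying image $\overline{[U_x/G_x]}\subset\bar\X$ is relatively compact and contained in some $U_\alpha$; such a chart is obtained by shrinking, using local compactness. The underlying maps of these charts form an open cover of $\bar\X$ refining $\{U_\alpha\}$, and since $\bar\X$ is paracompact and locally compact I would extract a countable, locally finite subcover $\{V_i\}$, $V_i=\overline{[U_i/G_i]}$, with each $\cl V_i$ compact and contained in some $U_\alpha$. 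A standard shrinking argument on the paracompact Hausdorff space $\bar\X$ then produces open sets $W_i$ with $\cl W_i\subset V_i$ that still cover $\bar\X$.

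Next I would build an invariant bump function on each chart. Since $G_i$ is finite, averaging an ordinary bump function over $G_i$ yields a smooth $G_i$-invariant function $U_i\to\R$ that is non-negative, strictly positive on the preimage of $W_i$, and supported in a compact $G_i$-invariant subset whose image lies in $V_i$; such an invariant function is precisely a morphism $\psi_i\colon[U_i/G_i]\to\R$. To obtain a morphism on all of $\X$ I would extend $\psi_i$ by zero: the open embedding $[U_i/G_i]\hookrightarrow\X$ and the open substack $\X\setminus\supp\psi_i$ --- which is open by Proposition~\ref{SubsetProposition}, as $\supp\psi_i$ is compact and contained in $V_i$ --- cover $\X$, and on their overlap $\psi_i$ agrees with the zero morphism. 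Because morphisms into the representable stack $\R$ satisfy descent, these glue to a morphism $\tilde\psi_i\colon\X\to\R$ with $\supp\tilde\psi_i\subset V_i$ compact.

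Finally I would assemble the partition of unity. Local finiteness of $\{V_i\}$ guarantees that near any point only finitely many $\tilde\psi_i$ are nonzero, so the local finite sums glue to a morphism $\Psi=\sum_i\tilde\psi_i\colon\X\to\R$; since the $W_i$ cover $\bar\X$ and each $\tilde\psi_i$ is positive on $W_i$ and non-negative everywhere, $\bar\Psi$ is everywhere positive. Dividing, $\phi_i=\tilde\psi_i/\Psi$ is again a morphism $\X\to\R$ (as $1/\Psi$ is smooth), the $\bar\phi_i$ sum to $1$, and $\supp\phi_i=\supp\tilde\psi_i$ is compact and contained in some $U_\alpha$, so $\{\bar\phi_i\}$ is the desired partition of unity subordinate to $\{U_\alpha\}$.

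I expect the main obstacle to be the passage from functions on $\bar\X$ to genuine morphisms of stacks, concentrated in the extension-by-zero and summation steps. The combinatorial covering arguments transfer almost verbatim from the manifold case because $\bar\X$ is a well-behaved space, but one must ensure that every analytic operation --- averaging to produce invariant functions, gluing over the two-element cover $\{[U_i/G_i],\,\X\setminus\supp\psi_i\}$, and forming the locally finite sum --- is carried out at the level of morphisms $\X\to\R$ rather than merely on $\bar\X$. The enabling facts are that an invariant function on an orbifold-chart \emph{is} a morphism from the chart, that morphisms into $\R$ satisfy descent, and that Proposition~\ref{SubsetProposition} lets one treat the complement of a compact support as an open substack; the identity $(\supp\psi_i)_U=\supp(\psi_i\circ\pi_U)$ following from Corollary~\ref{InteriorClosureCorollary} keeps the bookkeeping between supports on $\X$ and on charts consistent.
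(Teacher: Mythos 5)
Your proposal is correct and follows essentially the same route as the paper: both reduce the combinatorial covering work to the locally compact, second countable Hausdorff space $\bar\X$ (as in Warner's Theorem 1.11), obtain invariant bump functions by averaging over the finite group of an orbifold-chart, extend them by zero to morphisms $\X\to\R$ (your descent/gluing argument is precisely the content of Lemma~\ref{ExtensionByZeroExistsLemma}), and form locally finite sums at the level of morphisms before normalizing. The only cosmetic difference is that the paper cites the extension-by-zero lemma and the summation fact as two standalone ingredients rather than re-deriving them inline.
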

\begin{proof}
Using the fact that $\bar\X$ is a locally compact topological space, one can prove this result exactly as one proves \cite[Theorem 1.11]{\Warner}, after establishing the two facts below.

First, we must show that given $x\in\bar\X$ and a neighbourhood $U$ of $x$, there is $\psi\colon\X\to\R$ with $\image\bar\psi\subset[0,1]$, with $\supp\psi\subset U$ compact and with $\bar\psi=1$ in a neighbourhood of $x$.  To see this, take an orbifold-chart $[M/G]\to\X$ at $x$ and a function $p\colon M\to\R$ with $p=1$ in a neighbourhood of $x_M$, $p\geqslant 0$, and $\supp p\subset V_M$ compact.  By averaging we may assume that $p$ is $G$-invariant and write $p\colon[M/G]\to\R$ for the corresponding map.  Then using Lemma~\ref{ExtensionByZeroExistsLemma} in \S\ref{TechnicalSection} we may extend $p$ to the required morphism $\X\to\R$.

Second, we must check that, given $\psi_i\colon\X\to\R$ for $i=1,2,\ldots$ with $\supp\psi_i$ locally finite, with $\bar\psi_i\geqslant 0$, and with some $\bar\psi_i(x)$ nonzero for each $x\in\bar\X$, then there is a function $\psi\colon\X\to\R$ with $\bar\psi=\sum\bar\psi_i$.  To see this, let $\pi_U\colon U\to\X$ be any surjection, and note that the $\supp(\psi_i\circ\pi_U)$ form a locally finite family so that we may define $\psi_U=\sum\psi_i\circ\pi_U$.  It is clear that the $\psi_U$ satisfy the required conditions for the existence of a morphism $\psi\colon\X\to\R$ with $\psi\circ\pi_U=\psi_U$.  Since for each $u\in U$ we have $\psi_U(u)=\sum\psi_i(u)$, it follows that $\bar\psi=\sum\bar\psi_i$.
\end{proof}

\subsection{Some technical results.}\label{TechnicalSection}

This section gives several technical results for differentiable Deligne-Mumford stacks that will be important for applications later in the paper.

\begin{proposition}
Suppose we have a diagram of differentiable stacks
\[\xymatrix{\Y
\ar@/^1pc/[rr]^{f}_{}="1"
\ar@/_1pc/[rr]_{g}^{}="2"\ar@{=>}"1";"2"^{\lambda,\mu}
&& \X}\]
where $\X$ admits an \'etale atlas and $\bar\Y$ is connected.  Fix a point $y\to\Y$.  Then $\lambda=\mu$ if and only if $\lambda|_y=\mu|_y$.
\end{proposition}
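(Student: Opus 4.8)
The plan is to establish the non-trivial implication: assuming $\lambda|_y=\mu|_y$, deduce $\lambda=\mu$ (the converse is immediate, since restriction to a point is functorial). I would run the whole argument as a connectedness argument on $\bar\Y$. Introduce the locus
\[S=\{\,[y'\to\Y]\in\bar\Y\mid\lambda|_{y'}=\mu|_{y'}\,\}.\]
This is well defined as a subset of $\bar\Y$ --- not merely on the points of an atlas --- because naturality of $\lambda$ and $\mu$ with respect to a $2$-isomorphism $y'\Rightarrow y''$ shows that the condition $\lambda|_{y'}=\mu|_{y'}$ depends only on the class $[y'\to\Y]$. The aim is to prove that $S$ is open and closed; as $y\in S$ and $\bar\Y$ is connected this forces $S=\bar\Y$, after which I upgrade pointwise equality to equality of $2$-morphisms.

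To analyse $S$ I would descend to atlases. Fix an atlas $p\colon Y\to\Y$ with $Y$ a manifold and an \'etale atlas $X\to\X$, and write $R=X\times_\X X\rightrightarrows X$ for the associated groupoid. Two features are crucial here and will carry the whole proof: $R$ is a manifold and hence Hausdorff (this is built into the notion of differentiable stack), and the source and target maps $s,t\colon R\to X$ are local diffeomorphisms precisely because $X\to\X$ is \'etale. The preimage $S_Y\subset Y$ of $S$ under $\bar p$ is exactly the set of $v\in Y$ at which the restricted $2$-morphisms $\lambda_Y,\mu_Y\colon f\circ p\Rightarrow g\circ p$ agree. Working on a small open $V\subset Y$, I would lift $f\circ p|_V$ and $g\circ p|_V$ through the \'etale surjection $X\to\X$ to maps $\tilde f,\tilde g\colon V\to X$; then $\lambda_Y|_V$ and $\mu_Y|_V$ are encoded by maps $\gamma_\lambda,\gamma_\mu\colon V\to R$ with $(s,t)\circ\gamma=(\tilde f,\tilde g)$. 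In particular both $\gamma_\lambda$ and $\gamma_\mu$ lift the \emph{single} map $\tilde f$ along $s$.

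The two local facts I need then drop out of the \'etale and Hausdorff structure of $R$. Since $s$ is a local diffeomorphism, a lift of $\tilde f$ along $s$ is locally determined by its value at one point; hence $\gamma_\lambda(v_0)=\gamma_\mu(v_0)$ implies $\gamma_\lambda=\gamma_\mu$ near $v_0$, so $S_Y$ is open. Conversely, if $\gamma_\lambda(v_0)\neq\gamma_\mu(v_0)$, Hausdorffness of $R$ lets me choose disjoint neighbourhoods of the two values, each carried diffeomorphically by $s$ onto a neighbourhood of $\tilde f(v_0)$; continuity of $\gamma_\lambda$ and $\gamma_\mu$ keeps their values in these disjoint sets for $v$ near $v_0$, so the complement of $S_Y$ is open too. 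Thus $S_Y$ is clopen in $Y$, and being the preimage of a subset of $\bar\Y$ it is saturated, so Lemma~\ref{AtlasTopologyLemma} identifies it with a clopen subset of $\bar\Y$, namely $S$.

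With $S=\bar\Y$ in hand, $\lambda_Y$ and $\mu_Y$ agree at every point of $Y$; being sections of the representable $\mathrm{Isom}(f\circ p,g\circ p)\to Y$, pointwise agreement forces $\lambda_Y=\mu_Y$. Finally, because $Y\to\Y$ is a cover and $2$-morphisms form a sheaf, restriction along the atlas is faithful, so $\lambda_Y=\mu_Y$ yields $\lambda=\mu$. I expect the main obstacle to be the local analysis of the third paragraph, and this is exactly where the hypotheses bite: \'etaleness of the atlas makes $s$ a local diffeomorphism (openness of $S_Y$), and the Hausdorff manifold structure of $R=X\times_\X X$ provides the separation (openness of its complement). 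For a stack that only has finite automorphism groups but no \'etale atlas one expects this rigidity to break down, consistent with the counterexamples promised in \S\ref{CounterexamplesSection}.
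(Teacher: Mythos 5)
Your proof is correct and follows essentially the same route as the paper: both arguments show that the agreement locus is clopen in $\bar\Y$ by passing to an \'etale atlas $X\to\X$, using \'etaleness of the source map of $X\times_\X X\rightrightarrows X$ for openness and Hausdorffness of the manifold $X\times_\X X$ for closedness of the complement. The only cosmetic difference is that the paper first reduces to $\mu=\Id$ and phrases the local analysis via the identity section $e\colon X\to X\times_\X X$ having clopen image, whereas you compare the two lifts $\gamma_\lambda,\gamma_\mu$ directly.
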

\begin{proof}
We may suppose that $f=g$ and that $\mu=\Id$.  Note that for any point $\pt\to\Y$ the question of whether $\lambda|_\pt=\Id$ depends only on $[\pt\to\Y]$.  We will prove that the subset
\[\mathfrak{I}=\{[\pt\to\Y]\mid \lambda|_\pt=\Id\}\]
of $\bar\Y$ is both open and closed.

First, let $X\to\X$ be an \'etale atlas and let $e\colon X\to X\times_\X X$ denote the identity of the groupoid $X\times_\X X\rightrightarrows X$.  Then the image of $e$ is both open and closed.  To see this note that the composition of $e$ with either projection map $X\times_\X X\to X$ is the identity.  It immediately follows that $e$ is closed, and since the projection maps are \'etale it follows that $e$ is \'etale and in particular is open.

Now consider the diagram
\[\xymatrix{
X\times_\X\Y\ar[r]\ar[d] & X\ar[d]_{}="1"\\
\Y\ar[r]_f^{}="2" & \X\ar@{=>}"1";"2"
}\]
The $2$-morphism $\lambda$ induces a map $l\colon X\times_\X\Y\to X\times_\X X$, and $\lambda|_{u}=\Id$ for $u\in X\times_\X\Y$ if and only if $l(u)$ lies in the image of $e$.  The last paragraph shows that
\begin{eqnarray*}
\mathfrak{I}_X
&=&\{u\in X\times_\X\Y\mid \lambda|_u=\Id\}\\
&=&\{u\in X\times_\X\Y\mid l(u)\in\image(e)\}
\end{eqnarray*}
is both open and closed.  It follows that $\mathfrak{I}$ is itself both open and closed, and this completes the proof.
\end{proof}

\begin{lemma}[Extension by zero.]\label{ExtensionByZeroExistsLemma}
Let $i\colon\Y\to\X$ be an open embedding of differentiable stacks.  Suppose given $\phi\colon\Y\to\R$ with $\supp\phi$ compact.  Then there is a unique $\tilde\phi\colon\X\to\R$ such that $\tilde\phi\circ i=\phi$ and such that the map underlying ${\tilde\phi}$ vanishes on $\bar\X-\bar i\bar\Y$.
\end{lemma}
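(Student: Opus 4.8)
The plan is to choose an atlas $\pi\colon X\to\X$, pull $\phi$ back to a smooth function on the open part of $X$ lying over $\Y$, extend that function by zero, and then check that the result descends to the required morphism $\tilde\phi\colon\X\to\R$. Recall that a morphism $\X\to\R$ is the same as a smooth function on $X$ that is invariant under the groupoid $X\times_\X X\rightrightarrows X$, so the whole problem is to produce such an invariant function with the stated behaviour.

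First I would form the cartesian square defining $X_\Y:=X\times_\X\Y$. Since $i$ is an open embedding, $X_\Y\to X$ is the inclusion of an open subset, while $X_\Y\to\Y$ is an atlas for $\Y$; composing the latter with $\phi$ gives a smooth function $g\colon X_\Y\to\R$. From the description of support we have $\supp g=(\supp\phi)_{X_\Y}$, and tracing through the underlying map shows that this is precisely $\bar\pi^{-1}\bigl(\bar i(\supp\phi)\bigr)$. In particular $\supp g$ is a closed, saturated (groupoid-invariant) subset of $X_\Y$.

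The key step, and the main obstacle, is to promote this to closedness of $\supp g$ in all of $X$; equivalently, to show that $K:=\bar i(\supp\phi)$ is closed in $\bar\X$. This is exactly where the compactness of $\supp\phi$ is indispensable: $K$ is the continuous image of a compact set, hence compact, and a compact subset of the underlying space is closed because $\bar\X$ is Hausdorff (for instance by Proposition~\ref{DMUnderlyingSpaceProposition} in the Deligne--Mumford case). I expect this separation property to be the real content of the lemma — without it the extension-by-zero need not even be continuous. Granting it, the open sets $X_\Y$ and $X\setminus\supp g$ cover $X$, and $g$ vanishes on their overlap, so extending $g$ by zero yields an honestly smooth function $\tilde g\colon X\to\R$.

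It then remains to descend $\tilde g$ and read off the properties. Since any arrow of $X\times_\X X$ joins two points with the same image in $\bar\X$, the invariance $s^\ast\tilde g=t^\ast\tilde g$ can be checked separately over $\bar i\bar\Y$ and over its complement: on $X_\Y$ it holds because $g$ is pulled back from the genuine morphism $\phi$ on $\Y$, and elsewhere because both sides vanish. Thus $\tilde g$ descends to a morphism $\tilde\phi\colon\X\to\R$ with $\tilde\phi\circ i=\phi$, and $\bar{\tilde\phi}$ vanishes on $\bar\X-\bar i\bar\Y$ because $\tilde g$ vanishes off $X_\Y$. Uniqueness is immediate: the two requirements determine the pullback of any such extension on the partition $X=X_\Y\sqcup\bar\pi^{-1}(\bar\X-\bar i\bar\Y)$, so the descended morphism is forced.
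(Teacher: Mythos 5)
Your proof is correct and follows essentially the same route as the paper's: pull $\phi$ back to the induced open atlas for $\Y$ inside an atlas for $\X$, use compactness of $\supp\phi$ to see that its image in $\bar\X$ is closed, extend by zero to a smooth invariant function, and descend. The one subtlety you flag --- that ``compact implies closed'' requires $\bar\X$ to be Hausdorff, which is guaranteed in the Deligne--Mumford setting where the lemma is actually applied but not for arbitrary differentiable stacks as literally stated --- is passed over in exactly the same way in the paper's own proof.
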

\begin{proof}
Let $\pi_A\colon A\to\X$ be an atlas for $\X$ and $\pi_B\colon B\to\Y$ the induced atlas for $\Y$, so that we have a $2$-commutative diagram 
\[\xymatrix{
B\ar[r]^{\tilde i}\ar[d]_{\pi_B} & A\ar[d]^{\pi_A}\\
\Y\ar[r]_i & \X
}\]
with $\tilde i$ an open embedding.  Since $\supp\phi\subset\bar\Y$ is compact, $\bar i(\supp\phi)\subset\bar\X$ is compact and therefore closed, so that $\tilde i(\supp(\phi\circ\pi_B))$ is closed.  We may therefore extend $\phi\circ\pi_B$ by zero to obtain $\tilde\phi_A\colon A\to\R$ with $\tilde\phi_A\circ\tilde i=\phi\circ\pi_B$ and $\tilde\phi_A=0$ on $A-\tilde i B$.  It is clear that the two compositions $A\times_\X A\rightrightarrows A\xrightarrow{\tilde\phi_A}\R$ coincide, so that there is $\tilde\phi\colon\X\to\R$ with $\tilde\phi\circ\pi_A=\tilde\phi_A$.  By construction, $\tilde\phi\circ i=\phi$ and $\bar{\tilde\phi}=0$ on $\bar\X-\bar i\bar\Y$.
\end{proof}

\begin{lemma}\label{BumpFunctionsLemma}
Let $\X$ be differentiable Deligne-Mumford.  Let $K\subset\bar\X$ be compact and let $U$ be an open neighbourhood of $K$.  Then we may find $f\colon\X\to\R$ such that $\bar f=1$ in a neighbourhood of $K$ and such that $\supp f$ is compact and contained in $U$.
\end{lemma}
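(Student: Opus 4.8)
The plan is to build $f$ as the composite of a cutoff function $\R\to\R$ with a nonnegative morphism $g\colon\X\to\R$ that is supported in $U$ and bounded below by $1$ on a neighbourhood of $K$; the morphism $g$ will be a finite sum of local bump functions obtained from orbifold-charts, with finiteness coming from the compactness of $K$. This is the standard manifold argument, carried out using the tools assembled above.

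First I would produce, for each $x\in K$, a local bump function $\psi_x\colon\X\to\R$ with $\image\bar\psi_x\subset[0,1]$, with $\supp\psi_x$ compact and contained in $U$, and with $\bar\psi_x=1$ on some open neighbourhood $V_x$ of $x$. This is exactly the first fact established in the proof of Theorem~\ref{PartitionsOfUnityTheorem}: choose a linear orbifold-chart $[M/G]\to\X$ at $x$ (Proposition~\ref{OrbifoldChartsProposition}), pull the open set $U$ back to a $G$-invariant neighbourhood of $x_M$, take a nonnegative bump function on $M$ that equals $1$ near $x_M$ and has compact support inside that neighbourhood, average it to make it $G$-invariant, and extend the resulting function on $[M/G]$ by zero to all of $\X$ using Lemma~\ref{ExtensionByZeroExistsLemma}. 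The support condition guarantees $\supp\psi_x\subset U$.

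Next, using compactness of $K$, I would extract a finite subcover $V_{x_1},\dots,V_{x_n}$ of $K$ from $\{V_x\}_{x\in K}$ and set $g=\sum_{i=1}^n\psi_{x_i}$. That $g$ is again a morphism $\X\to\R$ is the summation fact from the proof of Theorem~\ref{PartitionsOfUnityTheorem}: the functions $\psi_{x_i}\circ\pi_A$ on an atlas $A\to\X$ add to a groupoid-invariant function that descends, and $\bar g=\sum_i\bar\psi_{x_i}$. Since each $\bar\psi_{x_i}\geqslant 0$ and at least one equals $1$ at every point of $\bigcup_i V_{x_i}\supset K$, we have $\bar g\geqslant 1$ on a neighbourhood of $K$; moreover $\supp g\subset\bigcup_i\supp\psi_{x_i}\subset U$ is compact, being a finite union of compact sets. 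Finally I would fix a smooth $h\colon\R\to\R$ with $0\leqslant h\leqslant 1$, $h\equiv 0$ on $(-\infty,0]$ and $h\equiv 1$ on $[1,\infty)$, and put $f=h\circ g$. Then $\bar f=h\circ\bar g$ by functoriality of the underlying map, so $\bar f=1$ on the neighbourhood $\bigcup_i V_{x_i}$ of $K$, while $\bar f$ vanishes wherever $\bar g$ does; hence $\supp f\subset\supp g\subset U$ is compact, as required.

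The construction involves no real obstacle: every genuinely new point --- that a $G$-invariant bump descends and extends by zero, that a finite sum of morphisms to $\R$ is again such a morphism, and that underlying maps and supports commute with these operations --- has already been handled in Lemma~\ref{ExtensionByZeroExistsLemma} and in the proof of Theorem~\ref{PartitionsOfUnityTheorem}. The only place demanding attention is the bookkeeping with supports, namely ensuring that the chart can be chosen so that $\supp\psi_x$ stays inside $U$ and that $\supp f\subset\supp g$; the latter follows formally since $h(0)=0$ forces $\bar f$ to vanish off $\supp g$.
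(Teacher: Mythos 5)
Your proof is correct, but it takes a different route from the paper's. The paper disposes of this lemma in two lines by invoking Theorem~\ref{PartitionsOfUnityTheorem} for the two-set cover $\{U,\bar\X\setminus K\}$ and defining $f=\sum\phi_i$, the sum running over those $i$ with $\supp\phi_i\subset U$: local finiteness forces all the remaining $\phi_i$ to vanish on a neighbourhood of $K$, so $\bar f=1$ there. You instead go back to the two ingredients used to \emph{prove} that theorem --- the local bumps built from linear orbifold-charts, averaging, and Lemma~\ref{ExtensionByZeroExistsLemma}, and the fact that (finite) sums of morphisms $\X\to\R$ descend --- extract a finite subcover of $K$, and then compose the resulting finite sum $g$ with a real cutoff $h$ to flatten it to $1$. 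What the paper's route buys is brevity, since the partition of unity theorem is already in hand; what your route buys is a more transparent handling of the support, since $\supp g$ is a finite union of compact sets by construction, whereas in the paper's version the selected $\phi_i$ could a priori be infinite in number and one must note (or first shrink $U$ to a relatively compact neighbourhood of $K$) that the closure of the union of their supports is compact. Your one point requiring care --- that $h(0)=0$ forces $\supp f\subset\supp g$, and that $\overline{h\circ g}=h\circ\bar g$ --- is handled correctly.
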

\begin{proof}
Take a partition of unity $\phi_1,\phi_2,\ldots$ subordinate to the cover $\{U,\bar\X\setminus K\}$ and set 
\[f=\sum\phi_i\]
where the sum is taken over those $i$ for which $\supp\phi_i$ is contained in $U$.  This is the required function.
\end{proof}

\begin{definition}
Let $\X$ be a differentiable stack.  A family of morphisms $\alpha_i\colon A_i\to\X$ is called \emph{locally finite} if for each $U\to\X$ and each $u\in U$ there is a neighbourhood $V$ of $u$ such that $V\times_\X A_i$ is non-empty for only finitely many $i$.  This is if and only if the underlying maps $\bar\alpha_i\colon A_i\to\bar\X$ form a locally finite family.
\end{definition}

\begin{proposition}\label{SpecialCoverProposition}
Let $\X$ be differentiable Deligne-Mumford.  We can find a countable locally finite family of \'etale morphisms $s_l\colon S_l\to\X$ from open subsets of $\R^n$, together with open subsets $T_l\subset S_l$ for which each $\cl T_l$ is compact and $\bigsqcup s_l\colon\bigsqcup T_l\to\X$ is surjective.  If we wish we may assume that the $s_l\colon S_l\to\X$ are obtained from orbifold-charts $[S_l/G_l]\to\X$.
\end{proposition}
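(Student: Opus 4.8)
The plan is to build the family as a locally finite, countable refinement of the cover of $\bar\X$ by images of linear orbifold-charts, and then to obtain the relatively compact pieces $T_l$ by pulling a shrinking of that cover back through the chart maps. By Proposition~\ref{DMUnderlyingSpaceProposition} the underlying space $\bar\X$ is locally compact, Hausdorff, second countable and metrizable, so it is $\sigma$-compact and normal; I would fix an exhaustion $K_1\subset K_2\subset\cdots$ by compact sets with $K_n\subset\interior K_{n+1}$ and $\bigcup_n K_n=\bar\X$, setting $K_n=\emptyset$ for $n\le 0$. For each $x\in\bar\X$, Proposition~\ref{OrbifoldChartsProposition} supplies a linear orbifold-chart $[U_x/G_x]\to\X$ at $x$; composing the \'etale atlas $U_x\to[U_x/G_x]$ with this open embedding gives an \'etale morphism $s_x\colon U_x\to\X$ whose source is an open subset of a linear $G_x$-representation. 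Its underlying map $\bar s_x$ is open (Proposition~\ref{UnderlyingMapProposition}) with image the open set $W_x\subset\bar\X$ underlying $[U_x/G_x]\to\X$, and restricting the chart to the $G_x$-invariant open preimage of a smaller neighbourhood of $x$ lets me shrink $W_x$ at will while keeping it a linear orbifold-chart.

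Next I would extract the cover. For each $n$ the compact set $C_n=K_n\setminus\interior K_{n-1}$ lies in the open set $O_n=\interior K_{n+1}\setminus K_{n-2}$. Shrinking the chart at each $x\in C_n$ so that $W_x\subset O_n$, and passing to a finite subcover of the compact $C_n$, yields---after ranging over all $n$---a countable family of orbifold-charts with \'etale maps $s_l\colon S_l\to\X$ and images $W_l\subset\bar\X$. These cover $\bar\X$ because the $C_n$ do, and each $\cl W_l$ is compact because $W_l\subset O_n\subset K_{n+1}$. The family is locally finite: $\interior K_m$ can meet a $W_l$ arising at stage $n$ only when $O_n\cap\interior K_m\neq\emptyset$, which forces $n\le m+1$, and each stage contributes only finitely many charts; by the remark following the definition of a locally finite family this makes $\{s_l\}$ locally finite as a family of morphisms into $\X$.

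Finally I would produce the $T_l$. Since $\bar\X$ is normal and $\{W_l\}$ is a locally finite open cover, the shrinking lemma gives open sets $V_l$ with $\cl V_l\subset W_l$ and $\bigcup_l V_l=\bar\X$; each $\cl V_l$, being closed in the compact set $\cl W_l$, is compact. I then set $T_l=(V_l)_{S_l}=\bar s_l^{-1}(V_l)$. The crucial point is that $\bar s_l\colon S_l\to W_l$ factors as the quotient $S_l\to S_l/G_l$ followed by a homeomorphism $S_l/G_l\cong W_l$ (using $\overline{[S_l/G_l]}=S_l/G_l$), and the quotient by the finite group $G_l$ is proper; hence $\bar s_l^{-1}(\cl V_l)$ is compact and $\cl T_l$, closed inside it, is compact. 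Since $V_l\subset W_l=\bar s_l(S_l)$ we have $\bar s_l(T_l)=V_l$, so $\bigcup_l\bar s_l(T_l)=\bar\X$; as each $s_l|_{T_l}$ is \'etale, the underlying map of $\bigsqcup_l s_l\colon\bigsqcup_l T_l\to\X$ is surjective and the morphism is an \'etale surjection, as required. The $s_l$ are by construction obtained from the orbifold-charts $[S_l/G_l]\to\X$.

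The hard part will be this last shrinking step: the requirement that $\cl T_l$ be compact is a statement inside the source manifold $S_l$, not merely inside $\bar\X$, and it is the properness of the finite-group quotient $S_l\to S_l/G_l\cong W_l$ that converts a relatively compact shrinking $V_l$ downstairs into a relatively compact $T_l$ upstairs. This is exactly why the construction must be carried out with orbifold-charts---whose underlying maps are proper onto their images---rather than with arbitrary \'etale morphisms. The remaining bookkeeping, namely maintaining local finiteness across the exhaustion and reading the covering of $\bar\X$ as surjectivity of the stack morphism, is routine.
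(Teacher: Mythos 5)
Your proof is correct and follows essentially the same route as the paper's: cover $\bar\X$ by images of linear orbifold-charts, extract a countable locally finite refinement via an exhaustion by compact sets, shrink the cover, and use properness of the finite quotient $S_l\to S_l/G_l\cong W_l$ to pull relative compactness back up into the chart. The only cosmetic difference is that the paper shrinks inside each chart first and then refines the two nested covers of $\bar\X$ simultaneously via its bespoke Lemma~\ref{CoverLemma}, whereas you refine first and then invoke the standard shrinking lemma for locally finite open covers of the normal space $\bar\X$; both produce the same data.
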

\begin{proof}
For each $x\in\bar\X$ choose a linear orbifold-chart $[U_x/G_x]\to\X$ at $x$, so that $U_x$ is an open neighbourhood of the origin in some $n$-dimensional representation of $G_x$.  Choose an invariant open neighbourhood $V_x\subset U_x$ of $0$ with $\cl_{U_x} V_x$ compact.  Set $A_x=\overline{[U_x/G_x]}$, $B_x=\overline{[V_x/G_x]}$, so that the two covers $\{A_x\}$, $\{B_x\}$ of $\bar\X$ satisfy the hypotheses of Lemma~\ref{CoverLemma} below.  (Note that the overlines refer to underlying spaces, not to closures.)  Applying the lemma, we obtain countable locally finite covers $\{A'_{x_l}\}$, $\{B'_{x_l}\}$ of $\bar\X$, with $A'_{x_l}\subset A_{x_l}$, $B'_{x_l}\subset B_{x_l}$, and $\cl_{\bar\X} B'_l\subset A'_l$ compact.  Now $\{A'_{x_l}\}$ and $\{B'_{x_l}\}$ determine open subsets
\begin{gather*}
U'_{x_l}=\{u\in U_{x_l}\mid [u\to\X]\in A'_{x_l}\}=(A'_{x_l})_{U_{x_l}},\\
V'_{x_l}=\{v\in V_{x_l}\mid [v\to\X]\in B'_{x_l}\}=(B'_{x_l})_{U_{x_l}},
\end{gather*}
and $\cl_{U'_{x_l}}V'_{x_l}=(\cl_{\bar\X}B'_{x_l})_{U'_{x_l}}$ is compact, since it is the preimage under the proper map $U'_{x_l}\to\overline{[U'_{x_l}/G_{x_l}]}=A'_{x_l}$ of the compact $\cl_{\bar\X}B'_{x_l}$.  Thus the morphisms $U'_{x_i}\to\X$, together with the subsets $V'_{x_i}\subset U'_{x_i}$, are the required data.
\end{proof}

\begin{lemma}\label{CoverLemma}
Let $X$ be a locally-compact Hausdorff space and let $\mathcal{U}=\{U_\alpha\}_{\alpha\in A}$, $\mathcal{V}=\{V_\alpha\}_{\alpha\in A}$ be open covers of $X$, such that $\cl V_\alpha\subset U_\alpha$ for each $\alpha$.  Then there are countable locally finite refinements $\mathcal{U}'=\{U'_\beta\}_{\beta\in B}$ and $\mathcal{V}'=\{V'_\beta\}_{\beta\in B}$ of $\mathcal{U}$ and $\mathcal{V}$ such that $\cl V'_\beta\subset U'_\beta$ for each $\beta\in B$.
\end{lemma}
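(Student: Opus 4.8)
The plan is to prove this exactly as one proves the standard locally-finite shrinking refinement for $\sigma$-compact, locally compact Hausdorff spaces (cf.\ the proofs of \cite[1.9, 1.11]{\Warner}), but carrying the two covers $\mathcal{U}$ and $\mathcal{V}$ through the construction in tandem. The one ingredient beyond local compactness and the Hausdorff property is a compact exhaustion of $X$, which requires $\sigma$-compactness; this is exactly what is available in the intended application, since there $X=\bar\X$ is locally compact, second countable and Hausdorff by Proposition~\ref{DMUnderlyingSpaceProposition}. So first I would fix a sequence of compact sets with $K_n\subset\interior K_{n+1}$ and $X=\bigcup_n K_n$ (setting $K_n=\emptyset$ for $n\leqslant 0$), and from it build the compact annuli $L_n=K_n\setminus\interior K_{n-1}$ together with the open annuli $W_n=\interior K_{n+1}\setminus K_{n-2}$. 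A routine check then gives $L_n\subset W_n$, $\bigcup_n L_n=X$, and --- crucially --- that $\{W_n\}$ is locally finite, since any point lies in some $\interior K_m$, which can meet $W_n$ only for $n\leqslant m+1$.

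Next I would carry out the point-by-point shrinking on each annulus. For each $n$ and each $x\in L_n$, since $\mathcal{V}$ covers there is $\alpha$ with $x\in V_\alpha$; using local compactness and the Hausdorff property I would choose an open $P_x\ni x$ whose closure is compact and satisfies $\cl P_x\subset V_\alpha\cap W_n$. Compactness of $L_n$ lets me pass to a finite subcover $P_{x_1},\dots,P_{x_{k_n}}$, with associated indices $\alpha_1,\dots,\alpha_{k_n}$. Letting $\beta$ range over the (countable) disjoint union over $n$ of these finite index sets, I would set $V'_\beta=P_{x_j}$ and $U'_\beta=U_{\alpha_j}\cap W_n$ for the $\beta$ coming from level $n$.

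It then remains to verify the four required properties. Refinement is immediate, as $V'_\beta\subset V_{\alpha_j}$ and $U'_\beta\subset U_{\alpha_j}$; the family $\{V'_\beta\}$ covers $X$ because $X=\bigcup_n L_n$ and the $P_{x_j}$ cover $L_n$ at each level; and local finiteness of both families follows from local finiteness of $\{W_n\}$ together with finiteness of the index set at each level, since every $V'_\beta$ and $U'_\beta$ lies in a single $W_n$. The key containment is $\cl V'_\beta\subset U'_\beta$: here $\cl V'_\beta=\cl P_{x_j}\subset V_{\alpha_j}\cap W_n$ by the choice of $P_{x_j}$, and since $\cl V_{\alpha_j}\subset U_{\alpha_j}$ we get $V_{\alpha_j}\cap W_n\subset U_{\alpha_j}\cap W_n=U'_\beta$. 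The main obstacle is precisely keeping this containment intact while cutting the sets down by the annuli $W_n$ to force local finiteness; the resolution is to intersect both $V'_\beta$ and $U'_\beta$ with the \emph{same} open annulus $W_n$, and to have arranged $\cl P_x$ to sit compactly inside $V_\alpha\cap W_n$ rather than merely inside $V_\alpha$, so that no closure is enlarged by the intersection and the hypothesis $\cl V_\alpha\subset U_\alpha$ transfers directly to the refined sets.
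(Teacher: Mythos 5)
Your proof is correct and follows essentially the same route as the paper's: a compact exhaustion into annuli, finite subcovers of each compact annulus, and intersection with locally finite open annuli, the only (harmless) difference being that you secure $\cl V'_\beta\subset U'_\beta$ by shrinking to precompact sets $P_x$ inside a single annulus $W_n$, whereas the paper uses two nested annuli of different widths and keeps $V'=V_\alpha\cap(\text{annulus})$. You are also right that the construction needs a compact exhaustion, hence $\sigma$-compactness beyond the stated hypotheses; the paper's proof implicitly assumes this too (via \cite[1.9]{\Warner}), and it holds in the intended application by Proposition~\ref{DMUnderlyingSpaceProposition}.
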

\begin{proof}
This is a mild modification of the proof of \cite[1.9]{\Warner}.  As in that proof, let $G_i$ for $i=1,2,\ldots$ be a sequence of open subsets of $X$ such that
\begin{gather*}
\bigcup G_i=X,\\
\cl G_i\subset G_{i+1},\\
\cl G_i\mathrm{\ compact}.
\end{gather*}
In what follows, $G_i$ for $i\leqslant 0$ should be taken to be the empty set $\emptyset$.

For each $i$, the $V_\alpha\cap(G_{i+1}-\cl G_{i-2})$ form an open cover of the compact set $\cl G_i-G_{i-1}$, and we may therefore choose finitely many $\alpha^i_l$ such that the $V_{\alpha^i_l}\cap(G_{i+1}-\cl G_{i-2})$ cover $\cl G_i-G_{i-1}$.  Set
\begin{gather*}
B=\{\alpha^i_l\},\\
V'_{\alpha^i_l}=V_{\alpha^i_l}\cap(G_{i+1}-\cl G_{i-2}),\\
U'_{\alpha^i_l}=U_{\alpha^i_l}\cap(G_{i+2}-\cl G_{i-3}).
\end{gather*}
Then
\begin{eqnarray*}
\cl V'_{\alpha^i_l}
&\subset& \cl V_{\alpha^i_l}\cap(\cl G_{i+1}- G_{i-2})\\
&\subset& U_{\alpha^i_l}\cap(G_{i+2}-\cl G_{i-3})\\
&=&U'_{\alpha^i_l},
\end{eqnarray*}
and the covers are locally finite since $(G_{i+2}-\cl G_{i-3})\cap (G_{j+2}-\cl G_{j-3})\neq\emptyset$ only when $|i-j|\leqslant 4$.
\end{proof}

\section{Morse Functions.}\label{MorseFunctionSection}

In this section we discuss Morse functions on differentiable Deligne-Mumford stacks.  In \S\ref{MorseFunctionDefinitionSection} we define Morse functions, their critical points, and the index and co-index of critical points.  This generalises from manifolds the notion of Morse function, critical points, and index of a critical point.  However, the index and co-index of a Morse function on a differentiable Deligne-Mumford stack are much richer quantities than the index of a critical point of a Morse function on a manifold, and this is crucial in a correct formulation of the Morse inequalities.   In \S\ref{MorseLemmaSection} we prove a Morse Lemma for differentiable Deligne-Mumford stacks.  Finally, in \S\ref{InertiaStackSection} we show how a Morse function on $\X$ gives rise to a Morse function on the inertia stack $\Lambda\X$.

\subsection{Morse functions.}\label{MorseFunctionDefinitionSection}

\begin{definition}\label{TangentSpaceDefinition} Let $\X$ be a differentiable Deligne-Mumford stack.
\begin{enumerate}
\item Let $x$ be a $2$-isomorphism class of points in $\X$.  Define the \emph{automorphism group} $\aut_x$ of $x$ and the \emph{tangent space} $T_x\X$ of $x$ as follows.  Choose an \'etale morphism $U\to\X$ with a point $u\in U$ that represents $x$.  Then the automorphism group $\aut_u$ of $u$ in the groupoid $U\times_\X U\rightrightarrows U$ is finite and acts linearly on $T_uU$.  Set $\aut_x=\aut_u$ and $T_x\X=T_uU$.  We regard $T_x\X$ as a representation of $\aut_x$.
\item Let $f\colon\X\to\R$ be a morphism.  We define $d_xf\colon T_x\X\to\R$ to be the derivative at $u$ of the composite $U\to\X\xrightarrow{f}\R$.  It is an $\aut_x$-invariant linear map.
\item If $d_xf=0$ then we say that $x$ is a \emph{critical point of $f$} and that $f(x)$ is a \emph{critical value of $f$}.  When $x$ is a critical point of $f$ we define the \emph{Hessian of $f$ at $x$}, which is a symmetric bilinear form
\[H_{f,x}\colon T_x\X\times T_x\X\to\R,\]
to be the Hessian at $u$ of the composite $U\to\X\xrightarrow{f}\R$.  We say that $x$ is a \emph{degenerate critical point of $f$} if $H_{f,x}$ is singular; otherwise we say that $x$ is a \emph{nondegenerate critical point}.
\end{enumerate}
\end{definition}

When $\X$ is a manifold the automorphism group of any point is trivial and Definition~\ref{TangentSpaceDefinition} simply gives us the usual definition of tangent space, derivative of $f$ and Hessian of $f$.

\begin{note}\label{TangentSpaceNote}
When $\X$ is not a manifold then $\aut_x$, $T_x\X$, $d_xf$ and $H_{f,x}$ are dependent on the choice of $U\to\X$ and $u\in U$.  If we make a different choice of $U$ and $u$ then $\aut_x$ will only change up to isomorphism, while $T_x\X$ will only change up to an equivariant isomorphism that intertwines the two choices of $d_xf$ and the two choices of $H_{f,x}$.  Most importantly, the isomorphisms themselves are not unique; they can vary by an inner automorphism of $\aut_x$ and the corresponding isomorphism of $T_x\X$.  For notational convenience we will largely ignore these ambiguities; this will not impinge on any of what follows.
\end{note}

\begin{definition}
A morphism $f\colon\X\to\R$ is called a \emph{Morse function} if it has no nondegenerate critical points.
\end{definition}

This generalizes the usual definition of Morse function on a manifold to Deligne-Mumford stacks.  Just as for manifolds, Morse functions on a differentiable Deligne-Mumford stack are abundant: Theorem~\ref{MorseFunctionsAreDenseTheorem} will show that a generic morphism $\X\to\R$ is Morse in the sense that Morse functions form a dense open subset of the \emph{space} of all morphisms $\X\to\R$.

\begin{proposition}
$f\colon\X\to\R$ is Morse if and only if the composition $U\to\X\xrightarrow{f}\R$ is Morse for each \'etale $U\to\X$.  This is if and only if $f\circ\pi$ is Morse for some choice of \'etale atlas $\pi\colon X\to\X$.
\end{proposition}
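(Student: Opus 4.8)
The plan is to reduce all three conditions to a single chart-level observation. Fix an \'etale morphism $U\to\X$ and a point $u\in U$, and set $x=[u\to\X]$. By Definition~\ref{TangentSpaceDefinition} the quantities $d_xf$ and $H_{f,x}$ may be computed using \emph{any} \'etale chart representing $x$; taking the chart to be the given $U\to\X$ at the point $u$, we see that $d_xf$ and $H_{f,x}$ are exactly the derivative and the Hessian at $u$ of the smooth function $U\to\X\xrightarrow{f}\R$, under the identification $T_uU=T_x\X$. It follows at once that $u$ is a critical point of this composite precisely when $d_xf=0$, that is, precisely when $x$ is a critical point of $f$; and that, at such a point, the composite has nonsingular Hessian precisely when $H_{f,x}$ is nonsingular, that is, precisely when $x$ is nondegenerate. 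The only point needing care is that these conclusions are independent of the chosen chart: this is the content of Note~\ref{TangentSpaceNote}, and since the ambiguity there is by an equivariant \emph{linear} isomorphism of $T_x\X$, it preserves the rank of the symmetric bilinear form $H_{f,x}$ and hence the property of being a nondegenerate critical point.

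With this observation in hand the first equivalence is immediate. If $f$ is Morse and $U\to\X$ is \'etale, then any critical point $u$ of the composite gives a critical point $x=[u\to\X]$ of $f$, which is nondegenerate by hypothesis, so $u$ is nondegenerate; thus $U\to\X\xrightarrow{f}\R$ is Morse. Conversely, if $U\to\X\xrightarrow{f}\R$ is Morse for every \'etale $U\to\X$, let $x$ be any critical point of $f$; by Definition~\ref{TangentSpaceDefinition} we may pick an \'etale $U\to\X$ and $u\in U$ with $[u\to\X]=x$, so that $u$ is a critical point of the (Morse) composite, hence nondegenerate, whence $x$ is nondegenerate. Therefore $f$ is Morse.

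For the second equivalence I would sandwich the ``some atlas'' condition between the two already shown equivalent. An \'etale atlas is in particular an \'etale morphism, and such an atlas exists since $\X$ is Deligne-Mumford, so the condition ``$U\to\X\xrightarrow{f}\R$ is Morse for every \'etale $U\to\X$'' trivially implies ``$f\circ\pi$ is Morse for some \'etale atlas $\pi$.'' Conversely, suppose $f\circ\pi$ is Morse for some \'etale atlas $\pi\colon X\to\X$, and let $x$ be a critical point of $f$. Since $\pi$ is surjective we may choose $u\in X$ with $[u\to\X]=x$; applying the chart-level observation to $X\to\X$ shows that $u$ is a critical point of $f\circ\pi$, hence nondegenerate, so $x$ is nondegenerate. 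Thus $f$ is Morse, and all three conditions are equivalent.

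I do not expect a genuine obstacle here: once the derivative and Hessian of $f$ are recognised as literally those of its composite with an \'etale chart, the proposition is pure bookkeeping. The one subtle ingredient, which I would be careful to flag, is the chart-independence of nondegeneracy supplied by Note~\ref{TangentSpaceNote}; everything else is a routine translation between critical points of $f$ and critical points of its local expressions.
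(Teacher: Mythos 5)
Your proof is correct and follows the same route as the paper's: the paper likewise reduces everything to the chart-independence of $d_xf$ and $H_{f,x}$ recorded in Note~3.2, notes that degenerate critical points of $f\circ\pi$ correspond to degenerate critical points of $f$, and uses surjectivity of the atlas for the remaining direction. You have simply written out in full the bookkeeping that the paper compresses into two sentences.
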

\begin{proof}
The two claims are equivalent by Note~\ref{TangentSpaceNote}.  Any degenerate critical point of $f\circ\pi$ represents a degenerate critical point of $f$, and since $\pi$ is surjective any degenerate critical point of $f$ must be represented by a degenerate critical point of $f\circ\pi$.
\end{proof}

\begin{definition}\label{IndexCoindexDefinition}
Let $f\colon\X\to\R$ be Morse and let $c$ be a critical point of $f$.  Let $T_c\X=T_c\X_+\oplus T_c\X_-$ be an $\aut_c$-invariant splitting for which $H_{f,c}|_{T_c\X_+}$ is positive-definite and $H_{f,c}|_{T_c\X_-}$ is negative-definite.
\begin{enumerate}
\item The \emph{index} of $c$, denoted $\ind_c$, is the isomorphism class of $T_c\X_-$ as an $\aut_c$-representation.
\item The \emph{co-index} of $c$, denoted $\coind_c$, is the isomorphism class of $T_c\X_+$ as an $\aut_c$-representation.
\end{enumerate}
We refer to $\{(c,\aut_c,\ind_c,\coind_c)\mid c\mathrm{\ a\ critical\ point\ of\ }f\}$
as the \emph{critical point data for $f$}.
\end{definition}

The splitting required in Definition~\ref{IndexCoindexDefinition} can always be found.  It is not unique, but the isomorphism classes of $T_c\X_{\pm}$ as $\aut_c$-representations are uniquely determined.  

When $\X$ is a manifold the critical points of $f$ have trivial automorphism groups, so that the index and co-index are simply non-negative integers --- the dimensions of the relevant representations --- and moreover they determine one another since their sum is just the dimension of $\X$.  However, for general $\X$ the index and co-index may contain strictly more information than their dimensions, and they do not determine one another.  The most important piece of information contained in the index besides its dimension is singled out in the following definition.

\begin{definition}
A critical point $c$ of a Morse function $f\colon\X\to\R$ is called \emph{orientable} if the action of $\aut_c$ on $\ind_c$ is orientation-preserving.
\end{definition}

\subsection{The Morse Lemma.}\label{MorseLemmaSection}

\begin{theorem}[Morse Lemma]\label{MorseLemma}
Let $f\colon\X\to\R$ be a function and let $c\in\bar\X$ be a nondegenerate critical point of $f$.  Then there is an open subset $U_c\subset T_c\X$ and a linear orbifold-chart $[U_c/\aut_c]\to\X$ at $c$ for which
\[U_c\to[U_c/\aut_c]\to\X\xrightarrow{f}\R\]
is just $u\mapsto \bar f(c)+H_{f,c}(u,u)$.
\end{theorem}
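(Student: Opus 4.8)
The plan is to reduce the statement to an equivariant Morse lemma on a single linear representation and then repackage the conclusion as an orbifold-chart.

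First I would apply Proposition~\ref{OrbifoldChartsProposition} to obtain a \emph{linear} orbifold-chart $[U/\aut_c]\to\X$ at $c$, so that $U$ is an open neighbourhood of $0$ in the $\aut_c$-representation $T_c\X$, the group $\aut_c$ acts linearly, and $0$ represents $c$. Pulling $f$ back along $U\to[U/\aut_c]\to\X\xrightarrow{f}\R$ gives a smooth $\aut_c$-invariant function $g\colon U\to\R$ whose critical point at $0$ is nondegenerate and whose Hessian at $0$ is $H_{f,c}$. The theorem will follow once I produce an $\aut_c$-equivariant diffeomorphism $\phi$ from an invariant open $U_c\subset T_c\X$ onto an invariant open neighbourhood of $0$ in $U$, with $\phi(0)=0$, such that $g\circ\phi(u)=\bar f(c)+H_{f,c}(u,u)$. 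Indeed, such a $\phi$ descends to an isomorphism $[U_c/\aut_c]\to[\phi(U_c)/\aut_c]$ of quotient stacks, and the composite $[U_c/\aut_c]\to[\phi(U_c)/\aut_c]\hookrightarrow[U/\aut_c]\to\X$ is then the required linear orbifold-chart at $c$.

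To construct $\phi$ I would run the classical Morse Lemma argument \cite{\Milnor} while keeping everything $\aut_c$-equivariant. Fix an $\aut_c$-invariant inner product on $T_c\X$, obtained by averaging. Applying the fundamental theorem of calculus twice, one writes $g(u)-\bar f(c)=\langle u, B(u)u\rangle$ for a smooth symmetric-matrix-valued function $B(u)$ whose value $B(0)$ represents the quadratic form $u\mapsto H_{f,c}(u,u)$ (after the standard normalization of the integral-remainder coefficients) and which is therefore nondegenerate for $u$ near $0$. Since $g$ is $\aut_c$-invariant and the inner product is invariant, averaging $B$ over $\aut_c$ makes it equivariant, meaning $B(\gamma\cdot u)=\gamma B(u)\gamma^{-1}$ for $\gamma\in\aut_c$, while preserving the identity $g(u)-\bar f(c)=\langle u,B(u)u\rangle$. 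The remaining and decisive step is an equivariant diagonalization: I would seek a smooth family $A(u)$ of linear automorphisms of $T_c\X$ with $A(0)=\Id$ and $A(u)^{*}B(0)A(u)=B(u)$, where $*$ denotes the adjoint for the invariant inner product. Since $B(0)$ is invertible, the endomorphism $B(0)^{-1}B(u)$ is close to the identity near $0$, and one may take $A(u)=\bigl(B(0)^{-1}B(u)\bigr)^{1/2}$ via the principal (functional-calculus) square root; this square root is canonical, hence automatically $\aut_c$-equivariant, and depends smoothly on $u$. The substitution $y=A(u)u$, invertible near $0$ because $A(0)=\Id$, carries $g$ into standard form, and its inverse is the desired $\phi$; this is exactly the equivariant Morse Lemma of Wasserman \cite{\Wasserman}, to which one may alternatively appeal directly.

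I expect the main obstacle to be precisely this equivariance of the change of coordinates: the naive Gram--Schmidt diagonalization used in the manifold case does not respect the $\aut_c$-action, so it must be replaced by the canonical square-root construction (or one invokes \cite{\Wasserman}), and one must check that each preparatory step --- the two integrations, the symmetrization, and the averaging of $B$ --- can be carried out smoothly and $\aut_c$-equivariantly. Once $\phi$ is in hand, descending it to an open embedding of quotient stacks and verifying the orbifold-chart axioms is routine, using the equivalence between $\aut_c$-equivariant maps and morphisms of the corresponding quotient stacks together with Proposition~\ref{OrbifoldChartsProposition}; shrinking $U$ to a suitable invariant open set then yields the $U_c\subset T_c\X$ demanded by the statement.
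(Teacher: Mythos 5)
Your proposal is correct and follows essentially the same route as the paper: reduce via a linear orbifold-chart (Proposition~\ref{OrbifoldChartsProposition}) to an $\aut_c$-equivariant Morse lemma for an invariant function on a linear representation. The only difference is that where the paper simply cites the Lerman--Tolman equivariant Morse lemma (itself an observation that Palais' square-root argument is equivariant), you write out that same Palais-style proof --- the integral-remainder form $g(u)-\bar f(c)=\langle u,B(u)u\rangle$ followed by the canonical functional-calculus square root of $B(0)^{-1}B(u)$ --- so the content is identical.
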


Since the origin is the only critical point of $u\mapsto \bar{f}(c)+H_{f,c}(u,u)$  we immediately have the following:

\begin{corollary}\label{CriticalPointsIsolatedCorollary}
The critical points of a Morse function on $\X$ are isolated in $\bar\X$.
\end{corollary}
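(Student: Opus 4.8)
The plan is to read the result straight off the Morse Lemma (Theorem~\ref{MorseLemma}). First I would apply that lemma at the critical point $c$ to obtain a linear orbifold-chart $[U_c/\aut_c]\to\X$ at $c$, with $U_c\subset T_c\X$ open, for which the composite $U_c\to[U_c/\aut_c]\to\X\xrightarrow{f}\R$ is the quadratic function $q(u)=\bar f(c)+H_{f,c}(u,u)$. Because $c$ is a nondegenerate critical point, $H_{f,c}$ is a nondegenerate symmetric bilinear form, so the derivative $d_uq=2H_{f,c}(u,-)$ vanishes precisely when $u=0$. Thus the origin is the unique critical point of $q$ in $U_c$.

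Next I would transfer this statement about $q$ into a statement about $f$ on $\bar\X$. The orbifold-chart is a representable open embedding, so by Proposition~\ref{UnderlyingMapProposition} its underlying map is an open topological embedding, and hence the underlying space of its image is an open neighbourhood $N$ of $c$ in $\bar\X$. Moreover, every point of $N$ is of the form $[u\to\X]$ for some $u\in U_c$, since the \'etale map $U_c\to\X$ surjects onto the open substack $[U_c/\aut_c]$. By Definition~\ref{TangentSpaceDefinition}, a point $x=[u\to\X]$ of $N$ is a critical point of $f$ exactly when the derivative at $u$ of the composite $U_c\to\X\xrightarrow{f}\R$ vanishes; but this composite is $q$, so the condition is $d_uq=0$. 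By the previous step this forces $u=0$, whence $x=c$. Therefore $c$ is the only critical point of $f$ lying in the open neighbourhood $N$, which is exactly the assertion that $c$ is isolated in $\bar\X$.

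The argument is essentially immediate once the Morse Lemma is available, so there is no serious obstacle. The one point that warrants a moment's care is the identification of critical points of $f$ near $c$ with critical points of $q$: criticality is defined in Definition~\ref{TangentSpaceDefinition} by means of an arbitrary \'etale morphism into $\X$, so I must use that the chart map $U_c\to\X$ is itself such an \'etale morphism and that the criticality of $x$ is independent of this choice (Note~\ref{TangentSpaceNote}). Since the chart surjects onto $N$, no critical point of $f$ in $N$ can be overlooked, and each such point is detected by $q$ on $U_c$, which completes the deduction.
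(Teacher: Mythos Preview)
Your argument is correct and follows exactly the paper's approach: the paper states this corollary as an immediate consequence of the Morse Lemma, noting that the origin is the only critical point of $u\mapsto\bar f(c)+H_{f,c}(u,u)$. You have simply spelled out the details of why this suffices, which the paper leaves implicit.
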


By taking an orbifold chart at $c$, the proof of the Morse Lemma reduces to the following $\aut_c$-equivariant form, which was proved by Lerman and Tolman in \cite{\LermanTolman}.  Its proof is based on the observation that Palais' proof of the Morse-Palais lemma (for Morse functions on Hilbert manifolds; see \cite{\Lang}) naturally extends to the equivariant setting.

\begin{lemma}[An equivariant  Morse lemma. {\cite{\LermanTolman}}]
Let $M$ be a manifold with $G$-action, $f\colon M\to\R$ a $G$-invariant function, and $m\in M$ a nondegenerate critical point of $f$ with stabilizer $G$.

There exists a $G$-equivariant diffeomorphism $\phi\colon U_0\to U_m$ from a neighbourhood of $0\in T_mM$ to a neighbourhood of $m\in M$ such that:
\begin{enumerate}
\item $\phi(0)=m$;
\item $f(\phi(v))-f(m)=H_{f,m}(v,v)$ for all $v\in U_0$;
\end{enumerate}
\end{lemma}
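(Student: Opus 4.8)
The plan is to reproduce Palais' proof of the Morse--Palais lemma as presented in \cite{\Lang} and to check that, since $G$ fixes $m$ and may be taken compact (finite, in the application to orbifold-charts), every step can be carried out $G$-equivariantly. Writing $E=T_mM$, I would first reduce to a linear model: averaging produces a $G$-invariant inner product $\langle\,,\,\rangle$ on $E$, and Bochner's linearization theorem then furnishes a $G$-equivariant chart identifying a neighbourhood of $m$ with a $G$-invariant open neighbourhood of $0$ in $E$ on which $G$ acts by its (orthogonal) linear isotropy representation. After this reduction I may assume that $M$ is such a neighbourhood, that $m=0$, $f(0)=0$, $Df(0)=0$, and that $f$ is $G$-invariant.

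Because $f(0)=0$ and $Df(0)=0$, Taylor's formula with integral remainder gives
\[
f(x)=B(x)(x,x),\qquad B(x)=\int_0^1(1-t)\,D^2f(tx)\,dt,
\]
a smooth family of symmetric bilinear forms with $B(0)$ equal to (one half of) the Hessian $H_{f,m}$. Writing $B(x)(u,w)=\langle A(x)u,w\rangle$ for a smooth family of self-adjoint operators $A(x)$, nondegeneracy of the critical point says that $\mathcal{H}:=A(0)$ is invertible. Since $f$ is $G$-invariant and $G$ acts linearly and orthogonally, one gets $A(gx)=gA(x)g^{-1}$ for all $g\in G$; in particular $\mathcal{H}$ commutes with the $G$-action.

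For $x$ near $0$ the operator $\mathcal{H}^{-1}A(x)$ is close to $\Id$ and is self-adjoint for the (possibly indefinite) form $\beta(u,w)=\langle\mathcal{H}u,w\rangle$, so its principal square root $R(x)=(\mathcal{H}^{-1}A(x))^{1/2}$ is defined by the convergent binomial series, depends smoothly on $x$, and satisfies $R(0)=\Id$. One then checks, using $\beta$-self-adjointness of $R(x)$, that $R(x)^*\mathcal{H}R(x)=A(x)$, so the map $\psi(x)=R(x)x$ satisfies $\langle\mathcal{H}\,\psi(x),\psi(x)\rangle=\langle A(x)x,x\rangle=f(x)$ and $D\psi(0)=\Id$. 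Hence $\psi$ is a local diffeomorphism near $0$, and $\phi=\psi^{-1}$ gives properties (1) and (2) once the normalization of $H_{f,m}$ is matched to the constant in $B(0)$.

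The hard part, and the only genuinely new ingredient beyond the classical argument, is the equivariance of $\phi$. Since $\mathcal{H}$ commutes with each $g$ and $A(gx)=gA(x)g^{-1}$, we have $\mathcal{H}^{-1}A(gx)=g\,\mathcal{H}^{-1}A(x)\,g^{-1}$; writing $R(x)$ as the series in powers of $N(x)=\mathcal{H}^{-1}A(x)-\Id$ and using that conjugation by $g$ commutes with taking powers and with summation, I obtain $R(gx)=gR(x)g^{-1}$, whence $\psi(gx)=R(gx)(gx)=g\psi(x)$ and $\phi=\psi^{-1}$ is $G$-equivariant. The whole difficulty therefore collapses to the naturality of the functional-calculus square root under conjugation by $G$, which is immediate once the inner product has been chosen $G$-invariant and the action has been linearized; this is precisely the sense in which Palais' argument ``naturally extends to the equivariant setting.''
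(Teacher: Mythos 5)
Your proposal is correct and follows exactly the route the paper indicates: the paper does not prove this lemma itself but cites Lerman--Tolman and remarks that the proof is Palais' argument for the Morse--Palais lemma carried out equivariantly, which is precisely what you have written out (invariant inner product by averaging, equivariant linearization, the operator square root $R(x)=(\mathcal{H}^{-1}A(x))^{1/2}$, and naturality of the functional calculus under conjugation). The only points to keep an eye on are minor: shrink to a $G$-invariant neighbourhood at the end, and fix the factor of $\tfrac{1}{2}$ between $B(0)$ and the paper's normalization of $H_{f,m}$, both of which you have already flagged.
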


\subsection{Morse functions and the inertia stack.}\label{InertiaStackSection}

We will now show that a Morse function $f\colon\X\to\R$ on a differentiable Deligne-Mumford stack $\X$ induces a Morse function $f\circ\epsilon$ on the inertia stack $\Lambda\X$, and that the critical point data for $f\circ\epsilon$ can be read directly from that for $f$.

\begin{definition}
Recall, for example from \cite[4.4]{\AbramovichVistoliGraber}, that the \emph{inertia stack} of a stack $\X$ is the stack $\Lambda\X$ for which $\Lambda\X(U)$ is the following groupoid.  The objects are pairs $(u,\phi)$ for $u\in\X(U)$ and $\phi\in\aut_u$, and the morphisms $(u,\phi)\Rightarrow(v,\psi)$ are the $\lambda\colon u\Rightarrow v$ for which $\lambda\phi=\psi\lambda$.  If $\X$ is differentiable Deligne-Mumford then so is $\Lambda\X$, and there is a representable \emph{evaluation morphism} $\epsilon\colon\Lambda\X\to\X$ which on objects sends $(u,\phi)$ to $u$.
\end{definition}

The set $\overline{\Lambda\X}$ consists of pairs $(x,(g))$ where $x$ is a point of $\bar\X$ and $(g)$ is the conjugacy class of an element $g\in\aut_x$.  We will write $(x,(g))$ as $x^g$.  By taking an orbifold-chart at $x$ it is simple to see that $\aut_{x^g}=C_{\aut_x}(g)$ and that $T_{x^g}\Lambda\X=(T_x\X)^g$, where $-^g$ denotes fixed points of $g$.

\begin{theorem}\label{InertiaTheorem}
Let $f\colon\X\to\R$ be Morse.  Then $f\circ\epsilon\colon\Lambda\X\to\R$ is also Morse.  Moreover, the critical points of $f\circ\epsilon$ are precisely the $c^g$, where $c$ is a critical point of $f$ and $g\in\aut_c$.  Further,
\[\ind_{c^g}=(\ind_c)^g,\qquad \coind_{c^g}=(\coind_c)^g.\]
\end{theorem}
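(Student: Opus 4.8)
The plan is to reduce everything to a local, equivariant computation via an orbifold-chart, and then to invoke the manifold-with-finite-group-action version of these facts. The key structural observation, already recorded in the excerpt, is that the inertia stack is locally modelled on fixed-point data: if $[U/G]\to\X$ is a linear orbifold-chart at a point $x=[u\to\X]$ with $u=0$, then $\aut_x=G$ and $T_x\X=T_uU$ as a $G$-representation. For a fixed $g\in G$ the corresponding point $x^g$ of $\overline{\Lambda\X}$ has $\aut_{x^g}=C_G(g)$ and $T_{x^g}\Lambda\X=(T_x\X)^g$. So the strategy is: first produce a chart for $\Lambda\X$ adapted to $\epsilon$, then transport the Morse data for $f$ through it.

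First I would choose, for each point $x\in\bar\X$, a linear orbifold-chart $\iota\colon[U/G]\to\X$ at $x$ with $U\subset T_x\X$ open and $G=\aut_x$, as provided by Proposition~\ref{OrbifoldChartsProposition}. Applying the inertia construction to this chart gives a chart for $\Lambda\X$: over the component indexed by the conjugacy class $(g)$, the inertia of $[U/G]$ is modelled by $[U^g/C_G(g)]$, and under $\epsilon$ this maps to $\X$ by the composite $U^g\hookrightarrow U\to[U/G]\xrightarrow{\iota}\X$. Thus the composite $U^g\to\Lambda\X\xrightarrow{f\circ\epsilon}\R$ is precisely the restriction to $U^g$ of the function $F=f\circ\iota\circ(U\to[U/G])\colon U\to\R$. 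Since $f$ is Morse, $F$ is a $G$-invariant Morse function on $U$, and its restriction $F|_{U^g}$ to the fixed-point submanifold is exactly the object whose critical behaviour we must analyse.

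The crux is therefore the elementary equivariant fact: for a $G$-invariant smooth function $F$ on a $G$-representation $V$ with an isolated nondegenerate critical point at $0$, the restriction $F|_{V^g}$ has a nondegenerate critical point at $0$, with Hessian the restriction of $H_{F,0}$ to $V^g$, and negative/positive eigenspaces exactly $(V_-)^g$ and $(V_+)^g$. I would establish this by noting that the $G$-invariant (hence $g$-invariant) splitting $V=V_+\oplus V_-$ into $H_{F,0}$-definite subspaces respects the $g$-action, so it restricts to a splitting $V^g=(V_+)^g\oplus(V_-)^g$ on which $H_{F,0}|_{V^g}$ is correspondingly positive- and negative-definite; nondegeneracy of $H_{F,0}|_{V^g}$ is immediate since a definite form stays definite on any subspace. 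That $0$ is a critical point of $F|_{V^g}$ follows because $d_0F$ is $G$-invariant and hence vanishes on $V^g$ when it vanishes on $V$. Translating back through the identifications $\aut_{c^g}=C_{\aut_c}(g)$, $T_{c^g}\Lambda\X=(T_c\X)^g$, and $\ind_c=[V_-]$, $\coind_c=[V_+]$ as $\aut_c$-representations, and restricting these representations to $C_{\aut_c}(g)$, yields both that $c^g$ is a nondegenerate critical point of $f\circ\epsilon$ and the claimed formulas $\ind_{c^g}=(\ind_c)^g$, $\coind_{c^g}=(\coind_c)^g$.

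Finally I would check the converse exhaustion: every critical point of $f\circ\epsilon$ arises as some $c^g$. A point of $\overline{\Lambda\X}$ is $x^g$ for $x\in\bar\X$ and $g\in\aut_x$; working in the chart above, $x^g$ is critical for $f\circ\epsilon$ iff $0$ is critical for $F|_{V^g}$. The main obstacle I anticipate is one potential subtlety here: a priori $d_0(F|_{V^g})$ could vanish even when $d_0F$ does not, so criticality of $x^g$ need not force $x$ to be critical for $f$. The resolution is that $f$ is assumed Morse, so $F$ has \emph{only nondegenerate} critical points and these are isolated (Corollary~\ref{CriticalPointsIsolatedCorollary}); one must argue that if $d_0F\neq0$ then, because $d_0F$ is $G$-invariant and $0\in V^g$, the restriction $d_0F|_{V^g}$ can still be nonzero, ruling out spurious critical points — more carefully, a $G$-invariant linear functional restricted to the fixed space $V^g$ is the average of $d_0F$ over the $g$-action and may vanish, so the honest argument is that since $f\circ\epsilon$ is verified to be Morse with critical locus computed chart-by-chart as above, any critical point of $f\circ\epsilon$ lies over a critical point of $f$ precisely because nondegeneracy of $F$ near a noncritical point of $F$ forces $F|_{V^g}$ to have no critical point there either. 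Handling this degeneration correctly, and confirming that the constructed charts genuinely assemble into the inertia stack compatibly with $\epsilon$, is where the care is needed; the rest is the linear-algebra bookkeeping of fixed spaces under a definite symmetric form.
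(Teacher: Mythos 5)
Your chart set-up and the forward direction are fine and match the paper's approach: the paper reduces the theorem to the identities $d_{x^g}(f\circ\epsilon)=d_xf|_{(T_x\X)^g}$ and $H_{f\circ\epsilon,x^g}=H_{f,x}|_{(T_x\X)^g}$ and then to three pieces of linear algebra about a finite group $G$ acting on $V$, of which your treatment of the restricted Hessian and of the splitting $V^g=(V_+)^g\oplus(V_-)^g$ reproduces two. The genuine gap is in the converse exhaustion, exactly where you flag ``the main obstacle''. You assert that a nonzero $G$-invariant linear functional restricted to $V^g$ ``may vanish'', and then fall back on the claim that nondegeneracy of $F$ near a noncritical point forces $F|_{V^g}$ to have no critical point there. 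The first assertion is false, and the second is not an argument: nondegeneracy is a property of critical points, and for a general (non-invariant) function with $d_0F\neq 0$ the restriction $F|_{V^g}$ certainly can have a critical point at $0$ (take $F(x,y)=y$ with $V^g$ the $x$-axis). As written, your converse direction is circular --- it appeals to $f\circ\epsilon$ being Morse, which is part of what is being proved.

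The missing ingredient is precisely part (1) of the paper's ``trivial lemma'': if $d\colon V\to\R$ is linear, $G$-invariant and nonzero, then $d|_{V^g}\neq 0$. The proof is a one-line averaging argument: let $P_g=\frac{1}{n}\sum_{k=0}^{n-1}g^k$ (with $n$ the order of $g$) be the projection of $V$ onto $V^g$; invariance of $d$ gives $d=d\circ P_g$, so $d(v)\neq 0$ implies $d(P_gv)\neq 0$ with $P_gv\in V^g$. Applied to $d_0F=d_xf$ this shows that a non-critical point $x$ of $f$ yields only non-critical points $x^g$ of $f\circ\epsilon$, which is exactly what you need to rule out spurious critical points. (Note the direction of the averaging: since $d_0F$ is already invariant, its ``average over the $g$-action'' is itself, not something that can newly vanish.) With this lemma inserted in place of your fallback, your argument closes and coincides with the paper's proof.
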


\begin{definition}\hfill
\begin{enumerate}
\item We say that a pair $(c,(g))$ is \emph{orientable} if $c^g$ is an orientable critical point of $\Lambda\X$.  Thus $(c,(g))$ is orientable if the action of $C_{\aut_c}(g)$ on $(\ind_c)^g$ orientation-preserving.
\item The \emph{age} or \emph{degree-shifting number} $\iota(c,(g))$ is the age $\iota_{T_c\X}(g)$, i.e.~the degree-shifting number associated to the component  of $\overline{\Lambda\X}$ containing $c^g$ \cite{\ChenRuan}.
\end{enumerate}
\end{definition}

\begin{proof}[Proof of Theorem~\ref{InertiaTheorem}.]
It is simple to check that for $x\in\bar\X$ and $g\in\aut_x$ we have $d_{x^g}(f\circ\epsilon)=d_{x}f|_{(T_x\X)^g}$, and that if $x$ is critical, then $H_{f\circ\epsilon,x^g}=H_{f,x}|_{(T_x\X)^g}$.  The result is now immediate from the following trivial lemma.
\end{proof}

\begin{lemma}
Let $V$ be a finite-dimensional real representation of a finite group $G$ and let $g\in G$.  Then:
\begin{enumerate}
\item Given a non-zero $G$-invariant linear map $d\colon V\to\R$, the restriction $d|\colon V^g\to\R$ is also non-zero.
\item Given a $G$-invariant nondegenerate symmetric bilinear form $H\colon V\times V\to\R$, the restriction $H|\colon V^g\times V^g\to\R$ is also nondegenerate.
\item Given a $G$-invariant splitting $V=V_+\oplus V_-$ there is a splitting $V^g=V_+^g\oplus V_-^g$.
\end{enumerate}
\end{lemma}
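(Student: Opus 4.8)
The plan is to treat all three parts by means of a single tool: the averaging operator over the cyclic subgroup $\langle g\rangle$. Writing $n$ for the order of $g$, I would set $\pi=\frac1n\sum_{k=0}^{n-1}g^k\colon V\to V$. The first step is to record the standard properties of $\pi$: it is linear, it satisfies $g\pi=\pi$ and hence $\pi^2=\pi$, its image is exactly $V^g$, and it restricts to the identity on $V^g$. Thus $\pi$ is a projection of $V$ onto $V^g$, and $V=V^g\oplus\ker\pi$ as $\langle g\rangle$-representations. Each of the three claims then reduces to an elementary manipulation with $\pi$.

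For part (1), since $d$ is $G$-invariant we have $d\circ g^k=d$ for every $k$, whence $d\circ\pi=d$. Therefore $d=(d|_{V^g})\circ\pi$, so if the restriction $d|_{V^g}$ vanished then $d$ itself would vanish; as $d\neq0$ this gives $d|_{V^g}\neq0$. For part (3), I would use that the $G$-invariant splitting is in particular $g$-invariant, so $g$ preserves each of $V_\pm$. Given $v\in V^g$, write $v=v_++v_-$ with $v_\pm\in V_\pm$; applying $g$ gives $v=gv=gv_++gv_-$ with $gv_\pm\in V_\pm$, and uniqueness of the decomposition forces $gv_\pm=v_\pm$. Hence $v_\pm\in V_\pm^g$, and since $V_+^g\cap V_-^g\subset V_+\cap V_-=0$ the splitting $V^g=V_+^g\oplus V_-^g$ follows.

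Part (2) is the only point that is not entirely immediate, and is where I expect the small amount of real work to lie. The goal is to show that $V^g$ and $\ker\pi$ are $H$-orthogonal, after which nondegeneracy of $H|_{V^g}$ drops out. For $v\in V^g$ and arbitrary $w\in V$, $G$-invariance of $H$ gives $H(v,g^kw)=H(g^{-k}v,w)=H(v,w)$ for each $k$, so averaging over $k$ yields $H(v,\pi w)=H(v,w)$. Taking $w\in\ker\pi$ gives $H(v,w)=H(v,0)=0$, so indeed $V^g\perp\ker\pi$. Now if $v\in V^g$ lies in the radical of $H|_{V^g}$, then $H(v,-)$ annihilates both $V^g$ and $\ker\pi$, hence all of $V=V^g\oplus\ker\pi$; nondegeneracy of $H$ on $V$ then forces $v=0$. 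Therefore $H|_{V^g}$ is nondegenerate, which completes the argument.
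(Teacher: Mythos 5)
Your proof is correct and complete; all three parts go through exactly as you argue, with the averaging projection $\pi=\frac1n\sum_{k=0}^{n-1}g^k$ onto $V^g$ doing the work (in particular the orthogonality $V^g\perp\ker\pi$ in part (2) is exactly the right observation). The paper itself states this as a ``trivial lemma'' and supplies no proof, so there is nothing to compare against; your argument is a standard and fully rigorous way of filling in the omitted details.
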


\section{Riemannian Metrics and Vector Fields.}\label{GeometrySection}

This section deals with Riemannian metrics and vector fields on differentiable Deligne-Mumford stacks.  The reason for covering these topics is that we wish to define the flow of the negative gradient field of a Morse function on a differentiable Deligne-Mumford stack.  This flow will be an elementary but crucial ingredient in proving the results of Section~\ref{ApplicationsSection} that relate the topology of $\bar\X$ to the critical points of a Morse function on $\X$.

In \S\ref{VFRMDefinitionSection} we define vector fields and Riemannian metrics on a differentiable Deligne-Mumford stack and we characterize them in terms of atlases.  We also state Theorem~\ref{MetricTheorem}, which tells us that any differentiable Deligne-Mumford stack admits a Riemannian metric.  In \S\ref{IntegralFlowSection} we define what it means for a morphism to integrate a vector field and we define flows of a vector field.  We then state Theorems \ref{IntegralTheorem} and \ref{FlowTheorem}, which tell us that integrals are unique (in an appropriate weak sense) and that any compactly-supported vector field has a flow.  All but the simplest proofs in \S\ref{VFRMDefinitionSection} and \S\ref{IntegralFlowSection} are deferred to \S\ref{ProofsSection}.

All of the definitions and results in \S\ref{VFRMDefinitionSection} and \S\ref{IntegralFlowSection} can be \emph{stated} for any stack that admits an \'etale atlas, and such stacks form a much larger class than the Deligne-Mumford stacks alone.  However, none of Theorems \ref{MetricTheorem}, \ref{IntegralTheorem} or \ref{FlowTheorem} remain true in this broader context.  This is the subject of \S\ref{CounterexamplesSection}, where we demonstrate the failure of these theorems by example.

\subsection{Riemannian metrics and vector fields.}\label{VFRMDefinitionSection}

An \'etale morphism of manifolds $f\colon U\to V$ induces isomorphisms of tangent spaces $df\colon T_uU\xrightarrow{\cong} T_{f(u)}V$ for each $u\in U$.  Therefore, given a Riemannian metric $\langle-,-\rangle$ on $V$, we obtain a metric $\langle-,-\rangle^f$ on $U$ by setting
\[\langle\alpha,\beta\rangle^f=\langle df(\alpha),df(\beta)\rangle.\]
Similarly, given a vector field $X$ on $V$, we obtain a vector field $f^\ast X$ on $U$ by setting
\[(f^\ast X)_u=(df)^{-1}X_{f(u)}.\]

\begin{definition}\label{RiemannianMetricDefinition}
A \emph{Riemannian metric} $\langle-,-\rangle$ on a differentiable Deligne-Mumford stack $\X$ is an assignment
\[(U\to\X)\mapsto\langle-,-\rangle_U\]
of a Riemannian metric $\langle-,-\rangle_U$ on $U$ to every \'etale morphism $U\to\X$, such that for every $2$-commutative diagram
\[\xymatrix{
V\ar[d]_f\ar[rr]_{}="1" & &\X\\
U\ar[rru]^{}="2"&&\ar@{=>}"1";"2"
}\]
we have ${\langle-,-\rangle_U}^f=\langle-,-\rangle_V$.  We will call a differentiable Deligne-Mumford stack equipped with a Riemannian metric a \emph{Riemannian differentiable Deligne-Mumford stack}.
\end{definition}

\begin{definition}\label{VectorFieldDefinition}
A \emph{vector field} $X$ on a differentiable Deligne-Mumford stack $\X$ is an assignment
\[(U\to\X)\mapsto X_U\]
of a vector field $X_U$ on $U$ to each \'etale morphism $U\to\X$, such that for every $2$-commutative diagram
\[\xymatrix{
V\ar[d]_f\ar[rr]_{}="1" & &\X\\
U\ar[rru]^{}="2"&&\ar@{=>}"1";"2"
}\]
we have $f^\ast X_U=X_V$.
\end{definition}

\begin{definition}\label{ManipulateVectorFieldsDefinition}
Let $\X$ be a Riemannian differentiable Deligne-Mumford stack.
\begin{enumerate}
\item Let $X$ be a vector field on $\X$ and $f\colon\X\to\R$ a morphism.  Then
\[X\cdot f\colon\X\to\R\]
denotes the morphism that when composed with an \'etale $\pi_U\colon U\to\X$ becomes $X_U\cdot(f\circ\pi_U)$.

\item Let $X$ and $Y$ be vector fields on $\X$. Then
\[\langle X,Y\rangle\colon\X\to\R\]
denotes the morphism that when composed with an \'etale morphism $U\to\X$ becomes $\langle X_U,Y_U\rangle_U\colon U\to\R$.

\item Let $f\colon\X\to\R$ be a morphism.  Then the \emph{gradient vector field of f}, denoted $\nabla f$, is defined by
\[(\nabla f)_U=\nabla(f\circ\pi_U)\]
for any \'etale morphism $\pi_U\colon U\to\X$, where the right hand side is formed using $\langle-,-\rangle_U$.  Note that $\langle\nabla f,X\rangle=X\cdot f$.
\end{enumerate}
\end{definition}

The gradient vector field gives us many examples of vector fields on any differentiable Deligne-Mumford stack that admits a Riemannian metric.  Metrics are provided by the following theorem.

\begin{theorem}\label{MetricTheorem}
Every differentiable Deligne-Mumford stack admits a Riemannian metric.
\end{theorem}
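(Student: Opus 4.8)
The plan is to build a global Riemannian metric by patching together locally-defined equivariant metrics using a partition of unity, exactly as one does for manifolds, with the crucial caveat that the metric data lives on \'etale atlases and must be assembled \emph{invariantly}. First I would invoke Proposition~\ref{SpecialCoverProposition} to obtain a countable locally finite family of \'etale morphisms $s_l\colon S_l\to\X$ arising from orbifold-charts $[S_l/G_l]\to\X$, together with open subsets $T_l\subset S_l$ with $\cl T_l$ compact and $\bigsqcup s_l\colon\bigsqcup T_l\to\X$ surjective. On each $S_l$ I would pick an arbitrary smooth Riemannian metric and then \emph{average it over the finite group $G_l$}, producing a $G_l$-invariant metric $\langle-,-\rangle^{(l)}$ on $S_l$. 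Because $G_l$ is finite, this averaging is a finite sum and poses no difficulty; invariance under $G_l$ is precisely what will be needed for the local pieces to descend to the chart $[S_l/G_l]$.

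Next I would take a partition of unity $\{\phi_l\colon\X\to\R\}$ subordinate to the cover underlying the $s_l$ (guaranteed by Theorem~\ref{PartitionsOfUnityTheorem}), with each $\supp\phi_l$ compact. The idea is to form the global metric by the weighted sum $\langle-,-\rangle=\sum_l \phi_l\,\langle-,-\rangle^{(l)}$, interpreted correctly at the level of \'etale atlases. Concretely, to specify the metric I must, for \emph{every} \'etale $U\to\X$, produce a metric $\langle-,-\rangle_U$ on $U$ satisfying the compatibility condition of Definition~\ref{RiemannianMetricDefinition}. For a fixed such $U$, I would form the fibre products $U\times_\X S_l$, which come with \'etale projections $p_l\colon U\times_\X S_l\to U$ and $q_l\colon U\times_\X S_l\to S_l$; the map $q_l$ lets me pull back $\langle-,-\rangle^{(l)}$ and, since each $G_l$-invariant metric on $S_l$ descends to $[S_l/G_l]$, this pullback glues over the (\'etale, surjective) projection to give a well-defined contribution on $U$ supported where $\phi_l$ is nonzero. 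Weighting by $\phi_l\circ\pi_U$ and summing over the locally finite family yields a genuine smooth metric $\langle-,-\rangle_U$ on $U$.

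The main obstacle — and the step deserving the most care — is verifying the compatibility condition ${\langle-,-\rangle_U}^f=\langle-,-\rangle_V$ across every $2$-commutative triangle of \'etale morphisms, i.e.\ checking that the assignment really defines a metric \emph{on the stack} and not merely a disconnected collection of metrics on atlases. This is where the $G_l$-invariance is essential: because $\langle-,-\rangle^{(l)}$ is $G_l$-invariant it defines a metric on the chart $[S_l/G_l]$ in the sense of Definition~\ref{RiemannianMetricDefinition}, and pullback of a stack-metric along \'etale maps is manifestly compatible with further \'etale pullback, since both sides are computed by the functorial construction $\langle-,-\rangle\mapsto\langle-,-\rangle^{f}$ and the identity $(\langle-,-\rangle^{g})^{f}=\langle-,-\rangle^{gf}$ holds for composable \'etale maps. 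Once each summand is a bona fide metric on $\X$, compatibility of the weighted sum follows because $\phi_l$ is a morphism $\X\to\R$ (so its atlas-pullbacks are automatically compatible) and the local finiteness from Proposition~\ref{SpecialCoverProposition} ensures the sum is finite on a neighbourhood of each point, hence smooth. Positive-definiteness is immediate since the $\phi_l$ are non-negative, sum to $1$, and each $\langle-,-\rangle^{(l)}$ is positive-definite, so at every point at least one term contributes strictly positively. This completes the construction.
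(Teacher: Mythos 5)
Your proposal is correct and follows essentially the same route as the paper: a countable locally finite cover by orbifold-charts, metrics averaged over the finite groups $G_l$ to make them invariant, and a weighted sum via a partition of unity from Theorem~\ref{PartitionsOfUnityTheorem}. The only cosmetic difference is that you handle well-definedness by descending the $G_l$-invariant metric along $U\times_\X S_l\to U$ (in effect via Proposition~\ref{RiemannianMetricVectorFieldAtlasProposition}), whereas the paper chooses local lifts $\lambda_l$ into the charts and checks independence of the choice using connectedness and $G_l$-invariance --- the same idea in different packaging.
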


We have the following simple characterization of Riemannian metrics and vector fields on a differentiable Deligne-Mumford stack.  It implies in particular that Riemannian metrics and vector fields on a global quotient $[M/G]$ are in one-to-one correspondence with $G$-invariant Riemannian metrics and vector fields on $M$.

\begin{proposition}\label{RiemannianMetricVectorFieldAtlasProposition}
Let $\X$ be a differentiable Deligne-Mumford stack.  Let $A\to\X$ be an \'etale atlas and let $\pi_1,\pi_2\colon A\times_\X A\to A$ be the projections.  Then:
\begin{enumerate}
\item The assignment $\langle-,-\rangle\mapsto\langle-,-\rangle_A$ determines a one-to-one correspondence between Riemannian metrics $\langle-,-\rangle$ on $\X$ and Riemannian metrics $\langle-,-\rangle_A$ on $A$ that satisfy ${\langle-,-\rangle_A}^{\pi_1}={\langle-,-\rangle_A}^{\pi_2}$.
\item The assignment $X\mapsto X_A$ determines a one-to-one correspondence between vector fields $X$ on $\X$ and vector fields $X_A$ on $A$ that satisfy $\pi_1^\ast X_A=\pi_2^\ast X_A$.
\end{enumerate}
We call metrics that satisfy ${\langle-,-\rangle_A}^{\pi_1}={\langle-,-\rangle_A}^{\pi_2}$ and vector fields that satisfy $\pi_1^\ast X_A=\pi_2^\ast X_A$ \emph{invariant}.
\end{proposition}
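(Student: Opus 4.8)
The plan is to prove both parts simultaneously, since the arguments are identical: the vector-field case is obtained from the metric case by replacing the operation $\langle-,-\rangle\mapsto\langle-,-\rangle^f$ with the pullback $f^\ast$, and the only formal properties used are that both operations are compatible with composition of \'etale maps and that both admit descent along surjective \'etale maps. I will therefore describe the argument for Riemannian metrics and remark that the vector-field statement follows verbatim. First I would check that the assignment is well-defined, i.e.\ that the restriction $\langle-,-\rangle_A$ of a metric on $\X$ is invariant. Applying the compatibility condition of Definition~\ref{RiemannianMetricDefinition} to the two projections $\pi_1,\pi_2\colon A\times_\X A\to A$, both of which are \'etale and whose composites with $A\to\X$ are related by the tautological $2$-morphism of the fibre product, yields ${\langle-,-\rangle_A}^{\pi_1}=\langle-,-\rangle_{A\times_\X A}={\langle-,-\rangle_A}^{\pi_2}$, which is exactly invariance.

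Next I would establish injectivity. Given any \'etale $U\to\X$, form $U\times_\X A$ with projections $p\colon U\times_\X A\to U$ and $q\colon U\times_\X A\to A$; here $p$ is a surjective \'etale map because $A\to\X$ is a surjective \'etale atlas, and $q$ is \'etale. The compatibility condition gives ${\langle-,-\rangle_U}^p=\langle-,-\rangle_{U\times_\X A}={\langle-,-\rangle_A}^q$. Since $p$ is a surjective local diffeomorphism, a metric on $U$ is determined by its pullback along $p$; hence $\langle-,-\rangle_U$ is determined by $\langle-,-\rangle_A$, and the metric on $\X$ is determined by its restriction to $A$.

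For surjectivity I would construct a metric on $\X$ from an invariant metric $\langle-,-\rangle_A$ by descent. For each \'etale $U\to\X$ take $p,q$ as above and form the pulled-back metric ${\langle-,-\rangle_A}^q$ on $U\times_\X A$. I claim it descends uniquely along the surjective \'etale $p$ to a metric $\langle-,-\rangle_U$ on $U$: the descent condition is that the two pullbacks of ${\langle-,-\rangle_A}^q$ to $(U\times_\X A)\times_U(U\times_\X A)$ agree. Using the canonical identification $(U\times_\X A)\times_U(U\times_\X A)\cong U\times_\X A\times_\X A$, under which the two projections become $(u,a_1,a_2)\mapsto(u,a_i)$, this condition pulls back from $A\times_\X A$ and reduces precisely to the hypothesis ${\langle-,-\rangle_A}^{\pi_1}={\langle-,-\rangle_A}^{\pi_2}$. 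Thus the descent succeeds and defines $\langle-,-\rangle_U$. To see that $U\mapsto\langle-,-\rangle_U$ is a metric on $\X$, compatibility with a $2$-commutative triangle $V\xrightarrow{f}U\to\X$ follows from uniqueness of descent together with functoriality of $(-)\times_\X A$ in the first variable, since both $\langle-,-\rangle_V$ and ${\langle-,-\rangle_U}^f$ descend the same pulled-back metric. Taking $U=A$ and using the identity section of $A\times_\X A\to A$ recovers $\langle-,-\rangle_A$.

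The main obstacle will be the descent step: producing the canonical identification $(U\times_\X A)\times_U(U\times_\X A)\cong U\times_\X A\times_\X A$ and tracking the induced $2$-morphisms carefully enough to see that the invariance hypothesis is exactly what is needed for the pulled-back metric to descend. Once a clean descent lemma for tensors (respectively vector fields) along surjective \'etale maps is in hand, the remaining verifications of well-definedness, injectivity, and compatibility with triangles are routine diagram chases, and the vector-field case of Definition~\ref{VectorFieldDefinition} goes through identically using the contravariant functoriality of $f\mapsto f^\ast$.
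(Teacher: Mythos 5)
Your proposal is correct and follows essentially the same route as the paper: the paper also establishes the correspondence by pulling the invariant field (resp.\ metric) back along $U\times_\X A\to A$ and descending along the surjective \'etale map $U\times_\X A\to U$, using precisely the identification $(U\times_\X A)\times_U(U\times_\X A)\cong U\times_\X(A\times_\X A)$ so that the descent condition is pulled back from the invariance hypothesis on $A\times_\X A$. The only cosmetic differences are that the paper writes out the vector-field case and declares the metric case identical (you do the reverse), and that you make the injectivity step and the triangle-compatibility check slightly more explicit.
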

\begin{proof}
We will prove the second result; the first is proved in exactly the same way.  Certainly, a vector field on $\X$ does induce an invariant vector field on $A$.  Conversely, suppose given an invariant vector-field $X_A$ on $A$ and let $U\to\X$ be \'etale.  Then in the diagram 
\[\xymatrix{
U\times_\X(A\times_\X A)\ar[r]\ar@<3 pt>[d]\ar@<-3 pt>[d] & A\times_\X A\ar@<3 pt>[d]\ar@<-3 pt>[d]\\
U\times_\X A\ar[r]\ar[d] & A\ar[d]_{}="2"\\
U\ar[r]^{}="1" & \X \ar@{=>}"2";"1"
}\]
we can use the first two horizontal maps, which are \'etale, to construct from $X_A$ a vector field on $U\times_\X A$ whose two pullbacks to $U\times_\X(A\times_\X A)$ coincide.  Since $U\times_\X(A\times_\X A)\rightrightarrows U\times_\X A$ is an \'etale groupoid representing $U$, this in turn induces a vector field $X_U$ on $U$.  The assignment $U\mapsto X_U$ clearly satisfies the required property.
\end{proof}

We now have the following proposition, which details three ways in which we can obtain new vector fields from old ones.  
\begin{proposition}\label{VectorFieldProposition}
Let $\X$ and $\Y$ be differentiable Deligne-Mumford stacks.
\begin{enumerate}
\item
Given vector fields $X,Y$ and functions $f,g$ on $\X$, there is a unique vector field $fX+gY$ on $\X$ such that for any $\pi_U\colon U\to\X$ \'etale,
\[(fX+gY)_U=(f\circ\pi_U)X_U+(g\circ\pi_U)Y_U.\]
\item
Given vector fields $X$ on $\X$ and $Y$ on $\Y$ there is a unique vector-field $X\oplus Y$ on $\X\times\Y$ such that
\[(X\oplus Y)_{U\times V}=X_U\oplus Y_V\]
for \'etale $U\to \X$, $Y\to\Y$.
\item
Let $\Y\to\X$ be an embedding and let $X$ be a vector field on $\X$.  Suppose that $X$ is \emph{tangent} to $\Y$ in the sense that for each \'etale $U\to\X$, $X_U$ is tangent to the submanifold $\Y\times_\X U\subset U$.  Then there is a unique vector field $X|_\Y$ on $\Y$ with $X_U|_{\Y\times_\X U}=(X|_\Y)_{\Y\times_\X U}$.  
\end{enumerate}
\end{proposition}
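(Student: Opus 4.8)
The plan is to treat the three parts by reducing, in each case, to manipulations of honest vector fields on manifolds and then invoking the atlas characterization of Proposition~\ref{RiemannianMetricVectorFieldAtlasProposition} together with the compatibility condition built into Definition~\ref{VectorFieldDefinition}. Part (1) I would do directly: for every \'etale $\pi_U\colon U\to\X$ set $(fX+gY)_U=(f\circ\pi_U)X_U+(g\circ\pi_U)Y_U$, which prescribes a value on every chart and so forces uniqueness. To see that this assignment is a vector field I must check the pullback condition: for an \'etale $h\colon V\to U$ over $\X$, pullback of a function times a vector field satisfies $h^\ast(\phi Z)=(\phi\circ h)\,h^\ast Z$, and since $h^\ast X_U=X_V$, $h^\ast Y_U=Y_V$, and $(f\circ\pi_U)\circ h=f\circ\pi_V$ (because $\pi_U\circ h$ and $\pi_V$ are $2$-isomorphic), the two sides agree. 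This step is entirely routine.

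For (2) and (3) the difficulty is that the stated defining property only pins the vector field down on a restricted class of \'etale morphisms --- products $U\times V\to\X\times\Y$ in (2), and submanifolds $\Y\times_\X U\subset U$ in (3) --- and these do not exhaust all \'etale morphisms into the target. So instead of defining the vector field chart-by-chart I would construct it on a single atlas and appeal to Proposition~\ref{RiemannianMetricVectorFieldAtlasProposition}. For (2), choose \'etale atlases $A\to\X$, $B\to\Y$; then $A\times B\to\X\times\Y$ is an \'etale atlas and I form $X_A\oplus Y_B$ on it. Using the identification $(A\times B)\times_{\X\times\Y}(A\times B)\cong(A\times_\X A)\times(B\times_\Y B)$, under which the two projections decompose as products of the projections for $A$ and for $B$, the invariance $\pi_1^\ast X_A=\pi_2^\ast X_A$ and $\pi_1^\ast Y_B=\pi_2^\ast Y_B$ yields invariance of $X_A\oplus Y_B$, so the proposition produces a vector field $X\oplus Y$ with $(X\oplus Y)_{A\times B}=X_A\oplus Y_B$. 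For (3), choose an \'etale atlas $A\to\X$; then $B:=\Y\times_\X A\to\Y$ is an \'etale atlas, $X_A$ is tangent to the submanifold $B\subset A$ by hypothesis, and its restriction $X_A|_B$ is a vector field on $B$. Identifying $B\times_\Y B$ with the part of $A\times_\X A$ lying over $\Y$, whose projections to $B$ are the restrictions of those of $A\times_\X A$, the invariance of $X_A$ restricts to invariance of $X_A|_B$, and the proposition produces $X|_\Y$ with $(X|_\Y)_B=X_A|_B$.

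It remains, in both (2) and (3), to upgrade this single-atlas identity to the property stated for all admissible \'etale morphisms, and this is the step I expect to require the most care. For (2) I would compare $(X\oplus Y)_{U\times V}$ with $X_U\oplus Y_V$ after pulling both back to $(U\times V)\times_{\X\times\Y}(A\times B)\cong(U\times_\X A)\times(V\times_\Y B)$: the compatibility of Definition~\ref{VectorFieldDefinition} identifies the pullback of $(X\oplus Y)_{U\times V}$ with that of $X_A\oplus Y_B$, while the agreement of $X_U$ with $X_A$ on $U\times_\X A$ (and likewise for $Y$) identifies the pullback of $X_U\oplus Y_V$ with the same thing; since the covering map to $U\times V$ here is surjective and \'etale, pullback of vector fields along it is injective, so the two vector fields coincide. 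The argument for (3) is the same, comparing $(X|_\Y)_{\Y\times_\X U}$ with $X_U|_{\Y\times_\X U}$ over the refinement $(\Y\times_\X U)\times_\Y B$. Uniqueness in each part then follows because any vector field with the stated property is determined by its value on the chosen atlas, which Proposition~\ref{RiemannianMetricVectorFieldAtlasProposition} shows to be injective data. The only genuinely delicate points are the fiber-product identifications and the verification that the relevant projections restrict correctly; everything else is bookkeeping.
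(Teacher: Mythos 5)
Your proposal is correct and follows essentially the same route as the paper: part (1) is handled by the direct chart-by-chart definition with the pullback compatibility check, while parts (2) and (3) are built on a chosen atlas via Proposition~\ref{RiemannianMetricVectorFieldAtlasProposition} and then extended to general \'etale $U\times V$ (resp.\ $\Y\times_\X U$) by pulling back to the common refinement $(U\times_\X A)\times(V\times_\Y B)$ (resp.\ $(\Y\times_\X U)\times_\Y B$) and using that pullback along a surjective \'etale map is injective. The only difference is that you spell out the invariance verification for $X_A\oplus Y_B$ and $X_A|_B$ slightly more explicitly than the paper does.
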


\subsection{Integrals and flows of a vector field.}\label{IntegralFlowSection}

Throughout what follows $\X$ and $\Y$ will denote differentiable Deligne-Mumford stacks, $X$ will be a vector field on $\X$, and $I\subset\R$ will be a possibly unbounded open interval.  We will write $\ddt$ for the vector field on $\Y\times I$ obtained by adding the zero vector field on $\Y$ and the vector field $\ddt$ on $I$ as in Proposition~\ref{VectorFieldProposition}.

\begin{definition}
A representable morphism $\Phi\colon\Y\times I\to\X$ \emph{integrates} $X$ if, for each \'etale $U\to\X$, the induced
\[\Phi_U\colon(\Y\times I)\times_\X U\to U\]
satisfies
\[\left(\bigddt_{(\Y\times I)\times_\X U}\right)\cdot\Phi_U=X_U\circ\Phi_U.\]
This condition holds for all \'etale $U\to\X$ if and only if it holds for a single \'etale atlas $A\to\X$.
\end{definition}

\begin{theorem}[Uniqueness of integrals.]\label{IntegralTheorem}
Let
\[\Phi,\Psi\colon\Y\times I\to\X\]
be representable morphisms that integrate $X$ and suppose given a $2$-morphism
\[\lambda\colon\Phi|_{\Y\times\{t_0\}}\Rightarrow\Psi|_{\Y\times\{t_0\}}\]
for some $t_0\in I$.  Then there is a unique $\Lambda\colon\Phi\Rightarrow\Psi$ for which $\Lambda|_{\Y\times\{t_0\}}=\lambda$.
\end{theorem}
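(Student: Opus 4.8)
Let me plan a proof.

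The plan is to reduce the statement about stacks to the corresponding uniqueness statement for flows on manifolds, which is classical, and then to glue the resulting local $2$-morphisms using the connectedness-in-$t$ that the interval $I$ provides. The key technical tool will be the Proposition (the penultimate unnumbered one in \S\ref{TechnicalSection}, on $2$-morphisms between maps into a stack admitting an \'etale atlas) that says a $2$-morphism between two morphisms out of a stack with connected underlying space is determined by its restriction to a single point. I would use this to promote pointwise or fibrewise agreement of candidate $2$-morphisms to genuine equality.

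First I would choose an \'etale atlas $A\to\X$ and form the pullbacks $P=(\Y\times I)\times_\X A$, so that $\Phi$ and $\Psi$ induce maps $\Phi_A,\Psi_A\colon P\to A$ satisfying the defining integral equation for $X_A$. By Proposition~\ref{RiemannianMetricVectorFieldAtlasProposition} the vector field $X$ is recorded by the invariant vector field $X_A$, and the integral condition says exactly that along each curve $\{p\}\times I$ in the source the composite $\Phi_A$ is an integral curve of $X_A$ on $A$. Here I must be careful: $P$ is not literally $\Y\times I$, but because $\Y\times I\to\X$ and $A\to\X$ are being pulled back and $A\to\X$ is \'etale, $P\to\Y\times I$ is \'etale and the $\partial/\partial t$ direction lifts, so the relevant curves are genuine integral curves of $X_A$ in the manifold $A$.

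Next I would construct the candidate $\Lambda$ locally. Given the $2$-morphism $\lambda$ at time $t_0$, classical uniqueness of solutions to ODEs on the manifold $A$ (applied fibrewise over the $\Y$-directions, with the standard smooth dependence on initial conditions) forces the two integral curves $\Phi_A$ and $\Psi_A$ through $2$-isomorphic initial points to remain $2$-isomorphic for all $t\in I$; this produces, on the level of the groupoid $A\times_\X A\rightrightarrows A$, a canonical arrow interpolating $\Phi_A(t)$ and $\Psi_A(t)$ extending $\lambda$ at $t_0$. The main obstacle is checking that these fibrewise-constructed arrows assemble into a single well-defined $2$-morphism $\Lambda\colon\Phi\Rightarrow\Psi$ of stacks, rather than merely a compatible family on the atlas; this is precisely where I invoke the earlier Proposition. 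Concretely, any two local choices of $\Lambda$ agree at the slice $t=t_0$ by construction, and since $(\Y\times I)$ has the property that its underlying space is connected in the $I$-direction along each fibre, the set of $t$ at which two candidate $2$-morphisms agree is open and closed, hence all of $I$; uniqueness of $\Lambda$ follows the same way. I would present this last gluing/rigidity step as the heart of the argument, with the ODE uniqueness on $A$ feeding it the local data.
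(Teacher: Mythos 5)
There is a genuine gap at the heart of your argument, namely the sentence claiming that ``classical uniqueness of solutions to ODEs on the manifold $A$ \ldots{} forces the two integral curves $\Phi_A$ and $\Psi_A$ through $2$-isomorphic initial points to remain $2$-isomorphic for all $t\in I$.'' Classical ODE uniqueness says that two integral curves of $X_A$ through the \emph{same} point of $A$ coincide; it says nothing about curves through merely \emph{isomorphic} points. To propagate the isomorphism you must lift the connecting arrow $e\in A\times_\X A$ to an integral curve $\varepsilon$ of $X_{A\times_\X A}$ in the arrow manifold with $s\circ\varepsilon=\gamma$ and $t\circ\varepsilon=\delta$, and the problem is that $\varepsilon$ might only be defined on a proper subinterval even though $\gamma$ and $\delta$ are defined on all of $I$ --- the curve of arrows can ``run off the edge'' of $A\times_\X A$. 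This is exactly what happens in Example~\ref{ExampleTwo} of the paper: a non-proper \'etale groupoid with trivial inertia in which two integral curves are $2$-isomorphic at $t=-1$ but not at positive times. Since your argument as written never invokes properness of the diagonal, it would prove the theorem for that example too, so some step must fail, and this is the one. The paper closes the gap in Lemma~\ref{PointIntegralLemma}: properness makes $(s\times t)^{-1}(D_\gamma\times D_\delta)$ compact for closed discs $D_\gamma, D_\delta$ around $\gamma(t)$, $\delta(t)$, which yields a limit point of $\varepsilon(t_i)$ and lets one continue $\varepsilon$ past any putative right endpoint of its maximal interval of definition. Your closing open-and-closed argument in $t$ has the same problem in disguise: openness of the set of good times is the easy local ODE statement, but closedness is precisely the long-time existence of $\varepsilon$ that needs the compactness supplied by properness.

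A secondary point: the rigidity proposition from \S\ref{TechnicalSection} that you lean on requires the underlying space of the source to be connected, which is not assumed of $\bar\Y$; the paper instead gets uniqueness of $\Lambda$ for free from the uniqueness of the lifted integral curve $\varepsilon$ in Lemma~\ref{PointIntegralLemma}, and handles the descent from an atlas $Y\to\Y$ by comparing the two pullbacks of $\Lambda_Y$ to $Y\times_\Y Y\times I$ and checking they agree at $t_0$. Your overall architecture (pass to an atlas, propagate in $t$, glue) matches the paper's, but the proof is not complete without the properness argument.
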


\begin{definition}
A \emph{flow} $\Phi$ of $X$ is a representable morphism
\[\Phi\colon\X\times\R\to\X\]
that integrates $X$, together with a $2$-morphism $e_\Phi\colon\Phi|_{\X\times\{0\}}\Rightarrow\Id_\X$.
\end{definition}

The following is an immediate corollary of Theorem~\ref{IntegralTheorem}.

\begin{corollary}[Uniqueness of flows.]\label{FlowUniquenessCorollary}
Let $\Phi,\Psi\colon\X\times\R\to\X$ be flows of $X$.  Then there is a unique $2$-morphism $\lambda\colon\Phi\Rightarrow\Psi$ such that $\lambda|_{\X\times\{0\}}=e_\Psi^{-1}e_\Phi$.
\end{corollary}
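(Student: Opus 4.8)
The plan is to reduce everything directly to Theorem~\ref{IntegralTheorem} by taking $\Y=\X$ and $I=\R$. The only real content is to manufacture, from the two flow structures, a suitable $2$-morphism along the slice $\X\times\{0\}$ to feed into that theorem; once that is in hand the statement is purely formal.

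First I would observe that both $\Phi$ and $\Psi$ are, by the very definition of a flow, representable morphisms $\X\times\R\to\X$ that integrate $X$. They are therefore exactly the sort of maps to which the uniqueness-of-integrals theorem applies, with the interval $I=\R$ and the distinguished time $t_0=0$.

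Next I would build the required $2$-morphism on the zero-slice. The flow data supply $2$-morphisms $e_\Phi\colon\Phi|_{\X\times\{0\}}\Rightarrow\Id_\X$ and $e_\Psi\colon\Psi|_{\X\times\{0\}}\Rightarrow\Id_\X$. Since every $2$-morphism in a stack is invertible, we have $e_\Psi^{-1}\colon\Id_\X\Rightarrow\Psi|_{\X\times\{0\}}$, and the vertical composite $e_\Psi^{-1}e_\Phi\colon\Phi|_{\X\times\{0\}}\Rightarrow\Psi|_{\X\times\{0\}}$ is then a $2$-morphism between the two zero-slices, with source $\Phi|_{\X\times\{0\}}$ and target $\Psi|_{\X\times\{0\}}$ as required.

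Finally I would invoke Theorem~\ref{IntegralTheorem} with $\Y=\X$, $I=\R$, $t_0=0$, and the $2$-morphism $e_\Psi^{-1}e_\Phi$ just constructed. The theorem produces a unique $\Lambda\colon\Phi\Rightarrow\Psi$ with $\Lambda|_{\X\times\{0\}}=e_\Psi^{-1}e_\Phi$, and this $\Lambda$ is precisely the asserted $\lambda$; its uniqueness is exactly the uniqueness clause of the theorem. I do not anticipate any genuine obstacle here, since the corollary is formal once the zero-slice $2$-morphism has been assembled. The only points deserving a word of care are that $e_\Psi$ is invertible (being a $2$-morphism, hence an isomorphism) and that the vertical composition of $2$-morphisms is taken in the order compatible with the stated normalization $\lambda|_{\X\times\{0\}}=e_\Psi^{-1}e_\Phi$.
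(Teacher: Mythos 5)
Your proof is correct and is exactly the argument the paper intends: the paper states the corollary as an immediate consequence of Theorem~\ref{IntegralTheorem}, obtained precisely by applying that theorem with $\Y=\X$, $I=\R$, $t_0=0$ and the slice $2$-morphism $e_\Psi^{-1}e_\Phi\colon\Phi|_{\X\times\{0\}}\Rightarrow\Psi|_{\X\times\{0\}}$. Nothing further is needed.
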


\begin{proposition}\label{FlowActionProposition}
Let $\Phi$ be a flow of $X$.  Then there is a unique $2$-morphism $\mu_\Phi$
\[\xymatrix{
\X\times\R\times\R\ar[d]_{\Phi\times\Id_\R}\ar[r]^-{\Id_\X\times\alpha} & \X\times\R\ar[d]^{\Phi}_{}="1"\\
\X\times\R\ar[r]_\Phi^{}="2"&\X\ar@{=>}"2";"1"^{\mu_\Phi}
}\]
such that $\mu_\Phi|_{\X\times\{0\}\times\{0\}}=(\Phi|_{\X\times\{0\}})_\ast e_\Phi$.  Here $\alpha(s,t)=s+t$.  In particular, $\bar\Phi\colon\bar\X\times\R\to\bar\X$ is an action of $\R$ on $\bar\X$.
\end{proposition}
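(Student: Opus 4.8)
The plan is to recognise the two ways around the square as representable morphisms $\X\times\R\times\R\to\X$ that \emph{both} integrate $X$, and then to invoke the uniqueness of integrals (Theorem~\ref{IntegralTheorem}) together with the rigidity of $2$-morphisms proved at the start of \S\ref{TechnicalSection}. Writing a point of $\X\times\R\times\R$ as $(x,s,t)$, the composite $\Phi\circ(\Phi\times\Id_\R)$ is $(x,s,t)\mapsto\Phi(\Phi(x,s),t)$, while the composite $\Phi\circ(\Id_\X\times\alpha)$ is $(x,s,t)\mapsto\Phi(x,s+t)$. Both are representable, being composites of the representable $\Phi$ with products of representable morphisms, and the $2$-morphism $\mu_\Phi$ we seek runs from the former to the latter.

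First I would check that, regarding $\X\times\R\times\R$ as $\Y\times I$ with $\Y=\X\times\R$ and $I=\R$ the last factor, each composite integrates $X$. For $\Phi\circ(\Phi\times\Id_\R)$ this is immediate, since fixing $(x,s)$ and differentiating in $t$ reproduces the integral equation for $\Phi$ after precomposition with the fixed map $(x,s)\mapsto\Phi(x,s)$. For $\Phi\circ(\Id_\X\times\alpha)$ it follows from the chain rule, because $\Id_\X\times\alpha$ carries $\ddt$ to the flow direction $\ddt$ of $\Phi$ as $\partial(s+t)/\partial t=1$; I would record this verification on an \'etale atlas $A\to\X$, where it reduces to the corresponding statement for the induced maps of manifolds through the \'etale pullbacks appearing in the definition of ``integrates''.

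Next I would produce the seed $2$-morphism at the slice $t=0$ required by Theorem~\ref{IntegralTheorem}. Restricting to $\X\times\R\times\{0\}$, the first composite becomes $(\Phi|_{\X\times\{0\}})\circ\Phi$ and the second becomes $\Phi$, so whiskering $e_\Phi\colon\Phi|_{\X\times\{0\}}\Rightarrow\Id_\X$ on the right with $\Phi\colon\X\times\R\to\X$ gives a $2$-morphism $\lambda\colon(\Phi|_{\X\times\{0\}})\circ\Phi\Rightarrow\Phi$ over the whole slice. Theorem~\ref{IntegralTheorem} then yields a unique $\mu_\Phi$ from the first composite to the second with $\mu_\Phi|_{t=0}=\lambda$. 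Restricting $\lambda$ further to $s=0$ replaces the right-hand $\Phi$ by $\Phi|_{\X\times\{0\}}$, so that $\mu_\Phi|_{\X\times\{0\}\times\{0\}}=\lambda|_{s=0}=(\Phi|_{\X\times\{0\}})_\ast e_\Phi$, as demanded.

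For uniqueness subject to this normalization I would not need Theorem~\ref{IntegralTheorem} a second time: any two such $2$-morphisms $\mu_\Phi,\mu_\Phi'$ from the first composite to the second agree on the substack $\X\times\{0\}\times\{0\}$, whose underlying space is all of $\bar\X$ and hence meets every connected component of $\overline{\X\times\R\times\R}=\bar\X\times\R\times\R$. Applying the rigidity result of \S\ref{TechnicalSection} on each component $C\times\R\times\R$, with $C$ a component of $\bar\X$ and a basepoint of the form $(x,0,0)$, forces $\mu_\Phi=\mu_\Phi'$. Finally, the action claim follows on passing to underlying maps: the existence of $\mu_\Phi$ gives $\overline{\Phi\circ(\Phi\times\Id_\R)}=\overline{\Phi\circ(\Id_\X\times\alpha)}$, that is $\bar\Phi(\bar\Phi(x,s),t)=\bar\Phi(x,s+t)$, while $e_\Phi$ gives $\bar\Phi(x,0)=x$, so $\bar\Phi$ is an action of $\R$. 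I expect the main obstacle to be the honest verification of the integral equations through the \'etale pullbacks, together with the careful bookkeeping of the whiskerings needed to confirm that the restriction of the seed $\lambda$ is exactly the prescribed normalization.
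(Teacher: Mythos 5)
Your proof is correct, and it reaches the same $\mu_\Phi$ as the paper but by a leaner route. The paper applies Theorem~\ref{IntegralTheorem} \emph{twice}: first in the $s$-direction, to propagate the seed $(\Phi|_{\X\times\{0\}})_\ast e_\Phi$ from $\X\times\{0\}\times\{0\}$ to a $2$-morphism $M$ over the whole slice $\X\times\R\times\{0\}$, and then in the $t$-direction to extend $M$ to $\mu_\Phi$; you instead write down the slice-$t=0$ datum directly as the whisker $e_\Phi\ast\mathrm{id}_\Phi\colon\Phi|_{\X\times\{0\}}\circ\Phi\Rightarrow\Phi$ and invoke Theorem~\ref{IntegralTheorem} only once, which saves the verification that the two composites integrate $X$ in the $s$-variable. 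The one point you gloss over is that your restriction $\lambda|_{s=0}$ is a priori the \emph{right} whisker $e_\Phi\circ\Phi|_{\X\times\{0\}}$, whereas the normalization in the statement is the \emph{left} whisker $(\Phi|_{\X\times\{0\}})_\ast e_\Phi$; these agree because $e_\Phi$ is invertible (cancel $e_\Phi$ in the two interchange-law expressions for $e_\Phi\ast e_\Phi$), but you should say so. For uniqueness you use the rigidity proposition of \S\ref{TechnicalSection} (two $2$-morphisms between maps to a stack with \'etale atlas agree iff they agree at one point of each component of the connected source), applied on each component $C\times\R\times\R$ with basepoint $(x,0,0)$, whereas the paper gets uniqueness from the uniqueness clause of Theorem~\ref{IntegralTheorem} applied along both axes; your version is a touch more self-contained since it does not require re-checking the integral equations, at the mild cost of tracking components. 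Both arguments for the final ``action'' claim are the standard passage to underlying maps.
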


\begin{definition}
Given $U\to\X$ \'etale and $u\in U$, the question of whether $X_U(u)$ is zero or non-zero depends only on $[u\to\X]$.  The \emph{support of $X$} is defined to be \[\supp X=\cl\{x\in\bar\X\mid x=[u\to\X],\, X_U(u)\neq 0\}.\]
\end{definition}

\begin{theorem}[Existence of flows.]\label{FlowTheorem}
Suppose that $\supp X$ is compact.  Then there is a flow $\Phi\colon\X\times\R\to\X$ of $X$.
\end{theorem}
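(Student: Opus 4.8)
The plan is to mirror the classical manifold argument --- build local flows, glue them, and then use the group law to extend to all of $\R$ --- with Theorem~\ref{IntegralTheorem} playing the role that, on manifolds, is played by the equality of solutions to an ODE: it will manufacture and match up the $2$-morphisms needed to make everything cohere. First I would produce a flow defined for a uniform small time. Since $\supp X$ is compact, I can cover it by finitely many orbifold-charts $[M_i/G_i]\to\X$, $i=1,\dots,N$ (Proposition~\ref{OrbifoldChartsProposition}), each shrunk to a relatively compact open substack $\mathcal{U}_i$ on whose closure the flow is defined for some time $\epsilon_i>0$ by the uniform-time existence of flows over compact sets. Together with the open substack $\mathcal{V}\subset\X$ corresponding to the open set $\bar\X\setminus\supp X$ (Proposition~\ref{SubsetProposition}), these cover $\X$, and $X$ restricts to the zero field on $\mathcal V$. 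On $M_i$ the field $X_{M_i}$ is $G_i$-invariant (Proposition~\ref{RiemannianMetricVectorFieldAtlasProposition}), so its ordinary flow is $G_i$-equivariant and hence descends to a representable $\Phi^i\colon\mathcal U_i\times(-\epsilon_i,\epsilon_i)\to\X$ integrating $X$ with $\Phi^i|_{\mathcal U_i\times\{0\}}\simeq\Id$, while on $\mathcal V$ the constant morphism $\Phi^{\mathcal V}(x,t)=x$ serves for all time. Setting $\epsilon=\min_i\epsilon_i$ gives local flows on the pieces of an open cover of $\X\times(-\epsilon,\epsilon)$.

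Next I would glue these pieces. On a double overlap the two relevant local flows both integrate $X$ and are canonically $2$-isomorphic at $t=0$, so Theorem~\ref{IntegralTheorem} supplies a unique $2$-morphism between them extending the one at $t=0$; the same uniqueness statement forces these $2$-morphisms to satisfy the cocycle condition on triple overlaps. Descent for morphisms of stacks then assembles the local flows into a single representable $\Phi\colon\X\times(-\epsilon,\epsilon)\to\X$ integrating $X$, equipped with $e\colon\Phi|_{\X\times\{0\}}\Rightarrow\Id_\X$. Verifying these cocycle conditions --- that is, that the locally-defined flows cohere into an honest morphism of stacks --- is the technical heart of the argument, and it is precisely here that the Deligne-Mumford hypothesis enters, through the uniqueness of integrals in Theorem~\ref{IntegralTheorem} (which, as noted in the excerpt, fails for more general stacks).

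Finally I would extend $\Phi$ from $(-\epsilon,\epsilon)$ to all of $\R$. Writing $\Phi_s=\Phi|_{\X\times\{s\}}$, applying Theorem~\ref{IntegralTheorem} to the two reparametrised integrals $t\mapsto\Phi_{s+t}$ and $t\mapsto\Phi_s\circ\Phi_t$ (which agree at $t=0$) yields a local group law $\Phi_{s+t}\simeq\Phi_s\circ\Phi_t$ whenever $s,t,s+t\in(-\epsilon,\epsilon)$; in particular, with $\delta=\epsilon/2$, the map $\Phi_\delta$ is an equivalence with inverse $\Phi_{-\delta}$. On each interval $\bigl((n-1)\delta,(n+1)\delta\bigr)$ I would then define $(x,t)\mapsto(\Phi_\delta)^{\circ n}\bigl(\Phi(x,t-n\delta)\bigr)$; since the time-$s$ flow map carries $X$ to $X$, each of these integrates $X$, and the local group law shows that consecutive ones agree up to a canonical $2$-morphism at $t=n\delta$, hence throughout their overlap and compatibly on triple overlaps by Theorem~\ref{IntegralTheorem}. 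Gluing produces a representable $\Phi\colon\X\times\R\to\X$ integrating $X$ with $\Phi|_{\X\times\{0\}}\simeq\Id$, which is the desired flow. Beyond the gluing in the second step, I expect the remaining work to be bookkeeping: confirming that representability survives these compositions and descents, and that each new cocycle condition again reduces, via Theorem~\ref{IntegralTheorem}, to the uniqueness of integrals.
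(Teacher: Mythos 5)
Your proposal is correct in outline and reaches the theorem by the same two-stage skeleton (short-time flow, then extension via the group law), but the construction of the short-time flow --- the heart of the proof --- goes by a genuinely different route. The paper works on a single \'etale atlas $U\to\X$: it chooses local flows of $X_U$ on pieces $W_{u_i}$ of $U$ \emph{and} local flows of $X_{U\times_\X U}$ on the arrow space (properness of $s\times t$ gives compactness of $(s\times t)^{-1}\cl W_{u_1}\times\cl W_{u_2}$ and hence a uniform time there), assembles these into a faithful groupoid morphism $(\Phi_0,\Phi_1)$, and reads off a representable $\Phi\colon\X\times(-\delta,\delta)\to\X$, with representability coming for free from faithfulness. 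You instead descend the equivariant flow chart-by-chart and glue over the cover, using Theorem~\ref{IntegralTheorem} to manufacture the transition $2$-morphisms and to force the cocycle identity on triple overlaps. This is legitimate and arguably cleaner: the hard work on the arrow space is already packaged inside the proof of Theorem~\ref{IntegralTheorem} (Lemma~\ref{PointIntegralLemma}, which is exactly where properness enters), and you avoid choosing the uniform times $\delta_{u_iu_j}$ on $U\times_\X U$ by hand. The price is that representability of the glued morphism does not come for free; it is a genuine step rather than bookkeeping, though it does go through for Deligne-Mumford stacks (each local piece is faithful, faithfulness is local on the source, and a faithful morphism between Deligne-Mumford stacks is representable by the argument of Proposition~\ref{RepresentableProposition} applied to the fibre products). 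One further wrinkle in your extension step: you compose as $(\Phi_\delta)^{\circ n}\circ\Phi(\cdot,t-n\delta)$, which integrates $X$ only once you know the time-$\delta$ map pushes $X$ forward to itself; this is true (check it on an atlas, where $\Phi_\delta$ is covered by time-$\delta$ flow maps of $X_U$) but needs to be said. The paper sidesteps it by composing in the opposite order --- applying the fixed-time equivalence first and flowing last, as in $\Phi\circ(\Phi_{3\delta/4}^{2n}\times\tau_n)$ --- so that the composite integrates $X$ automatically.
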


\begin{proposition}[Restriction of flows.]\label{RestrictedFlowProposition}
Let $i\colon\Y\to\X$ be an embedding and suppose that $X$ is tangent to $\Y$.  Then a flow of $X$ on $\X$ can be restricted to a flow of $X|_\Y$ on $\Y$.  That is, for any flow $\Phi\colon\X\times\R\to\X$ of $X$, there is a flow $\Psi\colon\Y\times\R\to\Y$ of $X|_\Y$ and a $2$-commutative diagram
\[\xymatrix{
\Y\times\R\ar[r]^\Psi\ar[d]_{i\times\Id_\R} & \Y\ar[d]^i_{}="1"\\
\X\times\R\ar[r]_{\Phi}^{}="2" & \X\ar@{=>}_{\varepsilon}"1";"2"
}\]
with $\varepsilon|_{\Y\times\{0\}}=i_\ast e_\Psi \circ i^\ast e_\Phi^{-1}$.
\end{proposition}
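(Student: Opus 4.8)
The plan is to show that the flow $\Phi$ preserves the substack $\Y$, to define $\Psi$ as the resulting restricted flow, and then to extract $e_\Psi$ and $\varepsilon$ by descent along $i$ together with the uniqueness of integrals. First I would record that, since $i\colon\Y\to\X$ is an embedding, $\Y$ is a \emph{subset} of $\X$ in the sense of \S\ref{SubstackSubsection}: for every $U\to\X$ the pullback $\Y\times_\X U\to U$ is the inclusion of an embedded submanifold, hence of a subset. Write $\bar Y\subset\bar\X$ for the corresponding subset of the underlying space (Proposition~\ref{SubsetProposition}). The key geometric input is that the flow preserves $\bar Y$. To see this I would pass to an \'etale atlas $A\to\X$, over which the defining integral condition exhibits $\Phi$ as a flow of the vector field $X_A$, and over which $\Y\times_\X A$ is the embedded submanifold $N=\bar Y_A\subset A$. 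Because $X$ is tangent to $\Y$, the field $X_A$ is tangent to $N$, so the classical theory of flows on manifolds guarantees that this flow preserves $N$ and restricts there to the flow of $X_A|_N$. Passing to underlying spaces and using that $\bar A\to\bar\X$ is a quotient map (Proposition~\ref{XBarTopologyProposition}), I conclude that the $\R$-action $\bar\Phi$ of Proposition~\ref{FlowActionProposition} preserves $\bar Y$.

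Next I would construct $\Psi$. Consider $F=\Phi\circ(i\times\Id_\R)\colon\Y\times\R\to\X$. Its underlying map is $\bar\Phi\circ(\bar i\times\Id_\R)$, which by the previous step sends every point of $\bar\Y\times\R$ into $\bar Y$. By the factoring criterion established in the proof of Proposition~\ref{SubsetProposition} --- a morphism into $\X$ all of whose points lie in $\bar Y$ factors through the subset $\Y$ --- the morphism $F$ factors as $F\cong i\circ\Psi$ for some $\Psi\colon\Y\times\R\to\Y$, and I fix a $2$-morphism $\varepsilon_0\colon i\circ\Psi\Rightarrow F$. To see that $\Psi$ integrates $X|_\Y$ I would use the \'etale atlas $\Y\times_\X A\to\Y$, which is presented by the submanifold $N\subset A$; over it $\Psi$ is exactly the restriction to $N$ of the flow of $X_A$ found above, and this restricted flow integrates $X_A|_N=(X|_\Y)_N$ by Proposition~\ref{VectorFieldProposition}(3). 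Since the integral condition need only be checked on a single atlas, $\Psi$ integrates $X|_\Y$.

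Finally I would upgrade $\Psi$ to a flow and normalize $\varepsilon$. Restricting $\varepsilon_0$ to $\Y\times\{0\}$ and composing with the $2$-morphism obtained by whiskering $e_\Phi$ with $i$ yields a $2$-morphism $i\circ\Psi|_{\Y\times\{0\}}\Rightarrow i=i\circ\Id_\Y$; since $i$ is an embedding, and in particular faithful on $2$-morphisms, this descends to a unique $e_\Psi\colon\Psi|_{\Y\times\{0\}}\Rightarrow\Id_\Y$, making $(\Psi,e_\Psi)$ a flow of $X|_\Y$. Both $i\circ\Psi$ and $F$ integrate $X$, so by the uniqueness of integrals (Theorem~\ref{IntegralTheorem}) there is a unique $2$-morphism $\varepsilon\colon i\circ\Psi\Rightarrow F$ whose restriction to $\Y\times\{0\}$ equals the prescribed $i_\ast e_\Psi\circ i^\ast e_\Phi^{-1}$; taking this $\varepsilon$ gives the required $2$-commutative diagram. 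I expect the main obstacle to be the geometric step of showing that the flow preserves $\Y$, since this is where the tangency hypothesis must be converted, via an atlas and the classical manifold result, into a statement about the stacky action $\bar\Phi$; the remaining $2$-categorical bookkeeping for $e_\Psi$ and $\varepsilon$ is routine but must be carried out carefully to match the stated normalization.
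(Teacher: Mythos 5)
Your overall architecture is sound and genuinely different from the paper's: you route the construction of $\Psi$ through the subset--substack correspondence of Proposition~\ref{SubsetProposition} (show that $\bar\Phi$ preserves the subset $\bar Y$, factor $\Phi\circ(i\times\Id_\R)$ through $\Y$, then normalize $e_\Psi$ and $\varepsilon$ using fullness of $i$ and Theorem~\ref{IntegralTheorem}), whereas the paper works directly with groupoid presentations, showing that the induced map $\tilde\Phi_0\colon A'\to A$ on atlases carries $B'$ into $B$ and that the resulting groupoid morphism therefore restricts. Your descent and normalization steps at the end are fine and essentially parallel the paper's use of Lemma~\ref{IdentityEmbeddingLemma}.

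The gap is in your key geometric step. You justify the invariance of $N=\Y\times_\X A$ by saying that ``the classical theory of flows on manifolds guarantees that this flow preserves $N$'', where ``this flow'' is supposed to be the flow of $X_A$ on the atlas $A$. But there is no such flow: $X_A$ is in general not complete on $A$ (think of $A$ as a disjoint union of small charts), and the whole content of Theorem~\ref{FlowTheorem} is that the flow exists only on the stack, assembled from local flows via the groupoid $A\times_\X A\rightrightarrows A$. What is true at the level of $A$ is only that \emph{local} integral curves of $X_A$ starting in $N$ remain in $N$ while they are defined. To convert this into the statement that $\bar\Phi$ preserves $\bar Y$ you must decompose a trajectory $t\mapsto\bar\Phi(y,t)$ into finitely many segments, each of which lifts to an integral curve of $\partial/\partial t$ in the atlas of $\X\times\R$ induced by $\Phi$ and hence maps to an integral curve of $X_A$ in $A$, propagate membership in $N$ along each segment using tangency, and pass from the endpoint of one segment to the start of the next using the fact that $N$ is saturated (closed under $2$-morphisms in $A$). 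This chaining argument is exactly what the paper supplies in its proof of the claim that $\tilde\Phi_0(B')\subset B$, and it is the one piece of your proposal that cannot be dismissed as bookkeeping. (A smaller point: to descend your $2$-morphism $i\circ\Psi|_{\Y\times\{0\}}\Rightarrow i\circ\Id_\Y$ to $e_\Psi$ you need $i$ to be \emph{full} on $2$-morphisms, not merely faithful; for an embedding this holds, but it is fullness you are using.)
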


\begin{lemma}\label{UnderlyingFlowLemma}
Let $X$ be a vector field on $\X$ with flow $\Phi$, and let $f\colon\X\to\R$ be a morphism.  Write $\varphi_t\colon\bar\X\to\bar\X$ for the action underlying $\Phi$.  Then for any $x\in\bar\X$ the map $t\mapsto\bar f\circ\varphi_t(x)$ is smooth with derivative $t\mapsto\overline{X\cdot f}(\varphi_t(x))$.
\end{lemma}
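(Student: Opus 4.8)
The plan is to reduce the entire statement to the familiar manifold identity $\frac{d}{dt}(g\circ\tilde\gamma)=(X_U\cdot g)\circ\tilde\gamma$ for an integral curve $\tilde\gamma$ of $X_U$ on an \'etale chart $U\to\X$. First I would package the data into a single curve. Writing $x\colon\pt\to\X$ for a geometric point representing the class $x\in\bar\X$, set $\gamma=\Phi\circ(x\times\Id_\R)\colon\R\to\X$. By Proposition~\ref{FlowActionProposition} the underlying map $\bar\gamma\colon\R\to\bar\X$ is exactly $t\mapsto\varphi_t(x)$. Now $f\circ\gamma\colon\R\to\R$ is a morphism from a manifold to $\R$, hence an ordinary smooth function, and by functoriality of underlying maps $\bar f\circ\varphi_t(x)=\bar f\circ\bar\gamma=\overline{f\circ\gamma}=f\circ\gamma$. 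This immediately gives smoothness and reduces the lemma to showing $\frac{d}{dt}(f\circ\gamma)(t)=\overline{X\cdot f}(\varphi_t(x))$ at each $t$.

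Since the claim is local in $t$, I would fix $t_0$ and choose an \'etale $\pi_U\colon U\to\X$ together with $u_0\in U$ representing $\varphi_{t_0}(x)$. Forming the fibre product $\R\times_\X U$ along $\gamma$ and $\pi_U$, the projection $\R\times_\X U\to\R$ is \'etale because $\pi_U$ is representable \'etale; a $2$-isomorphism $\gamma(t_0)\Rightarrow\pi_U(u_0)$ (which exists as both represent $\varphi_{t_0}(x)$) provides a point over $t_0$, and a local section of this \'etale projection, composed with the projection to $U$, yields a lift $\tilde\gamma\colon(t_0-\varepsilon,t_0+\varepsilon)\to U$ with $\tilde\gamma(t_0)=u_0$ together with a $2$-isomorphism $\pi_U\circ\tilde\gamma\Rightarrow\gamma$. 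Applying $f$ to this $2$-isomorphism and using that $2$-isomorphic morphisms to the manifold $\R$ are equal gives $f\circ\gamma=(f\circ\pi_U)\circ\tilde\gamma$ near $t_0$, so that $\frac{d}{dt}(f\circ\gamma)=d(f\circ\pi_U)(\tilde\gamma')$.

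The crux, and the step I expect to be the main obstacle, is to show that $\tilde\gamma$ is an integral curve of $X_U$, i.e.\ $\tilde\gamma'(t)=X_U(\tilde\gamma(t))$; this is where the integration property of $\Phi$ enters. I would lift the section above to a curve $\sigma$ into the fibre product $(\X\times\R)\times_\X U$ defining $\Phi_U$, arranged so that $\Phi_U\circ\sigma=\tilde\gamma$ and so that $\sigma$ projects to the curve $t\mapsto(x,t)$ in $\X\times\R$. Because $\ddt$ on $(\X\times\R)\times_\X U$ is by definition the pullback of $\ddt$ along the \'etale projection to $\X\times\R$, and that projection carries $\sigma$ to a curve of velocity $\ddt$, the velocity of $\sigma$ is precisely $\ddt$ along $\sigma$. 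Differentiating $\Phi_U\circ\sigma$ and invoking the integration identity $\bigddt\cdot\Phi_U=X_U\circ\Phi_U$ then yields $\tilde\gamma'(t)=X_U(\tilde\gamma(t))$. Substituting into the chain-rule expression from the previous paragraph gives $\frac{d}{dt}(f\circ\gamma)=\bigl(X_U\cdot(f\circ\pi_U)\bigr)\circ\tilde\gamma=(X\cdot f)_U\circ\tilde\gamma$, and since $(X\cdot f)_U(u)=\overline{X\cdot f}([u\to\X])$ while $[\tilde\gamma(t)\to\X]=\varphi_t(x)$, this equals $\overline{X\cdot f}(\varphi_t(x))$. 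As $t_0$ was arbitrary the identity holds for all $t$.
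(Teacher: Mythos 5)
Your argument is correct and follows essentially the same route as the paper: both reduce to the smooth function $f\circ\Phi\circ(x\times\Id_\R)$ on $\R$, lift it locally through an \'etale chart $\pi_U\colon U\to\X$ where the integration property of $\Phi$ makes the lift an integral curve of $X_U$, and then apply the classical chain rule to get $X_U\cdot(f\circ\pi_U)=\overline{X\cdot f}$. The only difference is cosmetic — you work at a general $t_0$ and spell out the construction of the lift and the verification that it integrates $X_U$, where the paper reduces to $t=0$ and cites the definition of integration more tersely.
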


\subsection{Counterexamples for non-Deligne-Mumford stacks.}\label{CounterexamplesSection}

With the exception of Theorem~\ref{MetricTheorem}, the definitions and results of \S\ref{VFRMDefinitionSection} apply to any differentiable stack that admits an \'etale atlas.  We can therefore ask whether Theorems \ref{MetricTheorem}, \ref{IntegralTheorem} and \ref{FlowTheorem} hold for these more general stacks.  The answer in each case is `no', even if one restricts to stacks with an \'etale atlas and finite inertia groups, as we shall now show.

\begin{example}\label{ExampleOne}
In this example we will show that Theorem~\ref{MetricTheorem} can fail for stacks which admit \'etale atlases and have finite inertia groups.

Define $\mathfrak{C}=[C_1\rightrightarrows C_0]$, where $C_1\rightrightarrows C_0$ is the groupoid with objects $\R$ and with, for each $n\in\mathbb{N}$, a single morphism from each $t\in(\frac{-1}{2^n},\frac{-1}{2^{n+1}})$ to $2+2^{n+1}t\in(0,1)$, and with no nontrivial morphisms besides the ones these generate.  Thus $C_0=\R$, while $C_1$ is the disjoint union of one copy of $\R$ with countably many copies of $(0,1)$.  The groupoid $C_1\rightrightarrows C_0$ is \'etale and has trivial inertia groups but is not proper.

Suppose that $\mathfrak{C}$ admits a Riemannian metric and consider the corresponding invariant metric on $C_0$.  Without loss let the length of $\partial/\partial x\in T_0 C_0$ be $1$, so that $\partial/\partial x\in T_tC_0$ has length at most $2$ for all $t$ in some neighbourhood of $0$; this neighbourhood contains $(\frac{-1}{2^n},\frac{-1}{2^{n+1}})$ for all $n$ large enough.  For each $n$, $C_1$ contains a copy of $(0,1)$ with $s\colon(0,1)\to\R$ given by $r\mapsto r$ and $t\colon(0,1)\to\R$ given by $r\mapsto(r-2)/2^{n+1}$.  Invariance of the metric then means that on $(0,1)$ $\partial/\partial x\in T_tC_0$ must have length at most $1/2^n$ for each $n$ large enough.  Thus $\partial/\partial x$ must have length zero at all points on $(0,1)$, which is a contradiction.
\end{example}

\begin{example}\label{ExampleTwo}
In this example we will show that Theorem~\ref{IntegralTheorem} can fail for vector fields on stacks which admit \'etale atlases and have finite inertia groups.

Let $\mathfrak{A}$ be the stack $[A_1\rightrightarrows A_0]$, where $A_1\rightrightarrows A_0$ is the \'etale groupoid whose objects consist of two copies of $\R$ that we denote by $\R_0$ and $\R_1$, and with a single morphism from $t\in\R_0$ to $t\in\R_1$ for each $t\in(-\infty,0)$, and no further nontrivial morphisms besides the ones these generate.  Thus $A_0=\R_0\sqcup\R_1$ and $A_1$ is a disjoint union of copies of $\R$ and $(-\infty,0)$.  Then $A_1\rightrightarrows A_0$ is an \'etale groupoid with trivial inertia groups but is not proper.

The underlying space $\bar{\mathfrak{A}}$ consists of two copies of $\R$ with the subsets $(-\infty,0)$ identified, and so is not Hausdorff.  In particular, for any $\delta>0$ the two morphisms $\{-\delta\}\hookrightarrow\R_i\to\mathfrak{A}$ are $2$-isomorphic while the two $\{\delta\}\hookrightarrow\R_i\to\mathfrak{A}$ are not.

Now consider the vector field $A$ on $\mathfrak{A}$ corresponding to the invariant vector field $\partial/\partial x$ on $\R_0\sqcup\R_1$.  The two morphisms $\R=\R_i\to\mathfrak{A}$ then integrate $A$ and are $2$-isomorphic when restricted to $\{-1\}$, but are not themselves $2$-isomorphic.  Thus Theorem~\ref{IntegralTheorem} fails for $\mathfrak{A}$ and $A$.
\end{example}

\begin{example}\label{ExampleThree}
In this example we will show that Theorem~\ref{FlowTheorem} can fail for vector fields on stacks which admit \'etale atlases and have finite inertia groups.  

Let $\mathfrak{A}$ be the stack defined in the last example, and let $B$ be the vector field on $\mathfrak{A}$ corresponding to the invariant vector field $\phi\cdot\partial/\partial x$ on $\R_0\sqcup\R_1$.  Here $\phi\colon\R_0\sqcup\R_1\to\R$ is the composition of the componentwise identity map $\R_0\sqcup\R_1\to\R$ with a function $\R\to\R$ that has value $1$ on $[-2,2]$ and that has compact support.

$B$ has compact support, so let us suppose that Theorem~\ref{FlowTheorem} holds for $\mathfrak{A}$ and $B$, giving us $\Phi\colon\mathfrak{A}\times\R\to\mathfrak{A}$ and $e_\Phi\colon\Phi_0\Rightarrow\Id$.  It is possible to show that there must be some $\epsilon>0$ such that for each $i$ the composition
\[(-\epsilon,\epsilon)\xrightarrow{\{0\}\times\mathrm{inc}}\R_i\times\R\to\mathfrak{A}\times\R\xrightarrow{\Phi}\mathfrak{A}\]
is $2$-isomorphic to
\[(-\epsilon,\epsilon)\to\R_i\to\mathfrak{A}.\]
However, the first pair of morphisms are $2$-isomorphic, while the second pair of morphisms are not; this is a contradiction.
\end{example}

\subsection{Proofs.}\label{ProofsSection}

We will now give proofs of the results of \S\ref{VFRMDefinitionSection} and \S\ref{IntegralFlowSection}.  We begin with the proofs of Proposition~\ref{VectorFieldProposition} and Theorem~\ref{MetricTheorem} from \S\ref{VFRMDefinitionSection}.

\begin{proof}[Proof of Proposition~\ref{VectorFieldProposition}.]
The first part is immediate from the fact that, given $h\colon U\to V$ \'etale and vector fields $X,Y$ and functions $f,g$ on $V$, we have $h^\ast(fX+gY)=(f\circ h)h^\ast X+(g\circ h)h^\ast Y$.

Now we prove the second part.  Using Proposition~\ref{RiemannianMetricVectorFieldAtlasProposition} and choosing \'etale atlases $A\to\X$, $B\to\Y$, we can define $X\oplus Y$ to be the vector field on $\X\times\Y$ for which
\[(X\oplus Y)_{A\times B}=X_A\oplus Y_B.\]
We must now verify that $(X\oplus Y)_{U\times V}=X_U\oplus Y_V$ for any \'etale maps $U\to\X$ and $V\to\Y$.  But in the diagram
\[\xymatrix{
(U\times V)\times_{\X\times\Y}(A\times B)\ar[r]^-{\pi_2}\ar[d]_{\pi_1} & A\times B\ar[d]_{}="1"\\
U\times V\ar[r]^{}="2" & \X\times\Y\ar@{=>}"1";"2"
}\]
we have $(U\times V)\times_{\X\times\Y}(A\times B)=(U\times_\X A)\times(V\times_\Y B)$, $\pi_1=\pi_1\times\pi_1$, and $\pi_2=\pi_2\times\pi_2$.  Therefore $\pi_1^\ast(X_U\oplus Y_V)=X_{U\times_\X A}\oplus Y_{V\times_\Y B}=\pi_2^\ast(X_A\oplus Y_B)=\pi_2^\ast(X\oplus Y)_{A\times B}=\pi_1^\ast(X\oplus Y)_{U\times V}$ so that, since $\pi_1$ is surjective and \'etale, we must have $X_U\oplus Y_V=(X\oplus Y)_{U\times V}$.

The third part is proved in a similar way.  Let $B\to\Y$ be the induced \'etale atlas $A\times_\X\Y\to\Y$, which is a submanifold of $A$.  Define $X|_\Y$ to be the vector field with $(X|_\Y)_B=X_A|_B$.  Now we must verify that $(X|_\Y)_{U\times_\X\Y}=X_U|_{U\times_\X\Y}$ for any \'etale $U\to\X$.  Consider the diagrams
\[\xymatrix{
U\times_\X A\ar[r]^-{\pi_1}\ar[d]_{\pi_2} & U\ar[d]_{}="1"\\
A\ar[r]^{}="2" & \X,\ar@{=>}"1";"2"}\qquad
\xymatrix{
(U\times_\X\Y)\times_\Y B\ar[r]^-{\pi_1}\ar[d]_{\pi_2} & U\times_\X\Y\ar[d]_{}="1"\\
B\ar[r]^{}="2" & \Y,\ar@{=>}"1";"2"
}\]
where the second diagram is obtained from the first by pulling back along $\Y\to\X$.  We therefore have $\pi_1^\ast(X|_\Y)_{U\times_\X\Y}=\pi_2^\ast(X|_\Y)_B=\pi_2^\ast(X_A|_B)=(\pi_2^\ast X_A)|_{(U\times_\X\Y)\times_\Y B}=(\pi_1^\ast X_U)|_{(U\times_\X\Y)\times_\Y B}=\pi_1^\ast (X_U|_{U\times_\X\Y})$.  Since $\pi_1$ is an \'etale surjection, we must have $(X|_\Y)_{U\times_\X\Y}=X_U|_{U\times_\X\Y}$ as required.
\end{proof}

\begin{proof}[Proof of Theorem~\ref{MetricTheorem}.]
By Proposition~\ref{OrbifoldChartsProposition} we may cover $\bar\X$ with open sets of the form $\overline{[M/G]}$ for $[M/G]\to\X$ an orbifold chart.  Applying Theorem~\ref{PartitionsOfUnityTheorem}, we may find $\phi_i\colon\X\to\R$, for $i=1,2,\ldots$, such that the the $\bar\phi_i$ are a partition of unity on $\bar\X$ and each $\supp\phi_i$ is contained in $\overline{[M_i/G_i]}$ for some orbifold-chart $[M_i/G_i]\to\X$.  We write $\iota_i$ for the composition $M_i\to[M_i/G_i]\to\X$.  Note that by averaging we may find for each $i$ a $G_i$-invariant metric $\langle-,-\rangle_i$ on $M_i$.

With this data we will now construct the required assignment $(U\to\X)\mapsto\langle-,-\rangle_U$ satisfying the conditions of Definition~\ref{RiemannianMetricDefinition}.

Let $U\to\X$ be \'etale and let $u\in U$.  Then there is an open neighbourhood $V$ of $u$ such that $(\supp\phi_i)_V$ is non-empty for only finitely many $i$ and, reducing $V$ if necessary, $(\supp\phi_i)_V$ is non-empty only if it contains $u$.  For each such $i$ the map $V\times_\X M_i\to V$ is \'etale and has image containing $u$.  We may therefore find a neighbourhood $W_u$ of $u$ and $2$-commutative diagrams
\[\xymatrix{W_u\ar[r]\ar[d]_{\lambda_i} & U\ar[d]_{}="1"\\
M_i\ar[r]^{}="2"_{\iota_i}&\X\ar@{=>}"1";"2"_{\psi_i}}\]
for each $i$ such that $(\supp\phi_i)_V\neq\emptyset$.  Now consider the Riemannian metric $\langle-,-\rangle_{W_u}$ on $W_u$ defined by
\begin{equation}\label{MetricEquation}\langle\alpha,\beta\rangle_{W_u}=\sum\phi_i(\iota_i(\lambda_i(w))){\langle\alpha,\beta\rangle_i}^{\lambda_i}\end{equation}
for $\alpha,\beta\in T_wW_u$.  This is certainly a smooth section of $S^2T^\ast W_u$, and it is positive since $\sum\bar\phi_i=1$.

Now let $\langle-,-\rangle_u$ denote the metric on $T_uU$ induced by $\langle-,-\rangle_{W_u}$.  We claim that this metric is independent of the choices made.  For suppose given a second set of data $W'_u$, $\lambda_i'$, $\psi_i'$.  We may clearly asume that $W_u=W_u'$ and that this neighbourhood of $u$ is connected.  Then for each $i$ there is $g_i\in G_i$ such that $\lambda_i'=g_i\lambda_i$, and so ${\langle-,-\rangle_i}^{\lambda'_i}={\langle-,-\rangle_i}^{\lambda_i}$ since $\langle-,-\rangle_i$ is $G_i$-invariant.  Also $\iota_i\circ\lambda_i'$ and $\iota_i\circ\lambda_i$ are $2$-isomorphic, so that $\phi_i\circ\iota_i\circ\lambda_i=\phi_i\circ\iota_i\circ\lambda_i'$.  The claim is now immediate from \eqref{MetricEquation}.

Since $W_u$ serves as $W_v$ for any $v\in W_u$, the $\langle-,-\rangle_u$ combine to give a Riemannian metric $\langle-,-\rangle_U$ on $U$.  We must now check  that, given a $2$-commutative diagram
\[\xymatrix{
V\ar[d]_f\ar[rr]_{}="1"^{\pi_V} & &\X\\
U\ar[rru]^{}="2"_{\pi_U}&&\ar@{=>}_{\psi}"1";"2"
}\]
with $f$ \'etale, we have ${\langle-,-\rangle_U}^f=\langle-,-\rangle_V$.  Given $v\in V$, take a neighbourhood $W_v$ as above, chosen small enough that $f|\colon W_v\to U$ is an open embedding.  We may therefore regard $f|\colon W_v\to U$ as the inclusion of $f(W_v)$.  From the diagrams
\[\xymatrix{W_v\ar[r]\ar[d]_{\lambda_i} & V\ar[d]_{}="1"^{\pi_V}\\
M_i\ar[r]^{}="2"_{\iota_i}&\X\ar@{=>}"1";"2"_{\psi_i}}\]
we obtain diagrams
\[\xymatrix{f(W_v)\ar[r]\ar[d]_{\lambda_if|^{-1}} & U\ar[d]_{}="1"^{\pi_U}\\
M_i\ar[r]^{}="2"_{\iota_i}&\X,\ar@{=>}"1";"2"_{\psi_i\psi^{-1}}}\]
so that again looking at the definition \eqref{MetricEquation}, $f|\colon W_v\to f(W_v)$ identifies $\langle-,-\rangle_U$ and $\langle-,-\rangle_V$ over these subsets.  The result follows.
\end{proof}

Now we deal with the proof of Theorem~\ref{IntegralTheorem}, which is based on the following lemma.

\begin{lemma}\label{PartialIntegralLemma}
Let $Y$ be a manifold and let
\[\Phi,\Psi\colon Y\times I\to\X\]
be two morphisms integrating $\X$.  Let $t\in I$ and let $Y_1\subset Y$ be an open subset with $\cl Y_1$ compact.  Then there is an open interval $J\subset I$ with $t\in J$ such that any $2$-morphism
\[\lambda\colon\Phi|_{Y_1\times\{s\}}\Rightarrow\Psi|_{Y_1\times\{s\}}\]
with $s\in J$ extends to a $2$-morphism $\Lambda\colon\Phi|_{Y_1\times J}\Rightarrow\Psi|_{Y_1\times J}$.
\end{lemma}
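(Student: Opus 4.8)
The plan is to reduce everything to the étale atlas and to package a $2$-morphism as a map into the groupoid $A\times_\X A$, which carries a canonical vector field whose flow transports $2$-morphisms. Fix an étale atlas $\pi\colon A\to\X$ and let $X_A$ be the corresponding invariant vector field on $A$ supplied by Proposition~\ref{RiemannianMetricVectorFieldAtlasProposition}; write $\phi_\tau$ for its local flow. On the groupoid $G=A\times_\X A$, with projections $\pi_1,\pi_2\colon G\to A$, invariance of $X_A$ means $\pi_1^\ast X_A=\pi_2^\ast X_A$, so there is a well-defined vector field $X_G$ on $G$ with $d\pi_i(X_G)=X_A\circ\pi_i$. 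Since each $\pi_i$ is étale and intertwines $X_G$ with $X_A$, its local flow $\theta_\tau$ satisfies $\pi_i\circ\theta_\tau=\phi_\tau\circ\pi_i$. Finally, recall that over a manifold a $2$-morphism between two morphisms to $\X$ is, locally over a chart on which both morphisms lift to $A$, exactly a map into $G$ whose composites with $\pi_1,\pi_2$ are the chosen lifts.

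First I would carry out the transport. Working over a chart $V\subset Y_1$ on which $\Phi|_{V\times J}$ and $\Psi|_{V\times J}$ lift to $a,b\colon V\times J\to A$, the integral condition says that for fixed $v$ the curves $\tau\mapsto a(v,\tau)$ and $\tau\mapsto b(v,\tau)$ are integral curves of $X_A$. A $2$-morphism $\lambda$ at time $s$ yields $g_s\colon V\to G$ with $\pi_1 g_s=a(-,s)$ and $\pi_2 g_s=b(-,s)$. I then set $g(v,\tau)=\theta_{\tau-s}\big(g_s(v)\big)$. Because $\pi_1\circ\theta_{\tau-s}=\phi_{\tau-s}\circ\pi_1$ and integral curves of $X_A$ in the manifold $A$ are unique, $\pi_1 g(v,\tau)$ is the integral curve through $a(v,s)$, namely $a(v,\tau)$; likewise $\pi_2 g(v,\tau)=b(v,\tau)$. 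Thus $g(-,\tau)$ is a $2$-morphism $\Phi|_{V\times\{\tau\}}\Rightarrow\Psi|_{V\times\{\tau\}}$ for every $\tau$, and smooth dependence of ODE solutions on initial conditions makes $g$ smooth in $(v,\tau)$. The construction is canonical — $\theta_\tau$ consists of automorphisms of the groupoid $G$ — so the local pieces transform correctly under the transition maps between charts and assemble into the required $\Lambda\colon\Phi|_{Y_1\times J}\Rightarrow\Psi|_{Y_1\times J}$, restricting to $\lambda$ at time $s$.

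The remaining task, and the main obstacle, is to choose $J$ so that $\theta_{\tau-s}(g_s(v))$ is defined for all $v\in Y_1$ and all $s,\tau\in J$, uniformly and regardless of which $\lambda$ is given; here the Deligne--Mumford hypothesis is essential. Pick a compact interval $J_0\subset I$ with $t\in\interior J_0$. Since $\Phi$ and $\Psi$ are defined on all of $Y\times I$, their lifted integral curves exist for all $\tau\in J_0$, so $a$ and $b$ carry $\cl Y_1\times J_0$ into a compact set $K\subset A$; consequently every admissible initial point $g_s(v)$ lies in $(\pi_1,\pi_2)^{-1}(K\times K)$. Because $\X$ is Deligne--Mumford its diagonal is proper, so $(\pi_1,\pi_2)\colon G\to A\times A$ is proper and $(\pi_1,\pi_2)^{-1}(K\times K)$ is compact. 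As $\pi_i\circ\theta_{\tau-s}=\phi_{\tau-s}\circ\pi_i$ keeps both projections of $\theta_{\tau-s}(g_s(v))$ inside $K$ for all $\tau,s\in J_0$, the $X_G$-integral curve through $g_s(v)$ remains in this compact set, and a curve confined to a compact set extends over its whole interval. Taking $J=\interior J_0$ then gives a uniform interval on which the transport is defined. I expect the verification that the flow stays confined to $G$ — that properness prevents the transported $2$-morphism from escaping — to be the crux, and this is exactly the property that fails in the non-Deligne--Mumford Examples of \S\ref{CounterexamplesSection}, where integrals cease to be unique.
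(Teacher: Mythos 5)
Your core mechanism is the same as the paper's: package the $2$-morphism as a map into the arrow manifold $A\times_\X A$, transport it by integrating the induced vector field $X_{A\times_\X A}$, and use properness of $(\pi_1,\pi_2)\colon A\times_\X A\to A\times A$ (pulled back from the proper diagonal) to trap the integral curves in a compact set so they extend over the whole interval. Your last paragraph is essentially the paper's Lemma~\ref{PointIntegralLemma}, and the observation that the projections of the transported curve must coincide with $a(v,\cdot)$ and $b(v,\cdot)$ by uniqueness of integral curves in $A$ is exactly how the paper shows $s\circ L=\tilde\Phi$, $t\circ L=\tilde\Psi$. However, there is a genuine gap in your setup of the interval $J$ and the compact set $K$. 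You fix an arbitrary compact $J_0\ni t$ and assert that, because $\Phi$ and $\Psi$ are defined on all of $Y\times I$, lifts $a,b\colon V\times J_0\to A$ exist and carry $\cl Y_1\times J_0$ into a compact subset of $A$. Neither claim is justified: a morphism from a manifold to $\X$ lifts to an atlas only locally, so for a fixed chart $V$ a lift over $V\times J_0$ need not exist once $J_0$ is large (already when $\X$ is a manifold and $A$ a union of small charts, the image of $V\times J_0$ need not fit inside one chart), and since your $a,b$ are only defined chart by chart, the phrase ``$a$ and $b$ carry $\cl Y_1\times J_0$ into $K$'' has no meaning until the lifts have been chosen coherently over a uniform domain.

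The correct version of this step is the paper's Lemma~\ref{LocalIntegralLemma}: for each $y\in\cl Y_1$ one lifts $\Phi$ (resp.\ $\Psi$) over some $V_y\times(t-\delta_y,t+\delta_y)$ using a local section of $(Y\times I)\times_\X A\to Y\times I$, and compactness of $\cl Y_1$ then yields finitely many charts and a single shrunken interval $J=(t-\delta,t+\delta)$ over which both morphisms lift on an \'etale cover $\tilde Y_1=\bigsqcup V_{y_i}\cap Y_1$ of $Y_1$, with image in a compact set. This is where the hypothesis that $\cl Y_1$ is compact actually enters, and it is why the lemma promises only \emph{some} interval $J\ni t$ rather than an arbitrarily prescribed one. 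With that repair your argument goes through; the remaining gluing over chart overlaps, which you dispatch by appealing to ``canonicity'' of the transport, is handled in the paper by noting that the extended map into $A\times_\X A$ integrates $X_{A\times_\X A}$, so its compatibility with the transition data, which holds at time $s$ because all local pieces come from the single global $\lambda$, propagates to all of $J$ by uniqueness of integral curves.
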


\begin{proof}[Proof of Theorem \ref{IntegralTheorem}.]
We will first prove the theorem when $\Y$ is a manifold $Y$.  Consider $2$-morphisms $\Lambda_{Y_1,J}\colon\Phi|_{Y_1\times J}\Rightarrow\Psi|_{Y_1\times J}$ such that $\Lambda_{Y_1,J}|_{Y_1\times\{t_0\}}=\lambda|_{Y_1}$, where $Y_1$ is an open subset of $Y$ and $J\subset I$ is an open interval containing $t_0$.  Such a $\Lambda_{Y_1,J}$ is unique, if it exists.  Our aim is to show that $\Lambda_{Y,I}$ exists.

Fix $Y_1\subset Y$ open with $\cl Y_1$ compact.  By applying Lemma~\ref{PartialIntegralLemma} with $s=t=t_0$ one can find some $J$ for which $\Lambda_{Y_1,J}$ exists.  Moreover, if one can find $\Lambda_{Y_1,J}$ and $\Lambda_{Y_1,J'}$ then, using the uniqueness of $\Lambda_{Y_1,J\cap J'}$, these can be glued together to produce $\Lambda_{Y_1,J\cup J'}$.  Therefore let $K\subset I$ be the largest open interval for which $\Lambda_{Y_1,K}$ exists.

We claim that $K=I$.  If not, then let $(t_i)\i$ be a sequence in $K$ converging to $t\in I-K$.  There is an open interval $J\subset I$ containing $t$ and satisfying the conclusions of Lemma~\ref{PartialIntegralLemma}.  Since $J$ contains $t$, it contains some $t_i$, and so applying the conclusion of Lemma~\ref{PartialIntegralLemma} with $s=t_i$ and $\lambda=\Lambda_{Y_1,K}|_{Y_1\times\{t_i\}}$, there is $M\colon\Phi|_{Y_1\times J}\Rightarrow\Psi|_{Y_1\times J}$ with $M|_{Y_1\times\{t_i\}}=\Lambda_{Y_1\times K}|_{Y\times\{t_i\}}$.  It follows that $M|_{Y_1\times J\cap K}=\Lambda_{Y_1,K}|_{Y_1\times J\cap K}$, and therefore $M$ and $\Lambda_{Y_1,K}$ can be glued to produce $\Lambda_{Y_1,K\cup J}$, contradicting the assumption.

We have established that $\Lambda_{Y_1,I}$ exists for each $Y_1$ with $\cl Y_1$ compact.    We can write $Y=\bigcup_{i\in\mathbb{N}}Y_i$, where each $Y_i\subset Y$ is open with $\cl Y_i$ compact, and we can find $\Lambda_{Y_i,I}$ for each $i$.  Since $\Lambda_{Y_i,I}|_{Y_i\cap Y_j}=\Lambda_{Y_j,I}|_{Y_i\cap Y_j}$, the $\Lambda_{Y_i,I}$ can be glued to produce the required $\Lambda$.  This completes the proof in the case $\Y=Y$.

We now turn to the general case.  Let $y\colon Y\to\Y$ be an \'etale surjection.  Then by the result for manifolds, there is $\Lambda_Y\colon \Phi\circ(y\times\Id_I)\Rightarrow\Psi\circ(y\times\Id_I)$ such that $\Lambda_Y|_{Y\times\{t_0\}}=y^\ast\lambda$.  Consider the pullback diagram
\[\xymatrix{
Y\times_\Y Y\ar[r]^-s\ar[d]_t & Y\ar[d]_{}="1"\\
Y\ar[r]_y^{}="2" & \Y.\ar@{=>}"1";"2"_{\mu}
}\]
In order to show that $\Lambda_Y$ descends to the required $\Lambda$, it suffices to show that
\begin{gather*}
\Psi_\ast(\mu\times\Id_{\Id_I})\circ(s\circ y\times\Id_I)^\ast\Lambda_Y,\\
(t\circ y\times\Id_I)^\ast\Lambda_Y\circ\Phi_\ast(\mu\times\Id_{\Id_I}),
\end{gather*}
which are $2$-morphisms $\Phi\circ(y\circ s\times\Id_I)\Rightarrow\Psi\circ(y\circ t\times\Id_I)$, coincide.  When restricted to $Y\times_\Y Y\times\{t_0\}$ these become $({\Psi|_{\Y\times\{t_0\}}}_\ast\mu)\circ(s\circ y^\ast\lambda)$, $(t\circ y^\ast\lambda)\circ{\Phi|_{\Y\times\{t_0\}}}_\ast\mu$, which do indeed coincide, and therefore $\Lambda_Y$ descends to $\Lambda$ as required.
\end{proof}

We must now prove Lemma~\ref{LocalIntegralLemma}.  In order to do so we state and prove the following two supporting lemmas.  

\begin{lemma}\label{LocalIntegralLemma}
Let $Y$ be a manifold, $\Phi\colon Y\times I\to\X$ a morphism that integrates $X$, and $U\to\X$ an \'etale surjection.  Then for each open $Y_1\subset Y$ with $\cl{Y_1}$ compact and each $t\in I$ there is an interval $J\subset I$ containing $t$, an \'etale surjection $\tilde Y_1\to Y$, and a $2$-commutative square
\[\xymatrix{
{\tilde Y}_1\times J\ar[r]\ar[d] & U\ar[d]_{}="1"\\
Y\times I\ar[r]_\Phi^{}="2" & \X\ar@{=>}"1";"2"
}\]
in which $\tilde Y_1\times J\to U$ integrates $X_U$.
\end{lemma}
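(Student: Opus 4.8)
The statement to prove is Lemma~\ref{LocalIntegralLemma}: given $\Phi\colon Y\times I\to\X$ integrating $X$, an \'etale surjection $U\to\X$, an open $Y_1$ with $\cl Y_1$ compact, and $t\in I$, we must produce an interval $J\ni t$, an \'etale surjection $\tilde Y_1\to Y$, and a $2$-commutative square lifting $\Phi$ through $U\to\X$ in such a way that the lift integrates $X_U$. The essential content is purely local: over a neighbourhood of a single point of $Y_1\times\{t\}$ the morphism $\Phi$ factors through $U$ because $U\to\X$ is \'etale (hence locally a diffeomorphism of atlases), and the problem is to patch these local lifts into one defined over all of $Y_1$ and over a common interval $J$. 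The plan is to first construct pointwise lifts, then use compactness of $\cl Y_1$ to extract a finite cover, and finally assemble the cover into a single \'etale surjection $\tilde Y_1\to Y$ with a common time-interval $J$.

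\emph{First}, I would fix a point $y_0\in\cl Y_1$ and examine $\Phi$ near $(y_0,t)$. Since $U\to\X$ is an \'etale surjection, the fibre product $(Y\times I)\times_\X U\to Y\times I$ is an \'etale surjection of manifolds; choosing a point of this fibre product lying over $(y_0,t)$ gives a local section, i.e.\ an open neighbourhood $N\times J_{y_0}$ of $(y_0,t)$ in $Y\times I$ together with a lift $N\times J_{y_0}\to U$ and a $2$-morphism making the square commute. This is just the statement that an \'etale map admits local sections, applied to the pullback of $U\to\X$ along $\Phi$. By construction the composite $N\times J_{y_0}\to U\to\X$ agrees (up to the $2$-morphism) with $\Phi$, and because $U\to\X$ is \'etale the integrating condition $(\ddt)\cdot\Phi_U=X_U\circ\Phi_U$ is inherited by the lift; concretely, the lift equals $\Phi$ read in the atlas $U$, so it automatically integrates $X_U$.

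\emph{Second}, I would cover $\cl Y_1$ by finitely many such neighbourhoods $N_1,\dots,N_k$, with associated intervals $J_1,\dots,J_k$ and lifts $\ell_j\colon N_j\times J_j\to U$. Set $J=\bigcap_j J_j$, an open interval containing $t$, and set $\tilde Y_1=\bigsqcup_j N_j$. The disjoint union $\bigsqcup_j N_j\to Y$ is \'etale (each $N_j\hookrightarrow Y$ is an open embedding) and surjects onto the open set $\bigcup_j N_j\supset \cl Y_1\supset Y_1$; if one insists on surjectivity onto all of $Y$, simply adjoin the open inclusion $Y\hookrightarrow Y$ as an extra component, so $\tilde Y_1=\bigl(\bigsqcup_j N_j\bigr)\sqcup Y$, which maps \'etale-surjectively to $Y$. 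The lifts $\ell_j$ assemble into a single map $\tilde Y_1\times J\to U$ on the disjoint-union components, and on the extra $Y$-component one uses any local lift or simply restricts attention to the part covering $Y_1$ (the statement only requires an \'etale surjection $\tilde Y_1\to Y$, not compatibility of overlapping lifts, since the components are disjoint). The resulting square is $2$-commutative and the total lift integrates $X_U$ because each $\ell_j$ does.

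\emph{The main obstacle.} The genuinely delicate point is not the existence of local lifts but ensuring a \emph{common} interval $J$ valid uniformly over $Y_1$: a priori each local lift is defined only on its own shrinking time-interval, and without compactness one could not intersect infinitely many of them to a nonempty $J$. This is exactly why the hypothesis $\cl Y_1$ compact appears, and the finite subcover step is what converts the pointwise statements into a uniform one. A secondary subtlety is bookkeeping with the $2$-morphisms: one must check that the local $2$-morphisms $\Phi\Rightarrow(\text{lift composed with }U\to\X)$ need not be glued, because working over a disjoint union $\tilde Y_1$ sidesteps any cocycle condition. The integrating property of the lift is then immediate, since in the \'etale atlas $U$ the equation defining ``integrates $X$'' is the same equation that $\Phi$ already satisfies.
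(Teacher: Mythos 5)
Your argument is essentially the paper's own proof: pull back the \'etale surjection $U\to\X$ along $\Phi$ to $(Y\times I)\times_\X U\to Y\times I$, take local sections near each point of $\cl Y_1\times\{t\}$ (these automatically integrate $X_U$ since the section identifies $\partial/\partial t$ on both sides), and use compactness of $\cl Y_1$ to extract a finite subcover with a common interval $J$ and $\tilde Y_1$ a disjoint union of the patches. The one correction: do not adjoin $Y$ itself as an extra component of $\tilde Y_1$ --- no lift $Y\times J\to U$ need exist, and none is required, since the paper's proof (and its application in Lemma~\ref{PartialIntegralLemma}) only ever uses that $\tilde Y_1\to Y_1$ is an \'etale surjection, the stated ``$\tilde Y_1\to Y$'' notwithstanding.
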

\begin{proof}
Consider the pullback diagram:
\[\xymatrix{
U\times_\X(Y\times I)\ar[d]\ar[r]^-{{\Phi_U}}  & U\ar[d]_{}="1"\\
Y\times I\ar[r]_\Phi^{}="2" & \X\ar@{=>}"1";"2"
}\]
For each $y\in\cl Y_1$ choose a $\tilde y\in U\times_\X(Y\times I)$ lying over $(y,t)\in Y\times I$.  We can find an open neighbourhood $V_y$ of $y$ and $\delta_y>0$ such that $V_y\times(t-\delta_y,t+\delta_y)$ lifts to a neighbourhood of $\tilde y$; on such a neighbourhood $\Phi_U\colon V_y\times(t-\delta_y,t+\delta_y)\to U$ integrates $X_U$.  Since the $V_y$ cover $\cl Y_1$ and $\cl Y_1$ is compact, we can choose $y_1,\ldots,y_n\in \cl Y_1$ such that $\cl Y_1\subset \bigcup V_{y_i}$.  Set $\delta=\mathrm{min}(\delta_{y_i})$ and $\tilde Y_1=\bigsqcup V_{y_i}\cap Y_1$, so that $\tilde Y_1\to Y_1$ is an \'etale surjection, and the previous diagram gives us the required square
\[\xymatrix{
\tilde Y_1\times I\ar[d]\ar[r]&U\times_\X(Y\times I)\ar[r]^-{\Phi_U} & U\ar[d]_{}="1"\\
Y\times I\ar[rr]_\Phi^(0.75){}="2"& & \X.\ar@{=>}"1";"2"
}\]
\end{proof}

\begin{lemma}\label{PointIntegralLemma}
Let $U\to\X$ be \'etale, and suppose given $\gamma,\delta\colon I\to U$ that integrate $X_U$, together with $e\in U\times_\X U$ such that $s(e)=\gamma(t_0)$, $t(e)=\delta(t_0)$.  Then there is a unique $\varepsilon\colon I\to U\times_\X U$ such that $\varepsilon(t_0)=e$, $s\circ\varepsilon=\gamma$, $t\circ\varepsilon=\delta$.
\end{lemma}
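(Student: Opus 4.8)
The plan is to identify $\varepsilon$ with an integral curve of a single vector field on $U\times_\X U$ and then to use the properness packaged into the Deligne--Mumford condition to see that this integral curve survives over all of $I$.

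First I would produce the relevant vector field. Since $U\to\X$ is \'etale, both projections $s,t\colon U\times_\X U\to U$ are \'etale, and the two composites $U\times_\X U\to U\to\X$ are canonically $2$-isomorphic. Hence the compatibility condition of Definition~\ref{VectorFieldDefinition} (the invariance of Proposition~\ref{RiemannianMetricVectorFieldAtlasProposition}) yields a single vector field $\xi$ on $U\times_\X U$ with $s^\ast X_U=\xi=t^\ast X_U$. In particular $s$ and $t$ each intertwine $\xi$ with $X_U$, so each carries integral curves of $\xi$ to integral curves of $X_U$ and, being a local diffeomorphism, each lifts integral curves of $X_U$ locally and uniquely.

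Uniqueness is then immediate: if $\varepsilon$ satisfies $s\circ\varepsilon=\gamma$ and $\varepsilon(t_0)=e$, then since $s$ is a local diffeomorphism with $s^\ast X_U=\xi$ we get $\dot\varepsilon(\tau)=(ds_{\varepsilon(\tau)})^{-1}\dot\gamma(\tau)=\xi_{\varepsilon(\tau)}$, so $\varepsilon$ is forced to be the integral curve of $\xi$ through $e$ at time $t_0$. For existence I would let $\varepsilon$ be precisely this maximal integral curve of $\xi$, defined on an open interval $J\subseteq I$ with $t_0\in J$. Then $s\circ\varepsilon$ integrates $X_U$ and takes the value $s(e)=\gamma(t_0)$ at $t_0$, so $s\circ\varepsilon=\gamma|_J$ by uniqueness of integral curves on the manifold $U$; likewise $t\circ\varepsilon=\delta|_J$. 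It remains only to show $J=I$.

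This is the crux, and it is exactly where the Deligne--Mumford hypothesis is used. Since $\X$ has proper diagonal, the map $(s,t)\colon U\times_\X U\to U\times U$, being a base change of $\Delta\colon\X\to\X\times\X$, is proper. Suppose $J\neq I$ and let $b\in I$ be an endpoint of $J$. As $\tau\to b$ we have $(s,t)(\varepsilon(\tau))=(\gamma(\tau),\delta(\tau))\to(\gamma(b),\delta(b))$, so for $\tau$ near $b$ the curve $\varepsilon$ lies in the preimage under $(s,t)$ of a compact neighbourhood of $(\gamma(b),\delta(b))$; this preimage is compact, so some sequence $\varepsilon(\tau_n)$ with $\tau_n\to b$ converges to a point $p$ with $s(p)=\gamma(b)$ and $t(p)=\delta(b)$. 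Choosing an open $W\ni p$ on which $s$ restricts to a diffeomorphism onto a neighbourhood of $\gamma(b)$ intertwining $\xi$ with $X_U$, the curve $(s|_W)^{-1}\circ\gamma$ is an integral curve of $\xi$ defined near $b$ that agrees with $\varepsilon$ at $\tau_n$ for $n$ large; by uniqueness of integral curves it extends $\varepsilon$ past $b$, contradicting maximality of $J$. Hence $J=I$. I expect this completeness argument, controlling the lift via properness of $(s,t)$, to be the main obstacle; it is precisely the step that breaks down for the non-proper groupoids of \S\ref{CounterexamplesSection}.
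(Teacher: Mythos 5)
Your proof is correct and follows essentially the same route as the paper's: both identify $\varepsilon$ with the integral curve of the vector field $X_{U\times_\X U}$ through $e$, use properness of $s\times t\colon U\times_\X U\to U\times U$ to extract a limit point of $\varepsilon$ over $(\gamma(b),\delta(b))$ at a putative endpoint $b$ of the maximal interval, and then extend $\varepsilon$ past $b$ by lifting $\gamma$ locally through the \'etale map $s$. Your explicit treatment of uniqueness and of why $s\circ\varepsilon=\gamma$, $t\circ\varepsilon=\delta$ on the maximal interval is a minor elaboration of steps the paper leaves implicit.
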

\begin{proof}
We must show that the integral curve $\varepsilon$ of $X_{U\times_\X U}$ with $\varepsilon(t_0)=e$ can be defined on the interval $I$.  Let $J\subset I$ be the largest open interval containing $t_0$ on which $\varepsilon$ is defined.  If $J\neq I$, let $t_i$ be a sequence in $J$ that converges to some $t\in I-J$.  Choose closed discs $D_\gamma,D_\delta\subset U$ around $\gamma(t)$, $\delta(t)$ respectively, and without loss assume $\gamma(t_i)\in D_\gamma$, $\delta(t_i)\in D_\delta$ for all $i$.  Then $(s\times t)^{-1}D_\gamma\times D_\delta$ is a compact subset of $U\times_\X U$ containing the points $\varepsilon(t_i)$ for all $i$, and therefore there is a subsequence $t_{i_j}$ of $t_i$ and an $f\in(s\times t)^{-1}D_\gamma\times D_\delta$ for which $\varepsilon(t_{i_j})$ converges to $f$.  Note that $s(f)=\gamma(t)$ and $t(f)=\delta(t)$.

Choose an open neighbourhood $N$ of $f$ for which $s|_N$ and $t|_N$ are diffeomorphisms onto their images.  There is an open interval $K\subset I$ containing $t$ such that $\gamma|_K$ has image in $s(N)$.  Lift $\gamma|_K$ to a curve $g$ in $N$.  Then $g$ integrates $X_{U\times_\X U}$, and note that we must have $\varepsilon(t_{i_j})=g(t_{i_j})$ for all $j$ large enough.  This shows that $\varepsilon$ can be defined on an interval containing $t$, in contradiction with the initial assumption.
\end{proof}

\begin{proof}[Proof of Lemma~\ref{PartialIntegralLemma}.]
By applying Lemma~\ref{LocalIntegralLemma} for $Y_1$, $t$, and each of $\Phi$ and $\Psi$, we can find an open interval $J$ containing $t$, an \'etale surjection $\tilde Y_1\to Y_1$, and $2$-commutative diagrams
\[\xymatrix{
\tilde Y_1\times J \ar[r]^{\tilde\Phi}\ar[d] & U\ar[d]_{}="1" && \tilde Y_1\times J\ar[r]^{\tilde\Psi}\ar[d] & U\ar[d]_{}="3"\\
Y\times I\ar[r]_\Phi^{}="2" & \X, && Y\times I\ar[r]_\Psi^{}="4"& \X. \ar@{=>}"1";"2"\ar@{=>}"3";"4"
}\]

Now suppose given $\lambda\colon\Phi|_{Y_1\times\{s\}}\Rightarrow\Psi|_{Y_1\times\{s\}}$ as in the statement.  This $2$-morphism, together with the two diagrams above, induces a map $l\colon\tilde Y_1\times\{s\}\to U\times_\X U$ with $s\circ l=\tilde\Phi|_{\tilde Y_1\times\{s\}}$, $t\circ l=\tilde\Psi|_{\tilde Y_1\times\{s\}}$, and whose compositions with the two maps $\tilde Y_1\times_Y\tilde Y_1\to\tilde Y_1$ coincide.

By Lemma~\ref{PointIntegralLemma}, for any $y\in \tilde Y_1$ the integral curve $\varepsilon$ of $X_{U\times_\X U}$ with $\varepsilon(s)=l(y)$ can be defined on the interval $J$.  We can therefore extend $l$ to a map $L\colon\tilde Y_1\times J\to U\times_\X U$ integrating $X_{U\times_\X U}$.  Since $L$ integrates $X_{U\times_\X U}$ it follows that $s\circ L=\tilde\Phi$ and $t\circ L=\tilde\Psi$, and that the compositions of $L$ with the two maps $\tilde Y_1\times_Y\tilde Y_1\times J\to\tilde Y_1\times J$ coincide.  

The map $L$ and its stated properties lead immediately to the required $2$-morphism $\Lambda$.
\end{proof}

This concludes the material related to Theorem~\ref{IntegralTheorem}.  Now we go on to deal with the proofs of Proposition~\ref{FlowActionProposition}, Theorem~\ref{FlowTheorem} and Proposition~\ref{RestrictedFlowProposition}.

\begin{proof}[Proof of Proposition~\ref{FlowActionProposition}.]
First consider the two compositions when restricted to $\X\times\{0\}\times\{0\}$.  These are $\Phi|_{\X\times\{0\}}\circ\Phi|_{\X\times\{0\}}$ and $\Phi|_{\X\times\{0\}}$, and so we have the $2$-morphism $(\Phi|_{\X\times\{0\}})_\ast e_\Phi$ between them.  Now consider the two compositions when restricted to $\X\times\R\times\{0\}$.  These both integrate $\X$, so by Theorem~\ref{IntegralTheorem} there is a $2$-morphism $M$ between them that restricts to $(\Phi|_{\X\times\{0\}})_\ast e_\Phi$ on $\X\times\{0\}\times\{0\}$.  Finally, the two compositions, regarded now as morphisms $(\X\times\R)\times\R\to\X$, both integrate $X$ and are $2$-isomorphic when restricted to $\X\times\R\times\{0\}$, so that by Theorem~\ref{IntegralTheorem} we obtain the required $\mu_\Phi$.
\end{proof}

\begin{proof}[Proof of Theorem~\ref{FlowTheorem}.]
Let $U\to\X$ be an \'etale atlas and let $X_U$ be the vector field on $U$ induced by $X$.  For each $u\in U$ there is an open neighbourhood $V_u$ of $u$ and $\delta_u >0$ such that the flow
\[\Phi_u\colon V_u\times(-\delta_u,\delta_u)\to U\]
of $X_U$ is defined, which means that
\begin{gather*}
\Phi_u(v,0)=v,\\
\bigddt\Phi_u=X_U\circ\Phi_u.
\end{gather*}
If $u\not\in\supp X_U$, then without loss let $V_u\subset U-\supp X_U$ and $\delta_u=1$.  Choose open neighbourhoods $W_u\subset V_u$ of $u$ for which $\cl W_u$ is compact and still contained in $V_u$.  Then for $u_1,u_2\in U$, $W_{u_1}\times_\X W_{u_2}\subset U\times_\X U$ is contained within the compact $(s\times t)^{-1}\cl W_{u_1}\times \cl W_{u_2}$, and so there is $\delta_{u_1 u_2}>0$ such that the flow
\[\Phi_{u_1u_2}\colon W_{u_1}\times_\X W_{u_2}\times(-\delta_{u_1u_2},\delta_{u_1u_2})\to U\times_\X U\]
of $X_{U\times_\X U}$ is defined.  If $u_1\not\in\supp X_U$ or $u_2\not\in\supp X_U$ then without loss let $\delta_{u_1u_2}=1$.
Since $\supp X$ is compact we can choose countably many $u_i\in U$, of which only finitely many lie in $\supp X_U$, such that any point in $\X$ is $2$-isomorphic to a point in one of the $W_{u_i}$.  Set
\begin{gather*}
U'=\bigsqcup_i W_{u_i},\\
\delta=\min\{\delta_{u_i},\delta_{u_iu_j}\}.
\end{gather*}
Note that $\delta>0$ since only finitely many of the $\delta_{u_i}$, $\delta_{u_iu_j}$ are not equal to $1$.

We now have two groupoids, $[U'\times_\X U'\rightrightarrows U']$ and $[U\times_\X U\rightrightarrows U]$, that represent $\X$, and a groupoid-morphism
\[(i_0,i_1)\colon[U'\times_\X U'\rightrightarrows U']\to[U\times_\X U\rightrightarrows U],\]
obtained by componentwise inclusion, that represents $\Id_\X$.  Moreover, the $\Phi_{u_i}$ and $\Phi_{u_iu_j}$ define a faithful groupoid-morphism
\[(\Phi_0,\Phi_1)\colon[U'\times_\X U'\times(-\delta,\delta)\rightrightarrows U'\times(-\delta,\delta)]\to[U\times_\X U\rightrightarrows U]\]
in which $\Phi_0$ integrates $X_U$ and restricts to $i_0$ on $U'\times\{0\}$, and in which $\Phi_1$ integrates $X_{U\times_\X U}$ and restricts to $i_1$ on $U'\times_\X U'\times\{0\}$.  (The last two claims are immediate by construction; that $(\Phi_0,\Phi_1)$ is a groupoid-morphism then follows from the same fact for $(i_0,i_1)$ and the fact that the $\Phi_i$ integrate the stated vector fields.  We now show that $(\Phi_0,\Phi_1)$ is faithful.  Suppose given points $(w_1,t),(w_2,t)\in U'\times(-\delta,\delta)$ and $(v,t),(v',t)\in U'\times_\X U'\times(-\delta,\delta)$ such that $s(v,t)=s(v',t)=(w_1,t)$ and $t(v,t)=t(v',t)=(w_2,t)$.  Suppose that $\Phi_1(v,t)=\Phi_1(v',t)$.  Then, since $\Phi_1$ is the flow of $X_{U\times_\X U}$, it follows that $v=v'$, so that $(v,t)=(v',t)$ and $(\Phi_0,\Phi_1)$ is faithful as claimed.)

It follows from the last paragraph that  there is a morphism of stacks $\Phi\colon\X\times (-\delta,\delta)\to\X$ and a $2$-commutative diagram
\[\xymatrix{
U'\times(-\delta,\delta)\ar[r]^-{\Phi_0}\ar[d] & U\ar[d]_{}="1"\\
\X\times(-\delta,\delta)\ar[r]_-\Phi^{}="2" & \X \ar@{=>}"1";"2"
}\]
that induces $(\Phi_0,\Phi_1)$.  Since $(\Phi_0,\Phi_1)$ is faithful, $\Phi$ is representable.  Since $\Phi_0$ integrates $X_U$ it follows that $\Phi$ integrates $X$.  Since $(\Phi_0,\Phi_1)$ restricts to $(i_0,i_1)$ on $[U'\times_\X U'\times\{0\}\rightrightarrows U'\times\{0\}]$ it follows that there is a $2$-morphism $e_\Phi\colon\Phi|_{\X\times\{0\}}\to \Id_\X$. 

We will now use $\Phi\colon\X\times(-\delta,\delta)\to\X$ to construct the required $\Phi\colon\X\times\R\to\X$.  For $t\in(-\delta,\delta)$ we will write $\Phi_t$ for $\Phi|_{\X\times\{t\}}$.

Consider the morphism
\[\Phi_+\Phi_-\colon\X\times(-\delta,\delta)\to\X\]
given by the composition
\[\X\times(-\delta,\delta)\xrightarrow{\Id_\X\times d}X\times(-\delta,\delta)\times(-\delta,\delta)\xrightarrow{\Phi\times\Id_{(-\delta,\delta)}}\X\times(-\delta,\delta)\xrightarrow{\Phi}\X\]
with $d(t)=(-t,t)$.  Thus $\Phi_+\Phi_-|_{\X\times\{t\}}=\Phi_t\Phi_{-t}$.  It is easy to check that $\Phi_+\Phi_-$ integrates the zero vector field on $\X$, and that $\Phi_+\Phi_-|_{\X\times\{0\}}={\Phi|_{\X\times\{0\}}}^2$, which is equipped with $e_\Phi^2\colon{\Phi|_{\X\times\{0\}}}^2\Rightarrow\Id_\X$.  Therefore by Theorem~\ref{IntegralTheorem} there is $\Lambda\colon\Phi_+\Phi_-\Rightarrow\pi_1$, and in particular
\begin{align*}
\lambda_{\tdf}\colon&\Phi_{\tdf}\Phi_{-\tdf}\Rightarrow\Id_\X,\\
\lambda_{-\tdf}\colon&\Phi_{-\tdf}\Phi_\tdf\Rightarrow\Id_\X.
\end{align*}

Now set $K=3\delta/2$, so that the open intervals $I_n=(nK-\delta,nK+\delta)$ cover $\R$, and the only nonempty intersections among them are $I_{n-1}\cap I_n=((3n-2)\delta/2,(3n-1)\delta/2)$.  Define
\[\Phi_n\colon\X\times I_n\to\X\]
to be the composition
\[\X\times I_n\xrightarrow{\Phi_{3\delta/4}^{2n}\times\tau_n}\X\times I\xrightarrow{\Phi}\X\]
for $n\geqslant 0$, and similarly for $n\leqslant 0$ but with $\Phi_{3\delta/4}^{2n}$ replaced by $\Phi_{-3\delta/4}^{-2n}$; here $\tau_n(t)=t-{3\delta n}/{2}$.

Note that $n\frac{3\delta}{2}-\tdf=(n-1)\frac{3\delta}{2}-\tdf$, and that there is a $2$-morphism
\begin{gather*}
\Lambda_n\colon\Phi_n|_{\X\times\{n\frac{3\delta}{2}-\tdf\}}=\Phi_{-\tdf}\Phi_{\tdf}^{2n}\Longrightarrow\Phi_{\tdf}^{2n-1}=\Phi_{n-1}|_{\X\times\{n\frac{3\delta}{2}-\tdf\}},\quad n>0,\\
\Lambda_n\colon\Phi_n|_{\X\times\{n\frac{3\delta}{2}-\tdf\}}=\Phi_{\tdf}\Phi_{-\tdf}^{-2n}\Longrightarrow\Phi_{-\tdf}^{-2n+1}=\Phi_{n-1}|_{\X\times\{n\frac{3\delta}{2}-\tdf\}},\quad n\leqslant 0,
\end{gather*}
constructed using $\lambda_{-\tdf}$, $\lambda_\tdf$ respectively.
By Theorem~\ref{IntegralTheorem} these $2$-morphisms extend to
\[\Lambda_n\colon\Phi_n|_{I_{n-1}\cap I_n}\Rightarrow\Phi_{n-1}|_{\X\times I_{n-1}\cap I_n}.\]
Since there are no nonempy triple intersections $I_i\cap I_j\cap I_k$ for $i,j,k$ distinct, the $\Phi_n$ and $\Lambda_n$ immediately yield $\Phi\colon\X\times\R\to\X$ with $M_n\colon\Phi|_{\X\times I_n}\Rightarrow\Phi_n$ such that $M_{n-1}|_{\X\times I_{n-1}\cap I_n}\circ M_n|_{\X\times I_{n-1}\cap I_n}=\Lambda_n$.  It follows that $\Phi$ integrates $X$ and that there is $e_\Phi\colon\Phi|_{\X\times\{0\}}\Rightarrow\Id_\X$ as required.
\end{proof}

\begin{proof}[Proof of Proposition~\ref{RestrictedFlowProposition}.]
Let $A\to\X$ be an \'etale atlas for $\X$.  Let $A'\to\X\times\R$ the atlas induced fom $A$ by $\Phi$.  Let $B\to\Y$ and $B'\to\Y\times\R$ be the atlases induced by $\Y\to\X$ and $\Y\times\R\to\X\times\R$ respectively.  Thus $A\hookrightarrow B$, $A'\hookrightarrow B'$ are embedded submanifolds.

We claim that the induced map $\tilde\Phi_0\colon A'\to A$ sends $B'$ into $B$.  Assuming this for the time being, it follows that $\tilde\Phi_1\colon A'\times_{\X\times\R}A'\to A\times_\X A$ sends $B'\times_{\Y\times\R}B'$ into $B\times_\X B$, and there is a commutative diagram of groupoid-morphisms
\[\xymatrix{
[B'\times_{\Y\times\R}B'\rightrightarrows B']\ar[r]\ar[d]  & [B\times_\X B\rightrightarrows B]\ar[d]\\
[A'\times_{\X\times\R}A'\rightrightarrows A']\ar[r]  & [A\times_\X A\rightrightarrows A]
}\]
in which all but the top map are induced by the maps in 
\[\xymatrix{
\Y\times\R\ar[d]& \Y\ar[d]\\
\X\times\R\ar[r]^-\Phi& \X.
}\]
It follows that this last diagram can be completed to a $2$-commutative diagram
\[\xymatrix{
\Y\times\R\ar[d]\ar[r]^-\Psi& \Y\ar[d]_{}="1"\\
\X\times\R\ar[r]_-\Phi^{}="2"& \X\ar@{=>}"1";"2"
}\]
in which, by construction, $\Psi$ integrates $X|_\Y$.  The rest of the proposition now follows from Lemma~\ref{IdentityEmbeddingLemma}.

We now prove our claim that $\tilde\Phi_0\colon A'\to A$ sends $B'$ into $B$.  First note that, since $\Phi$ integrates $X$, $\tilde\Phi_0$ sends integral curves for $\ddt$ into integral curves for $X_A$, and that $X_A$ is tangent to the submanifold $B\hookrightarrow A$.  Moreover, since there is $e_\Phi\colon\Phi|_{\X\times\{0\}}\Rightarrow\Id_\X$, $\tilde\Phi_0$ sends the points in $B'$ that lie over $0\in\R$ into $B$.

Let $b'\in B'$ be some point, lying over time $t$, and without loss assume $t>0$.  By considering the morphism $\R\to \X\times\R$, $s\mapsto(\mathrm{Im}(b'),s)$, we can find $0=t_0<\cdots<t_n=t$, $\ddt$-integral curves $\gamma_i\colon[t_{i-1},t_i]\to B'$, and $2$-morphisms $\gamma_i(t_i)\Rightarrow\gamma_{i+1}(t_i)$ for all $1\leqslant i\leqslant n-1$, and $\gamma_n(t_n)\Rightarrow b'$.  Since $\gamma_i(0)$ lies over $0\in\R$, $\tilde\Phi_0(\gamma_i(0))$ lies in $B$.  Since each $\gamma_i$ is a $\ddt$ integral curve, if $\tilde\Phi_0(\gamma_i(t_{i-1})$ lies in $B$, so does $\tilde\Phi_0(\gamma_i(t_i)$.  Since $B\subset A$ is closed under $2$-morphisms, if $\tilde\Phi_0(\gamma_i(t_{i}))$ lies in $B$, so does $\tilde\Phi_0(\gamma_{i+1}(t_i))$.  The claim follows.
\end{proof}

\begin{lemma}\label{IdentityEmbeddingLemma}
Let $i\colon\Y\to\X$ be an embedding of differentiable stacks and suppose given a $2$-commutative diagram 
\[\xymatrix{
\Y\ar[d]_i\ar[r]^\Psi &\Y\ar[d]_{}="1"^i\\
\X\ar[r]_{\Id_\X}^{}="2" & \X.\ar@{=>}"1";"2"_{\lambda}
}\]
Then there is $\mu\colon\Psi\Rightarrow\Id_\Y$ such that $i_\ast\mu=\lambda$.
\end{lemma}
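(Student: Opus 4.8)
The plan is to exploit the fact that an embedding $i\colon\Y\to\X$ is, in particular, a \emph{monomorphism} of stacks, i.e.\ that the diagonal $\Y\to\Y\times_\X\Y$ is an equivalence. Granting this, whiskering by $i$ produces for every differentiable stack $\Z$ a functor $i_\ast\colon\mathrm{Hom}(\Z,\Y)\to\mathrm{Hom}(\Z,\X)$ of Hom-groupoids, and the monomorphism condition is precisely the statement that each such $i_\ast$ is fully faithful. The lemma then follows at once on taking $\Z=\Y$: the objects $\Psi$ and $\Id_\Y$ of $\mathrm{Hom}(\Y,\Y)$ map to $i\circ\Psi$ and $i=i\circ\Id_\Y$, so full faithfulness makes whiskering a bijection from $2$-morphisms $\Psi\Rightarrow\Id_\Y$ onto $2$-morphisms $i\Psi\Rightarrow i$. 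Since $\lambda$ lies in the target, there is a (unique) $\mu\colon\Psi\Rightarrow\Id_\Y$ with $i_\ast\mu=\lambda$.

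To establish the monomorphism property, which is where the embedding hypothesis is genuinely used, I would argue on an atlas in the hands-on style of this section. Since $i$ is representable, an \'etale atlas $A\to\X$ pulls back to an \'etale atlas $B=\Y\times_\X A\to\Y$ together with the inclusion $\tilde\imath\colon B\hookrightarrow A$ of an embedded submanifold. Such an inclusion is an injective immersion and hence a monomorphism of manifolds, so $B\to B\times_A B$ is an isomorphism. Under the identifications $B\cong A\times_\X\Y$ and $B\times_A B\cong A\times_\X(\Y\times_\X\Y)$ this diagonal is carried to $A\times_\X\Delta$, where $\Delta\colon\Y\to\Y\times_\X\Y$; since $A\to\X$ is an \'etale surjection, base change along it reflects equivalences, so $\Delta$ is an equivalence and $i$ is a monomorphism.

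The content is concentrated entirely in the previous paragraph; the rest is formal, and in particular the $\mu$ produced is automatically invertible because the Hom-groupoids are groupoids. For readers who prefer a direct construction, the full faithfulness of $i_\ast$ can be unwound concretely: the source and target of $\lambda$ both factor through $i$, so the map into the groupoid $A\times_\X A$ induced by $\lambda$ takes values in the arrows of $A\times_\X A$ whose source and target lie in $B$, which by the embedding property are precisely the arrows of $B\times_\Y B$; lifting $\lambda$ through this identification and descending along $B\to\Y$ yields $\mu$, with $i_\ast\mu=\lambda$ holding by construction. I expect the only delicate point to be the bookkeeping that identifies the restricted groupoid $(s\times t)^{-1}(B\times B)$ with $B\times_\Y B$, which is just a restatement of $\Delta$ being an equivalence.
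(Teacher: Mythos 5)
Your proof is correct, but it is organized around a different pivot than the paper's. The paper argues entirely at the level of a groupoid presentation: it pulls the square back along an atlas $A\to\X$, observes that the resulting square of manifolds is forced to be the identity square on the submanifold $B=\Y\times_\X A$, deduces that the induced groupoid morphism on $[B\times_\Y B\rightrightarrows B]$ is the identity, and descends that to the desired $\mu$, checking $i_\ast\mu=\lambda$ by comparison on $B$. You instead isolate the statement that an embedding is a monomorphism of stacks (the diagonal $\Y\to\Y\times_\X\Y$ is an equivalence), verify it by the same kind of atlas computation (an embedded submanifold inclusion $B\hookrightarrow A$ satisfies $B\xrightarrow{\sim}B\times_A B$, and base change along the atlas reflects equivalences), and then obtain the lemma as the purely formal consequence that whiskering by $i$ is fully faithful on Hom-groupoids. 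The two arguments rest on the same geometric input, and your closing ``concrete unwinding'' is essentially the paper's proof; what your packaging buys is (i) a reusable statement --- full faithfulness of $i_\ast$ for any source $\Z$, not just $\Z=\Y$ over $\Id_\X$ --- and (ii) the \emph{uniqueness} of $\mu$, which the paper's proof does not record. The one point you should make explicit if you write this up is the descent fact that a morphism of stacks over $\X$ is an equivalence as soon as its base change along a surjective \'etale atlas is one; you invoke it correctly but it is the only non-elementary ingredient in your reduction.
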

\begin{proof}
Let $A\to\X$ be an atlas.  Pulling back the above diagram under this atlas yields a commutative square of manifolds, which must be
\[\xymatrix{B\ar[r]^{\Id}\ar[d]& B\ar[d]\\
A\ar[r]_{\Id} & A,}\]
where $B\to A$ is the submanifold induced by $i$.  Since $B\to A$ and $B\times_\Y B\to A\times_\X A$ are embeddings and the square
\[\xymatrix{A\ar[r]^=\ar[d] & A\ar[d]_{}="1"\\
\X\ar[r]_{\Id_\X}^{}="2"& \X\ar@{=>}"1";"2"}\]
induces the identity map on $[A\times_\X A\rightrightarrows A]$, the square
\[\xymatrix{B\ar[r]^=\ar[d] & B\ar[d]_{}="1"\\
\Y\ar[r]_\Psi^{}="2"& \Y\ar@{=>}"1";"2"_m}\]
must induce the identity map on $[B\times_\Y B\rightrightarrows B]$, so that there is indeed a $2$-morphism $\mu\colon\Psi\Rightarrow\Id_\X$ that when composed with $m$ becomes trivial.  Since composing $m$ and $\lambda$ yields the trivial $2$-morphism, it follows that $i_\ast\mu$ and $\lambda$ coincide when pulled back to $B$, and the result follows.
\end{proof}

\begin{proof}[Proof of Lemma~\ref{UnderlyingFlowLemma}.]
Let $\pi_U\colon U\to\X$ be \'etale with $x_U\in U$ such that $x=[x_U\to\X]$.  Then $t\mapsto\bar f\circ\varphi_t(x)$ is the composition
\[\bar f\circ\bar\Phi\circ\overline{x_U\times\Id}=f\circ\Phi\circ(x_U\times\Id),\]
where we have written $x_U\times\Id\colon\R\to\X\times\R$.  This is certainly smooth.  Now let us compute the derivative of the composition; without loss assume $t=0$.  By the definition of what it means for a morphism to integrate $X$, we may find $\epsilon>0$ and a commutative diagram
\[\xymatrix{
(-\epsilon,\epsilon)\ar[d]_{x_U\times\mathrm{incl}}\ar[r]^\gamma & U\ar[d]_{}="1"^{\pi_U}\\ \X\times\R\ar[r]_\Phi^{}="2" & \X\ar@{=>}"1";"2"}\]
where $\gamma$ is the integral curve of $X_U$ through $x_U$.  Then
\[\left.\frac{d}{dt}\right |_{t=0}\bar f\circ\varphi_t(x)=\left.\frac{d}{dt}\right|_{t=0}f\circ\pi_U\circ\gamma=X_U\cdot (f\circ\pi_U)(x_U)=\overline{X\cdot f}(x)\]
as required.
\end{proof}

\section{The Strong Topology on $C^\infty(\X)$.}\label{StrongTopologySection}

Let $\X$ be a differentiable Deligne-Mumford stack.  The morphisms $f\colon\X\to\R$ form a set that we denote $C^\infty(\X)$.   This coincides with the set of those smooth functions $X\to\R$ on an atlas $X\to\X$ for which the two compositions $X\times_\X X\rightrightarrows X\to\R$ coincide.  This section will define the \emph{strong topology} on $C^\infty(\X)$ and study its properties.  We will show that $C^\infty(\X)$ is a Baire space in which the Morse functions form a dense open subset.  Thus Morse functions are abundant in a very precise sense.

It is usual to define the strong topology on $C^r(M,N)$ for manifolds $M,N$ and $0\leqslant r\leqslant\infty$ \cite{\Hirsch}. When $\X$ is a manifold $M$ our strong topology on $C^\infty(\X)$ coincides with the usual strong topology on $C^\infty(M,\R)$.

\begin{question}
Is it possible to define a topological stack $C^r(\X,\Y)$, `the mapping stack with the strong topology', for any differentiable Deligne-Mumford stacks $\X$, $\Y$ and $0\leqslant r\leqslant\infty$?
\end{question}

In \S\ref{StrongTopologyDefinitionSection} we define the strong topology on $C^\infty(\X)$ and verify that when $\X$ is a manifold our definition coincides with the usual one.  In \S\ref{StrongTopologyAtlasSection} we compare the strong topology on $C^\infty(\X)$ with the strong topology on $C^\infty(X)$ for an \'etale atlas $X\to\X$ and give a simple description of the strong topology on $C^\infty([M/G])$ for global quotients $[M/G]$.  In \S\ref{BaireSection} we show that $C^\infty(\X)$ is a Baire space, generalizing a familiar result for manifolds.  Finally, \S\ref{MorseDensitySection} is given to proving that Morse functions form a dense open subset of $C^\infty(\X)$ and concluding that any differentiable Deligne-Mumford stack $\X$ admits a Morse function $f$ with compact sublevel sets.  The existence of such Morse functions is crucial to the applications in Section~\ref{ApplicationsSection}.

\subsection{Definition of the strong topology.}\label{StrongTopologyDefinitionSection}

Recall that a family of morphisms $\alpha_i\colon A_i\to\X$ is called \emph{locally finite} if for each $U\to\X$ and each $u\in U$ there is a neighbourhood $V$ of $u$ such that $V\times_\X A_i$ is non-empty for only finitely many $i$.  

\begin{definition}\label{OpenSetsDefinition}
Suppose given the following data:
\begin{enumerate}
\item A locally finite family of \'etale morphisms $c_i\colon U_i\to\X$ from open subsets of $\R^n$, indexed by some set $I$.
\item A compact subset $K_i\subset U_i$ for each $i\in I$.
\item A positive number $\epsilon_i>0$ for each $i\in I$.
\item A non-negative integer $r$.
\item A morphism $f\colon\X\to\R$.
\end{enumerate}
Then we will write
\[\mathcal{N}^r(c_i,K_i,\epsilon_i,f)\subset C^\infty(\X)\]
for the set of all morphisms $g\colon\X\to\R$ satisfying the condition
\[\left|D^\mathbf{r}g\circ c_i(k)-D^\mathbf{r}f\circ c_i(k)\right|<\epsilon_i\mathrm{\ for\ all\ }i\in I,\ k\in K_i,\ |\mathbf{r}|\leqslant r.\]
Here $\mathbf{r}=(r_1,\ldots,r_m)$ is a list of numbers in $\{1,\ldots,n\}$, its length $|\mathbf{r}|$ is $m$, and $D^\mathbf{r}$ denotes the partial derivative $\frac{\partial^{|\mathbf{r}|}}{\partial x_{r_1}\cdots\partial x_{r_m}}$.
\end{definition}

\begin{definition}\label{StrongTopologyDefinition}
The \emph{strong topology} on $C^\infty(\X)$ is the topology with basis given by the sets $\mathcal{N}^r(c_i,K_i,\epsilon_i,f)$.
\end{definition}

When $\X$ is a manifold $M$, the definition of the strong topology on $C^\infty(M)$ is well known \cite[Chapter 2]{\Hirsch}.  We will now verify that the sets of Definition~\ref{OpenSetsDefinition} do indeed form the basis for a topology, and that in the case $\X=M$ we recover the usual definition of the strong topology.

\begin{lemma}\label{StrongTopologyBasisLemma}
The subsets $\mathcal{N}^r(c_i,K_i,\epsilon_i,f)$ form the basis for a topology on $C^\infty(\X)$.
\end{lemma}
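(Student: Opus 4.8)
The plan is to verify the two defining axioms of a basis: that the sets $\mathcal{N}^r(c_i,K_i,\epsilon_i,f)$ cover $C^\infty(\X)$, and that whenever a morphism lies in the intersection of two such sets it also lies in a third one contained in that intersection. The covering axiom is immediate: for any $f\in C^\infty(\X)$ and any admissible data $(c_i,K_i,\epsilon_i,r)$, the morphism $f$ satisfies the defining condition trivially, since each difference $D^{\mathbf{r}}(f\circ c_i)-D^{\mathbf{r}}(f\circ c_i)$ is identically zero; hence $f\in\mathcal{N}^r(c_i,K_i,\epsilon_i,f)$.

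For the intersection axiom I would first record two elementary facts. First, the union of two locally finite families of \'etale morphisms is again locally finite: given $U\to\X$ and $u\in U$, intersect the two neighbourhoods of $u$ supplied by the local finiteness of each family. Second, and more importantly, one can always \emph{re-centre with positive slack}. Given $g\in\mathcal{N}^r(c_i,K_i,\epsilon_i,f)$, set for each index $i$
\[
m_i=\max\bigl\{\,\left|D^{\mathbf{r}}(g\circ c_i)(k)-D^{\mathbf{r}}(f\circ c_i)(k)\right| : k\in K_i,\ |\mathbf{r}|\leqslant r\,\bigr\}.
\]
There are only finitely many lists $\mathbf{r}$ with $|\mathbf{r}|\leqslant r$, the functions involved are continuous, and $K_i$ is compact, so this maximum is attained; since the strict inequality $<\epsilon_i$ holds at every point, we conclude $m_i<\epsilon_i$ and hence $\delta_i:=\epsilon_i-m_i>0$. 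A single triangle-inequality estimate then gives $\mathcal{N}^r(c_i,K_i,\delta_i,g)\subseteq\mathcal{N}^r(c_i,K_i,\epsilon_i,f)$.

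With these in hand I would finish as follows. Suppose $g$ lies in $\mathcal{N}^r(c_i,K_i,\epsilon_i,f)\cap\mathcal{N}^{r'}(c'_j,K'_j,\epsilon'_j,f')$, with the two families indexed by $I$ and $J$. Choose slacks $\delta_i$ and $\delta'_j$ as above for the two sets. Form the family indexed by $I\sqcup J$ consisting of all the $c_i$ and $c'_j$, with compacta $K_i,K'_j$, radii $\delta_i,\delta'_j$, and order $r''=\max(r,r')$; by the first fact this is legitimate data for a basic set, and $g$ lies in the resulting $\mathcal{N}^{r''}$. Any $h$ in this $\mathcal{N}^{r''}$ satisfies the defining estimates for all $|\mathbf{r}|\leqslant r''$, hence in particular for $|\mathbf{r}|\leqslant r$ on each $K_i$ and for $|\mathbf{r}|\leqslant r'$ on each $K'_j$ (the condition only strengthens as the order grows). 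By the re-centring step $h$ therefore lies in both $\mathcal{N}^r(c_i,K_i,\epsilon_i,f)$ and $\mathcal{N}^{r'}(c'_j,K'_j,\epsilon'_j,f')$, so this basic set sits inside the intersection as required.

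The only genuine subtlety is the strict positivity of the slack $\delta_i$, and this is exactly where compactness of each $K_i$ (together with finiteness of the set of multi-indices) is used: without it the maximum $m_i$ need not be attained and could equal $\epsilon_i$, collapsing the re-centring. Everything else is bookkeeping with the disjoint index set $I\sqcup J$ and the monotonicity of the defining condition in the order $r$.
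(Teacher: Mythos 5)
Your proof is correct and follows essentially the same route as the paper's: re-centre each basic set at the common point $g$ with a positive slack obtained from compactness of the $K_i$, then combine the two families over the disjoint union of index sets with the maximum of the two orders. You are somewhat more explicit than the paper about the covering axiom, the local finiteness of the combined family, and the monotonicity of the condition in $r$, but these are the same implicit steps.
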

\begin{proof}
Suppose we are given $\mathcal{N}^r(c_i,K_i,\epsilon_i,f)$ and $\mathcal{N}^s(d_j,J_j,\delta_j,g)$ and $h\in\mathcal{N}^r(c_i,K_i,\epsilon_i,f)\cap\mathcal{N}^s(d_j,J_j,\delta_j,g)$, where the $c_i\colon U_i\to\X$ are indexed by $i\in I$ and the $d_j\colon V_j\to\X$ are indexed by $j\in J$.  We will find a subset $\mathcal{N}^{t}(e_k,L_k,\phi_k,h)$ contained in $\mathcal{N}^r(c_i,K_i,\epsilon_i,f)\cap\mathcal{N}^s(d_j,J_j,\delta_j,g)$; this will show that $\mathcal{N}^r(c_i,K_i,\epsilon_i,f)\cap\mathcal{N}^s(d_j,J_j,\delta_j,g)$ is a union of basic open sets, as required.

For each $i\in I$ set $\epsilon_i'=\epsilon_i-\sup\{|D^\mathbf{r}f\circ c_i(x)-D^\mathbf{r}h\circ c_i(x)|\mid |\mathbf{r}|\leqslant r,\ x\in K_i\}$.  Note that $\epsilon'_i>0$ since $K_i$ is compact.  If $l\colon\X\to\R$ satisfies $|D^\mathbf{r}l\circ c_i(x)-D^\mathbf{r}h\circ c_i(x)|<\epsilon_i'$ whenever $x\in K_i$ and $|\mathbf{r}|\leqslant r$, then it follows that $|D^\mathbf{r}l\circ c_i(x)-D^\mathbf{r}f\circ c_i(x)|<\epsilon_i$ whenever $x\in K_i$ and $|\mathbf{r}|\leqslant r$.

Similarly, for each $j\in J$ set $\delta_j'=\delta_j-\sup\{|D^\mathbf{r}g\circ d_j(x)-D^\mathbf{r}h\circ d_j(x)|\mid |\mathbf{r}|\leqslant r,\ x\in J_j\}$.  Note that $\delta'_j>0$ since $J_j$ is compact.  If $l\colon\X\to\R$ satisfies $|D^\mathbf{r}l\circ d_j(x)-D^\mathbf{r}h\circ d_j(x)|<\delta_j'$ whenever $j\in J_i$ and $|\mathbf{r}|\leqslant s$, then it follows that $|D^\mathbf{r}l\circ d_j(x)-D^\mathbf{r}f\circ d_j(x)|<\delta_j$ whenever $x\in J_j$ and $|\mathbf{r}|\leqslant s$.

Now consider the open set $\mathcal{N}^t(e_k,L_k,\phi_k,h)$, where $t=\max\{r,s\}$, where the indexing set is $I\cup J$, and where
\begin{eqnarray*}
(e_k\colon W_k\to\X)&=&\left\{\begin{array}{cc} c_k\colon U_k\to\X & \mathrm{if\ }k\in I,\\ d_k\colon V_k\to\X & \mathrm{if\ }k\in J,\end{array}\right.\\
L_k&=&\left\{\begin{array}{cc} K_k & \mathrm{if\ }k\in I,\\ J_k & \mathrm{if\ }k\in J,\end{array}\right.\\
\phi_k&=&\left\{\begin{array}{cc} \epsilon'_k & \mathrm{if\ }k\in I,\\ \delta'_k & \mathrm{if\ }k\in J.\end{array}\right.
\end{eqnarray*}
It is clear from the previous two paragraphs that
\[\mathcal{N}^t(e_k,L_k,\phi_k,h)\subset \mathcal{N}^r(c_i,K_i,\epsilon_i,f)\cap\mathcal{N}^s(d_j,J_j,\delta_j,g),\]
as claimed.
\end{proof}

\begin{proposition}\label{StrongTopologyForManifoldProposition}
Let $\X=M$ for some manifold $M$.  Then the topology on $C^\infty(M)$ given in Definition~\ref{StrongTopologyDefinition} coincides with the usual notion of the strong topology on $C^\infty(M,\R)$.
\end{proposition}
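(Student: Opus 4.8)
The plan is to prove equality of the two topologies by showing that every basic open set of each is open in the other; this gives both inclusions of open sets and hence coincidence. Recall from \cite[Chapter~2]{\Hirsch} that the usual strong topology on $C^\infty(M,\R)$ has a neighbourhood basis of $f$ indexed by a locally finite family of charts $\phi_i\colon U_i\to\R^n$ on $M$ (with $n=\dim M$ and the identity chart on the target $\R$), compact sets $K_i\subset U_i$, positive numbers $\epsilon_i$, and an integer $r$, consisting of those $g$ with $\|D^k(g\circ\phi_i^{-1})-D^k(f\circ\phi_i^{-1})\|<\epsilon_i$ on $\phi_i(K_i)$ for $0\leqslant k\leqslant r$. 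Since on a finite-dimensional space the norm of the $k$-th total derivative and the maximum of the partial derivatives $D^\mathbf{r}$ with $|\mathbf{r}|=k$ are equivalent, with constants depending only on $n$ and $k$, the same basis is obtained after rescaling the $\epsilon_i$ if one replaces $\|D^k(\cdot)\|$ by the seminorms $|D^\mathbf{r}(\cdot)|$, $|\mathbf{r}|\leqslant r$. I will use this partial-derivative description of the usual strong topology throughout; stating this equivalence explicitly is routine but necessary, since the two definitions are phrased differently.

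First I would handle the inclusion of the usual strong topology into the topology of Definition~\ref{StrongTopologyDefinition}. The inverse charts $c_i=\phi_i^{-1}$ are \'etale morphisms defined on the open subsets $\phi_i(U_i)\subset\R^n$, and $\{c_i\}$ is locally finite in the sense of Definition~\ref{OpenSetsDefinition} precisely because $\{U_i\}$ is a locally finite family of open subsets of $M$. With these $c_i$, the compact sets $\phi_i(K_i)$, and the same $\epsilon_i$ and $r$, the usual basic neighbourhood is exactly a set $\mathcal{N}^r(c_i,\phi_i(K_i),\epsilon_i,f)$ of Definition~\ref{OpenSetsDefinition}. Hence every basic neighbourhood for the usual strong topology is open in the topology of Definition~\ref{StrongTopologyDefinition}.

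The reverse inclusion is the substantive step, and the only place the generality of \'etale morphisms intervenes. Fix a basic set $\mathcal{N}=\mathcal{N}^r(c_i,K_i,\epsilon_i,f)$ and a point $g\in\mathcal{N}$; I must produce a usual-strong neighbourhood of $g$ contained in $\mathcal{N}$. As in the proof of Lemma~\ref{StrongTopologyBasisLemma}, compactness of $K_i$ yields a slack $\epsilon_i'=\epsilon_i-\sup\{|D^\mathbf{r}(g\circ c_i)(x)-D^\mathbf{r}(f\circ c_i)(x)|\mid x\in K_i,\ |\mathbf{r}|\leqslant r\}>0$. The difficulty is that each $c_i$ is only a local diffeomorphism, not an embedding, so it is not itself an inverse chart. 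I would remedy this by covering $K_i$ by finitely many open sets $O_{i,j}\subset U_i$ on each of which $c_i$ restricts to a diffeomorphism onto an open subset of $M$, and by choosing (via the standard shrinking of a finite cover of a compact set) compact sets $K_{i,j}\subset O_{i,j}$ with $K_i\subset\bigcup_j K_{i,j}$. Each restriction $c_i|_{O_{i,j}}$ is then an inverse chart on $M$, the family $\{c_i|_{O_{i,j}}\}$ is still locally finite (finitely many pieces over each $i$, with images inside the locally finite family $\{c_i(U_i)\}$), and $D^\mathbf{r}(h\circ c_i|_{O_{i,j}})=D^\mathbf{r}(h\circ c_i)$ on $O_{i,j}$ for every $h$. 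The usual-strong neighbourhood of $g$ whose inverse charts are the $c_i|_{O_{i,j}}$, with compacts $K_{i,j}$, numbers $\epsilon_i'$, and order $r$, then lies in $\mathcal{N}$: for $h$ in it the bound $|D^\mathbf{r}(h\circ c_i)-D^\mathbf{r}(g\circ c_i)|<\epsilon_i'$ holds on each $K_{i,j}$, hence on $K_i$, and the triangle inequality together with the definition of $\epsilon_i'$ gives $|D^\mathbf{r}(h\circ c_i)-D^\mathbf{r}(f\circ c_i)|<\epsilon_i$ on $K_i$, so $h\in\mathcal{N}$. Thus $\mathcal{N}$ is open in the usual strong topology.

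Combining the two inclusions shows the topologies coincide. I expect the main obstacle to be this reverse inclusion: one must pass from the general \'etale test maps $c_i$ to genuine charts by localizing on $K_i$, while keeping the subdivided family locally finite and ensuring the compacts $K_{i,j}$ still cover $K_i$, so that control in the individual chart neighbourhoods recovers control along all of $K_i$.
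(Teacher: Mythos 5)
Your proof is correct and follows essentially the same route as the paper: the key step in both is to subdivide each compact $K_i$ into finitely many pieces on which the \'etale map $c_i$ restricts to an open embedding, so that the basic sets of Definition~\ref{OpenSetsDefinition} become basic sets for the usual strong topology. The only notable differences are that the paper starts from Hirsch's definition with arbitrary target charts $d_i\colon V_i\to\R$ and proves via a chain-rule estimate that one may take $d_i=\Id$ (a reduction you build into your statement of the usual topology, while instead addressing the total-derivative versus partial-derivative normalization), and that the paper obtains an exact equality of basic sets centred at $f$ with the same $\epsilon_i$, where you argue pointwise around $g$ with a slack $\epsilon_i'$ --- both work.
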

\begin{proof}
Let us recall from \cite[Chapter 2]{\Hirsch} that the the strong topology on $C^\infty(M)$ is the topology with basis given by open sets
\begin{equation}\label{UsualOpenSetEquation}\mathcal{N}^r(c_i,K_i,d_i,\epsilon_i,f)\end{equation}
where $c_i\colon U_i\to M$ is a locally finite family of charts on $M$ (i.e.~open embeddings from open subsets of $\R^n$), $K_i\subset U_i$ are compact subsets, $d_i\colon V_i\to\R$ are charts, $\epsilon_i$ are positive real numbers, $r$ is a non-negative integer, and $f\colon M\to\R$ is a map for which $f(c_i(K_i))\subset d_i(V_i)$ for all $i$; then $\mathcal{N}^r(c_i,K_i,d_i,\epsilon_i,f)$ denotes the set of all $g\colon M\to\R$ for which $g(c_i(K_i))\subset d_i(V_i)$ and 
\[|D^\mathbf{s}(d_i^{-1}\circ g\circ c_i)(k)-D^\mathbf{s}(d_i^{-1}\circ f\circ c_i)(k)|<\epsilon_i\]
for all $\mathbf{s}$ with $|\mathbf{s}|\leqslant r$ and all $k\in K_i$.

In order to prove the lemma we will show first that we can assume, for the sets $\mathcal{N}^r(c_i,K_i,d_i,\epsilon_i,f)$ above, that the $d_i\colon V_i\to\R$ are all simply $\Id\colon\R\to\R$.  Thus the strong topology in its usual definition is generated by those sets $\mathcal{N}^r(c_i,K_i,\epsilon_i,f)$ from Definition~\ref{OpenSetsDefinition} for which the $c_i$ are all embeddings rather than just \'etale maps.  We will then show that any open set $\mathcal{N}^r(c_i,K_i,\epsilon_i,f)$ from Definition~\ref{OpenSetsDefinition} can be written as $\mathcal{N}^r(c'_j,K'_j,\epsilon'_j,f)$ where the $c_j'\colon U_j'\to M$ are open embeddings, thus completing the proof.

So suppose given $\mathcal{N}^r(c_i,K_i,d_i,\epsilon_i,f)$ as above.  We may take compact neighbourhoods $J_i$ of each $f(c_i(K_i))$ with $J_i\subset d_i(V_i)$ and -- by decreasing the $\epsilon_i$ if necessary -- assume that $g(K_i)\subset J_i$ for all $i$ and any $g\in\mathcal{N}^r(c_i,K_i,d_i,\epsilon_i,f)$.  Now set
\[\lambda_i=\sup_{j\in J_i,\,n\leqslant r,\,m\leqslant r}\left|\frac{d^nd_i^{-1}}{dx^n}(j)\right|^m\]
and define $\delta_i=\epsilon_i/(2^r\lambda_i)$.  Then for $g\in\mathcal{N}^r(c_i,K_i,\Id,\delta_i,f)$ and $k\in K_i$ we have
\[|D^\mathbf{s} d_i^{-1}\circ(g\circ c_i)(k)-D^\mathbf{s}d_i^{-1}\circ (f\circ c_i)(k)|<2^r\cdot\lambda_i\cdot\delta_i=\epsilon_i\]
so that $g\in\mathcal{N}^r(c_i,K_i,d_i,\epsilon_i,f)$.  Thus any set of the form \eqref{UsualOpenSetEquation} is a union of those of the form $\eqref{UsualOpenSetEquation}$ for which each $d_i=\Id_\R$.

Now suppose given a basic open set $\mathcal{N}^r(c_i,K_i,\epsilon_i,f)\subset C^\infty(M)$ as in Definition~\ref{OpenSetsDefinition}.  For each $i$ and each $k\in K_i$ we may find $E_k\supset B_k\owns k$, where $B_k$ is a closed ball and $E_k$ is an open ball small enough that $c_i|\colon E_k\to M$ is an open inclusion.  Take finitely many $k_i^m\in K_i$ for which $K_i\subset\bigcup B_{k_i^m}$.  Now set $c_i^m\colon U_i^m\to M$ to be the restriction of $c_i$ to $U_i^m=E_{k_i^m}$, set $K_i^m=K_i\cap B_{k_i^m}$, and finally set $\epsilon_i^m=\epsilon_i$.  Then the $c_i^m$, taken for all $i\in I$ and all $m$, are a locally finite family and 
\[\mathcal{N}^r(c_i,K_i,d_i,\epsilon_i,f)=\mathcal{N}^r(c_i^m,K_i^m,\epsilon_i^m,f).\]
This completes the proof.
\end{proof}

\subsection{Relation to the strong topology on an atlas.}\label{StrongTopologyAtlasSection}

Suppose we have an \'etale atlas $\pi\colon X\to\X$.  Write $C^\infty(X)^\mathrm{inv}$ for the set of smooth functions $X\to\R$ for which the two compositions $X\times_\X X\rightrightarrows X\to\R$ coincide.  Composition with $\pi$ determines a bijection
\[\pi^\ast\colon C^\infty(\X)\to C^\infty(X)^\mathrm{inv}.\]
Since $C^\infty(\X)$ admits the strong topology and $C^\infty(X)^\mathrm{inv}$ inherits a topology from the strong topology on $C^\infty(X)$, it is natural to ask whether the two topologies coincide under $\pi^\ast$.  The answer is that they do not coincide, and that in general $(\pi^\ast)^{-1}$ is continuous but $\pi^\ast$ is not.  For example, the bijection $C^\infty(S^1)\to C^\infty(\R)^\mathrm{inv}$ associated to the exponential map $\R\to S^1$ is not continuous.  However, in the very restrictive case that $\pi$ is proper, $\pi^\ast$ is indeed a homeomorphism.  This applies to the tautological atlases $M\to[M/G]$ for global quotients $[M/G]$ with $G$ finite.

\begin{proposition}\label{StrongTopologyAtlasProposition}
Let $\pi\colon X\to\X$ be an \'etale atlas and consider the bijection
\[\pi^\ast\colon C^\infty(\X)\to C^\infty(X)^\mathrm{inv}.\]
Then $(\pi^\ast)^{-1}$ is continuous.  If $\pi$ is proper then $\pi^\ast$ is also continuous.  In particular, the strong topology on $C^\infty([M/G])=C^\infty(M)^G$ for $G$ finite is simply the topology inherited from the strong topology on $C^\infty(M)$.
\end{proposition}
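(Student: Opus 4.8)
The plan is to analyse how the basic open sets $\mathcal{N}^r(c_i,K_i,\epsilon_i,f)$ of Definition~\ref{OpenSetsDefinition} transform under $\pi^\ast$ and its inverse, the crucial mechanism being a comparison of derivatives through the fibre products $U_i\times_\X X$. Throughout I would freely use two preliminary reductions. First, by Proposition~\ref{StrongTopologyForManifoldProposition} the strong topology on the \emph{manifold} $X$ may itself be described by the sets $\mathcal{N}^r$ of Definition~\ref{OpenSetsDefinition}, that is, using arbitrary \'etale charts rather than only embeddings. Second, by the recentering argument in the proof of Lemma~\ref{StrongTopologyBasisLemma}, to test continuity it suffices, for each point $f$ of the preimage, to produce a basic open set \emph{centred at the image of $f$} that maps into the target; in particular I may always take the centre of a basic neighbourhood to be the relevant function. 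Since $\pi^\ast$ is a bijection, continuity of $(\pi^\ast)^{-1}$ is exactly the assertion that $\pi^\ast$ carries basic open sets to open subsets of $C^\infty(X)^{\mathrm{inv}}$.

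The comparison I would isolate first is this. For an \'etale chart $c_i\colon U_i\to\X$, form $U_i\times_\X X$ with its \'etale projections $p_i\colon U_i\times_\X X\to U_i$ and $q_i\colon U_i\times_\X X\to X$; it carries a canonical $2$-morphism $c_i\circ p_i\Rightarrow\pi\circ q_i$, and composing with any $g\colon\X\to\R$ yields the \emph{equality of functions} $(g\circ c_i)\circ p_i=(\pi^\ast g)\circ q_i$. On a small enough open set where $p_i$ is a diffeomorphism onto a neighbourhood of a given $k\in U_i$ and $q_i$ is an open embedding, the composite $\phi=q_i\circ p_i^{-1}$ is then a genuine chart on $X$ (an open embedding of an open subset of $\R^n$) for which $g\circ c_i=(\pi^\ast g)\circ\phi$ holds identically. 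Hence the derivatives of $g\circ c_i$ at $k$ equal those of $(\pi^\ast g)\circ\phi$ at $k$ with \emph{no} chain-rule distortion; this literal identity is what makes the two topologies directly comparable.

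For continuity of $(\pi^\ast)^{-1}$ I would fix a basic set $\mathcal{N}^r(c_i,K_i,\epsilon_i,f)$ and a point $\pi^\ast g_0$ of its image, and recentre at $g_0$. For each $i$, cover the compact $K_i$ by finitely many neighbourhoods of the kind above, producing charts $\phi_{i,k}$ on $X$ and compact sets $L_{i,k}=K_i\cap B_{i,k}\subset U_i$. The family $\{\phi_{i,k}\}$ is locally finite in $X$: applying the definition of local finiteness of $\{c_i\}$ to the single morphism $\pi\colon X\to\X$ shows that each point of $X$ has a neighbourhood meeting only finitely many $U_i\times_\X X$, hence only finitely many $\phi_{i,k}$. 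By the identity above, $\mathcal{N}^r(\phi_{i,k},L_{i,k},\epsilon_i,\pi^\ast g_0)\cap C^\infty(X)^{\mathrm{inv}}$ consists of functions $\pi^\ast g$ with $g\in\mathcal{N}^r(c_i,K_i,\epsilon_i,g_0)$, so it is a neighbourhood of $\pi^\ast g_0$ lying inside the image. Thus $\pi^\ast$ is open and $(\pi^\ast)^{-1}$ is continuous; note that this direction used only that $\pi$ is a morphism from a manifold.

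Finally, for continuity of $\pi^\ast$ when $\pi$ is proper, I would run the comparison in reverse. Given a basic set $\mathcal{N}^r(d_j,J_j,\delta_j,h)$ in $C^\infty(X)$ and $f$ with $\pi^\ast f$ in it, recentre at $\pi^\ast f$ and consider the \'etale charts $\pi\circ d_j\colon V_j\to\X$; since $(\pi^\ast g)\circ d_j=g\circ(\pi\circ d_j)$, the set $\mathcal{N}^r(\pi\circ d_j,J_j,\delta_j,f)$ is carried by $\pi^\ast$ straight into $\mathcal{N}^r(d_j,J_j,\delta_j,\pi^\ast f)$. The one point to check — and the main obstacle, since it is exactly what fails in general — is that $\{\pi\circ d_j\}$ is locally finite \emph{in $\X$}: local finiteness of $\{d_j\}$ in $X$ need not transport downward through a non-proper $\pi$, because a point of $\X$ may have infinitely many preimages in $X$ scattered across different $V_j$ (the phenomenon behind the failure of $\R\to S^1$). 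When $\pi$ is proper, a compact neighbourhood of a point of $\bar\X$ pulls back to a compact subset of $X$ meeting only finitely many $d_j$, giving the required local finiteness and completing the proof that $\pi^\ast$ is continuous. The concluding statement is then immediate: for finite $G$ the atlas $M\to[M/G]$ is proper, so $\pi^\ast$ is a homeomorphism identifying $C^\infty([M/G])$ with the subspace $C^\infty(M)^G$ of $C^\infty(M)$.
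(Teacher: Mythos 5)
Your argument is correct and is essentially the paper's own proof: both directions proceed by transporting basic neighbourhoods through the fibre products $U_i\times_\X X$, using local sections of the \'etale projection to $U_i$ to build charts on $X$ along which the derivative data of $g\circ c_i$ and $(\pi^\ast g)$ agree literally (no chain rule), and using properness to push local finiteness of the $d_j$ down to $\X$ for the converse direction. Two cosmetic caveats: take the $B_{i,k}$ to be closed balls inside the section domains so that $L_{i,k}=K_i\cap B_{i,k}$ is genuinely compact (this is the paper's $\cl V_k\subset W_k$ shrinking step), and your aside that the first direction uses only that $\pi$ is a morphism from a manifold is too strong --- \'etaleness of $\pi$ is exactly what makes the projection $U_i\times_\X X\to U_i$ admit the local sections on which the whole comparison rests.
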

\begin{proof}
Let $\mathcal{N}^{r}(c_i,K_i,\epsilon_i,f)$ be a basic open neighbourhood in $C^\infty(\X)$.  Consider the diagrams
\[\xymatrix{
U_i\times_\X X \ar[r]^-{\tilde c_i}\ar[d]_{\pi_i} & X\ar[d]^\pi_{}="1"\\
U_i\ar[r]_{c_i}^{}="2"  &\X.\ar@{=>}"1";"2"
}\]
For each $k\in K_i$ we may find open discs $V_k\subset W_k$ centered at $k$, for which $\cl V_k\subset W_k$, and for which $\pi_i$ admits a local section $\pi_i^{-1}\colon W_k\to U_i\times_\X X$.  Choose finitely many $k_i^m$ for which the $V_i^m=V_{k_i^m}$ cover $K_i$.  Set $W_i^m=W_{k_i^m}$ and $\epsilon_i^m=\epsilon_i$.  Now it is simple to verify that the family of \'etale maps 
\[c_i^m\colon W_i^m\xrightarrow{\pi_i^{-1}}\pi_i^{-1}(W_i^m)\xrightarrow{\tilde c_i}X\]
is locally finite, that $K_i\cap\cl V_i^m$ is compact, and that $g\in\mathcal{N}^{r}(c_i,K_i,\epsilon_i,f)$ if and only if $g\circ\pi\in\mathcal{N}^{r}(c_i^m,K_i\cap\cl V_i^m,\epsilon_i^m,f\circ\pi)$.  This shows that $(\pi^\ast)^{-1}$ is continuous. 

Now let $\pi$ be proper and suppose given a basic open subset $\mathcal{N}^{r}(d_j,J_j,\delta_j,g\circ\pi)$ in $C^\infty(X)$.  We claim that the $\pi\circ d_j\colon V_j\to\X$ again form a locally finite family.  Assuming this for the time being, it is immediate that $h\circ\pi\in\mathcal{N}^{r}(d_j,J_j,\delta_j,g\circ\pi)$ if and only if $h\in\mathcal{N}^{r}(\pi\circ d_j,J_j,\delta_j,g)$, so that $\pi^\ast$ is continuous.

Now we prove our claim.  Let $U\to\X$ be any morphism and let $u\in U$.  Then since $U\times_\X X\to U$ is proper \'etale and the $d_j\colon V_j\to X$ are locally finite, we may find an open neighbourhood $W$ of $u$ such that the open set $\pi_2(\pi_1^{-1}(W))\subset X$ meets only finitely many of the $d_j(V_j)$.  It follows that only finitely many of the $W\times_\X V_j$ are non-empty, as required.
\end{proof}

\subsection{$C^\infty(\X)$ is a Baire space.}\label{BaireSection}

\begin{theorem}\label{BaireTheorem}
$C^\infty(\X)$ with the strong topology is a Baire space.  That is, a countable union of dense open subsets in $C^\infty(\X)$ is again dense.
\end{theorem}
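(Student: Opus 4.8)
The plan is to adapt the classical argument that $C^\infty(M)$ with the strong topology is a Baire space \cite{\Hirsch}. Since the strong topology on $C^\infty(\X)$ is not metrizable when $\bar\X$ is non-compact, we cannot simply invoke the Baire category theorem; instead we verify the defining property directly. It suffices to show that for any sequence $D_1,D_2,\ldots$ of dense open subsets and any nonempty basic open set $W_0$, the intersection $W_0\cap\bigcap_n D_n$ is nonempty. First I would fix, once and for all, a single countable locally finite family of \'etale charts to work with. By Proposition~\ref{SpecialCoverProposition} choose \'etale $c_i\colon U_i\to\X$ from open subsets of $\R^n$, together with open $T_i\subset U_i$ such that each $K_i=\cl T_i$ is compact and $\bigsqcup c_i\colon\bigsqcup T_i\to\X$ is surjective; then $\bigsqcup U_i\to\X$ is an \'etale atlas. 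Because the overlap maps of an atlas are local diffeomorphisms, controlling derivatives up to order $r$ on the $K_i$ controls them on any chart, so every basic neighborhood contains one defined over this fixed family. We may therefore assume that all the neighborhoods constructed below are indexed by the single fixed family $\{c_i,K_i\}_{i\in\mathbb{N}}$.

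The heart of the proof is an inductive construction of nested basic neighborhoods. Suppose $f_{n-1}$ and a basic neighborhood $W_{n-1}\ni f_{n-1}$ have been chosen. Using density of $D_n$, pick $f_n\in D_n\cap W_{n-1}$, and then shrink a basic neighborhood about $f_n$ to obtain $W_n=\mathcal{N}^{r_n}(c_i,K_i,\epsilon^{(n)}_i,f_n)\subset D_n\cap W_{n-1}$. Here I would arrange three things simultaneously: that $r_n\geqslant n$, that $\epsilon^{(n)}_i\leqslant 2^{-n}$ for all $i\leqslant n$, and that the tolerances $\epsilon^{(n)}_i$ are small enough that every element of $W_n$, together with all of its $C^\infty$-limits, still lies in $W_{n-1}$. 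Concretely, at each stage one reserves strictly more than the tail of the shrinkage remaining inside $W_{n-1}$ on each compact $K_i$, exactly as in the manifold proof; this keeps the construction self-consistent.

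Finally I would extract and verify the limit. On each chart $U_i$, restricted to the compact $K_i$, the functions $f_n\circ c_i$ are Cauchy in $C^r$ for every fixed $r$, since eventually $r_n\geqslant r$ and the tolerances decay geometrically; by completeness of $C^r(K_i)$ they converge in $C^\infty$. Local finiteness of $\{c_i\}$ guarantees that only finitely many constraints are active near any point, so these local limits patch to a smooth invariant function on the atlas $\bigsqcup U_i$, which descends to a morphism $f\colon\X\to\R$: the compatibility condition defining $C^\infty\bigl(\bigsqcup U_i\bigr)^{\mathrm{inv}}$ in \S\ref{StrongTopologyAtlasSection} is preserved under local uniform convergence, since the two pullbacks of $f$ to $\bigsqcup U_i\times_\X\bigsqcup U_i$ are the limits of the (coinciding) pullbacks of the $f_n$. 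The smallness built into the construction gives $f\in W_n\subset D_n$ for every $n$ and $f\in W_0$, so $W_0\cap\bigcap_n D_n\neq\emptyset$, which is the Baire property.

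I expect the main obstacle to be the bookkeeping in the inductive step. Because the tolerances $\epsilon^{(n)}_i$ are allowed to depend on $i$ while the derivative order $r_n$ must be driven to infinity, one must choose the shrinkage at stage $n$ so as to keep $W_n$ inside $D_n\cap W_{n-1}$, to force geometric $C^\infty$-convergence on each fixed $K_i$, and to trap the eventual limit inside all previous neighborhoods, all at once. The one genuinely new point beyond the manifold argument is the verification that the $C^\infty$-limit of morphisms is again a morphism, but this reduces to the observation that invariance under $\bigsqcup U_i\times_\X\bigsqcup U_i\rightrightarrows\bigsqcup U_i$ is a closed condition under the convergence just established.
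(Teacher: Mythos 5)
Your proposal is correct and follows essentially the same route as the paper's proof: reduce to a fixed countable locally finite family of charts with compact pieces covering $\bar\X$ (this is Proposition~\ref{CoverTopologyProposition}, proved via the derivative estimate of Lemma~\ref{EstimateLemma}), inductively nest basic neighbourhoods inside the $D_n$ with increasing derivative orders and geometrically shrinking tolerances that reserve room for the limit, extract a $C^\infty$-limit on each compact piece, and patch using the fact that the invariance condition descends to the limit. The only cosmetic difference is that the paper shrinks all tolerances by a factor of $2$ at every stage and writes the reserve as $\mathcal{N}^{r_i}(2\epsilon^{(i)}_l,f_i)\subset A_i\cap\mathcal{N}^{r_{i-1}}(\epsilon^{(i-1)}_l,f_{i-1})$, whereas you describe the same trapping condition more informally.
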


Before proving this theorem we will establish the next proposition, which gives us a relatively `lean' description of the strong topology.

\begin{proposition}\label{CoverTopologyProposition}
Let $s_l\colon S_l\to\X$ be a countable locally finite family of \'etale morphisms  from open subsets of $\R^n$, indexed by $l\in L$, together with open subsets $T_l\subset S_l$ for which $\cl T_l$ is compact and $\bigsqcup s_l\colon\bigsqcup T_l\to\X$ is surjective.  Then the open subsets $\mathcal{N}^r(s_l,\cl T^l,\epsilon_l,f)$ form a basis for the strong topology on $C^\infty(\X)$.
\end{proposition}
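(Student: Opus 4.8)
The plan is to prove the two inclusions of topologies separately. Each set $\mathcal{N}^r(s_l,\cl T_l,\epsilon_l,f)$ is itself one of the basic open sets of Definition~\ref{OpenSetsDefinition}, since by hypothesis the $s_l\colon S_l\to\X$ form a countable locally finite family of \'etale morphisms from open subsets of $\R^n$ and each $\cl T_l$ is compact; hence these sets are open in the strong topology, so the topology they generate is no finer than it. The content of the proposition is the reverse inclusion: given any basic strong-open set $\mathcal{N}^r(c_i,K_i,\epsilon_i,f)$ and a function $g$ lying in it, I must produce indices and data so that $g\in\mathcal{N}^r(s_l,\cl T_l,\delta_l,g)\subset\mathcal{N}^r(c_i,K_i,\epsilon_i,f)$.

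First I would reduce to a purely local comparison of derivatives, exactly as in the proof of Lemma~\ref{StrongTopologyBasisLemma}: setting
\[\epsilon_i'=\epsilon_i-\sup\{\,|D^{\mathbf r}(g\circ c_i)(x)-D^{\mathbf r}(f\circ c_i)(x)|\mid |\mathbf r|\le r,\ x\in K_i\,\}>0\]
(positive by compactness of $K_i$), it suffices to guarantee $|D^{\mathbf r}(h\circ c_i)-D^{\mathbf r}(g\circ c_i)|<\epsilon_i'$ on $K_i$ whenever $h$ lies in the neighborhood of $g$ to be constructed. The geometric input relating the two charts $c_i$ and $s_l$ comes from the fiber product $P=U_i\times_\X S_l$, whose two projections $p\colon P\to U_i$ and $q\colon P\to S_l$ are \'etale. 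Because $g$ and $h$ are morphisms to $\R$, a sheaf with no nontrivial automorphisms, the two pullbacks agree on the nose: $g\circ c_i\circ p=g\circ s_l\circ q$, and likewise for $h$. Near any point of $P$ both $p$ and $q$ are local diffeomorphisms, so $\theta=q\circ p^{-1}$ is a transition diffeomorphism with $g\circ c_i=(g\circ s_l)\circ\theta$ and $h\circ c_i=(h\circ s_l)\circ\theta$ on its domain; consequently $(h-g)\circ c_i=\bigl((h-g)\circ s_l\bigr)\circ\theta$ there.

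The chain rule (Fa\`a di Bruno) then bounds $|D^{\mathbf r}((h-g)\circ c_i)|$ on the domain of $\theta$ by a constant $C$, depending only on the fixed map $\theta$ and its derivatives up to order $r$ on the relevant compact set (hence finite and independent of $h$), times $\max_{|\mathbf s|\le r}|D^{\mathbf s}((h-g)\circ s_l)|$. Surjectivity of $\bigsqcup T_l\to\X$ lets me, for each $k\in K_i$, choose such a $\theta$ carrying a neighborhood $W_k$ of $k$ into some $T_l$; compactness of $K_i$ yields a finite subcover $\{W_k\}$, hence finitely many pairs $(l,\theta)$ and constants $C$ attached to this $i$. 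Finally I would choose the $\delta_l$: for fixed $l$ the image $s_l(\cl T_l)$ is compact, so by local finiteness of $\{c_i\}$ it meets only finitely many $c_i(U_i)$, meaning only finitely many indices $i$ involve $s_l$; taking $\delta_l$ smaller than $\epsilon_i'/C$ over all those finitely many $(i,\theta)$ makes $\delta_l$ well defined and positive, and then $\mathcal{N}^r(s_l,\cl T_l,\delta_l,g)\subset\mathcal{N}^r(c_i,K_i,\epsilon_i,f)$ follows by the triangle inequality. I expect the main obstacle to be exactly this bookkeeping, namely verifying that the two local finiteness hypotheses (on the $c_i$ and on the $s_l$) interlock so that each $\delta_l$ is subject to only finitely many constraints, together with the care needed to shrink each $W_k$ so that $\theta(W_k\cap K_i)\subset\cl T_l$, where the restricted neighborhood actually controls the derivatives.
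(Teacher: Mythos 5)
Your proposal is correct and follows essentially the same route as the paper: the forward inclusion is immediate since the $\mathcal{N}^r(s_l,\cl T_l,\epsilon_l,f)$ are themselves basic strong-open sets, and the reverse inclusion is obtained by bounding derivatives in the chart $c_i$ by derivatives in the chart $s_l$ via the transition diffeomorphisms of the fiber product $U_i\times_\X S_l$ (your chain-rule constant $C$ is exactly the constant $M_{il}$ of the paper's Lemma~\ref{EstimateLemma}), and then using local finiteness of the $c_i$ together with compactness of $\cl T_l$ to make each $\delta_l$ subject to only finitely many constraints. The only cosmetic difference is that you recenter explicitly at $g$ via $\epsilon_i'$, whereas the paper works at the center $f$ and relies on the new family already being the basis of a topology.
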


By Proposition~\ref{SpecialCoverProposition} we can always find a family of \'etale morphisms that satisfies the hypotheses of Proposition~\ref{CoverTopologyProposition}.

\begin{proof}
It is simple to show, much as in the proof of Lemma~\ref{StrongTopologyBasisLemma}, that the $\mathcal{N}^r(s_l,\cl T^l,\epsilon_l,f)$ do form the basis for a topology on $C^\infty(\X)$ which is no finer than the strong topology.  Now let $\mathcal{N}^r(c_i,K_i,\epsilon_i,f)$ be a basic open subset of $C^\infty(\X)$, where the $c_i\colon U_i\to\X$ are indexed by $I$.  We will find a $\delta_l>0$ for each $l\in L$, such that
\[\mathcal{N}^r(s_l,\cl T_l, \delta_l,f)\subset \mathcal{N}^r(c_i,K_i,\epsilon_i,f).\]
This will show that the $\mathcal{N}^r(s_l,\cl T_l,\delta_l,f)$ are a basis for the strong topology.

For each $l\in L$ and $i\in I$ we have a diagram
\[\xymatrix{
U_i\times_\X S_l\ar[r]^-{\pi_2}\ar[d]_{\pi_1} & S_l\ar[d]^{s_l}_{}="1"\\
U_i\ar[r]_{c_i}^{}="2" &\X.\ar@{=>}"1";"2"
}\]
By Lemma~\ref{EstimateLemma} below we may find $M_{il}>0$ such that 
\begin{equation}
|D^\mathbf{s}(g\circ c_i)(\pi_1(k))|\leqslant M_{il}\max_{|\mathbf{t}|\leqslant|\mathbf{s}|} |D^\mathbf{t}(g\circ s_l)(\pi_2(k))|\label{SecondEstimateEquation}
\end{equation}
for all $g\colon\X\to\R$, $k\in(\pi_1\times\pi_2)^{-1}(K_i\times\cl T_l)$, and $\mathbf{s}$ with $|\mathbf{s}|\leqslant r$.

Now for each $l\in L$ set
\[\delta_l=\min\left(\frac{\epsilon_i}{M_{il}}\right)>0,\]
where the minimum is taken over the finitely many $i$ for which $(\pi_1\times\pi_2)^{-1}(K_i\times\cl  T_l)$ is non-empty.  Let $g\in\mathcal{N}^r(s_l,\cl T_l,\delta_l,f)$.   Since $\bigsqcup s_l$ is surjective, for any $i\in I$ and any $u\in K_i$ there is $l\in L$ and $k\in (\pi_1\times\pi_2)^{-1}(K_i\times\cl T_l)$ such that $u=\pi_1(k)$.  It now follows from \eqref{SecondEstimateEquation} that $g\in\mathcal{N}^r(c_i,K_i,\epsilon_i,f)$ as required.
\end{proof}

\begin{lemma}\label{EstimateLemma}
Let $\rho\colon U\to\X$, $\sigma\colon V\to\X$ be \'etale maps from open subsets of $\R^n$ and let $J\subset U$, $K\subset V$ be compact so that we have a diagram
\[\xymatrix{
U\times_\X V\ar[r]^-{\pi_2}\ar[d]_{\pi_1} & V\ar[d]^{\sigma}_{}="1"\\
U\ar[r]_{\rho}^{}="2" &\X.\ar@{=>}"1";"2"
}\]
Then given $r\geqslant 0$ there is a constant $M>0$ such that
\[|D^\mathbf{s}(g\circ\rho)(\pi_1(k))|\leqslant M\max_{|\mathbf{t}|\leqslant r} |D^\mathbf{t}(g\circ\sigma)(\pi_2(k))|\]
for all $k\in(\pi_1\times\pi_2)^{-1}(J\times K)$, $g\colon\X\to\R$, and $\mathbf{s}$ with $|\mathbf{s}|\leqslant r$.
\end{lemma}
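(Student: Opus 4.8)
The plan is to reduce the statement to a uniform higher-order chain-rule estimate on the manifold $W=U\times_\X V$, using that $\pi_1$ and $\pi_2$ are \'etale. The starting point is the identity
\[(g\circ\rho)\circ\pi_1=(g\circ\sigma)\circ\pi_2\]
of smooth functions on $W$. This holds because the $2$-commutativity of the square provides a $2$-morphism $\rho\circ\pi_1\Rightarrow\sigma\circ\pi_2$; composing with $g$ gives a $2$-morphism between two morphisms $W\to\R$, and since $\R$ is representable any such $2$-morphism forces the two morphisms to coincide. Writing $\phi=g\circ\rho$ and $\psi=g\circ\sigma$, the task becomes to bound the derivatives of $\phi$ at $\pi_1(k)$ in terms of those of $\psi$ at $\pi_2(k)$, knowing that $\phi\circ\pi_1=\psi\circ\pi_2$.

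Next I would observe that $C:=(\pi_1\times\pi_2)^{-1}(J\times K)$ is compact. Indeed the map $\pi_1\times\pi_2\colon W\to U\times V$ exhibits $W$ as the fibre product $(U\times V)\times_{\X\times\X}\X$ formed from $\rho\times\sigma$ and the diagonal $\Delta\colon\X\to\X\times\X$; since $\X$ is Deligne--Mumford, $\Delta$ is proper, and properness is stable under base change, so $\pi_1\times\pi_2$ is proper. Hence $C$, the preimage of the compact set $J\times K$, is compact. This is the only point at which the Deligne--Mumford hypothesis enters.

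For the local estimate, fix $k\in C$. Because $\pi_1$ and $\pi_2$ are \'etale I can choose an open neighbourhood $N_k$ of $k$ on which both restrict to diffeomorphisms onto their images; then $\tau_k:=\pi_2\circ(\pi_1|_{N_k})^{-1}$ is a diffeomorphism from $\pi_1(N_k)$ onto $\pi_2(N_k)$ with $\tau_k(\pi_1(k))=\pi_2(k)$, and the identity above reads $\phi=\psi\circ\tau_k$ on $\pi_1(N_k)$. The higher chain rule (Fa\`a di Bruno) then expresses each $D^\mathbf{s}\phi$, for $|\mathbf{s}|\leqslant r$, as a universal polynomial that is \emph{linear} in the values $D^\mathbf{t}\psi$ at $\tau_k(\cdot)$ with $|\mathbf{t}|\leqslant|\mathbf{s}|$, the coefficients being built from the derivatives of $\tau_k$ up to order $r$. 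This linearity is what lets a single constant serve for every $g$ at once, since the coefficients depend only on $\tau_k$ and not on $g$. Shrinking $N_k$ to a relatively compact $N_k'$, the derivatives of $\tau_k$ up to order $r$ are bounded on $\pi_1(\cl N_k')$, yielding a constant $M_k$ with $|D^\mathbf{s}\phi(\pi_1(k'))|\leqslant M_k\max_{|\mathbf{t}|\leqslant r}|D^\mathbf{t}\psi(\pi_2(k'))|$ for all $k'\in N_k'$, all $\mathbf{s}$ with $|\mathbf{s}|\leqslant r$, and all $g$.

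Finally I would pass from local to global by compactness: the $N_k'$ cover $C$, so finitely many $N_{k_1}',\dots,N_{k_p}'$ suffice, and $M:=\max_j M_{k_j}$ is the required constant. The main obstacle is organizing the uniformity --- producing a \emph{single} $M$ independent both of the point $k\in C$ and of the function $g$. This is resolved by the two features highlighted above: the compactness of $C$, which bounds the transition-map data uniformly and is where properness is needed, and the linearity of the higher chain rule in the derivatives of $\psi$, which removes any dependence on $g$.
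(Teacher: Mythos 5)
Your proposal is correct and follows essentially the same route as the paper: both use the \'etale property to build local transition maps $\tau_k$ (the paper's $b_k$), apply a quantitative higher chain rule whose linearity in the derivatives of $g\circ\sigma$ gives uniformity in $g$, and use compactness of $(\pi_1\times\pi_2)^{-1}(J\times K)$ to extract a single constant (the paper takes a supremum over this set directly where you take a finite subcover, a purely cosmetic difference). Your explicit justification of that compactness via properness of the diagonal and base change is a welcome detail that the paper leaves implicit.
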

\begin{proof}
We will use the following estimate.  Suppose given open subsets $A,B$ of $\R^n$ and smooth functions $\alpha\colon A\to\R$, $\beta\colon B\to A$.  Then for $b\in B$
\begin{equation}\label{EstimateEquation}|D^\mathbf{s}(\alpha\circ\beta)(b)|\leqslant 2^{|\mathbf{s}|}\max_{|\mathbf{t}|\leqslant |\mathbf{s}|}|D^\mathbf{t} \alpha(\beta(b))|\max_{|\mathbf{t}|\leqslant |\mathbf{s}|,m\leqslant |\mathbf{s}|}|D^\mathbf{t}\beta(b)|^m.\end{equation}

Now since $\pi_1$ and $\pi_2$ are \'etale, each $k\in(\pi_1\times\pi_2)^{-1}(J\times K)$ has an open neighbourhood $W_k$ such that $\pi_1|_{W_k}$ and $\pi_2|_{W_k}$ are diffeomorphisms onto their images.  Set $b_k=(\pi_2|_{W_k})(\pi_1|_{W_k})^{-1}$. Now set
\[M=2^{r}\sup\{|D^\mathbf{t} b_k(k)|^m\,:\, {k\in(\pi_1\times\pi_2)^{-1}(J\times K) ,\,|\mathbf{t}|\leqslant r,\,m\leqslant r} \}.\]
Since $D^\mathbf{t} b_k(k)$ depends continuously on $k$ and $(\pi_1\times\pi_2)^{-1}(J\times K)$ is compact, this supremum does indeed exist.  Further, for $g\colon\X\to\R$ and $k$ as above, \eqref{EstimateEquation} gives us
\begin{eqnarray*}
|D^\mathbf{s}(g\circ\rho)(\pi_1(k))|
&=&|D^\mathbf{s}(g\circ\sigma\circ b_k)(\pi_1(k))|\\
&\leqslant&M\max_{|\mathbf{t}|\leqslant r} |D^\mathbf{t}(g\circ\sigma)(\pi_2(k))|
\end{eqnarray*}
as required.
\end{proof}

\begin{proof}[Proof of Theorem~\ref{BaireTheorem}.]
Choose a locally finite family of \'etale morphisms $s_l\colon S_l\to\X$ from open subsets of $\R^n$, together with open subsets $T_l\subset S_l$ satisfying the hypothesis of Proposition~\ref{CoverTopologyProposition}, so that the sets
\[\mathcal{N}^r(\epsilon_l,f)=\mathcal{N}^r(s_l,\cl T_l,\epsilon_l,f)\]
form a basis for the strong topology.

Let $A_i$, $i=0,1,\ldots$ be a sequence of dense open subsets of $C^\infty(\X)$, and let $U\subset C^\infty(\X)$ be open.  We must show that $\bigcap_i A_i\cap U$ is nonempty.

Since $A_0$ is dense and open, $A_0\cap U$ is non-empty and open, so we can find $f_0$, $\epsilon^{(0)}_l$ and $r_0$ such that $\mathcal{N}^{r_0}(2\epsilon^{(0)}_l,f_0)\subset A_0\cap U$.  Since $A_1$ is dense and open, we may find $f_1$, $\epsilon^{(1)}_l$ and $r_1$ such that $\mathcal{N}^{r_1}(2\epsilon^{(1)}_l,f_1)\subset A_1\cap\mathcal{N}^{r_0}(\epsilon^{(0)}_l,f_0)$.  Without loss we may assume that $r_1>r_0$ and that $\epsilon^{(1)}_l\leqslant{\epsilon^{(0)}_l}/{2}$ for $l\in L$.  Proceeding inductively we find sequences $f_i$, $\epsilon^{(i)}_l$ and $r_i$ such that
\begin{gather*}
\mathcal{N}^{r_i}(2\epsilon^{(i)}_l,f_i)\subset A_i\cap\mathcal{N}^{r_{i-1}}(\epsilon^{(i-1)}_l,f_{i-1}),\\
\epsilon^{(i)}_l\leqslant\frac{\epsilon^{(i-1)}_l}{2},\\
r_i>r_{i-1},
\end{gather*}
for $i=1,2,\ldots$.

Now fix $l\in L$ and consider the sequence of smooth functions $(g_i)_{i=0}^\infty$ given by $g_i=f_i\circ s_l\colon S_l\to\R$.  By construction,
\[\left|D^{\mathbf{s}}g_i(t)-D^{\mathbf{s}}g_{i-1}(t)\right|<\frac{\epsilon^0_l}{2^{i-1}}\]
for all $\mathbf{s}\in\cl T_l$ and all $i\geqslant |\mathbf{s}|+1$.  Thus for each $\mathbf{s}$ the sequence $\frac{\partial^\mathbf{s}}{\partial x^\mathbf{s}}g_i|_{T_l}$ is Cauchy in the $\sup$-norm, and it follows that $(g_i|_{T_l})$ converges in the $\sup$-norm to a smooth function $g^{(l)}\colon T_l\to\R$.  

Now consider the diagram
\[\xymatrix{
T_l\times_\X T_m\ar[r]^-{\pi_1}\ar[d]_{\pi_2} &  T_l\ar[d]^{s_l|_{T_l}}_{}="1"\\
T_m\ar[r]_{s_m|_{T_m}}^{}="2" & \X
}\]
and note that for $k\in T_l\times_\X T_m$ we have
\begin{eqnarray*}
g^{(l)}\circ\pi_1(k)
&=&\lim_{i\to\infty}f_i\circ s_l\circ\pi_1(k)\\
&=&\lim_{i\to\infty}f_i\circ s_m\circ\pi_2(k)\\
&=&g^{(m)}\circ\pi_2(k),
\end{eqnarray*}
and therefore, since $\bigsqcup s_l|_{T_l}\colon\bigsqcup T_l\to\X$ is surjective, the $g^{(l)}$ patch to give a function $g\colon\X\to\R$ such that $g\circ s_l|_{T_l}=g^{(l)}$ for each $l\in L$.  

We claim that $g\in\bigcap A_i\cap U$.  First note that, since $f_{j+1}\in\mathcal{N}^{r_j}(\epsilon^j_l,f_j)$ for each $j\geqslant 0$, there is for each $l\in\mathbb{N}$ and $j\geqslant 0$ some $0< k^j_l<\epsilon^j_l$ such that
\[\left|D^\mathbf{s}f_j\circ s_l(t)-D^\mathbf{s}f_{j+1}\circ s_l(t)\right|\leqslant k^j_l\]
for all $|\mathbf{s}|\leqslant r_j$, $t\in T_l$.   Therefore
\begin{eqnarray*}
\left|D^\mathbf{s}f_j\circ s_l(t)-D^\mathbf{s}g\circ s_l(t)\right|
&=&\lim_{j'\to\infty}\left|D^\mathbf{s}f_j\circ s_l(t)-D^\mathbf{s}f_{j'}\circ s_l(t)\right|\\
&\leqslant& \sum_{m=0}^{\infty}k_l^{j+m},
\end{eqnarray*}
for all $t\in T_l$ and $|\mathbf{s}|\leqslant r_j$; the same then holds for all $t\in\cl T_l$ and $|\mathbf{s}|\leqslant r_j$, and since
\[\sum_{m=0}^{\infty}k_l^{j+m}<\sum_{m=0}^{\infty}\epsilon_l^{(j+m)}\leqslant \sum_{m=0}^{\infty}\epsilon_l^{(j)}/2^m=2\epsilon_l^{(j)}\]
it follows that $g\in\mathcal{N}^{r_j}(2\epsilon^j_l,f_l)\subset A_j\cap U$, as required.
\end{proof}

\subsection{Density of Morse functions.}\label{MorseDensitySection}

\begin{theorem}\label{MorseFunctionsAreDenseTheorem}
Morse functions form a dense open subset of $C^\infty(\X)$.
\end{theorem}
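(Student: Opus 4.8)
The plan is to prove density and openness separately, reducing both to statements about an atlas where the corresponding manifold results are already known.

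The plan is to exhibit the Morse functions as a countable intersection of open dense sets and then invoke the Baire property of Theorem~\ref{BaireTheorem}. Fix, using Proposition~\ref{SpecialCoverProposition}, a countable locally finite family of \'etale morphisms $s_l\colon S_l\to\X$ arising from orbifold-charts $[S_l/G_l]\to\X$, together with open $T_l\subset S_l$ with $\cl T_l$ compact and $\bigsqcup s_l|_{T_l}$ surjective; by Proposition~\ref{CoverTopologyProposition} the sets $\mathcal N^r(s_l,\cl T_l,\epsilon_l,f)$ form a basis for the strong topology. Since $\bigsqcup s_l\colon\bigsqcup S_l\to\X$ is then a surjective \'etale atlas, a function $g$ is Morse if and only if each $g\circ s_l$ is; and since every point of $\bar\X$ is represented in some $T_l$, it is enough to test nondegeneracy on the sets $\cl T_l$. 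Accordingly I set
\[D_l=\{g\in C^\infty(\X)\mid g\circ s_l\text{ has no degenerate critical point in }\cl T_l\},\]
so that the Morse functions are exactly $\bigcap_l D_l$, a countable intersection.

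For \emph{openness} I would argue that a Morse $f$ admits a whole basic neighbourhood of Morse functions. On the compact set $\cl T_l$ the continuous function $x\mapsto|\nabla(g\circ s_l)(x)|+|\det\mathrm{Hess}(g\circ s_l)(x)|$ is strictly positive precisely when $g\circ s_l$ has no degenerate critical point there; when $g=f$ is Morse it is therefore bounded below by some $m_l>0$. Because $\det$ is a polynomial in the second derivatives, which stay bounded on $\cl T_l$, a $C^2$-small perturbation changes this quantity uniformly little, so for $r\geq 2$ and $\epsilon_l$ small enough every $g\in\mathcal N^r(s_l,\cl T_l,\epsilon_l,f)$ still satisfies the positivity, i.e.\ $g\in D_l$. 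Choosing such an $\epsilon_l$ simultaneously for every $l$ — which the basis of Proposition~\ref{CoverTopologyProposition} permits, since the $\epsilon_l$ vary independently — yields a single basic neighbourhood of $f$ contained in $\bigcap_l D_l$. In particular each $D_l$ is open and the Morse functions are open.

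The substantive point is the \emph{density} of each $D_l$, and this is where the equivariance of the problem enters. Given any $f$ and any basic neighbourhood, I would perturb $f$ only inside the chart $[S_l/G_l]$. Choose a $G_l$-invariant function equal to $1$ near $\cl T_l$ and compactly supported in $S_l$ (Lemma~\ref{BumpFunctionsLemma}). Because $S_l\to[S_l/G_l]$ is proper, the strong topology on $C^\infty([S_l/G_l])=C^\infty(S_l)^{G_l}$ is the subspace topology inherited from $C^\infty(S_l)$ (Proposition~\ref{StrongTopologyAtlasProposition}), so the classical density of $G_l$-invariant Morse functions among invariant functions on the $G_l$-manifold $S_l$ (equivariant transversality, resting on Sard's theorem \cite{\Wasserman}) furnishes an arbitrarily $C^r$-small $G_l$-invariant $h$ with $f\circ s_l+h$ Morse on a neighbourhood of $\cl T_l$. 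Cutting $h$ off by the bump function and extending by zero via Lemma~\ref{ExtensionByZeroExistsLemma} produces a global perturbation of $f$; taking $h$ small keeps the result inside the prescribed neighbourhood — only the finitely many charts whose $\cl T_m$ meet the support are affected, by local finiteness — and places it in $D_l$. Thus each $D_l$ is open and dense, and since $C^\infty(\X)$ is Baire, $\bigcap_l D_l$, namely the Morse functions, is dense.

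I expect the density of $D_l$, specifically the replacement of a naive linear perturbation $x\mapsto\langle a,x\rangle$ (which is not $G_l$-invariant) by its equivariant counterpart, to be the main obstacle; one must also check that "equivariantly Morse" coincides with nondegeneracy of the Hessian in the sense of Definition~\ref{TangentSpaceDefinition}. The remaining ingredients are a routine transcription of the manifold theory through the atlas $\bigsqcup s_l$.
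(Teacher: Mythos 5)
Your proposal is correct and follows essentially the same route as the paper's proof: both express the Morse functions as a countable intersection of the open sets of functions nondegenerate over each chart of a cover from Proposition~\ref{SpecialCoverProposition}, prove openness by compactness on each chart, prove density of each piece via Wasserman's equivariant Morse density transported through Proposition~\ref{StrongTopologyAtlasProposition} and a cut-off/extension-by-zero, and conclude by the Baire property of Theorem~\ref{BaireTheorem}. The only cosmetic difference is that you phrase the local perturbation additively ($f+\widetilde{\phi_l h}$) where the paper writes the equivalent patching $f(1-\tilde\phi_l)+\widetilde{\phi_l g}$, and the continuity you need for ``taking $h$ small keeps the result inside the prescribed neighbourhood'' is exactly the paper's Lemma~\ref{ExtensionByZeroLemma}.
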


Thus every differentiable Deligne-Mumford stack admits a Morse function.  Certain applications will require Morse functions with a stronger property.  These are provided by the next proposition and its corollary.

\begin{proposition}\label{UnboundedFunctionsProposition}
Let $\mathcal{C}\subset C^\infty(\X)$ denote the subset consisting of morphisms $f\colon\X\to\R$ for which $\bar f^{-1}(-\infty,t]\subset\bar\X$ is compact for all $t$.  Then $\mathcal{C}$ is non-empty and open.
\end{proposition}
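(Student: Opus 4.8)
The plan is to handle non-emptiness and openness separately, building an exhaustion function from a partition of unity and then controlling nearby functions using only the $C^0$-part of the strong topology.

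For non-emptiness I would first produce a suitable $f$. Applying Theorem~\ref{PartitionsOfUnityTheorem} to the trivial cover $\{\bar\X\}$ yields morphisms $\phi_i\colon\X\to\R$, $i=1,2,\ldots$, whose underlying functions $\bar\phi_i$ form a partition of unity with each $\supp\phi_i$ compact. Set $f=\sum_i i\,\phi_i$. Since the supports $\supp\phi_i$ are locally finite and the underlying functions $i\bar\phi_i$ are non-negative, the second construction in the proof of Theorem~\ref{PartitionsOfUnityTheorem} shows that this locally finite sum defines a genuine morphism $f\colon\X\to\R$ with $\bar f=\sum_i i\,\bar\phi_i$. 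To check $f\in\mathcal{C}$, fix $t$ and choose an integer $N\geqslant t$. If $x\in\bar\X$ lies outside the compact set $\bigcup_{i\leqslant N}\supp\phi_i$, then $\bar\phi_i(x)=0$ for $i\leqslant N$, so $\sum_{i>N}\bar\phi_i(x)=1$ and hence
\[\bar f(x)=\sum_{i>N} i\,\bar\phi_i(x)\geqslant (N+1)\sum_{i>N}\bar\phi_i(x)=N+1>t.\]
Thus $\bar f^{-1}(-\infty,t]\subset\bigcup_{i\leqslant N}\supp\phi_i$. Being a closed subset (as $\bar f$ is continuous) of a finite union of compact sets in the Hausdorff space $\bar\X$ (Proposition~\ref{DMUnderlyingSpaceProposition}), it is compact, so $f\in\mathcal{C}$ and $\mathcal{C}\neq\emptyset$.

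For openness, given $f\in\mathcal{C}$ I would exhibit a basic strong-open neighbourhood of $f$ lying inside $\mathcal{C}$. Apply Proposition~\ref{SpecialCoverProposition} to obtain a countable locally finite family of \'etale morphisms $s_l\colon S_l\to\X$ from open subsets of $\R^n$, together with open $T_l\subset S_l$ for which each $\cl T_l$ is compact and $\bigsqcup s_l\colon\bigsqcup T_l\to\X$ is surjective. I claim the basic open set $\mathcal{N}^0(s_l,\cl T_l,1,f)$ is contained in $\mathcal{C}$. Let $g$ be in it. For any $x\in\bar\X$, surjectivity supplies $l$ and $u\in T_l\subset\cl T_l$ with $[u\to\X]=x$, and the defining inequality at $k=u$ reads $|\bar g(x)-\bar f(x)|=|g\circ s_l(u)-f\circ s_l(u)|<1$. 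Hence $|\bar g-\bar f|<1$ everywhere on $\bar\X$, so $\bar g^{-1}(-\infty,t]\subset\bar f^{-1}(-\infty,t+1]$. The right-hand set is compact because $f\in\mathcal{C}$, and the left-hand set is closed, hence compact. Therefore $g\in\mathcal{C}$, which shows $\mathcal{N}^0(s_l,\cl T_l,1,f)\subset\mathcal{C}$ and that $\mathcal{C}$ is open.

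The only real subtlety is the passage from the defining conditions of a strong-topology neighbourhood, which constrain $g-f$ only finitely-many-at-a-time through chart conditions on the compact sets $\cl T_l$, to a genuine pointwise bound $|\bar g-\bar f|<1$ over all of $\bar\X$. This gap is bridged precisely by the surjectivity of $\bigsqcup s_l\colon\bigsqcup T_l\to\X$ furnished by Proposition~\ref{SpecialCoverProposition}, which guarantees that every point of $\bar\X$ is witnessed by some $u\in\cl T_l$. Everything else is routine, and no estimate beyond order $r=0$ is required, so I do not expect any further obstacle.
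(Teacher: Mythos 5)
Your proof is correct. The openness half is essentially identical to the paper's: both take the cover $s_l\colon S_l\to\X$, $T_l\subset S_l$ from Proposition~\ref{SpecialCoverProposition}, observe that any $g\in\mathcal{N}^0(s_l,\cl T_l,1,f)$ satisfies $|\bar g-\bar f|<1$ on all of $\bar\X$ because $\bigsqcup s_l|_{T_l}$ is surjective, and conclude that $\bar g^{-1}(-\infty,t]$ is a closed subset of the compact set $\bar f^{-1}(-\infty,t+1]$. Where you genuinely diverge is in the non-emptiness half. The paper first builds an exhaustion $G_1\subset G_2\subset\cdots$ of $\bar\X$ by open sets with compact closures, takes a partition of unity subordinate to $\{G_i\}$, and sums the auxiliary functions $\chi_j=1-\sum_{\supp\phi_i\cap\cl G_j\neq\emptyset}\phi_i$, so that $\chi=\sum\chi_j$ roughly records how many stages of the exhaustion a point has escaped. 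You instead take a partition of unity $\{\phi_i\}$ with compact supports subordinate to the trivial cover and set $f=\sum_i i\,\phi_i$; the locally finite sum is a morphism by the second step in the proof of Theorem~\ref{PartitionsOfUnityTheorem}, and the estimate $\bar f(x)\geqslant N+1$ off $\bigcup_{i\leqslant N}\supp\phi_i$ (using $\sum_{i>N}\bar\phi_i(x)=1$ there) gives $\bar f^{-1}(-\infty,t]\subset\bigcup_{i\leqslant N}\supp\phi_i$ for $N\geqslant t$. Your construction is the more direct of the two: it avoids the intermediate exhaustion $\{G_i\}$ and the monotone family $\chi_j$, at the mild cost of having to note explicitly that $i\phi_i$ still has locally finite supports so the weighted sum is a morphism. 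Both arguments are complete; there is no gap.
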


\begin{corollary}\label{ProperMorseFunctionCorollary}
Any differentiable Deligne-Mumford stack $\X$ admits a Morse function $f\colon\X\to\R$ with the property that $\bar f^{-1}(-\infty,t]$ is compact for each $t\in\R$.
\end{corollary}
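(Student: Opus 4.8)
The plan is to combine the two immediately preceding results, which already do all the real work, by a purely formal topological observation. By Theorem~\ref{MorseFunctionsAreDenseTheorem}, the set $\mathcal{M}\subset C^\infty(\X)$ of Morse functions is dense (indeed dense open) in the strong topology. By Proposition~\ref{UnboundedFunctionsProposition}, the set $\mathcal{C}\subset C^\infty(\X)$ consisting of those $f\colon\X\to\R$ for which every sublevel set $\bar f^{-1}(-\infty,t]\subset\bar\X$ is compact is non-empty and open.

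The key point is simply that a dense subset of a topological space meets every non-empty open subset. Since $\mathcal{M}$ is dense and $\mathcal{C}$ is open and non-empty, the intersection $\mathcal{M}\cap\mathcal{C}$ is non-empty. Any $f$ belonging to this intersection is at once a Morse function (because $f\in\mathcal{M}$) and has the property that $\bar f^{-1}(-\infty,t]$ is compact for every $t\in\R$ (because $f\in\mathcal{C}$), which is precisely the assertion of the corollary. So I would fix any such $f$ and be done.

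There is essentially no obstacle to overcome here; the corollary is a direct consequence of density of $\mathcal{M}$ together with the non-emptiness and openness of $\mathcal{C}$. I would note in passing that the Baire property established in Theorem~\ref{BaireTheorem} is \emph{not} needed for this particular statement, since we are only intersecting a single dense open set with a single non-empty open set rather than countably many dense open conditions. If one wanted the sharper conclusion that proper Morse functions are themselves dense within $\mathcal{C}$, this too would follow immediately from the density of $\mathcal{M}$, as $\mathcal{M}\cap\mathcal{C}$ would then be dense in $\mathcal{C}$.
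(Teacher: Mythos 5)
Your proposal is correct and is precisely the argument the paper intends: the corollary is stated immediately after Proposition~\ref{UnboundedFunctionsProposition} with no separate proof, the implicit reasoning being exactly that the dense (open) set of Morse functions from Theorem~\ref{MorseFunctionsAreDenseTheorem} must meet the non-empty open set $\mathcal{C}$. Your remark that the Baire property is not needed here is also accurate.
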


\begin{proof}[Proof of Proposition~\ref{UnboundedFunctionsProposition}.]
This is immediate if $\bar\X$ is compact.  Suppose that $\bar\X$ is not compact.  We begin by showing that $\mathcal{C}$ is open.  Let $s_l\colon S_l\to\R$, $T_l\subset S_l$ be a countable cover of $\X$ as in Proposition~\ref{SpecialCoverProposition}.  Suppose that $f\in\mathcal{C}$ and let $g\in\mathcal{N}^0(s_l,\cl T_l,1,f)$.  Then if $x\in \bar g^{-1}(-\infty,t]$ it follows --- since $x=[s\to\X]$ for some $l\in L$ and some $s\in T_l$ --- that $x\in\bar f^{-1}(-\infty,t+1]$.  Thus $\bar g^{-1}(-\infty,t]$ is a closed subset of the compact $\bar f^{-1}(-\infty,t+1]$ and so is itself compact.  This shows that $\mathcal{C}$ is open.

Now we will show that $\mathcal{C}$ is non-empty.  As in the proof of \cite[1.9]{\Warner} we may find $G_1\subset G_2\subset\cdots\subset\bar\X$ open, such that each $\cl G_i$ is compact and contained in $G_{i+1}$ and such that $\bar\X=\bigcup G_i$.  Now using Theorem~\ref{PartitionsOfUnityTheorem} we may take a partition of unity $\phi_1,\phi_2,\ldots$ subordinate to $\{G_i\}$.  Set
\[\chi_j=1-\sum_{\supp\phi_i\cap\cl G_j\neq\emptyset}\phi_i.\]
Then 
\begin{enumerate}
\item $\supp\chi_j\subset\bar\X\setminus G_j$,
\item $\bar\chi_j=1$ on $\bar\X\setminus G_{k_j}$ for some $k_j$ large enough,
\item $\chi_i\geqslant \chi_{i+1}$ for all $i\geqslant 1$.
\end{enumerate}

Since each $x\in\bar\X$ has a neighbourhood which meets the support of only finitely many $\chi_j$ we may --- as in the proof of Theorem~\ref{PartitionsOfUnityTheorem} --- form the sum
\[\chi=\sum\chi_j.\]
Let $N\in\mathbb{N}$.  Then if $x\in\bar\chi^{-1}(-\infty,N-1]$ we must have $\bar\chi_N(x)<1$, so that $x\in G_{k_N}$.  Thus $\bar\chi^{-1}(-\infty,N-1]$ is a subset of $G_{k_N}$ and is therefore compact.  This is sufficient to show that $\bar\chi^{-1}(-\infty,t]$ is compact for all $t\in\R$.
\end{proof}

\begin{lemma}\label{ExtensionByZeroLemma}
Let $i\colon\Y\to\X$ be an open embedding of differentiable Deligne-Mumford stacks and suppose given $\phi\colon\Y\to\R$ with $\supp\phi$ compact.  The function $C^\infty(\Y)\to C^\infty(\X)$, $g\mapsto \widetilde{\phi g}$ is continuous.  (For $h\colon\Y\to\R$ with compact support, the extension by zero $\tilde h\colon\X\to\R$ was defined in Lemma~\ref{ExtensionByZeroExistsLemma}.)
\end{lemma}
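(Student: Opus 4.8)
The plan is to prove continuity directly from the definition of the strong topology. It suffices to show that for each $g_0\in C^\infty(\Y)$ and each basic open set $\mathcal{N}^r(c_i,K_i,\epsilon_i,\widetilde{\phi g_0})$ of $C^\infty(\X)$ centred at $\widetilde{\phi g_0}$, the preimage under $g\mapsto\widetilde{\phi g}$ contains a basic open neighbourhood of $g_0$. Indeed, exactly as in the re-centring computation inside the proof of Lemma~\ref{StrongTopologyBasisLemma}, every open set of $C^\infty(\X)$ containing $\widetilde{\phi g_0}$ contains such a centred basic set, so this is enough for continuity at $g_0$.

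The first step is to understand the composite $\widetilde{\phi g}\circ c_i$. For each $i$ I would form the pullback square exhibiting $U_i^{\Y}:=U_i\times_\X\Y$ as an open subset of $U_i$ (the base change of the open embedding $i$), carrying an \'etale morphism $c_i^{\Y}\colon U_i^{\Y}\to\Y$. Since $\widetilde{\phi g}\circ i=\phi g$ by Lemma~\ref{ExtensionByZeroExistsLemma}, on $U_i^{\Y}$ one has $\widetilde{\phi g}\circ c_i=(\phi\circ c_i^{\Y})\cdot(g\circ c_i^{\Y})$; whereas the map underlying $\widetilde{\phi g}$ vanishes off $\bar i(\supp\phi)$, so $\widetilde{\phi g}\circ c_i$ vanishes together with all its derivatives on the open set $U_i\setminus S_i$, where $S_i:=(\bar i\supp\phi)_{U_i}$ is closed, independent of $g$, and contained in $U_i^{\Y}$. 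By uniqueness of extension by zero one also gets $\widetilde{\phi g}-\widetilde{\phi g_0}=\widetilde{\phi(g-g_0)}$, so the only derivative differences I must control are those of $\widetilde{\phi(g-g_0)}\circ c_i$.

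Next I would record the two key facts. First, the lifted family $c_i^{\Y}\colon U_i^{\Y}\to\Y$ is locally finite: given $V\to\Y$ one composes to $V\to\X$ and uses the natural identification $V\times_\Y U_i^{\Y}\cong V\times_\X U_i$ together with local finiteness of $\{c_i\}$ over $\X$; moreover $J_i:=K_i\cap S_i$ is a compact subset of $U_i^{\Y}$. Second, applying the Leibniz rule to $(\phi\circ c_i^{\Y})\cdot((g-g_0)\circ c_i^{\Y})$ on $J_i$ produces a constant $C_i\geqslant 0$, depending only on $\phi$, $r$ and the compact $J_i$ (through $\sup_{J_i}|D^{\mathbf a}(\phi\circ c_i^{\Y})|$ for $|\mathbf a|\leqslant r$), with
\[
\bigl|D^{\mathbf r}\bigl(\widetilde{\phi(g-g_0)}\circ c_i\bigr)(k)\bigr|\leqslant C_i\max_{|\mathbf s|\leqslant r}\bigl|D^{\mathbf s}\bigl((g-g_0)\circ c_i^{\Y}\bigr)(k)\bigr|
\]
for all $k\in J_i$ and $|\mathbf r|\leqslant r$. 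Setting $\delta_i=\epsilon_i/(1+C_i)>0$, I claim that $g\mapsto\widetilde{\phi g}$ carries the basic open neighbourhood $\mathcal{N}^r(c_i^{\Y},J_i,\delta_i,g_0)$ of $g_0$ into $\mathcal{N}^r(c_i,K_i,\epsilon_i,\widetilde{\phi g_0})$: for $k\in K_i\setminus S_i$ the derivative difference is identically zero, while for $k\in J_i$ the displayed estimate yields a bound $C_i\delta_i<\epsilon_i$.

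The routine parts are the Leibniz estimate and the shrinking-to-centre argument. The step needing most care — the main, if modest, obstacle — is the bookkeeping that splits $K_i$ into the region $S_i$, where $c_i$ genuinely factors through $\Y$ and the estimate lives, and its complement, where one must verify that $\widetilde{\phi(g-g_0)}\circ c_i$ and all its derivatives vanish; together with checking that the lifted family $\{c_i^{\Y}\}$ is again locally finite, so that $\mathcal{N}^r(c_i^{\Y},J_i,\delta_i,g_0)$ is genuinely a basic open subset of $C^\infty(\Y)$ in the sense of Definition~\ref{OpenSetsDefinition}.
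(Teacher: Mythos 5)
Your proof is correct, but it takes a genuinely different route from the paper's. The paper does not pull the charts $c_i$ back along the open embedding; instead it chooses (by a modification of Proposition~\ref{SpecialCoverProposition}) a separate \emph{finite} family of \'etale charts $d_j\colon V_j\to\Y$ with compacts $J_j$ whose images cover $\supp\phi$, and then must compare derivatives computed in the $c_i$ charts with derivatives computed in the $d_j$ charts; this chart comparison is exactly what Lemma~\ref{EstimateLemma} supplies, via the transition functions on $U_i\times_\X V_j$ and the constants $M_{ij}$, after which a Leibniz factor $2^rF_j$ handles the product with $\phi\circ d_j$. The resulting neighbourhood of $g_0$ is $\mathcal{N}^r(d_j,J_j,\delta_j,g_0)$ with a finite index set. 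Your version exploits the fact that $i$ is an \emph{open} embedding to take $U_i^\Y=U_i\times_\X\Y$ as an open subset of $U_i$ itself, so the two chart systems literally coincide where it matters, the constants $M_{ij}$ disappear, and only the Leibniz constant $C_i$ survives; the price is that you must carry the original (possibly infinite) index set and check that the lifted family $\{c_i^\Y\}$ is still locally finite over $\Y$ and that $J_i=K_i\cap S_i$ is compact, both of which you do correctly. The splitting of $K_i$ into $J_i$ and $K_i\setminus S_i$ (where $\widetilde{\phi(g-g_0)}\circ c_i$ vanishes on an open set, hence with all derivatives) is sound, as is the reduction to centred basic sets and the identity $\widetilde{\phi g}-\widetilde{\phi g_0}=\widetilde{\phi(g-g_0)}$ from the uniqueness clause of Lemma~\ref{ExtensionByZeroExistsLemma}. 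Each approach has a mild advantage: the paper's reuses existing machinery and yields a finitely-indexed neighbourhood; yours is more self-contained and avoids Lemma~\ref{EstimateLemma} entirely.
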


\begin{proof}[Proof of Theorem~\ref{MorseFunctionsAreDenseTheorem}.]
Using Proposition~\ref{SpecialCoverProposition} we may find countably many linear orbifold-charts $i_l\colon[S_l/G_l]\to\X$ indexed by $l\in L$, with compact subsets $K_l\subset S_l$, such that the underlying maps $K_l\to S_l\to\bar\X$ cover $\bar\X$, and such that the $\overline{[S_l/G_l]}$ form a locally finite cover of $\bar\X$.  Write $s_l\colon S_l\to\X$.  Further, using Corollary~\ref{BumpFunctionsLemma}, take for each $l\in L$ a compactly supported $\phi_l\colon[S_l/G_l]\to\R$ such that $\bar\phi_l\circ s_l=1$ in a neighbourhood of $K_l$ and such that $\phi_l$ has compact support.

Let $f\colon\X\to\R$ be a Morse function, so that each $f\circ s_l$ is Morse.  Then it follows immediately from \cite[5.32]{\BanyagaHurtubise} that we may find $\delta_l>0$ for each $l$ such that any $g\in\mathcal{N}^2(s_l,K_l,\delta_l,f)$ has the property that each $g\circ s_l$ has no degenerate critical points in $K_l$.  But such a $g$ is then Morse.  Thus the Morse functions form an open subset of $C^\infty(\X)$.

Now write $\mathcal{M}_l\subset C^\infty(\X)$ for the subset consisting of functions $f$ for which $f\circ s_l$ has no degenerate critical points in $K_l$.  Thus $\bigcap_l\mathcal{M}_l$ is the subset consisting of all Morse functions.  Again by \cite[5.32]{\BanyagaHurtubise} each $\mathcal{M}_l$ is open, and we will prove that each $\mathcal{M}_l$ is dense.  It then follows from Theorem~\ref{BaireTheorem} that the set of Morse functions is itself dense.

Let $G$ be a finite group acting on a manifold $S$.  Write $C^\infty_G(S)^r$, $r\leqslant\infty$, for the set of $G$-invariant smooth functions on $S$ equipped with the topology inherited from the $C^r$ topology on $C^\infty(S)$.  Wasserman \cite[Lemma 4.8]{\Wasserman} has shown that the set of $G$-invariant Morse functions on $S$ form a dense subset of $C^\infty_G(S)^r$ for $r<\infty$.  Since the topology on $C^\infty_G(S)^\infty$ is given by the union of the topologies on $C^\infty_G(S)^r$ it follows that the $G$-invariant smooth Morse functions on $S$ form a dense subset of $C^\infty_G(S)^\infty$.  Using Proposition~\ref{StrongTopologyAtlasProposition} we can restate the above paragraph as follows: The Morse functions on $[S/G]$ form a dense open subset of $C^\infty([S/G])$.

We now return to our claim that the $\mathcal{M}_l$ are dense.  Let $f\in C^\infty(\X)$ and let $\mathcal{N}$ be an open neighbourhood of $f$.  We will find an element of $\mathcal{N}\cap\mathcal{M}_l$, and this will prove that $\mathcal{M}_l$ is dense.  By Lemma~\ref{ExtensionByZeroLemma} there is an open neighbourhood $\mathcal{N}'$ of $f\circ i_l\in C^\infty([S_l/G_l])$ with the property that, for $g\in\mathcal{N}'$, $f(1-\tilde\phi_l)+\widetilde{\phi g}$ lies in $\mathcal{N}$.  Now by our restatement of Wasserman's result there is a Morse function $g$ on $[S_l/G_l]$ that lies within $\mathcal{N}'$, so that $f(1-\tilde\phi_l)+\widetilde{\phi g}$ lies in $\mathcal{N}$.  Now by construction $[f(1-\tilde\phi_l)+\widetilde{\phi g}]\circ s_l=g$ in a neighbourhood of $K_l$, so that $f(1-\tilde\phi_l)+\widetilde{\phi g}\in\mathcal{M}_l\cap\mathcal{N}$.   This completes the proof.
\end{proof}

\begin{proof}[Proof of Lemma~\ref{ExtensionByZeroLemma}.]
Let $g\in C^\infty(\Y)$ and let $\mathcal{N}^r(c_i,K_i,\epsilon_i,\widetilde{g\phi})$ be a basic open neighbourhood of $\widetilde{g\phi}$, where the $c_i\colon U_i\to\X$ are indexed by $i\in I$.  We may, by a slight modification of the proof of Proposition~\ref{SpecialCoverProposition}, find finitely many \'etale morphisms $d_j\colon V_j\to\Y$ from open subsets of $\R^n$, together with compact subsets $J_j\subset V_j$, such that for each $j$ only finitely many of the $V_j\times_\X U_i$ are nonempty, and the images of the underlying maps $J_j\to\bar\Y$ cover $\supp\phi$.

Consider the diagram
\[\xymatrix{
U_i\times_\X V_j\ar[r]^-{\pi_2}\ar[d]_{\pi_1}  & V_j\ar[d]_{}="1"^{i\circ d_j}\\
U_i\ar[r]_{c_i}^{}="2" & \X.\ar@{=>}"1";"2"
}\]
By Lemma~\ref{EstimateLemma} we may find $M_{ij}>0$ such that for all $g\colon\X\to\R$, all $\mathbf{s}$ with $|\mathbf{s}|\leqslant r$, and all $k\in(\pi_1\times\pi_2)^{-1}(K_i\times J_j)$,
\[|D^\mathbf{s}(g\circ c_i)(\pi_1(k))|\leqslant M_{ij}\max_{|\mathbf{t}|\leqslant r}|D^\mathbf{t}(g\circ i\circ d_j)(\pi_2(k))|.\]
Further, set
\[F_j=\sup_{|\mathbf{t}|\leqslant r,\, x\in J_j,\, m\leqslant r}|D^\mathbf{t}(\phi\circ d_j)(x)|^m.\]

Now suppose given $h\colon\Y\to\R$, $\mathbf{s}$ with $|\mathbf{s}|\leqslant r$, and $k\in K_i$.  Then either $[k\to\X]$ lies outside $\supp\phi$, in which case $\widetilde{h\phi}\circ c_i=0$ in a neighbourhood of $k$, so that $|D^\mathbf{s}(\widetilde{h\phi}\circ c_i)(k)|=0$, or alternatively there is some $j$ and some $\tilde k\in (\pi_1\times\pi_2)^{-1}(K_i\times J_j)$ for which $k=\pi_1(\tilde k)$.  Then
\begin{eqnarray*}
|D^\mathbf{s}(\widetilde{h\phi}\circ c_i)(k)|
&\leqslant& M_{ij}\max_{|\mathbf{t}|\leqslant r}|D^\mathbf{t}(\widetilde{h\phi}\circ i\circ d_j)(\pi_2(\tilde k))|\\
&=& M_{ij}\max_{|\mathbf{t}|\leqslant r}|D^\mathbf{t}(h\circ d_j)(\phi\circ d_j)(\pi_2(\tilde k))|\\
&\leqslant& M_{ij}2^rF_j\max_{|\mathbf{t}|\leqslant r}|D^\mathbf{s}(h\circ d_j)(\pi_2(\tilde k))|.
\end{eqnarray*}

Set $\delta_j=\max\frac{\epsilon_i}{M_{ij}2^rF_j}$, where the maximum is taken over $i$ for which $U_i\times_\X V_j$ is non-empty.  The estimate above then shows that $\mathcal{N}^r(d_j,J_j,\delta_j,g)$ lies in the preimage of $\mathcal{N}^r(c_i,K_i,\epsilon_i,\widetilde{g\phi})$.  This proves that the map described in the statement is continuous, as required.
\end{proof}

\section{The Morse Inequalities and Other Results.}\label{ApplicationsSection}

This section contains three applications of the material on Morse functions developed so far, concluding with a proof of the Morse Inequalities.  In \S\ref{RepresentabilitySection} we give a criterion for a differentiable Deligne-Mumford stack $\X$ to be representable (i.e.~equivalent to a manifold) in terms of a Morse function $f\colon\X\to\R$.  In \S\ref{HandleSection} we will describe how the topology of the subset $\bar\X^a=\bar f^{-1}(-\infty,a]$ changes as one increases $a$, concluding with a description of the homotopy type of $\bar\X$.  Then in \S\ref{MorseInequalitiesSection} we state and prove the Morse Inequalities.

\subsection{Morse functions and representability.}\label{RepresentabilitySection}

Let $\X$ be a differentiable Deligne-Mumford stack.  Recall that we write $\bar\X^a$ for $\bar f^{-1}(-\infty,a]$.

\begin{theorem}
Let $f\colon\X\to\R$ be Morse with $\bar\X^a$ compact for each $a\in\R$.  Then $\X$ is representable if and only if each critical point of $f$ has trivial automorphism group.
\end{theorem}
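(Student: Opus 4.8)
The forward implication is immediate: if $\X\simeq M$ for a manifold $M$, then every point of $\X$ has trivial automorphism group, in particular every critical point of $f$ does. The content is the converse, and the plan is to propagate the triviality of automorphism groups from the critical points to all of $\bar\X$ along the negative gradient flow, and then to deduce representability from the vanishing of all automorphism groups.

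First I would fix a Riemannian metric on $\X$ (Theorem~\ref{MetricTheorem}) and form $\nabla f$. Since $-\nabla f$ need not have compact support, to apply Theorem~\ref{FlowTheorem} I would work one point at a time: given $x\in\bar\X$ with $a=\bar f(x)$, choose via Lemma~\ref{BumpFunctionsLemma} a compactly supported $\rho\colon\X\to\R$ equal to $1$ on the compact set $\bar\X^{a+1}$, and flow the compactly supported field $-\rho\,\nabla f$ (Proposition~\ref{VectorFieldProposition}(1)). Let $\Phi$ be its flow and $\varphi_t$ the underlying $\R$-action on $\bar\X$ (Proposition~\ref{FlowActionProposition}). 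By Lemma~\ref{UnderlyingFlowLemma} the function $t\mapsto\bar f(\varphi_t(x))$ has derivative $-\bar\rho\,|\nabla f|^2\leqslant 0$, so it is nonincreasing; hence the forward orbit of $x$ stays in the compact set $\bar\X^a$, where $\bar\rho=1$ and the flow coincides with the genuine negative gradient flow.

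Next I would run the standard Lyapunov argument on the metric space $\bar\X$ (Proposition~\ref{DMUnderlyingSpaceProposition}). The forward orbit of $x$ has compact closure in $\bar\X^a$, so its $\omega$-limit set $\omega(x)$ is nonempty, compact, connected and invariant. As $\bar f$ is nonincreasing along the flow and strictly decreasing away from the critical points (where $|\nabla f|^2=0$), $\bar f$ is constant on $\omega(x)$ and every point of $\omega(x)$ is therefore a critical point. Critical points are isolated (Corollary~\ref{CriticalPointsIsolatedCorollary}), and a connected set of isolated points is a single point, so $\varphi_t(x)\to c$ for a critical point $c$. For each finite $t$ the morphism $\Phi_t=\Phi|_{\X\times\{t\}}$ is an equivalence, its quasi-inverse being $\Phi_{-t}$ via the action $2$-morphism $\mu_\Phi$, and so it induces an isomorphism $\aut_x\cong\aut_{\varphi_t(x)}$. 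By hypothesis $\aut_c$ is trivial, so by the Morse Lemma (Theorem~\ref{MorseLemma}) a neighbourhood of $c$ is the image of the chart $[U_c/\aut_c]=U_c\to\X$, a manifold, whence every point near $c$ has trivial automorphism group. Taking $t$ large enough that $\varphi_t(x)$ lies in this neighbourhood shows $\aut_x\cong\aut_{\varphi_t(x)}$ is trivial. Thus every point of $\X$ has trivial automorphism group.

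Finally I would deduce representability. Choose an \'etale atlas $\pi\colon X\to\X$, so that $X\times_\X X\rightrightarrows X$ is a proper \'etale groupoid. Triviality of all automorphism groups makes $s\times t\colon X\times_\X X\to X\times X$ injective, and being proper and \'etale it is a closed embedding onto an embedded equivalence relation with \'etale projections; the quotient is then a manifold $M$ with $\X\simeq M$. The main obstacle is the third paragraph: arranging a well-defined flow despite the possible noncompactness of $\supp(\nabla f)$, trapping the forward orbit in a compact sublevel set, and verifying that both the convergence to a \emph{single} critical point and the transport of automorphism groups through the flow equivalences go through in the stacky setting.
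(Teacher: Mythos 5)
Your proof is correct and follows essentially the same route as the paper, which factors the theorem through Proposition~\ref{RepresentableProposition} (trivial automorphism groups force representability) and Proposition~\ref{MorseAutomorphismProposition} (a cut-off gradient flow carries $\aut_x$ injectively into $\aut_c$ for some critical point $c$). The only differences are in the details: the paper cites an external reference for the convergence of the flow line to a single critical point where you run the $\omega$-limit-set argument directly on the metrizable space $\bar\X$, and it proves representability by exhibiting $\X\to\bar\X$ as an equivalence of stacks rather than by your Godement-type quotient of the proper \'etale groupoid.
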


By Corollary~\ref{ProperMorseFunctionCorollary} a Morse function $f$ with the required property always exists.  Note that the additional condition on $f$ is necessary: if $\Y$ is any differentiable Deligne-Mumford stack then the projection $\Y\times\R\to\R$ is Morse and has no critical points, regardless of whether $\Y\times\R$ is representable.  The theorem is a direct consequence of the next two propositions.  The first gives a simple criterion for $\X$ to be equivalent to a manifold.  The second states that the automorphism group of a general point in $\X$ is dominated by the inertia groups of the critical points of a Morse function $f$.  

\begin{proposition}\label{RepresentableProposition}
$\X$ is representable if and only if each of its points has trivial automorphism group.
\end{proposition}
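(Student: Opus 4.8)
The plan is to prove both implications, the reverse one being the substantial part. The forward direction is immediate: if $\X$ is representable then $\X$ is equivalent to a manifold $\underline M$, and since $M\to\underline M$ is an \'etale atlas for which $M\times_{\underline M}M$ is the diagonal $M$, the groupoid $M\times_{\underline M}M\rightrightarrows M$ has only identity arrows. Thus $\aut_x$ is trivial for every point of $M$, hence for every point of $\X$.

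For the converse, suppose every $\aut_x$ is trivial. I would fix an \'etale atlas $\pi\colon X\to\X$ and set $R=X\times_\X X$, with source and target $s,t\colon R\to X$. Since $\X$ is \'etale, $s$ and $t$ are local diffeomorphisms, so $\dim R=\dim X$; since $\X$ is Deligne-Mumford the diagonal is proper, whence $(s,t)\colon R\to X\times X$ is proper. The triviality of the automorphism groups says precisely that for each $u,v\in X$ the fibre $(s,t)^{-1}(u,v)$ is either empty or a torsor under $\aut_u=1$, so there is at most one arrow $u\to v$; that is, $(s,t)$ is \emph{injective} and $R$ is an equivalence relation on $X$.

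Next I would upgrade this to a smooth quotient. As $ds$ is already an isomorphism on each tangent space, $d(s,t)=(ds,dt)$ is injective, so $(s,t)$ is an immersion; being moreover injective and proper it is a closed embedding. Thus $R$ is a closed embedded submanifold of $X\times X$, realizing an equivalence relation whose two projections to $X$ are \'etale surjections. By Godement's criterion (the quotient-manifold theorem for closed embedded equivalence relations with submersive projections), the topological quotient $X/R$ carries a unique manifold structure making $X\to X/R$ a submersion. By Proposition~\ref{XBarTopologyProposition} this quotient is exactly the underlying space $\bar\X$, so $\bar\X$ is a manifold; this is consistent with the orbifold-structure corollary to Proposition~\ref{OrbifoldChartsProposition}, whose uniformizing systems are all trivial once every $\aut_x$ is trivial.

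Finally I would identify $\X$ with $\underline{\bar\X}$. The submersion $X\to\bar\X$ is itself an atlas for the representable stack $\underline{\bar\X}$, and its associated groupoid $X\times_{\bar\X}X\rightrightarrows X$ is the fibre-product equivalence relation, which by construction of the smooth structure on $\bar\X$ coincides with $R=X\times_\X X$. Since $\X$ and $\underline{\bar\X}$ are then represented by one and the same groupoid $R\rightrightarrows X$, they are equivalent, and $\X$ is representable. I expect the main obstacle to be this last identification: one must verify that the relation $X\times_{\bar\X}X$ cut out by the map to $\bar\X$ is \emph{equal} to $X\times_\X X$, rather than merely inducing the same topological quotient --- equivalently, that passing to $\bar\X$ introduces no extra identifications. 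This is exactly what the closed-embedding property of $(s,t)$ guarantees, so the crux of the argument is to establish that embedding cleanly and then read off the coincidence of groupoids.
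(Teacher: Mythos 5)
Your proof is correct, but it takes a genuinely different route from the paper's. You work globally with the atlas groupoid $R=X\times_\X X\rightrightarrows X$: triviality of the automorphism groups makes $(s,t)$ injective, \'etaleness makes it an immersion, properness of the diagonal makes it proper, so $R\hookrightarrow X\times X$ is a closed embedded equivalence relation with \'etale projections, and Godement's quotient-manifold theorem then puts a smooth structure on $X/R=\bar\X$; the equivalence $\X\simeq\underline{\bar\X}$ follows because the two atlases $X\to\X$ and $X\to\bar\X$ induce the same groupoid (here the uniqueness of embedded-submanifold structures on a fixed subset settles the point you flag as the crux). The paper instead argues locally: it invokes Proposition~\ref{OrbifoldChartsProposition} to cover $\X$ by orbifold-charts $[U/G]\to\X$, observes that trivial isotropy forces $G=1$ so these are open embeddings of manifolds, checks that the underlying open embeddings $U\hookrightarrow\bar\X$ form a smooth atlas (using that the relevant squares remain cartesian on underlying spaces when there are no automorphisms), and then builds the morphism $\X\to\bar\X$ explicitly and verifies it is covered by the identity on the atlas groupoid. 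The trade-off: the paper's route leans on the linearization/slice theorem behind orbifold-charts and in passing shows that every underlying map $V\to\bar\X$ is smooth, which makes the equivalence very concrete; your route avoids the chart machinery entirely and isolates precisely where each hypothesis enters (isotropy $\Rightarrow$ injectivity, \'etale $\Rightarrow$ immersion, Deligne--Mumford $\Rightarrow$ proper), at the cost of importing Godement's theorem as a black box. Both are complete; just make sure to note explicitly that $\bar\X$ is Hausdorff and second countable (Proposition~\ref{DMUnderlyingSpaceProposition}, or the closedness of $R$ in $X\times X$) so that $X/R$ is a manifold in the honest sense.
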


\begin{note}
Proposition~\ref{RepresentableProposition} fails for more general stacks, as one sees by considering the stack $\mathfrak{A}$ of Examples~\ref{ExampleTwo}, \ref{ExampleThree}.  
\end{note}

\begin{proposition}\label{MorseAutomorphismProposition}
Let $f\colon\X\to\R$ be Morse with $\bar\X^a$ compact for all $a\in\R$.  Then for each $x\in\bar\X$ there is a critical point $c$ of $f$ and an injection $\aut_x\hookrightarrow\aut_c$.
\end{proposition}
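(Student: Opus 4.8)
The plan is to flow $x$ downhill along the negative gradient of $f$ to a critical point $c$, exploiting two facts: the automorphism group is carried along isomorphically by the flow, and near a critical point the automorphism group of a nearby point embeds into that of the critical point. \emph{Setup.} Fix $x\in\bar\X$ and put $a=\bar f(x)$. Equip $\X$ with a Riemannian metric (Theorem~\ref{MetricTheorem}) and form $\nabla f$. Since $-\nabla f$ need not be compactly supported, I would use Lemma~\ref{BumpFunctionsLemma} to pick $\rho\colon\X\to\R$ with $\bar\rho=1$ on a neighbourhood of the compact set $\bar\X^a$ and with $\supp\rho$ compact, and set $Y=-\rho\,\nabla f$ (Proposition~\ref{VectorFieldProposition}). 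Then $\supp Y$ is compact, so Theorem~\ref{FlowTheorem} yields a flow $\Phi\colon\X\times\R\to\X$ of $Y$ with underlying $\R$-action $\varphi_t\colon\bar\X\to\bar\X$ (Proposition~\ref{FlowActionProposition}).

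\emph{Convergence.} By Lemma~\ref{UnderlyingFlowLemma} the map $t\mapsto\bar f(\varphi_t(x))$ is smooth with derivative $\overline{Y\cdot f}=-\bar\rho\,\overline{\langle\nabla f,\nabla f\rangle}\leqslant 0$, so $\bar f$ is non-increasing along forward orbits; in particular the orbit of $x$ never leaves the compact set $\bar\X^a$, on which $\bar\rho=1$. Thus the forward orbit has compact closure, and since $\bar\X$ is a metric space (Proposition~\ref{DMUnderlyingSpaceProposition}) the $\omega$-limit set $\omega(x)$ is non-empty, compact, connected and flow-invariant. As $\bar f$ equals $\lim_{t\to\infty}\bar f(\varphi_t(x))$ on the invariant set $\omega(x)\subset\bar\X^a$, its derivative along the flow vanishes there, forcing $\overline{\langle\nabla f,\nabla f\rangle}=0$; hence every point of $\omega(x)$ is critical. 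Critical points being isolated (Corollary~\ref{CriticalPointsIsolatedCorollary}), the connected set $\omega(x)$ is a single critical point $c$, and $\varphi_t(x)\to c$.

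\emph{Automorphism groups.} First, each $\Phi_t=\Phi|_{\X\times\{t\}}$ is an equivalence: the $2$-morphism $\mu_\Phi$ of Proposition~\ref{FlowActionProposition}, together with $e_\Phi$, exhibits $\Phi_{-t}$ as a quasi-inverse. An equivalence induces isomorphisms on automorphism groups, so $\aut_x\cong\aut_{\varphi_t(x)}$ for all $t$. Second, take a linear orbifold-chart $[U_c/\aut_c]\to\X$ at $c$ (Proposition~\ref{OrbifoldChartsProposition}); its underlying map is a homeomorphism onto an open neighbourhood $\mathcal{O}$ of $c$ (Proposition~\ref{UnderlyingMapProposition}). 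For $y\in\mathcal{O}$ represented by $u\in U_c$ under the \'etale map $U_c\to\X$, Definition~\ref{TangentSpaceDefinition} identifies $\aut_y$ with the automorphism group of $u$ in $U_c\times_\X U_c\rightrightarrows U_c$, which for this chart is the stabilizer of $u$ in $\aut_c$; hence $\aut_y$ embeds into $\aut_c$. Choosing $T$ with $\varphi_T(x)\in\mathcal{O}$ and composing gives $\aut_x\cong\aut_{\varphi_T(x)}\hookrightarrow\aut_c$, as required.

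\emph{Main obstacle.} I expect the real content to sit in the dynamical step: producing a genuine flow despite the non-compact support of $-\nabla f$ (handled by the cutoff $\rho$, legitimate precisely because forward orbits cannot escape $\bar\X^a$), and deducing convergence to a \emph{single} critical point from connectedness of $\omega(x)$ and isolatedness of the critical set. The automorphism bookkeeping---that $\Phi_t$ is an equivalence and that nearby automorphism groups are stabilizers---is comparatively routine given the orbifold-chart description of $\aut$, so the crux is ensuring the gradient-flow argument transfers cleanly to the underlying metric space $\bar\X$.
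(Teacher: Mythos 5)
Your proposal is correct and follows essentially the same route as the paper: cut off the gradient field near $\bar\X^a$, flow $x$ downhill to a critical point $c$ using the compactness of the sublevel set, and then combine the fact that each $\Phi_t$ is a self-equivalence with the stabilizer description of automorphism groups in an orbifold-chart at $c$. The only difference is cosmetic --- the paper delegates the convergence step to an adaptation of a cited lemma from the Morse theory literature, whereas you spell it out via the $\omega$-limit set, and the paper flows along $+\lambda\nabla f$ as $t\to-\infty$ rather than $-\rho\nabla f$ as $t\to+\infty$.
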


\begin{proof}[Proof of Proposition~\ref{RepresentableProposition}.]
Using orbifold-charts and the fact that each point has trivial inertia we see that any point $x\in\bar\X$ is represented by some point in an open embedding $U\to\X$ from a manifold $U$; we may assume that $U\subset\R^n$ if we wish.

Let $U\to\X$ be an open embedding, $V\to\X$ any morphism.  By applying the underlying space functor to the cartesian diagram
\[\xymatrix{
U\times_\X V\ar[r]\ar[d]& V\ar[d]_{}="1"\\
U\ar[r]^{}="2"& \X\ar@{=>}"1";"2"
}\]
we obtain a diagram
\[\xymatrix{
U\times_\X V\ar[r]\ar[d]& V\ar[d]_{}="1"\\
U\ar[r]^{}="2"& \bar\X
}\]
in which the horizontal maps are open embeddings (by Proposition~\ref{UnderlyingMapProposition}), and which is still cartesian since points of $\X$ have no nontrivial automorphisms.

We may cover the second-countable Hausdorff space $\bar\X$ with the open inclusions $U\hookrightarrow\bar\X$ underlying open inclusions $U\to\X$ from open subsets of $\R^n$.  The previous paragraph shows first that these give a smooth atlas on $\bar\X$, so that $\bar\X$ is a manifold, and second that if $V\to\X$ is a morphism then the underlying $V\to\bar\X$ is smooth.  Thus there is a morphism
\begin{gather*}
\X\to\bar\X\\
(V\to\X)\mapsto(V\to\bar\X).
\end{gather*}
Finally the last paragraph shows that if $A\to\X$ is an \'etale atlas then so is $A\to\bar\X$; it also shows that the atlases induce the same groupoid $A\times_\X A\rightrightarrows A$ and that $\X\to\bar\X$ is covered by the identity map on this groupoid.  Thus $\X\to\bar\X$ is an equivalence and the result follows.
\end{proof}

\begin{proof}[Proof of Proposition~\ref{MorseAutomorphismProposition}.]
Let $x\in\bar\X$ with $\bar f(x)=a$.  Let $\lambda\colon\X\to\R$ be a compactly supported morphism which is equal to $1$ on a neighbourhood of $\bar f^{-1}(-\infty,a]$.  This exists by Lemma~\ref{BumpFunctionsLemma}.  Let $\Phi\colon\X\times\R\to\X$ be a flow of the compactly-supported vector field $\lambda\nabla f$ and let $\varphi_t\colon\bar\X\to\bar\X$ be the $\R$-action underlying $\Phi$.

The proof of \cite[Lemma 2.2.3]{\Invitation} can be adapted, using $\bar\X$ in place of $M$, using Lemma~\ref{UnderlyingFlowLemma}, and using the fact that $\bar\X$ is metrizable (by Proposition~\ref{DMUnderlyingSpaceProposition}), to show that
\[\lim_{t\to-\infty}\varphi_t(x)\]
exists and is a critical point $c$ of $f$.  Let $[U_c/\aut_c]\to\X$ be an orbifold chart at $c$.  Since $\varphi_t(x)\to c$ as $t\to-\infty$ we may choose $t$ with $|t|$ large enough that $\varphi_t(x)$ is the image under $U_c\to\bar\X$ of some $u\in U_c$.  Thus $\aut_{\varphi_t(x)}\cong\mathrm{Stab}_{\aut_c}(u)\subset \aut_c$.  However $\varphi_t$ is the map underlying the self-equivalence $\Phi|_{\X\times\{t\}}$ of $\X$ and so there is an isomorphism $\aut_x\cong\aut_{\varphi_t(x)}$.  The result follows.
\end{proof}

\subsection{The topology of the underlying space.}\label{HandleSection}

\begin{theorem}[c.f.~{\cite[3.1]{\Milnor}}]\label{RetractTheorem}
Let $a<b$ and suppose that $\bar f^{-1}[a,b]$ is compact and contains no critical points of $f$.  Then $\bar\X^a$ and $\bar\X^b$ are homeomorphic.  Moreover, $\bar\X^a$ is a strong deformation retract of $\bar\X^b$.
\end{theorem}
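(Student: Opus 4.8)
The plan is to imitate Milnor's proof of the classical result \cite[3.1]{\Milnor}, carrying out all of the topological bookkeeping on the underlying space $\bar\X$ and using the machinery of Section~\ref{GeometrySection} to supply the relevant gradient flow. First I would equip $\X$ with a Riemannian metric (Theorem~\ref{MetricTheorem}) and form the gradient field $\nabla f$ together with the morphism $\langle\nabla f,\nabla f\rangle\colon\X\to\R$ (Definition~\ref{ManipulateVectorFieldsDefinition}). Since the critical points of $f$ are isolated (Corollary~\ref{CriticalPointsIsolatedCorollary}) and hence form a closed discrete subset of the metrizable space $\bar\X$ (Proposition~\ref{DMUnderlyingSpaceProposition}), and the compact set $\bar f^{-1}[a,b]$ contains none of them, I can choose an open neighbourhood $W\supset\bar f^{-1}[a,b]$ that is free of critical points. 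Using Lemma~\ref{BumpFunctionsLemma} I take $\lambda\colon\X\to\R$ with $0\le\bar\lambda\le1$, with $\bar\lambda=1$ on a neighbourhood of $\bar f^{-1}[a,b]$, and with $\supp\lambda$ compact and contained in $W$. On the open substack $\W$ corresponding to $W$ (Proposition~\ref{SubsetProposition}) the function $\langle\nabla f,\nabla f\rangle$ is nowhere zero, so $\lambda|_\W/\langle\nabla f,\nabla f\rangle|_\W$ is a morphism $\W\to\R$ with compact support; extending it by zero (Lemma~\ref{ExtensionByZeroExistsLemma}) produces $\rho\colon\X\to\R$, and I set $X=-\rho\,\nabla f$, which is a vector field by Proposition~\ref{VectorFieldProposition} whose support lies in $\supp\lambda$ and is therefore compact.

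Next I would invoke Theorem~\ref{FlowTheorem} to obtain a flow $\Phi\colon\X\times\R\to\X$ of $X$, with underlying $\R$-action $\varphi_t\colon\bar\X\to\bar\X$ (Proposition~\ref{FlowActionProposition}). The key computation is $X\cdot f=\langle\nabla f,X\rangle=-\rho\,\langle\nabla f,\nabla f\rangle$, so that $\overline{X\cdot f}\le0$ everywhere, while $\overline{X\cdot f}=-1$ at every point of $\bar f^{-1}[a,b]$, where $\bar\lambda=1$ and hence $\rho=1/\langle\nabla f,\nabla f\rangle$. By Lemma~\ref{UnderlyingFlowLemma} the map $t\mapsto\bar f(\varphi_t(x))$ is smooth with derivative $\overline{X\cdot f}(\varphi_t(x))$; thus $\bar f$ is non-increasing along each trajectory and decreases at unit rate precisely while the trajectory lies over $[a,b]$.

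With this in hand the rest of Milnor's argument goes through verbatim on $\bar\X$. Tracking $\bar f$-values along trajectories shows that the homeomorphism $\varphi_{b-a}$ of $\bar\X$ restricts to a homeomorphism $\bar\X^b\to\bar\X^a$: the unit decrease rate forces $\bar f(\varphi_{b-a}(x))\le a$ whenever $\bar f(x)\le b$, while applying the same rate control to backward trajectories (whose $\bar f$-value can climb at rate at most $1$ once it enters $[a,b]$) yields surjectivity onto $\bar\X^a$. The strong deformation retraction is then Milnor's explicit homotopy $r\colon\bar\X^b\times[0,1]\to\bar\X^b$ given by
\[ r(x,u)=\begin{cases} x, & \bar f(x)\le a,\\ \varphi_{u(\bar f(x)-a)}(x), & a\le\bar f(x)\le b,\end{cases}\]
which is continuous (the two clauses agree on $\bar f^{-1}(a)$), starts at the identity, fixes $\bar\X^a$ pointwise, and ends with a retraction of $\bar\X^b$ onto $\bar\X^a$.

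The main obstacle is not the topology — once the flow is in hand the level-set argument is routine — but rather the honest construction of the cut-off gradient field $\rho\,\nabla f$ as a global vector field on $\X$: one must invert the denominator $\langle\nabla f,\nabla f\rangle$ only over a critical-point-free open substack and then glue it back by extension by zero, and one must verify that the resulting $X$ is genuinely compactly supported so that Theorem~\ref{FlowTheorem} applies. The one genuinely stacky ingredient beyond this is Lemma~\ref{UnderlyingFlowLemma}, which legitimises transporting the entire $\bar f$-monotonicity calculation from $\X$ down to the space $\bar\X$, where Milnor's combinatorics of level sets actually takes place.
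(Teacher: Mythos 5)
Your proposal is correct and follows essentially the same route as the paper: equip $\X$ with a metric, cut off $\nabla f/\|\nabla f\|^2$ to a compactly supported vector field, take its flow via Theorem~\ref{FlowTheorem}, and transport the level-set bookkeeping to $\bar\X$ via Lemma~\ref{UnderlyingFlowLemma} before running Milnor's argument verbatim. The only differences are cosmetic --- you use the negative gradient (so $\varphi_{b-a}$ maps $\bar\X^b$ to $\bar\X^a$ rather than the reverse) and you spell out the construction of $\rho$ and the final homotopy, which the paper leaves to Lemma~\ref{ExtensionByZeroExistsLemma}, Lemma~\ref{BumpFunctionsLemma} and the citation of \cite[3.1]{\Milnor}.
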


\begin{theorem}[c.f.~{\cite[3.2]{\Milnor}}]\label{HandleTheorem}
Let $p\in\bar\X$ be a nondegenerate critical point of $f$, let $c=\bar f(p)$ be the corresponding critical value, and suppose that there is $\epsilon>0$ such that $\bar f^{-1}[c-\epsilon,c+\epsilon]$ is compact and contains no critical points of $f$ besides $c$.  Then $\bar\X^{c+\epsilon}$ has the homotopy type of $\bar\X^{c-\epsilon}$ with a copy of $\mathbb{D}(\ind_p)/\aut_p$ attached along $\mathbb{S}(\ind_p)/\aut_p$.
\end{theorem}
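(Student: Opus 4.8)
The plan is to adapt Milnor's proof of \cite[3.2]{\Milnor} by performing all constructions $\aut_p$-equivariantly inside an orbifold-chart at $p$ and then descending them to the underlying space $\bar\X$. First I would invoke the Morse Lemma (Theorem~\ref{MorseLemma}) to obtain a linear orbifold-chart $[U_p/\aut_p]\to\X$ at $p$ in which the lift of $f$ to $U_p\subset T_p\X$ is $u\mapsto c+H_{f,p}(u,u)$. Since $\aut_p$ is finite I can average to produce an $\aut_p$-invariant inner product on $T_p\X$ simultaneously diagonalizing the invariant nondegenerate form $H_{f,p}$; with respect to it the splitting $T_p\X=T_p\X_+\oplus T_p\X_-$ is orthogonal and $\aut_p$-invariant, the group acts orthogonally, and writing $u=u_++u_-$ the lift of $f$ becomes $c+|u_+|^2-|u_-|^2$. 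The key point is that the functions $\eta=|u_+|^2$ and $\xi=|u_-|^2$ are $\aut_p$-invariant, so any expression built from $\xi$ and $\eta$ descends to $\overline{[U_p/\aut_p]}=U_p/\aut_p\subset\bar\X$.

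Next I would carry out Milnor's modification. Exactly as in \cite{\Milnor} I form $F=f-\mu(\xi+2\eta)$, with $\mu$ a bump function chosen so that $\mu(\xi+2\eta)$ has compact support inside $U_p$. Because this perturbation is a function of the invariant quantities $\xi,\eta$, it is $\aut_p$-invariant and so defines a compactly supported morphism $[U_p/\aut_p]\to\R$; by Lemma~\ref{ExtensionByZeroExistsLemma} this modifies $f$ to a global morphism $F\colon\X\to\R$ agreeing with $f$ off the chart. The standard estimates show that $F$ has the same critical points as $f$ (the only one in the chart is still $p$, now pushed to value $F(p)<c-\epsilon$), that $F\leq f$ with $\bar F^{-1}(-\infty,c+\epsilon]=\bar\X^{c+\epsilon}$, and that $\bar F^{-1}[c-\epsilon,c+\epsilon]$ is compact and contains no critical points of $F$.

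With $F$ in hand the first reduction is immediate from the already-established Retract Theorem (Theorem~\ref{RetractTheorem}): since $\bar F^{-1}[c-\epsilon,c+\epsilon]$ is compact and critical-point-free, $\bar F^{-1}(-\infty,c-\epsilon]$ is a strong deformation retract of $\bar F^{-1}(-\infty,c+\epsilon]=\bar\X^{c+\epsilon}$. Writing $\bar F^{-1}(-\infty,c-\epsilon]=\bar\X^{c-\epsilon}\cup H$, with the handle $H$ contained in the chart, it remains to deformation retract $\bar\X^{c-\epsilon}\cup H$ onto $\bar\X^{c-\epsilon}$ with the cell $e=\{u_-\in T_p\X_-\mid |u_-|^2\leq\epsilon\}/\aut_p$ attached. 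For this I would lift Milnor's explicit retraction of the handle to $U_p$; since it is written purely in terms of $\xi$ and $\eta$ it is $\aut_p$-equivariant, descends to a deformation retraction of the corresponding subsets of $U_p/\aut_p\subset\bar\X$, fixes $\bar\X^{c-\epsilon}$, and glues trivially with the identity outside the chart. Identifying $\{u_-:|u_-|^2\leq\epsilon\}$ with $\mathbb{D}(\ind_p)$ and its boundary sphere with $\mathbb{S}(\ind_p)$, the attached object is precisely $\mathbb{D}(\ind_p)/\aut_p$ glued along $\mathbb{S}(\ind_p)/\aut_p$, as claimed.

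The main obstacle is the careful bookkeeping of equivariance: one must verify that each object Milnor writes down --- the perturbation $F$, the handle region, and the retraction onto the cell --- is genuinely $\aut_p$-invariant (that is, a function of $\xi$ and $\eta$ alone), so that it descends to the quotient, and that passing to the possibly non-manifold underlying space $\bar\X$ does not disturb the strong deformation retraction property. Since the chart and all perturbations are supported away from $\partial U_p$, the gluing with the unmodified portion of $\X$ is unproblematic, and the asserted homotopy equivalence of underlying spaces follows.
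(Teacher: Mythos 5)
Your proposal is correct and follows essentially the same route as the paper: take a linear orbifold-chart via the Morse Lemma, perform Milnor's modification $F=f-\mu(\cdot)$ equivariantly (the perturbation being a function of the invariant quantities $\|u_+\|^2,\|u_-\|^2$), extend by zero via Lemma~\ref{ExtensionByZeroExistsLemma}, establish Milnor's assertions, and descend the equivariant deformation retraction onto the cell to the underlying space. The only cosmetic difference is that the paper builds the invariant metric directly as $H_{f,p}|_{T_p\X_+}\oplus-H_{f,p}|_{T_p\X_-}$ rather than by averaging, which changes nothing.
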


Here $\mathbb{D}(-)$ and $\mathbb{S}(-)$ denote the unit disc and unit sphere in a representation, equipped with a suitable metric.  The analogue of Theorem~\ref{HandleTheorem} when there are several critical points with critical value $c$ holds with the obvious changes.

Now by Corollary~\ref{ProperMorseFunctionCorollary} we may assume that $f$ is Morse and that each $\bar\X^a$ is compact.  By Corollary~\ref{CriticalPointsIsolatedCorollary} the critical points of $f$ are isolated and so we may take $b_0<b_1<b_2<\cdots\in\R$ such that each interval $(b_i,b_{i+1})$ contains precisely one critical value of $f$ and such that all critical values lie in such an interval.  Write the critical points in $f^{-1}(b_i,b_{i+1})$ as $c^i_1,\ldots,c^i_{r_i}$.  We then have:

\begin{corollary}\label{HomotopyTypeCorollary}
$\bar\X=\bigcup_{i=0}^\infty\bar\X^{b_i}$, and each $\bar\X^{b_{i+1}}$ has the homotopy type of $\bar\X^{b_{i}}$ with copies of $\mathbb{D}(\ind_{c^i_{j}})$ attached along $\mathbb{S}(\ind_{c^i_{j}})$, for $j=1,\ldots,r_i$.
\end{corollary}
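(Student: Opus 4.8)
The plan is to prove the corollary by iterating Theorems~\ref{RetractTheorem} and \ref{HandleTheorem} across the bands $\bar f^{-1}[b_i,b_{i+1}]$, after first disposing of the union statement. Because each $\bar\X^a$ is compact and, by Corollary~\ref{CriticalPointsIsolatedCorollary}, the critical points of $f$ are isolated, every $\bar\X^a$ contains only finitely many critical points; hence the critical values form a set with no finite accumulation point, and the separating numbers $b_i$ may be chosen \emph{cofinal} in $\R$ (if there are only finitely many critical values, one extends the sequence by further $b_i\to\infty$, the extra bands simply being free of critical points). Then any $x\in\bar\X$ satisfies $\bar f(x)\leqslant b_i$ for some $i$, so $x\in\bar\X^{b_i}$, giving $\bar\X=\bigcup_i\bar\X^{b_i}$.

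Next I would fix $i$ and write $c\in(b_i,b_{i+1})$ for the unique critical value in that interval, so that $c^i_1,\dots,c^i_{r_i}$ are precisely the critical points at level $c$. Choose $\epsilon>0$ with $[c-\epsilon,c+\epsilon]\subset(b_i,b_{i+1})$; the set $\bar f^{-1}[c-\epsilon,c+\epsilon]$ is a closed subset of the compact $\bar\X^{b_{i+1}}$, hence compact, and contains no critical points other than the $c^i_j$. Applying Theorem~\ref{RetractTheorem} to the critical-point-free bands $[b_i,c-\epsilon]$ and $[c+\epsilon,b_{i+1}]$ yields strong deformation retractions of $\bar\X^{c-\epsilon}$ onto $\bar\X^{b_i}$ and of $\bar\X^{b_{i+1}}$ onto $\bar\X^{c+\epsilon}$. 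The several-point form of Theorem~\ref{HandleTheorem} (noted immediately after its statement) shows that $\bar\X^{c+\epsilon}$ has the homotopy type of $\bar\X^{c-\epsilon}$ with the cells $\mathbb{D}(\ind_{c^i_j})/\aut_{c^i_j}$ attached along $\mathbb{S}(\ind_{c^i_j})/\aut_{c^i_j}$, the $r_i$ attaching maps having disjoint images since the $c^i_j$ are distinct isolated critical points. Stringing these equivalences together reduces the claim to showing that $\bar\X^{c-\epsilon}$ with these cells attached is homotopy equivalent to $\bar\X^{b_i}$ with the corresponding cells attached.

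That last identification is the one genuinely nontrivial point, and it is purely homotopy-theoretic. I would invoke the classical gluing lemmas, as in Milnor \cite[\S 3]{\Milnor}: attaching a fixed space along homotopic maps produces homotopy-equivalent adjunction spaces, and a homotopy equivalence of the base that is also a cofibration transports cell-attachments up to homotopy equivalence. The strong deformation retraction of $\bar\X^{c-\epsilon}$ onto $\bar\X^{b_i}$ lets me homotope each attaching map $\mathbb{S}(\ind_{c^i_j})/\aut_{c^i_j}\to\bar\X^{c-\epsilon}$ into $\bar\X^{b_i}$ and then slide the retraction through the pushout. I expect the main obstacle to be checking that these lemmas apply verbatim when the ``cells'' are the orbit quotients $\mathbb{D}(\ind)/\aut$ rather than honest disks; concretely, one must know that the inclusion $\mathbb{S}(\ind_{c^i_j})/\aut_{c^i_j}\hookrightarrow\mathbb{D}(\ind_{c^i_j})/\aut_{c^i_j}$ is a cofibration. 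This follows by equipping the equivariant pair $(\mathbb{D}(\ind),\mathbb{S}(\ind))$ with an $\aut$-invariant collar and passing to the quotient, which exhibits the quotient pair as an \emph{NDR} pair. Granting this, the per-step homotopy equivalence holds, and together with the union statement it establishes the corollary.
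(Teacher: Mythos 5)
Your proof is correct and follows the route the paper intends: the corollary is stated without proof as an immediate consequence of Theorems~\ref{RetractTheorem} and~\ref{HandleTheorem}, iterated band by band exactly as you do, with the cell attachments transported through the deformation retractions by Milnor's gluing lemmas --- which apply here since $\mathbb{D}(\ind_{c^i_j})/\aut_{c^i_j}$ is the cone on $\mathbb{S}(\ind_{c^i_j})/\aut_{c^i_j}$, so your NDR/collar verification (or the cone structure directly) settles the cofibration point you flag. The only caveat is notational: the cells in the corollary should be read as the quotients $\mathbb{D}(\ind_{c^i_j})/\aut_{c^i_j}$, as in Theorem~\ref{HandleTheorem} and the proof of Theorem~\ref{MorseInequalitiesTheorem}, which is indeed what you attach.
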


\begin{note}
It would be desirable to prove theorems describing the `topology of $\X$' rather than the topology of $\bar\X$.  For example, one might hope for a theorem describing $\X$ as a `handlebody' obtained by attaching handles $[\mathbb{D}(\ind_c)\times\mathbb{D}(\coind_c)/\aut_x]$, one for each critical point $c$ of $f$.  (Here we have written $\mathbb{D}(-)$ to denote the closed unit disc in a representation.)  Such a result would require a notion of differentiable Deligne-Mumford stack \emph{with boundary}, which is beyond the scope of the present paper.  In \cite{\OrbifoldMSW} we will return to this issue after developing a theory of differentiable Deligne-Mumford stacks with corners.
\end{note}

\begin{proof}[Proof of Theorem~\ref{RetractTheorem}]
This proof will follow that of Theorem~3.1 in \cite{\Milnor}, which we will refer to throughout.

Using Theorem~\ref{MetricTheorem}, let $\X$ be equipped with a Riemannian metric, so that we may form the vector field $\nabla f$ and the associated function $\|\nabla f\|^2\colon\X\to\R$ using Definition~\ref{ManipulateVectorFieldsDefinition}.  Using Lemma~\ref{ExtensionByZeroExistsLemma} and Corollary~\ref{BumpFunctionsLemma} we may find $\rho\colon\X\to\R$ with compact support and with $\bar\rho=1/\overline{\|\nabla f\|^2}$ on $\bar f^{-1}[a,b]$.  Finally, we may use Proposition~\ref{VectorFieldProposition} to form the vector field $X=\rho\nabla f$ on $\X$, which has compact support, and then take a flow $\Phi$ of $X$ using Theorem~\ref{FlowTheorem}.

Now we may define
\[\varphi_t\colon\bar\X\to\bar\X\]
to be the 1-parameter family of automorphisms underlying $\Phi$.  For fixed $x\in\bar\X$ the function $t\mapsto\bar f(\varphi_t(x))$ is smooth by Lemma~\ref{UnderlyingFlowLemma} and if $\varphi_t(x)\in\bar f^{-1}[a,b]$ its derivative is $\overline{X\cdot f}(\varphi_t(x))=\overline{\langle\rho\nabla f,\nabla f\rangle}(\varphi_t(x))=1$.  The remainder of the proof of \cite[3.1]{\Milnor} now goes through without change to establish the result.
\end{proof}

\begin{proof}[Proof of Theorem~\ref{HandleTheorem}.]
This proof will follow that of Theorem~3.2 in \cite{\Milnor}, which we will refer to throughout.

Take an $\aut_p$-equivariant splitting $T_p\X=T_p\X_+\oplus T_p\X_-$ such that $H_{f,p}|_{T_p\X_{+}}$ is positive definite and $H_{f,p}|_{T_p\X_{-}}$ is negative definite.  Equip $T_p\X$ with the metric $H_{f,p}|_{T_p\X_{+}}\oplus -H_{f,p}|_{T_p\X_{-}}$.  Write elements $u\in T_p\X$ as $(u_+,u_-)$ where $u_+\in T_p\X_+$ and $u_-\in T_p\X_-$.

Now take an orbifold-chart $[U_p/\aut_p]\to\X$ at $p$ as in the Morse Lemma Theorem~\ref{MorseLemma}.  By reducing $\epsilon$ if necessary we may assume that $U_p\subset T_p\X$ contains the closed unit ball of radius $2\epsilon$.  

Let $\mu\colon\R\to\R$ be a smooth function satisfying the conditions
\begin{gather*}
\mu(0)>\epsilon,\\
\mu(r)=0\mathrm{\ for\ all\ }r\geqslant 2\epsilon,\\
-1<\mu'(r)\leqslant 0\mathrm{\ for\ all\ }r.
\end{gather*}
Now consider the morphism $[U_p/\aut_p]\to\R$ that when composed with $U_p\to[U_p/\aut_p]$ becomes $u\mapsto \mu(2\|u_-\|^2+\|u_+\|^2)$.  This morphism has compact support, so we may use Lemma~\ref{ExtensionByZeroExistsLemma} to extend it to a morphism $\X\to\R$, and then subtract it from $f$ to obtain $F\colon\X\to\R$.  

The following three assertions are directly analogous to those that appear in the proof of \cite[3.2]{\Milnor} and are proved in exactly the same way.
\begin{description}
\item[Assertion 1.]  $\bar F^{-1}(-\infty,c+\epsilon]$ coincides with $\bar\X^{c+\epsilon}$.
\item[Assertion 2.] The critical points of $F$ are precisely those of $f$.
\item[Assertion 3.] $\bar F^{-1}(-\infty,c-\epsilon]$ is a deformation retract of $\bar\X^{c+\epsilon}$.
\end{description}

Write $\bar F^{-1}(-\infty,c-\epsilon]$ as $\bar\X^{c-\epsilon}\cup H$, where $H$ is the closure of $\bar F^{-1}(-\infty,c-\epsilon]\setminus\bar\X^{c-\epsilon}$.  Consider the `cell' $e_p\subset\overline{[U_p/\aut_p]}\subset\bar\X$ given by the image of those $u\in U_p$ with $\|u_-\|\leqslant\epsilon$ and $u_+=0$.  The intersection $\partial e_p$ of $e_p$ with $\bar\X^{c-\epsilon}$ is the image of those $u\in U_p$ with $\|u_-\|=\epsilon$ and $u_+=0$.  Thus the pair $(e_p,\partial e_p)$ serves as the pair $(\mathbb{D}(\ind_p)/\aut_p,\mathbb{S}(\ind_p)/\aut_p)$ appearing in the statement.  The following assertion will therefore complete the proof of the theorem.

\begin{description}
\item[Assertion 4.] $\bar\X^{c-\epsilon}\cup e_p$ is a deformation retract of $\bar\X^{c-\epsilon}\cup H$.
\end{description}

Let $\tilde F\colon U_p\to\R$ be the composition of $F$ with $U_p\to\X$.  The proof will follow if we define an $\aut_p$-equivariant deformation retraction of $\tilde F^{-1}(-\infty,c-\epsilon]$ onto the union of $\{u\mid \|u_+\|^2-\|u_-\|^2\leqslant c-\epsilon\}$ with $\{u\mid u_+=0,\,\|u_-\|^2\leqslant\epsilon\}$.  The deformation retraction used to prove Assertion 4 in the proof of \cite[3.2]{\Milnor} can be directly translated to the current situation, and is immediately seen to be equivariant.  This completes the proof.
\end{proof}

\subsection{The Morse Inequalities.}\label{MorseInequalitiesSection}

In this section we will state and prove the Morse inequalities for orbifolds, generalizing the Morse inequalities for manifolds.  To do so we must choose an appropriate extension to orbifolds of the notion of homology or cohomology of a manifold.  We concentrate on the following three possibilities and for simplicity we take coefficients in $\C$.
\begin{enumerate}
\item $H_\ast(\X)$ and $H^\ast(\X)$, the \emph{homology} and \emph{cohomology} of $\X$, are simply the homology and cohomology of the underlying space $\bar\X$.

\item The \emph{inertia string topology} of $\X$ is the homology $H_\ast(\Lambda\X)$ of the inertia stack, equipped with an associative graded commutative `string product' that is directly analogous to the Chas-Sullivan product on the homology of the loopspace of a manifold.  This first appeared in Lupercio et al.~\cite{\LUXLoop} as the `virtual cohomology of $\X$' and has been described in a different way by Behrend et al.~\cite{\GinotEtAl}.

\item The \emph{orbifold} or \emph{Chen-Ruan} cohomology $H^\ast_\orb(\X)$ of an almost-complex $\X$ is given by $H^\ast(\Lambda\X)$ equipped with a modified grading and an ingenious cup-product structure \cite{\ChenRuan}.  Ruan's crepant resolution conjecture relates $H^\ast_\orb(\X)$ to the cohomology ring $H^\ast(Y)$ of a crepant resolution $Y\to\X$ \cite{\RuanCrepant}.
\end{enumerate}

Now let $\X$ be a differentiable Deligne-Mumford stack with $\bar\X$ compact and let $f\colon\X\to\R$ be a Morse function.

\begin{definition}
We write
\begin{gather*}
b_i=\dim H_i(\bar\X;\R),\\
b^\Lambda_i=\dim H_i(\overline{\Lambda\X};\R),\\
b^\orb_i=\dim H^i_\orb(\X)
\end{gather*}
for the \emph{Betti numbers}, \emph{inertia Betti numbers} and \emph{orbifold Betti numbers} of $\X$ respectively and we write
\begin{gather*}
P_t(\X)=\sum b_it^i,\\
P^\Lambda_t(\X)=\sum b_i^\Lambda t^i,\\
P^\orb_t(\X)=\sum b_i^\orb t^i
\end{gather*}
for the corresponding \emph{Poincar\'e polynomials}.  (The orbifold Betti numbers and corresponding Poincar\'e polynomial are only defined when $\X$ is almost-complex.)
\end{definition}

\begin{definition}
The \emph{Morse polynomial} of $f$, the \emph{inertia Morse polynomial} of $f$, and the \emph{orbifold Morse polynomial} of $f$ are
\begin{gather*}
M_t(f)=\sum_c t^{\dim\ind_c},\\
M_t^\Lambda(f)=\sum_{c,(g)}t^{\dim{\ind_c}^g},\\
M_t^\orb(f)=\sum_{c,(g)}t^{\dim{\ind_c}^g+2\iota(c,g)},
\end{gather*}
respectively, where in the first case the sum is taken over \emph{oriented} critical points of $f$ and in the second and third cases the sum is taken over \emph{oriented} pairs $(c,(g))$ with $c$ a critical point of $f$ and $(g)$ a conjugacy class in $\aut_c$.  The third definition only applies when $\X$ is almost-complex, and in this case $\iota(c,g)$ denotes the \emph{age grading} or \emph{degree-shifting number} \cite{\ChenRuan}.
\end{definition}

\begin{theorem}[Orbifold Morse Inequalities.]\label{MorseInequalitiesTheorem}
There are polynomials $R(t)$, $R^\Lambda(t)$, $R^\orb(t)$ with non-negative integer coefficients such that
\begin{gather*}
M_t(f)=P_t(\X)+(1+t)R(t),\\
M_t^\Lambda(f)=P_t^\Lambda(\X)+(1+t)R^\Lambda(t),\\
M_t^\orb(f)=P_t^\orb(\X)+(1+t)R^\orb(t).
\end{gather*}
In particular, if $M_t(f)$ has no consecutive powers of $t$ then $M_t(f)=P_t(\X)$, and similarly for $M_t^\Lambda(f)$ and $M_t^\orb(f)$.  The third result only applies when $\X$ is almost-complex.
\end{theorem}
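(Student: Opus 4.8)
The plan is to prove the first inequality $M_t(f)=P_t(\X)+(1+t)R(t)$ for an arbitrary compact differentiable Deligne--Mumford stack, and then to obtain the inertia and orbifold versions by applying this single result to the inertia stack $\Lambda\X$, grouped in the right way. The starting point is Corollary~\ref{HomotopyTypeCorollary} together with Theorem~\ref{HandleTheorem}, which present $\bar\X$ as the union of the increasing filtration $\bar\X^{b_0}\subset\bar\X^{b_1}\subset\cdots$. This filtration is finite, since $\bar\X$ is compact and the critical points of $f$ are isolated (Corollary~\ref{CriticalPointsIsolatedCorollary}), hence finite in number; and $(\bar\X^{b_{i+1}},\bar\X^{b_i})$ is homotopy equivalent as a pair to $\bar\X^{b_i}$ with one cell $\mathbb{D}(\ind_c)/\aut_c$ attached along $\mathbb{S}(\ind_c)/\aut_c$ for each critical point $c$ with $b_i<\bar f(c)<b_{i+1}$. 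By excision, $H_\ast(\bar\X^{b_{i+1}},\bar\X^{b_i};\C)$ is the direct sum over these $c$ of $H_\ast(\mathbb{D}(\ind_c)/\aut_c,\mathbb{S}(\ind_c)/\aut_c;\C)$.

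First I would carry out the key local computation: for a finite group $G$ acting linearly on a real vector space $V$, the pair $(\mathbb{D}(V),\mathbb{S}(V))$ has $\C$-homology equal to $\C$ concentrated in degree $\dim V$, on which $G$ acts through the orientation character, i.e.\ by the sign of $\det(g|_V)$; and because $G$ is finite and the coefficients are $\C$, the transfer identifies $H_\ast(\mathbb{D}(V)/G,\mathbb{S}(V)/G;\C)$ with the $G$-invariants of this. Hence this relative homology is $\C$ in degree $\dim V$ when the $G$-action on $V$ preserves orientation, and is zero otherwise. Applied to each cell, this shows that $H_\ast(\bar\X^{b_{i+1}},\bar\X^{b_i};\C)$ contributes exactly one class in degree $\dim\ind_c$ for each \emph{oriented} critical point $c$ in the relevant range, and nothing for the non-oriented ones. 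Summing over $i$, the graded dimension of $\bigoplus_i H_\ast(\bar\X^{b_{i+1}},\bar\X^{b_i};\C)$ is precisely $M_t(f)$.

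The first inequality then follows from the standard argument for the spectral sequence of the filtration, exactly as in \cite{\Milnor}. The $E^1$-page has graded dimension $M_t(f)$ and converges to $H_\ast(\bar\X;\C)$, of graded dimension $P_t(\X)$; since every differential lowers total degree by one, taking homology at each page removes matched pairs of classes whose degrees differ by one, subtracting a multiple of $(1+t)$ with non-negative coefficients. Accumulating these gives $M_t(f)=P_t(\X)+(1+t)R(t)$ with $R$ of non-negative integer coefficients, and the lacunary consequence (if $M_t(f)$ has no consecutive powers then $R=0$) is immediate.

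Finally I would deduce the remaining two statements. For the inertia version, Theorem~\ref{InertiaTheorem} shows that $f\circ\epsilon$ is Morse on $\Lambda\X$ with oriented critical points the $c^g$ for oriented pairs $(c,(g))$ and with $\ind_{c^g}=(\ind_c)^g$, so the first inequality applied to $(\Lambda\X,f\circ\epsilon)$ reads $M_t^\Lambda(f)=P_t^\Lambda(\X)+(1+t)R^\Lambda(t)$. For the orbifold version I would instead apply the first inequality to each connected component $\Y_\alpha$ of $\Lambda\X$ separately: the age $\iota$ is constant, say $\iota_\alpha$, on $\Y_\alpha$, so that $M_t^\orb(f)=\sum_\alpha t^{2\iota_\alpha}M_t((f\circ\epsilon)|_{\Y_\alpha})$, while $P_t^\orb(\X)=\sum_\alpha t^{2\iota_\alpha}P_t(\bar\Y_\alpha)$ by the definition of Chen--Ruan cohomology (using $\dim H^i=\dim H_i$ over $\C$ on the compact $\bar\Y_\alpha$). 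Multiplying each component's inequality by $t^{2\iota_\alpha}$ and summing gives the third statement with $R^\orb(t)=\sum_\alpha t^{2\iota_\alpha}R_\alpha(t)$, the $(1+t)$ factor surviving because the shift is pulled out component by component. The main obstacle, and the one genuinely new point beyond the manifold case, is the local computation of the second paragraph: it is exactly the vanishing of $H_\ast(\mathbb{D}(V)/G,\mathbb{S}(V)/G;\C)$ for non-orientation-preserving actions that explains why the Morse polynomials must count only oriented critical points, and keeping the age shift compatible with the $(1+t)$ factoring --- which relies on $\iota$ being locally constant on $\Lambda\X$ --- is the remaining bookkeeping that needs care.
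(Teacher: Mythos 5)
Your proof is correct and follows essentially the same route as the paper: the filtration of $\bar\X$ by sublevel sets from Corollary~\ref{HomotopyTypeCorollary}, the computation of $H_\ast(\mathbb{D}(V)/G,\mathbb{S}(V)/G;\C)$ as the $G$-invariants via the transfer (which the paper asserts without proof), the standard algebraic step --- you phrase it via the spectral sequence of the filtration where the paper invokes the subadditivity argument of \cite[\S 5]{\Milnor}, but these are equivalent --- and the reduction of the inertia and orbifold statements to the first via Theorem~\ref{InertiaTheorem}. Your explicit component-by-component handling of the age shift on $\Lambda\X$ is more detailed than the paper's one-line deduction of the third inequality, and correctly identifies the local constancy of $\iota$ as the point that makes the $(1+t)$ factorisation survive the degree shift.
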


\begin{proof}
We will only prove the first part.  The second and third follow immediately from the first and Theorem~\ref{InertiaTheorem}.  See \cite{\ChenRuan} for a definition of $H^\ast_\orb(\X)$ and an explanation of degree-shifting.

To prove the first part we will make use of \cite[\S 5]{\Milnor}.  The function that assigns to a pair of spaces $(X,Y)$ the alternating sum
\[S_i(X,Y)=\dim H_i(X,Y;\R)-\dim H_{i-1}(X,Y;\R)+\cdots\pm\dim H_0(X,Y;\R)\]
is \emph{subadditive} in the sense that given $X\supset Y\supset Z$ we have $S_i(X,Z)\leqslant S_i(X,Y)+S_i(Y,Z)$.  Now taking $b_0<\cdots<b_r$ as in Corollary~\ref{HomotopyTypeCorollary} (the sequence terminates since $\X$ is compact) we have
\begin{equation}\label{InequalityEquation}S_i(\bar\X,\emptyset)=S_i(\bar\X^{b_r},\bar\X^{b_0})\leqslant\sum S_i(\bar\X^{b_p},\bar\X^{b_{p-1}})\end{equation}
where the inequality is a simple consequence of subadditivity.  Now by Corollary~\ref{HomotopyTypeCorollary} we have
\begin{eqnarray*}
H_i(\bar\X^{b_{p}},\bar\X^{b_{p-1}};\R)
&=&H_i\left(\bar\X^{b_{p-1}}\cup\bigcup\mathbb{D}(\ind_{c^p_j})/\aut_{c^p_j},\bar\X^{b_{p-1}};\R\right)\\
&=&\bigoplus H_i(\mathbb{D}(\ind_{c^p_j})/\aut_{c^p_j},\mathbb{S}(\ind_{c^p_j})/\aut_{c^p_j};\R),
\end{eqnarray*}
and $H_i(\mathbb{D}(\ind_{c^p_j})/\aut_{c^p_j},\mathbb{S}(\ind_{c^p_j})/\aut_{c^p_j};\R)$ is equal to $\R$  if $c^p_j$ is orientable and $i=\dim\ind_{c^p_j}$,
and is equal to $0$ otherwise.  Therefore \eqref{InequalityEquation} becomes
\begin{equation}\label{SecondInequalityEquation}S_i(\bar\X,\emptyset)\leqslant C_i-C_{i-1}+\cdots\pm C_0,\end{equation}
where $C_j$ denotes the number of orientable critical points $c$ of $f$ with $\dim\ind_c=j$.  Finally \cite[3.43]{\BanyagaHurtubise} shows that \eqref{SecondInequalityEquation} is equivalent to the first claim in the theorem.  This completes the proof.
\end{proof}

\section{Examples.}\label{ExamplesSection}

This section contains three examples that apply the results of Section~\ref{ApplicationsSection}.  The first two demonstrate how in favourable circumstances Theorem~\ref{MorseInequalitiesTheorem} allows us to compute the homology or orbifold cohomology groups of an orbifold.  The third example demonstrates how the methods can be extended to compute the integer homology groups of the $K3$ surface.

\begin{example}[Weighted projective spaces.]
Bott's perfect Morse function on complex projective space (see \cite[3.7]{\BanyagaHurtubise}) generalizes to the weighted projective spaces as follows.  Let $q_0,\ldots,q_n\in\mathbb{N}$ and set
\[\mathfrak{P}^n_{q_0\cdots q_n}=[S^{2n+1}/T^1],\]
where $T^1$ acts on $S^{2n+1}\subset\mathbb{C}^{n+1}$ by $t\cdot(z_0,\ldots,z_n)=(t^{q_0}z_0,\ldots,t^{q_n}z_n)$.  Thus $\overline{\mathfrak{P}}^n_{q_0\cdots q_n}$ is the weighted projective space $\mathbb{CP}^n_{q_0\cdots q_n}$.  We define
\[f\colon\mathfrak{P}^n_{q_0\cdots q_n}\to\R\]
to be the function that when composed with $S^{2n+1}\to \mathfrak{P}^n_{q_0\cdots q_n}$ becomes $(z_0,\ldots,z_n)\mapsto |z_1|^2+2|z_2|^2+\cdots+n|z_n|^2$.

The computations of \cite[3.7]{\BanyagaHurtubise} immediately generalize to show that $f$ is Morse and has critical points
\[c_i=[(0,\ldots,1,\ldots,0)\to\mathfrak{P}^n_{q_0\cdots q_n}],\]
where the nonzero entry is in the $i$th place.  The critical point data for $f$ are given by
\begin{eqnarray*}
\aut_{c_i}&=&\Z_{q_i},\\
\ind_{c_i}&=&\rho^{q_0}\oplus\cdots\oplus\rho^{q_{i-1}},\\
\coind_{c_i}&=&\rho^{q_{i+1}}\oplus\cdots\oplus\rho^{q_{n}}.
\end{eqnarray*}
Here $\rho$ denotes the linear complex representation of $\Z_{q_i}$ in which the generator acts by $z\mapsto e^{{2\pi i}/{q_i}}z$.  We immediately obtain from Theorem~\ref{MorseInequalitiesTheorem} the familiar computation of the rational homology of $\mathbb{CP}^n_{q_0\cdots q_n}$:
\[
H_{i}(\mathbb{CP}^n_{q_0\cdots q_n};\mathbb{Q})=\left\{\begin{array}{cl}\mathbb{Q}&\mathrm{if\ }i=2j\mathrm{\ for\ }0\leqslant j\leqslant n,\\
0&\mathrm{otherwise}.
\end{array}\right.
\]
\qed\end{example}

Our next two examples will consider the \emph{K\"ummer construction} of the \emph{$K3$ surface} \cite[6.6.1]{\JoyceExceptional}.  Let $\mathfrak{K}$ denote the quotient stack $[T^4/\Z_2]$, where the generator of $\Z_2$ acts on $T^4=\R^4/\Z^4$ by $(x_1,x_2,x_3,x_4)\mapsto(-x_1,-x_2,-x_3,-x_4)$.  The underlying space $\bar{\mathfrak{K}}=T^4/\Z_2$ is not a manifold since it contains 16 points with neighbourhoods of the form $\C^2/\Z_2$, with $\Z_2$ acting on $\C^2$ by negation of vectors.  However, there is a crepant resolution
\[\rho\colon T^\ast\mathbb{CP}^1\to\C^2/\Z_2\]
that sends $\mathbb{CP}^1$ to $\{0\}$ and that becomes a diffeomorphism after deleting these subsets.  We may therefore replace the 16 regions $\C^2/\Z_2$ in $\bar{\mathfrak{K}}$ with copies of $T^\ast\mathbb{CP}^1$ to obtain a manifold $K$ which is a $K3$ surface, and the crepant resolution
\[\pi\colon K\to\bar{\mathfrak{K}}.\]

It is possible to show that $\pi_1(K)=0$ and to compute $H_\ast(K;\mathbb{Q})$ directly.  One can then infer, from the fact that $K$ is a simply-connected $4$-manifold, that
\[H_i(K;\Z)=\left\{\begin{array}{cl}\Z &\mathrm{if\ }i=0\mathrm{\ or\ }4,\\ \Z^{22}&\mathrm{if\ }i=2,\\  0 &\mathrm{otherwise.} \end{array}\right.\]

\begin{question}
Is it possible to compute $H_\ast(K;\Z)$ in a way that will generalize to other manifolds constructed as resolutions of orbifolds?
\end{question}

The manifolds that we have in mind when asking this question include crepant resolutions of Calabi-Yau orbifolds (\cite{\JoyceDesingularizations} for example) and Joyce's compact manifolds with $G_2$ and $\mathrm{Spin}(7)$ holonomy \cite{\JoyceExceptional}.

\begin{example}[The K\"ummer construction and orbifold cohomology.]\label{KummerOrbifoldExample}
Chen and Ruan \cite[5.1]{\ChenRuan} have computed $H^\ast_\orb(\mathfrak{K})$:
\begin{equation}\label{KThreeEquation}
H^i_\orb(\mathfrak{K})=\left\{\begin{array}{cl}\mathbb{C} &\mathrm{if\ }i=0\mathrm{\ or\ }4,\\ \mathbb{C}^{22}&\mathrm{if\ }i=2,\\  0 &\mathrm{otherwise} \end{array}\right.
\end{equation}
so that as a vector-space $H^\ast_\orb(\mathfrak{K})$ is isomorphic to $H^\ast(K;\C)$.  We will recover this result using Morse theory.

Let $f\colon\mathfrak{K}\to\R$ be the morphism that when composed with $T^4\to\mathfrak{K}$ becomes $(x_1,x_2,x_3,x_4)\mapsto\sum\cos(2\pi x_i)$.  This is Morse and has 16 critical points
\[c_{ijkl}=\left[\left({\textstyle\frac{i}{2}},{\textstyle\frac{j}{2}},{\textstyle\frac{k}{2}},{\textstyle\frac{l}{2}}\right)\to\mathfrak{K}\right],\quad i,j,k,l\in\{0,1\}.\]
The critical point data are
\begin{eqnarray*}
\aut_{c_{ijkl}}&=&\Z_2,\\
\ind_{c_{ijkl}}&=&{(4-i-j-k-l)}(-\mathbf{1}),\\
\coind_{c_{ijkl}}&=&({i+j+k+l})(-\mathbf{1}),
\end{eqnarray*}
where $-\mathbf{1}$ is the nontrivial linear representation of $\Z_2$.  It follows that $c_{ijkl}$ is orientable if and only if $i+j+k+l$ is even.  Further, $\iota(c_{ijkl}^0)=0$ and $\iota(c_{ijkl}^{1})=1$.  We therefore have
\begin{eqnarray*}M^\orb_t(f)&=&\sum_{i+j+k+l\ \mathrm{even}}t^{4-i-j-k-l}+\sum_{i,j,k,l}t^2\\
&=&1+22t^2+t^4.\end{eqnarray*}
Since this is concentrated in even degrees we immediately recover \eqref{KThreeEquation} from Theorem~\ref{MorseInequalitiesTheorem}.
\qed\end{example}

\begin{note}\label{ComputationNote}
Let $V\subset\mathbb{C}^2$ be any \emph{real} linear subspace and consider again the projection $\rho\colon T^\ast\mathbb{CP}^1\to\mathbb{C}^2/\Z_2$.  It is not difficult to show that the total homology group
\[H_\ast(\rho^{-1}(\mathbb{D}V/\Z_2),\rho^{-1}(\mathbb{S}V/\Z_2);\Z)\]
is \emph{free} on one generator in degree two and --- if $\dim V$ is even --- an additional generator in degree $\dim V$.  Note that the additional generator is introduced if and only if the $\Z_2$-action on $V$ preserves orientations.

Rationally this computation is entirely trivial but proving that the groups are free involves an extension problem.  We regard the result as the best possible, bearing in mind the crepant resolution conjecture.
\end{note}

\begin{example}[The integer homology of the $K3$ surface.]\label{KummerIntegralExample}
We will now show how to use the Morse function $f\colon\mathfrak{K}\to\R$ of Example~\ref{KummerOrbifoldExample}, together with the resolution $\pi\colon K\to\bar{\mathfrak{K}}$, to compute the integer homology of $K$.  The idea is that we will treat the composition $\bar f\circ\pi\colon K\to\R$ as if it were a Morse function and use it to understand the homology of $K$ by considering the sublevel sets $K^a=(\bar f\circ\pi)^{-1}(-\infty,a]$.  Note that $\bar f\circ\pi$ is not a Morse function, and indeed is not smooth.  It nonetheless provides an excellent tool for studying $K$.

By adapting the methods used to prove Theorems \ref{RetractTheorem} and \ref{HandleTheorem} it is possible to show that:
\begin{enumerate}
\item $K^a\cong K^b$ if $f$ has no critical values between $a$ and $b$.
\item Let $c$ be a critical value of $f$.  Then for $\epsilon>0$ sufficiently small $K^{c+\epsilon}$ has the homotopy type of $K^{c-\epsilon}$ with a copy of $\rho^{-1}(\mathbb{D}(\ind_p)/\Z_2)$ attached along $\rho^{-1}(\mathbb{S}(\ind_p)/\Z_2)$ for each critical point of $f$ with value $c$.
\end{enumerate}
Set $b_i=i-\frac{1}{2}$ for $i=0,\ldots,5$, so that each interval $[b_{i},b_{i+1}]$ contains the single critical value $i$ of $f$.  The two facts above, together with the computations of Note~\ref{ComputationNote} and the description of the critical point data of $f$ from Example~\ref{KummerOrbifoldExample} allow us to immediately compute that the homology groups
\[H_\ast(K^{b_{i}},K^{b_{i-1}};\Z)\]
are free on $\binom{4}{i}$ generators of degree $2$ with an additional $\binom{4}{i}$ generators in degree $i$ if $i$ is even.  Using the exact sequences
\[\cdots\to H_i(K^{b_{i-1}};\Z)\to H_i(K^{b_i};\Z)\to H_i(K^{b_i},K^{b_{i-1}};\Z)\to\cdots,\]
we may immediately conclude that
\[H_i(K;\Z)=\left\{\begin{array}{cl}\Z &\mathrm{if\ }i=0\mathrm{\ or\ }4,\\ \Z^{22}&\mathrm{if\ }i=2,\\  0 &\mathrm{otherwise}. \end{array}\right.\]
\qed\end{example}

\begin{note}
A full account of the methods used in Example~\ref{KummerIntegralExample} will be given in \cite{\Crepant}.  The methods can be extended to compute, for example, the homology of the crepant resolutions appearing in \cite{\JoyceDesingularizations} and the homology of Joyce's `simple example' of a compact manifold with $G_2$-holonomy \cite[12.2]{\JoyceExceptional}.
\end{note}

\bibliographystyle{alpha}
\bibliography{MorseTheoryBibliography}
\end{document}